\setlist[enumerate]{label={\upshape(\roman*)}}
\newlist{myenumi}{enumerate}{1}
\setlist[myenumi,1]{label=\upshape(\roman*)}
\newlist{myenuma}{enumerate}{1}
\setlist[myenuma,1]{label=\upshape(\alph*)}
\declaretheorem[name=Theorem,numberwithin=section]{thm}
\declaretheorem[name=Lemma,numberlike=thm]{lem}
\declaretheorem[name=Corollary,numberlike=thm]{cor}
\declaretheorem[name=Proposition,numberlike=thm]{prop}
\declaretheorem[name=Conjecture, numberlike=thm]{conjecture}
\declaretheorem[name=Question, numberlike=thm]{question}
\declaretheorem[name=Definition,numberlike=thm,style=definition,qed=\(\blacklozenge\)]{defn}
\declaretheorem[name=Example,numberlike=thm,style=definition,qed=\(\blacklozenge\)]{example}
\declaretheorem[name=Remark,numberlike=thm,style=definition,qed=\(\blacklozenge\)]{rem}
\declaretheorem[name=Properties,numberlike=thm,style=definition,qed=\(\blacklozenge\)]{properties}
\crefname{thm}{Theorem}{Theorems}
\crefname{lem}{Lemma}{Lemmas}
\crefname{defn}{Definition}{Definitions}
\crefname{prop}{Proposition}{Propositions}
\crefname{cor}{Corollary}{Corollaries}
\crefname{equation}{}{}
\newcommand{\N}{\mathbb{N}}
\newcommand{\IN}{\mathbb{N}}
\newcommand{\Z}{\mathbb{Z}}
\newcommand{\Q}{\mathbb{Q}}
\newcommand{\R}{\mathbb{R}}
\newcommand{\IR}{\mathbb{R}}
\newcommand{\C}{\mathbb{C}}
\newcommand{\IC}{\mathbb{C}}
\newcommand{\propagation}{\mathrm{prop}}
\newcommand{\supp}{\mathrm{supp}}
\newcommand{\lf}{\mathrm{lf}}
\newcommand{\ch}{\operatorname{ch}}
\newcommand{\Td}{\mathrm{Td}}
\newcommand{\EM}{\mathrm{EM}}
\newcommand{\BC}{\mathrm{BC}}
\newcommand{\cE}{\mathcal{E}}
\newcommand{\cF}{\mathcal{F}}
\newcommand{\cG}{\mathcal{G}}
\newcommand{\cU}{\mathcal{U}}
\newcommand{\cK}{\mathcal{K}}
\newcommand{\cB}{\mathcal{B}}
\newcommand{\Mult}{\mathcal{M}}
\newcommand{\KX}{\mathrm{KX}}
\newcommand{\SX}{\mathrm{SX}}
\newcommand{\KK}{\mathrm{K\!\, K}}
\newcommand{\RK}{\mathrm{RK}}
\newcommand{\Bfree}{\mathrm{B}}
\newcommand{\Efree}{\mathrm{E}}
\newcommand{\Eub}{\underline{\mathrm{E}}}
\newcommand{\Ct}{\mathrm{C}}
\newcommand*{\Cz}{\Ct_0}
\newcommand*{\Cc}{\Ct_{\mathrm{c}}}
\newcommand*{\Cb}{\Ct_{\mathrm{b}}}
\newcommand*{\Lin}{\cB}
\newcommand*{\Kom}{\cK}
\newcommand*{\U}{\cU}
\newcommand*{\elltwo}{\ell^2}
\DeclareMathOperator{\Hom}{Hom}
\newcommand*{\HZ}{\mathrm{H}}
\newcommand*{\HZX}{\mathrm{HX}}
\newcommand*{\coarsify}{\operatorname{c}}
\newcommand{\K}{\mathrm{K}}
\newcommand{\Ktop}{\mathrm{K}^\mathrm{top}}
\newcommand{\Strg}{\mathrm{S}}
\newcommand{\RStrg}{\mathrm{RS}}
\newcommand{\HRPlaceholder}{\mathcal{HR}}
\newcommand*{\StolzPos}{\mathrm{Pos}^{\mathrm{spin}}}
\newcommand*{\SpinBordism}{\Omega^{\mathrm{spin}}}
\newcommand*{\StolzRel}{\mathrm{R}^{\mathrm{spin}}}
\newcommand*{\Riem}{\mathcal{R}}
\newcommand*{\RiemPos}{\Riem^{+}}
\DeclareMathOperator{\inddiff}{inddiff}
\newcommand{\rmP}{\mathsf{P}}
\newcommand{\rmD}{\mathsf{D}}
\newcommand{\frakc}{\mathfrak{c}}
\newcommand{\frakcbar}{\bar{\mathfrak{c}}}
\newcommand*{\sHigCom}{\frakcbar}
\newcommand*{\sHigCor}{\frakc}
\newcommand*{\sHigComRed}{\bar{\mathfrak{c}}^{\mathrm{red}}}
\newcommand*{\sHigCorRed}{\mathfrak{c}^{\mathrm{red}}}
\newcommand*{\HigCom}{\Ct_{\mathrm{h}}}
\newcommand*{\HigCorSpace}{\partial_{\mathrm{h}}}
\newcommand*{\Ball}{\mathrm{B}}
\newcommand{\clBall}{\overline{\mathrm{B}}}
\newcommand*{\red}{\mathrm{red}}
\NewDocumentCommand{\Cstar}{}{\ensuremath{\mathrm{C}^*}}
\NewDocumentCommand{\textCstar}{}{\ensuremath{\mathrm{C}^*\!}}
\NewDocumentCommand{\CstarRed}{}{\Cstar_{\red}}
\NewDocumentCommand{\LSym}{}{\mathrm{L}}
\DeclareMathOperator{\ForgetEquiv}{\mathsf{F}}
\NewDocumentCommand{\D}{}{\mathop{}\!\mathrm{d}}
\NewDocumentCommand{\Dirac}{}{\slashed{D}}
\NewDocumentCommand \RoeSymbol {o} {
	\mathrm{C}^{\ast}
	\IfNoValueF{#1}{_{#1}}
}
\NewDocumentCommand \VanishSymbol {o} {
	\mathrm{N}^{\ast}
	\IfNoValueF{#1}{_{#1}}
}
\NewDocumentCommand \FiproSymbol {o} {
	\mathrm{E}^{\ast}
	\IfNoValueF{#1}{_{#1}}
}
\NewDocumentCommand \RoePlaceholder {o} {
\RoeSymbol[
	\IfNoValueF{#1}{#1,}
	\mathrm{?}
]
}
\NewDocumentCommand \Roe {o} {\RoeSymbol[#1]}
\NewDocumentCommand \Fipro {o} {\FiproSymbol[#1]}
\NewDocumentCommand \varRoe {o} {
	\RoeSymbol[\sim\IfNoValueF{#1}{,#1}]
}
\NewDocumentCommand \Loc {o} {
	\RoeSymbol[
		\IfNoValueF{#1}{#1,}
		\LSym
	]
}
\NewDocumentCommand \LocVanish {o} {
	\VanishSymbol[
		\IfNoValueF{#1}{#1,}
    \LSym
	]
}
\NewDocumentCommand \FiproLoc {o} {
	\FiproSymbol[
		\IfNoValueF{#1}{#1,}
		\LSym
	]
}
\NewDocumentCommand \varLoc {o} {
	\RoeSymbol[
		\sim,
		\IfNoValueF{#1}{#1,}
		\LSym
	]
}
\NewDocumentCommand \Locz {o} {
	\RoeSymbol[
		\IfNoValueF{#1}{#1,}
		\LSym,0
	]
}
\NewDocumentCommand \FiproLocz {o} {
	\FiproSymbol[
		\IfNoValueF{#1}{#1,}
		\LSym,0
	]
}
\NewDocumentCommand \varLocz {o} {
	\RoeSymbol[
		\sim,
		\IfNoValueF{#1}{#1,}
		\LSym,0
	]
}
\newcommand*{\spinC}{spin\(^\mathrm{c}\)}
\newcommand*{\ucov}[1]{\widetilde{#1}}
\DeclareMathOperator{\im}{im}
\newcommand{\id}{\mathrm{id}}
\DeclareMathOperator{\Var}{\operatorname{Var}}
\DeclareMathOperator{\Ind}{Ind}
\DeclareMathOperator{\Ad}{Ad}
\DeclareMathOperator{\tr}{tr}
\NewDocumentCommand{\blank}{}{{-}}
\NewDocumentCommand{\parensup}{m}{\textup{(}#1\textup{)}} 
\newcommand*{\tensmax}{\mathbin{\otimes_{\max}}}
\newcommand*{\tens}{\otimes}
\newcommand*{\exttensprod}{\boxtimes}
\numberwithin{equation}{section} 
\author{Alexander Engel\thanks{Fakult{\"a}t f{\"u}r Mathematik,
Universit{\"a}t Regensburg,
93040 Regensburg,
Germany\newline
\href{mailto:alexander.engel@mathematik.uni-regensburg.de}{alexander.engel@mathematik.uni-regensburg.de}}
\and Christopher Wulff\thanks{
Mathematisches Institut, Georg--August--Universität Göttingen, Bunsenstraße 3--5, 37073 Göttingen, Germany\newline
\href{mailto:christopher.wulff@mathematik.uni-goettingen.de}{christopher.wulff@mathematik.uni-goettingen.de}}
\and Rudolf Zeidler\thanks{Mathematisches Institut, Westfälische Wilhelms--Universität Münster, Einsteinstr.~62, 48149 Münster, Germany\newline
\href{mailto:math@rzeidler.eu}{math@rzeidler.eu}}
}
\title{Slant products on the Higson--Roe exact sequence}
\date{}
\begin{document}

\maketitle

\begin{abstract}
We construct a slant product $\// \colon \mathrm{S}_p(X \times Y) \times \mathrm{K}_{1-q}(\mathfrak{c}^{\mathrm{red}}Y) \to \mathrm{S}_{p-q}(X)$ on the analytic structure group of Higson and Roe and the K-theory of the stable Higson corona of Emerson and Meyer.
The latter is the domain of the co-assembly map $\mu^\ast \colon \mathrm{K}_{1-\ast}(\mathfrak{c}^{\mathrm{red}}Y) \to \mathrm{K}^\ast(Y)$.
We obtain such products on the entire Higson--Roe sequence.
They imply injectivity results for external product maps.
Our results apply to products with aspherical manifolds whose fundamental groups admit coarse embeddings into Hilbert space.
To conceptualize the class of manifolds where this method applies, we say that a complete $\mathrm{spin}^{\mathrm{c}}$-manifold is Higson-essential if its fundamental class is detected by the co-assembly map.
We prove that coarsely hypereuclidean manifolds are Higson-essential.
We draw conclusions for positive scalar curvature metrics on product spaces, particularly on non-compact manifolds.
We also obtain equivariant versions of our constructions and discuss related problems of exactness and amenability of the stable Higson corona.
\end{abstract}

\setcounter{tocdepth}{2}
\tableofcontents

\paragraph{Acknowledgements.}

The first ideas of this paper originated in discussions at the Oberwolfach workshop ``Analysis, Geometry and Topology of Positive Scalar Curvature Metrics'' held in 2017, and the authors thank the MFO for its hospitality.

We are grateful to many people for very helpful discussions about different parts of the present paper: Alcides Buss, Siegfried Echterhoff, Heath Emerson, Bernhard Hanke, Nigel Higson, Benedikt Hunger, Ralf Meyer and Rufus Willett.

Alexander Engel and Christopher Wulff acknowledge financial support by the DFG through the Priority Programme ``Geometry at Infinity'' (SPP 2026, EN 1163/3-1 and WU 869/1-1, ``Duality and the coarse assembly map'').

Alexander Engel further acknowledges support by the SFB 1085 ``Higher Invariants'' funded by the DFG.

Rudolf Zeidler was supported by the Deutsche Forschungsgemeinschaft (DFG, German Research Foundation) under Germany's Excellence Strategy EXC 2044-390685587, Mathematics Münster: Dynamics -- Geometry -- Structure.

\section{Introduction}
The first main entity we study in this paper is the analytic exact sequence of Higson and Roe~\cite{MappingSurgeryToAnalysisIII}.
Questions around this sequence have generated substantial activity in higher index theory, for a selection of recent works see~\cite{PiazzaSchick:StolzPSC,XieYuPscLocAlg,PiazzaSchick:SignatureOperator,Zenobi:MappingSurgery,DeeleyGoffengI,PiazzaZenobi:Singular,WeinbergerXieYu:Additivity}.
Its original motivation as devised by Higson and Roe was to serve as the target of certain analytic index maps defined on the surgery sequence from geometric topology.
For this reason it also often called \enquote{analytic surgery sequence}.
Later it was also used to serve as a target for index maps on the positive scalar curvature sequence of Stolz~\cite{PiazzaSchick:StolzPSC,XieYuPscLocAlg} (see~\cref{subsec:applications}).

To describe the sequence, start with the following setup.
Let \(X\) be a proper metric space endowed with a proper action of a countable discrete group \(G\).
Then the \emph{analytic sequence of Higson and Roe} associated to \(X\) reads as follows 
\begin{equation}
  \cdots \to \K_{\ast+1}(\Roe[G] X) \xrightarrow{\partial} \Strg_\ast^G(X) \to \K_\ast^G(X) \xrightarrow{\Ind} \K_\ast(\Roe[G] X) \to \cdots,
  \label{eq:HigsonRoe}
\end{equation}
where \(\K_\ast(\Roe[G] X)\) is the topological K-theory of the \emph{equivariant Roe algebra} \(\Roe[G] X\), \(\Strg^G_\ast(X)\) is the \emph{analytic structure group} of \(X\) and \(\K_\ast^G(X)\) stands for the \emph{equivariant locally finite \(\K\)-homology} of \(X\).
The definitions of these groups are given in \cref{sec:crossProducts}.
If \(X\) is a complete \spinC-manifold of dimension \(m\), where the \spinC-structure is preserved by \(G\), then the Dirac operator defines a fundamental class \([\Dirac_X] \in \K_m^G(X)\).

If the \(G\)-action is free, then \(\K_\ast^G(X) \cong \K_\ast(G \backslash X)\).
If the \(G\)-action on \(X\) is cocompact, then \(\Roe[G] X\) is canonically Morita equivalent to the reduced group \textCstar\nobreakdash-algebra \(\CstarRed G\).
The sequence \labelcref{eq:HigsonRoe} is often applied to the case \(X = \ucov{M}\), where \(\ucov{M}\) is the universal covering of a compact smooth manifold or finite simplicial complex \(M\), and \(G = \pi_1 M\).
In this case it becomes
\begin{equation}
\label{eq_analytic_surgery_sequence}
  \cdots \to \K_{\ast+1}(\CstarRed G) \xrightarrow{\partial} \Strg_\ast^{G}(\widetilde M) \to \K_\ast(M) \xrightarrow{\Ind_G} K_\ast(\CstarRed G) \to \cdots.
\end{equation}

Our initial investigations started with the following result of Zeidler:

\begin{thm}[{\cite[Corollary~5.8]{zeidler_secondary}}]\label{thm43re}
Let $N$ be a closed \spinC-manifold\footnote{Note that, although Corollary~5.8 of \cite{zeidler_secondary} is stated for spin-manifolds, the statement makes perfectly sense for \spinC-manifolds and the proof given by Zeidler works also perfectly well in the more general case of \spinC-manifolds.} of dimension $n$ such that its universal covering $\ucov{N}$ is \parensup{rationally} stably hypereuclidean.
Then for every closed manifold $M$ the map
\[\Strg_\ast^{G}(\ucov{M}) \to \Strg_{\ast+n}^{G \times H}(\ucov{M} \times \ucov{N}), \quad x \mapsto x \times [\Dirac_N],\]
where \(G = \pi_1 M\) and \(H = \pi_1 N\), is \parensup{rationally} split-injective.
\end{thm}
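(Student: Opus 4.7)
\medskip
Following the philosophy announced in the abstract, the natural approach is to exhibit a retraction of $\blank \times [\Dirac_N]$ via a slant product. The plan is to construct an equivariant slant product of the form
\[
\//\colon \Strg^{G\times H}_{\ast+n}(\ucov M \times \ucov N) \times \K_{1-n}^H(\sHigCorRed \ucov N) \to \Strg^G_\ast(\ucov M),
\]
and then produce a class $\sigma \in \K_{1-n}^H(\sHigCorRed \ucov N)$ with the property that $x \mapsto (x \times [\Dirac_N]) \// \sigma$ is a nonzero multiple of the identity on $\Strg^G_\ast(\ucov M)$.

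\medskip
The first key ingredient will be a \emph{projection formula}
\[
(x \times y) \// \sigma \;=\; \bigl\langle \mu^\ast(\sigma),\, y \bigr\rangle \cdot x \qquad \text{for } y \in \K^H_n(\ucov N),
\]
where the bracket denotes the Kronecker pairing between equivariant locally finite K-homology of $\ucov N$ and the K-cohomology class hit by the co-assembly map. Such a formula should be formal given a more general compatibility between the slant product on the whole Higson--Roe sequence and external products on the K-homological side, to be verified using the machinery developed later in the paper.

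\medskip
The second ingredient is the class $\sigma$, which will come from the stably hypereuclidean assumption. By hypothesis there exists a proper coarse map $f\colon \ucov N \times \R^k \to \R^{n+k}$ of (possibly rationally) nonzero degree $d$. Since $H$ acts freely and cocompactly on $\ucov N$, the equivariant and non-equivariant coarse invariants of $\ucov N$ essentially determine each other, so it suffices to work non-equivariantly on $\ucov N \times \R^k$. For Euclidean space the co-assembly map is known, à la Emerson--Meyer, to detect the fundamental K-cohomology generator: there is a canonical class $\sigma_0 \in \K_{1-n-k}(\sHigCorRed \R^{n+k})$ pairing to $1$ with $[\Dirac_{\R^{n+k}}]$. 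Pulling $\sigma_0$ back along $f$ via the contravariant coarse functoriality of the stable Higson corona and then de-suspending using Bott periodicity in the $\R^k$-direction produces the required $\sigma \in \K_{1-n}^H(\sHigCorRed \ucov N)$. Combining the degree identity $f_\ast[\Dirac_{\ucov N \times \R^k}] = d \cdot [\Dirac_{\R^{n+k}}]$ with naturality of the pairing identifies $\langle \mu^\ast(\sigma), [\Dirac_N]\rangle$ with $d$, which is nonzero.

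\medskip
The real work is not this formal computation but the underlying construction: setting up the slant product \emph{on the full Higson--Roe exact sequence}, not merely on K-theory of Roe algebras, propagating the operation through localization algebras, and verifying the projection formula at the level of analytic structure classes by tracking external product classes through their Paschke-type definition. I expect this to be the main obstacle. Once that machinery is in place, the theorem falls out: the splitting is integral when $|d|=1$ (the stably hypereuclidean case) and rational when $d$ is merely nonzero (the rationally stably hypereuclidean case).
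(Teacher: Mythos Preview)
Your proposal is correct and is precisely the strategy the paper develops to prove its \emph{generalization} of this theorem (\cref{intro_theorem_injectivity_higson_essential}, proved as \cref{cor_application_of_higson_essential}). One clarification: the paper does not give its own proof of \cref{thm43re} as stated; it simply cites it from \cite{zeidler_secondary}. The original argument there proceeds differently, via Mayer--Vietoris/suspension techniques specific to maps into Euclidean space rather than through the stable Higson corona.

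What you outline---construct the equivariant slant product on the structure group, verify the projection formula $(x\times y)/\sigma = \langle y,\mu^\ast\sigma\rangle\cdot x$ (this is \cref{cor_equivariant_composition_is_pairing}), and produce $\sigma$ by pulling back the Bott class along the coarse map $f$ and desuspending (this is \cref{hypereuclideanImpliesCoassemblyLift})---is exactly the paper's route. Two small points: the equivariant input is written in the paper as $\K_{1-n}(\sHigCorRed \ucov N \rtimes_\mu H)$ rather than $\K^H_{1-n}(\sHigCorRed \ucov N)$, and the passage from the non-equivariant class to the equivariant one is handled via the canonical inclusion $\iota\colon \sHigCorRed \ucov N \to \sHigCorRed \ucov N \rtimes_\mu H$ together with \cref{lem_compatibility_equiv_nonequiv}, rather than by an informal ``equivariant and non-equivariant coarse invariants determine each other'' argument. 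Otherwise your sketch matches the paper's machinery line for line.
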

Here we implicitly used an external product which mixes an element of the structure group and an element of \(\K\)-homology to produce an element of the structure group of the product.
This is a well-known product construction which we revisit in \cref{sec:crossProducts}.

Next, recall the notion of a (stably) hypereuclidean manifold.
This goes back to \citeauthor{gromov_lawson_complete}~\cite{gromov_lawson_complete}.
\begin{defn}\label{defn_stably_hypereuclidean_noncompact}
A complete oriented Riemannian manifold \(X\) of dimension \(m\) is called \emph{stably hypereuclidean} if for some $k \in \IN$, the product $X \times \IR^k$ admits a proper Lipschitz map to Euclidean space $\IR^{m+k}$ of degree~$1$.
If the latter condition is relaxed to merely non-zero degree, it is called \emph{rationally stably hypereuclidean}.
\end{defn}

The assumption of being (rationally) stably hypereuclidean is quite general.
Dranishnikov \cite{dranish_geomdedicata} proved that the universal cover of any closed aspherical manifold whose fundamental group has finite asymptotic dimension is stably hypereuclidean.
Further, there is currently no known example of an aspherical closed manifold whose universal covering is not (rationally) stably hypereuclidean.

This notion is closely related to the strong Novikov conjecture.
Indeed, if the universal covering \(\ucov{M}\) of a closed \spinC-manifold \(M\) is rationally hypereuclidean, then the higher index class \(\Ind_G([\Dirac_M]) \in \K_\ast(\CstarRed G)\) is non-zero.
In fact, since \(\R^n\) satisfies the coarse Baum--Connes conjecture, the \emph{coarse index} \(\Ind([\Dirac_{\ucov{M}}]) \in \K_\ast(\Roe \ucov{M})\) does not vanish.
The canonical forgetful map \(\K_\ast(\CstarRed G) = \K_\ast(\Roe[G] \ucov{M}) \to \K_\ast(\Roe(\ucov{M}))\) takes the higher index to the coarse index and so this also proves non-vanishing of former.
Furthermore, it turns out that the conclusion of \cref{thm43re} in itself implies non-vanishing of the higher index class:

\begin{thm}[see \cref{thm_InjImpliesNonVanish}]\label{intro_thm_InjImpliesNonVanish}
Let $z \in \K_n(N)$ be such that for every closed manifold $M$ the map $\Strg_\ast^{G}(\ucov{M}) \to \Strg_{\ast+n}^{G \times H}(\ucov{M} \times \ucov{N})$, \(x \mapsto x \times z\), where \(G = \pi_1 M\) and \(H = \pi_1 N\), is rationally injective.
Then $\Ind_H(z) \in \K_n(\CstarRed H)$ is rationally non-zero.
\end{thm}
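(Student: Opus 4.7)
The plan is to prove the contrapositive: assume that \(\Ind_H(z)\) vanishes rationally in \(\K_n(\CstarRed H)\), and construct a closed manifold \(M\) together with a class \(x \in \Strg_1^G(\ucov M) \otimes \Q\) such that \(x \neq 0\) but \(x \times z = 0\), contradicting the injectivity hypothesis.

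The central tool will be a Leibniz-type compatibility of the exterior product with the Higson--Roe boundary: multiplication by \(z\) on the \(\Strg\)- and \(\K_\ast(M)\)-terms, combined with multiplication by \(\Ind_H(z)\) on the \(\K_\ast(\CstarRed G)\)-terms, should define a morphism from the Higson--Roe sequence of \((G, \ucov M)\) to that of \((G \times H, \ucov M \times \ucov N)\). In particular, I will use that the square
\[
\begin{tikzcd}
\K_{\ast+1}(\CstarRed G) \arrow[r, "\partial"] \arrow[d, "{\cdot\, \times\, \Ind_H(z)}"'] & \Strg_\ast^G(\ucov M) \arrow[d, "{\cdot\, \times\, z}"] \\
\K_{\ast+n+1}(\CstarRed(G\times H)) \arrow[r, "\partial"] & \Strg_{\ast+n}^{G\times H}(\ucov M \times \ucov N)
\end{tikzcd}
\]
commutes. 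Under the assumption \(\Ind_H(z) = 0\) rationally, the left-hand vertical arrow is rationally zero, so by commutativity every class of the form \(\partial \alpha \in \Strg_\ast^G(\ucov M)\) is annihilated by \(\cdot \times z\). It therefore suffices to exhibit one closed manifold \(M\) for which \(\partial\) has non-zero rational image.

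For this I would take any closed manifold whose fundamental group \(G\) is finite and non-trivial, e.g.\ \(M = \R P^m\) with \(G = \Z/2\). Since finite groups are rationally acyclic, \(\K_0(BG) \otimes \Q \cong \Q\), so the image of assembly \(\Ind_G \colon \K_0(M) \to \K_0(\CstarRed G) = R(G)\), which factors through \(\K_0(BG)\), has rational rank at most one. On the other hand \(R(G) \otimes \Q\) has rank equal to the number of conjugacy classes of \(G\), which is at least two. Hence assembly is not rationally surjective in degree zero, and exactness of~\labelcref{eq_analytic_surgery_sequence} forces \(\partial \colon \K_0(\CstarRed G) \otimes \Q \to \Strg_1^G(\ucov M) \otimes \Q\) to have non-zero image. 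Choosing \(\alpha \in \K_0(\CstarRed G)\) whose class in \(R(G)/\im\Ind_G\) is rationally non-zero and setting \(x := \partial \alpha\) completes the construction.

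The main obstacle---and the step that requires genuine input from the paper---is the commutativity of the displayed square, i.e.\ the Leibniz-type compatibility of the external product with the boundary map \(\partial\). This is precisely the kind of feature supplied by the product constructions of this paper. Once this compatibility is in place, the remaining argument reduces to a diagram chase and the elementary rational computation of the assembly map for finite groups.
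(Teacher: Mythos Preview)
Your proposal is correct and follows essentially the same approach as the paper's own proof: contraposition, the commutative square relating \(\partial\) and the external products (which the paper constructs in \cref{subsec_crossProducts_equiv}), and the choice of a closed manifold with finite non-trivial fundamental group together with the rank comparison between \(R(G)\otimes\Q\) and \(\RK_0(\Bfree G)\otimes\Q\). The only cosmetic difference is that the paper phrases the last step via the factorization \(\partial_G = (\Strg_\ast^G(\ucov M)\to\RStrg_\ast^G(\Efree G))\circ\partial_M\), whereas you argue directly that \(\Ind_G\colon\K_0(M)\to R(G)\) factors through \(\K_0(\Bfree G)\); both lead to the same conclusion.
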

	The analogous integral version of \cref{thm_InjImpliesNonVanish} also holds.
	In fact, it is formally weaker because injectivity implies rational injectivity and rational non-vanishing implies non-vanishing.

These considerations suggested to us that there ought to be analytic conditions related to the coarse Baum--Connes conjecture and the strong Novikov conjecture that are weaker than hypereuclidean and would imply the conclusion \cref{thm43re}.
The quest for such hypotheses lead us to to the second main player of the present paper---the \emph{coarse co-assembly map}
\[
\mu^*\colon \K_{1-*}(\sHigCorRed X)\to \K^{*}(X)
\]
of \citeauthor{EmeMey}~\cite{EmeMey}, where \(\sHigCorRed X\) denotes the stable Higson corona of \(X\) and \(\K^{*}(X)\) is the compactly supported \(\K\)-theory of \(X\).
The coarse co-assembly map is dual to the coarse index map (or \enquote{coarse assembly map})
\[
  \mu = \Ind \colon \K_\ast(X) \to \K_\ast(\Roe X)
\]
via a pairing between the \(\K\)-theories of the stable Higson corona and the Roe algebra.
In particular, a \(\K\)-homology class which pairs non-trivially with a \(\K\)-theory class in the image of \(\mu^\ast\) does not lie in the kernel of \(\Ind\).
Our first main result extends \cref{thm43re} to such classes.
To formulate this, we introduce the following new property.
\begin{defn}\label{defn_Higson_essential_intro}
  We say that a complete Riemannian \spinC-manifold \(X\) of dimension \(m\) is (rationally) \emph{Higson-essential} if there exists \(\vartheta \in \K_{1-m}(\sHigCorRed X)\) such that \(\langle [\Dirac_X], \mu^\ast(\vartheta) \rangle = 1\) (\(\neq 0\), respectively), where \([\Dirac_X] \in \K_m(X)\) denotes the \(\K\)-homological fundamental class of the \spinC-structure.
\end{defn}
This notion is reminiscent of coarse largeness properties à la \citeauthor{BrunnbauerHanke}~\cite{BrunnbauerHanke}.
In fact, we will see that \parensup{rationally} \emph{coarsely stably hypereuclidean} \spinC-manifolds are (rationally) Higson-essential (see~\cref{hypereuclideanImpliesCoassemblyLift}).
In particular,  this applies to (rationally) stably hypereuclidean manifolds.

Furthermore, if the co-assembly map \(\mu^*\colon \K_{1-\ast}(\sHigCorRed X)\to \K^{\ast}(X)\) is surjective, then a \spinC-manifold \(X\) is automatically Higson-essential.
If \(X = \ucov{M}\) is the universal covering of a closed \emph{aspherical} manifold, then this co-assembly map is in fact known to be an isomorphism for a very broad class of examples.
For instance, if $\pi_1 M$ is coarsely embeddable into a Hilbert space \cite[Theorem~9.2]{EmeMey} or more generally if \(\pi_1 M\) has a \(\gamma\)-element~\cite[Corollary~34]{EM_descent}, or if it admits an expanding and coherent combing \cite[Theorem~5.10]{engel_wulff}.

Our first main result generalizes \cref{thm43re} from stably hypereuclidean to Higson-essential.
Start with a more general version which applies to non-compact manifolds.
\begin{thm}[{see \cref{cor_application_of_higson_essential}}]\label{intro_theorem_injectivity_higson_essential}
  Let \(Y\) be an \(n\)-dimensional complete \spinC-manifold of continuously bounded geometry\footnote{see \cref{defn:boundedGeometry}.\ref{defn_bounded_geom_uniform}}.
  Suppose that \(Y\) is \parensup{rationally} Higson-essential.
  Assume furthermore that \(Y\) is endowed with a proper action of a countable discrete group \(H\) which preserves the \spinC-structure.

	Then for every proper metric space \(X\) which is endowed with a proper action of a countable discrete group \(G\), the external product maps
  \[\Strg_\ast^{G}(X) \to \Strg_{\ast+n}^{G \times H}(X \times Y), \quad x \mapsto x \times [\Dirac_Y]\]
  and
  \[
  \K_\ast(\Roe[G] X) \to \K_{\ast+n}(\Roe[G \times H]{(X \times Y)}), \quad x \mapsto x \times \Ind([\Dirac_Y])
  \]
  are \parensup{rationally} split-injective.
\end{thm}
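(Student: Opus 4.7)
The strategy is to exhibit a one-sided inverse for the external product map via a slant product against a class detecting \([\Dirac_Y]\). Using the Higson-essential hypothesis, fix \(\vartheta \in \K_{1-n}(\sHigCorRed Y)\) such that \(\langle [\Dirac_Y], \mu^*(\vartheta) \rangle = 1\) (respectively non-zero, in the rational case). The main prerequisite, announced in the abstract and constructed in the body of the paper, is a natural (forgetful-)equivariant slant pairing
\begin{align*}
  \Strg_{*+n}^{G \times H}(X \times Y) \times \K_{1-n}(\sHigCorRed Y) &\to \Strg_*^G(X), \\
  \K_{*+n}\bigl(\Roe[G \times H]{(X \times Y)}\bigr) \times \K_{1-n}(\sHigCorRed Y) &\to \K_*(\Roe[G] X),
\end{align*}
whose target forgets the (merely proper) \(H\)-action on \(Y\). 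Given such a pairing, the candidate splitting of the map \(x \mapsto x \times [\Dirac_Y]\) is simply \(w \mapsto w \// \vartheta\).

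The heart of the argument is the compatibility formula
\[
  (x \times [\Dirac_Y]) \// \vartheta \;=\; \bigl\langle [\Dirac_Y], \mu^*(\vartheta) \bigr\rangle \cdot x,
\]
expressing that the slant product is dual to the exterior product via the evaluation pairing between \(\K\)-homology and the image of the co-assembly map. Once this identity is known in both the Roe-algebra and the structure-group settings, the Higson-essential choice of \(\vartheta\) forces the right-hand side to equal \(x\) (respectively a non-zero integer multiple of \(x\)), giving the desired (rational) split-injectivity simultaneously for \(\K_*(\Roe[G] X)\) and \(\Strg_*^G(X)\). Because the slant product is natural with respect to the boundary maps in \labelcref{eq:HigsonRoe}, the splittings assemble into a splitting of the entire external product of Higson--Roe sequences (so one also recovers the analogous statement on \(\K_*^G(X)\)).

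The principal obstacles are twofold. First, one must actually construct the slant product on the full Higson--Roe sequence in a sufficiently natural way: this demands explicit operator-theoretic models mixing Roe-algebra elements on \(X \times Y\) with asymptotically commuting multipliers representing classes in \(\sHigCorRed Y\), and the continuously bounded geometry hypothesis on \(Y\) enters precisely to guarantee the requisite \(\ast\)\nobreakdash-homomorphisms, functional-analytic norm estimates, and compatibility with the boundary operators. Second, one must verify the compatibility formula itself, which is a Bott-duality-type computation that has to be carried out compatibly across all three terms of \labelcref{eq:HigsonRoe} and in the presence of the \(G\)-action on \(X\). I expect the construction side to be the more delicate of the two: once the slant product and its functoriality are in place, the pairing formula with external products should reduce to an algebraic manipulation with explicit cycles, and the deduction of the stated theorem from \cref{defn_Higson_essential_intro} becomes essentially immediate.
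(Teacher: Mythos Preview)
Your proposal is correct and follows essentially the same approach as the paper. The only refinement worth noting is how the paper makes your ``forgetful-equivariant'' slant product precise: rather than constructing a pairing with \(\K_{1-n}(\sHigCorRed Y)\) directly, it builds the fully equivariant slant product with \(\K_{1-n}(\sHigCorRed Y \rtimes_\mu H)\), pushes \(\vartheta\) forward along the canonical inclusion \(\iota\colon \sHigCorRed Y \to \sHigCorRed Y \rtimes_\mu H\), and then invokes a separate compatibility lemma to identify the equivariant pairing \(\langle [\Dirac_Y], \mu_H^\ast \iota_\ast \vartheta\rangle\) with the non-equivariant pairing \(\langle \ForgetEquiv_\ast[\Dirac_Y], \mu^\ast \vartheta\rangle\) appearing in the Higson-essential hypothesis.
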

Specializing to universal coverings of closed manifolds yields the desired generalization of \cref{thm43re}:
\begin{cor}
  Let \(N\) be a closed \spinC-manifold of dimension $n$ such that its universal covering $\ucov{N}$ is \parensup{rationally} Higson-essential.
  Then for every closed manifold $M$ the maps
  \[\Strg_\ast^{G}(\ucov{M}) \to \Strg_{\ast+n}^{G \times H}(\ucov{M} \times \ucov{N}), \quad x \mapsto x \times [\Dirac_N],\]
  and
  \[
  \K_\ast(\CstarRed(G)) \to \K_{\ast+n}(\CstarRed (G \times H)), \quad x \mapsto x \times \Ind_H([\Dirac_N])
  \]
  where \(G = \pi_1 M\) and \(H = \pi_1 N\), are \parensup{rationally} split-injective.
\end{cor}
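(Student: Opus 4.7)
The plan is to derive this corollary directly from \cref{intro_theorem_injectivity_higson_essential} by specializing to \(X = \ucov{M}\) with the deck action of \(G = \pi_1 M\) and \(Y = \ucov{N}\) with the deck action of \(H = \pi_1 N\). The task then reduces to verifying the hypotheses of that theorem and translating the conclusion for the equivariant Roe algebras into the language of reduced group \textCstar-algebras.

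First I would verify the hypotheses. Since \(M\) is closed, \(\ucov{M}\) is a proper metric space carrying a free, properly discontinuous, and hence proper, \(G\)-action. Likewise, \(\ucov{N}\) equipped with the pullback Riemannian metric is a complete \spinC-manifold; compactness of \(N\) ensures that the metric together with all its covariant derivatives is uniformly bounded, so \(\ucov{N}\) has continuously bounded geometry. The deck action of \(H\) is proper and preserves the \spinC-structure by construction, while the (rational) Higson-essentialness of \(\ucov{N}\) is the standing hypothesis. Hence \cref{intro_theorem_injectivity_higson_essential} applies, yielding (rational) split-injectivity of the cross product map \(\Strg_\ast^{G}(\ucov{M}) \to \Strg_{\ast+n}^{G \times H}(\ucov{M} \times \ucov{N})\) by \([\Dirac_{\ucov{N}}]\). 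Under the canonical identification \(\K_\ast^H(\ucov{N}) \cong \K_\ast(N)\) this class corresponds to \([\Dirac_N]\), which yields the first claim.

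For the second claim I would invoke the Morita equivalences recalled at the start of the introduction: since the \(G\)-action on \(\ucov{M}\) and the \((G\times H)\)-action on \(\ucov{M}\times \ucov{N}\) are both cocompact, the canonical Morita equivalences \(\Roe[G]\ucov{M} \sim \CstarRed G\) and \(\Roe[G\times H](\ucov{M} \times \ucov{N}) \sim \CstarRed(G\times H)\) induce isomorphisms on \(\K\)-theory that intertwine the coarse index \(\Ind([\Dirac_{\ucov{N}}])\) with the higher index \(\Ind_H([\Dirac_N])\). Since these identifications are natural with respect to the external product construction, the second map of \cref{intro_theorem_injectivity_higson_essential} matches the asserted map \(\K_\ast(\CstarRed G) \to \K_{\ast+n}(\CstarRed(G\times H))\) of the corollary, and split-injectivity transfers. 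I do not foresee any serious obstacle: the corollary is essentially a dictionary translation of the noncompact statement, and the only verifications—bounded geometry of \(\ucov{N}\) and compatibility of the cocompact Morita equivalence with external products—are standard facts already invoked elsewhere in the paper.
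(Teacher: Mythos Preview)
Your proposal is correct and follows exactly the route the paper takes: the corollary is stated immediately after \cref{intro_theorem_injectivity_higson_essential} as a direct specialization with no separate proof, and your argument simply spells out the routine verifications (continuously bounded geometry of \(\ucov N\), the identification \(\K_\ast^H(\ucov N)\cong\K_\ast(N)\), and the Morita equivalences \(\Roe[G]\ucov M\sim\CstarRed G\) for cocompact actions) that the paper leaves implicit.
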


The existence of a \(\gamma\)-element has a far stronger consequences than this.
Traditionally, this concept appeared in proofs of the strong Novikov conjecture.
Indeed, it implies injectivity of the entire equivariant index map---not just non-vanishing of the index of the fundamental class.
Using an equivariant version of the co-assembly map it also enables us to prove a stronger result in the case of aspherical complexes.

\begin{thm}[see \cref{thm_struct_injective_gamma}]\label{intro_thm_struct_injective_gamma}
Let $N$ be a finite aspherical complex, and assume that $H = \pi_1 N$ has a $\gamma$-element.

Then for every proper metric space \(X\) endowed with a proper action of a countable discrete group \(G\), the external product maps
\[
\Strg_m^{G}(X) \otimes \K_{n}(N) \to \Strg_{m+n}^{G \times H}(X \times \ucov{N})
\]
and
\[
\K_m(\Roe[G] X) \otimes \K_{n}(N) \to \K_{m+n}(\Roe[G \times H](X \times \ucov{N}))
\]
 are rationally injective for each \(m, n \in \Z\).
\end{thm}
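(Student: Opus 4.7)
The plan is to exhibit a rational left inverse to each of these external product maps built from (equivariant) slant products, using the fact that a $\gamma$-element makes the equivariant coarse co-assembly map a rational isomorphism. Since $N$ is a finite aspherical complex with $\pi_1 N = H$, the universal cover $\ucov N$ is a model for $\mathrm{E}H$, so $\K_\ast^H(\ucov N) \cong \K_\ast(N)$ and $\K^\ast_H(\ucov N) \cong \K^\ast(N)$. Both groups are finitely generated, and rationally the Kronecker pairing
\[
\langle \blank, \blank \rangle \colon \K_\ast(N) \otimes \K^\ast(N) \to \Z
\]
is non-degenerate (universal coefficient theorem for finite CW complexes).

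Next I would invoke the equivariant slant product on the Higson--Roe sequence, constructed earlier in the paper, in the form
\[
\// \colon \Strg_{m+n}^{G \times H}(X \times \ucov N) \times \K_{1-n}^{H}(\sHigCorRed \ucov N) \to \Strg_m^{G}(X),
\]
together with its analogue on $\K_\ast$ of the equivariant Roe algebras. The key compatibility to verify (or cite from the slant-product section) is the \emph{reciprocity formula}
\[
(\xi \times z) \// \vartheta \;=\; \langle z, \mu^\ast(\vartheta)\rangle \cdot \xi
\quad \text{for } \xi \in \Strg_m^G(X),\ z \in \K_n^H(\ucov N),\ \vartheta \in \K_{1-n}^H(\sHigCorRed \ucov N),
\]
which expresses that slanting the external product by $\vartheta$ reduces to pairing the K-homology class $z$ with the K-theory class $\mu^\ast(\vartheta)$, and identically for the Roe-algebra version. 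This compatibility is the main technical content and is where I expect the bulk of the work to lie, as it requires chasing through the definitions of the external and slant products on the level of Roe-algebra models and matching them with the equivariant coarse co-assembly map.

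Now invoke the hypothesis that $H$ admits a $\gamma$-element, which by the paper's earlier discussion (following Emerson--Meyer) implies that the equivariant coarse co-assembly map
\[
\mu^{\ast}_H \colon \K_{1-\ast}^{H}(\sHigCorRed \ucov N) \to \K^{\ast}_H(\ucov N) \cong \K^\ast(N)
\]
is a rational isomorphism. Choose a $\Q$-basis $z_1,\dots,z_r$ of $\K_n(N) \otimes \Q$ and a dual basis $\eta_1,\dots,\eta_r$ of $\K^n(N) \otimes \Q$ with respect to the Kronecker pairing; lift each $\eta_j$ under the rational isomorphism $\mu_H^\ast$ to obtain $\vartheta_j \in \K_{1-n}^{H}(\sHigCorRed \ucov N) \otimes \Q$ with $\langle z_i, \mu^\ast(\vartheta_j)\rangle = \delta_{ij}$.

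Finally, assume $\sum_i \xi_i \otimes z_i$ maps to zero under the external product map, with $\xi_i \in \Strg_m^G(X) \otimes \Q$. Applying the slant product with $\vartheta_j$ to $\sum_i \xi_i \times z_i = 0$ and using the reciprocity formula yields $\xi_j = 0$ for each $j$, hence $\sum_i \xi_i \otimes z_i = 0$ in $\Strg_m^G(X) \otimes \K_n(N) \otimes \Q$. This establishes rational injectivity of the first map. The argument for
\[
\K_m(\Roe[G] X) \otimes \K_n(N) \to \K_{m+n}(\Roe[G \times H](X \times \ucov N))
\]
is verbatim identical, replacing $\Strg$ with $\K_\ast$ of the equivariant Roe algebras throughout and using the corresponding slant product and reciprocity formula on that part of the Higson--Roe sequence.
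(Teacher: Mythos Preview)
Your proposal is correct and follows essentially the same route as the paper: reduce to rational surjectivity of the equivariant coarse co-assembly map $\mu^*_H\colon \K_{1-\ast}(\sHigCorRed \ucov N \rtimes_\mu H)\to \K^\ast_H(\ucov N)$ (which the $\gamma$-element guarantees by \cref{cor_equiv_coarse_coass_surj}---in fact integrally surjective, not merely a rational isomorphism), then use the reciprocity formula $( \xi\times z)/\vartheta=\langle z,\mu_H^*(\vartheta)\rangle\cdot\xi$ of \cref{cor_equivariant_composition_is_pairing} together with non-degeneracy of the pairing on $\K_*(N)$ to build a rational left inverse from slant products with lifted dual classes. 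The only cosmetic difference is that the paper scales the basis so that the lifts $\vartheta_i$ are integral rather than working in $\K_{1-n}(\sHigCorRed \ucov N\rtimes_\mu H)\otimes\Q$.
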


\begin{cor}\label{intro_cor_injective_gamma}
  In the setup of \cref{intro_thm_struct_injective_gamma}, where \(X = \ucov{M}\) is the universal covering of a closed manifold \(M\) and \(G = \pi_1 M\), the external product maps
  \begin{equation}
  \label{eq_ext_prod_ana_struct}
  \Strg_m^{G}(\ucov{M}) \otimes \K_{n}(N) \to \Strg_{m+n}^{G \times H}(\ucov{M} \times \ucov{N})
  \end{equation}
  and
  \begin{equation}
    \K_m(\CstarRed G) \otimes \K_{n}(N) \to \K_{m+n}(\CstarRed (G \times H)), \label{eq_ext_prod_group_alg}
  \end{equation}
  are rationally injective for each \(m, n \in \Z\).
\end{cor}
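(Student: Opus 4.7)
The plan is to deduce the corollary directly from \cref{intro_thm_struct_injective_gamma} by specializing the proper metric space \(X\) to the universal cover \(\ucov{M}\) equipped with the free and cocompact action of \(G = \pi_1 M\), and then translating between Roe algebras and reduced group \Cstar-algebras via Morita equivalence.

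First I would observe that the statement \labelcref{eq_ext_prod_ana_struct} concerning analytic structure groups is nothing more than the first assertion of \cref{intro_thm_struct_injective_gamma} with \(X = \ucov{M}\) and \(G = \pi_1 M\), since these are valid inputs: \(\ucov{M}\) is a proper metric space carrying a proper (in fact free and cocompact) action of the countable discrete group \(G\). No further work is required here.

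For the group-\Cstar-algebra statement \labelcref{eq_ext_prod_group_alg}, I would apply the second assertion of \cref{intro_thm_struct_injective_gamma} to the same \(X = \ucov{M}\), which gives rational injectivity of the map
\[
\K_m(\Roe[G]\ucov{M}) \otimes \K_n(N) \to \K_{m+n}\bigl(\Roe[G \times H](\ucov{M} \times \ucov{N})\bigr).
\]
Now, since \(M\) is a closed manifold, the \(G\)-action on \(\ucov{M}\) is cocompact, and since \(N\) is a finite aspherical complex, \(\ucov{N} / H = N\) is compact, so the diagonal \((G \times H)\)-action on \(\ucov{M} \times \ucov{N}\) is again cocompact. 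As recalled in the introduction, for any proper metric space with a proper cocompact action of a countable discrete group \(\Gamma\), the equivariant Roe algebra is canonically Morita equivalent to \(\CstarRed \Gamma\), inducing a natural isomorphism on K-theory. Applied with \(\Gamma = G\) and with \(\Gamma = G \times H\), this identifies \(\K_m(\Roe[G]\ucov{M}) \cong \K_m(\CstarRed G)\) and \(\K_{m+n}(\Roe[G\times H](\ucov{M}\times\ucov{N})) \cong \K_{m+n}(\CstarRed(G\times H))\).

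The one point that must be checked carefully—the only place the proof is not completely mechanical—is the compatibility of these Morita equivalences with the external product, i.e.\ that the diagram
\[
\begin{tikzcd}
\K_m(\Roe[G]\ucov{M}) \otimes \K_n(N) \arrow[r, "\times"] \arrow[d, "\cong"] & \K_{m+n}\bigl(\Roe[G\times H](\ucov{M}\times\ucov{N})\bigr) \arrow[d, "\cong"] \\
\K_m(\CstarRed G) \otimes \K_n(N) \arrow[r] & \K_{m+n}(\CstarRed(G\times H))
\end{tikzcd}
\]
commutes, where the bottom horizontal map sends \(x \otimes z\) to \(x \times \Ind_H(z)\). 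This reduces to naturality of the external product on Roe-algebra K-theory with respect to the Morita equivalence \(\Roe[H]\ucov{N} \simeq \CstarRed H\), combined with the fact that under this identification the coarse index \(\Ind([\Dirac_N])\) in \(\K_n(\Roe[H]\ucov{N})\) corresponds to the higher index \(\Ind_H([\Dirac_N])\) in \(\K_n(\CstarRed H)\). Both are standard facts about the analytic assembly map for cocompact actions, so the main obstacle is purely bookkeeping rather than any genuine new input; once the diagram commutes, the rational injectivity of the bottom row follows from that of the top row established by \cref{intro_thm_struct_injective_gamma}.
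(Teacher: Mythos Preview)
Your proposal is correct and follows essentially the same approach as the paper, which treats the corollary as an immediate specialization of \cref{intro_thm_struct_injective_gamma} via the Morita equivalence \(\Roe[G]\ucov{M} \simeq \CstarRed G\) for cocompact actions (stated in the introduction). Your explicit discussion of the compatibility of the external product with these Morita equivalences is more careful than the paper, which simply leaves the corollary without proof, but the underlying argument is the same.
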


 One might conjecture that in the case of an aspherical manifold whose universal covering is stably hypereuclidean, the rational version of \cref{thm43re} should also follow from \cref{intro_cor_injective_gamma}.
 This would be the case if the following open question had a positive answer.
\begin{question}
Let $M$ be a closed aspherical manifold. If $M$ is stably hypereuclidean, does the fundamental group $\pi_1 M$ admit a $\gamma$-element?
\end{question}

If we assume that $\pi_1 M$ does not only have a $\gamma$-element, but that $\pi_1 M$ even satisfies the Baum--Connes conjecture, then the rational injectivity of~\eqref{eq_ext_prod_group_alg} upgrades to rational bijectivity due to the Künneth formula \cite{Kuenneth_BC}. Our next result is that under the assumption of Baum--Connes we also get such an upgrade for~\eqref{eq_ext_prod_ana_struct}.
To formulate this result in its full generality, we introduce the following notation for the \emph{representable \(\K\)-homology} and its counterpart for the structure group:
\begin{align}
  \RK_\ast^G(X) &\coloneqq \varinjlim_K \K_\ast^G(K), \label{eq:representableK}\\
  \RStrg_{\ast}^G(X) &\coloneqq \varinjlim_K \Strg^G_\ast(K),\label{eq:representableStrg}
\end{align}
where the colimits run over \(G\)-cocompact subsets \(K \subseteq X\).
Note that \labelcref{eq:HigsonRoe} then induces a sequence
\begin{equation}
  \cdots \to \K_{\ast+1}(\CstarRed G) \xrightarrow{\partial} \RStrg_\ast^G(X) \to \RK_\ast^G(X) \xrightarrow{\Ind} \K_\ast(\CstarRed G) \to \cdots.
  \label{eq:HigsonRoeRepresentable}
\end{equation}

\begin{thm}[\cref{thm_kuenneth}]
\label{thm_intro_kuenneth_full}
Let $H$ be a countable discrete group.
Assume that $H$ is torsion-free and satisfies the Baum--Connes conjecture for all coefficient \textCstar-algebras with trivial $H$-action.\footnote{For example, $H$ could be a-T-menable \cite{higson_kasparov} or it could be hyperbolic \cite{lafforgue_BC,puschnigg_BC}.}

Then for any simplicial complex $M$, the external product map
\[ \RStrg_*^{G}(\ucov{M}) \otimes \RK_*(\Bfree H) \to \RStrg_*^{G \times H}(\ucov{M} \times \Efree H),\]
 where \(G = \pi_1 M\), is rationally an isomorphism. 
 If $\RK_*(\Bfree H)$ is torsion-free, then it is integrally an isomorphism.
\end{thm}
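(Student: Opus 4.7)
The plan is to apply the five-lemma to the ladder obtained by tensoring the representable Higson--Roe sequence \labelcref{eq:HigsonRoeRepresentable} of $\ucov{M}$ with the graded abelian group $\RK_*(\Bfree H)$ and comparing to the representable Higson--Roe sequence of $\ucov{M}\times\Efree H$. For $\alpha\in\RK_n(\Bfree H)\cong\RK_n^{H}(\Efree H)$ (using that $H$ acts freely on $\Efree H$), the external cross product $x\mapsto x\times\alpha$ gives a degree-$n$ morphism of the entire Higson--Roe sequence of $\ucov{M}$ into that of $\ucov{M}\times\Efree H$. The naturality of this cross product with respect to the boundary morphism of the Higson--Roe sequence, established in the earlier sections of the paper, then yields the commutative ladder
\[
\begin{tikzcd}[column sep=small,row sep=small]
\K_{*+1}(\CstarRed G)\tens\RK_*(\Bfree H)\ar[r]\ar[d] & \RStrg_*^{G}(\ucov{M})\tens\RK_*(\Bfree H)\ar[r]\ar[d] & \RK_*^{G}(\ucov{M})\tens\RK_*(\Bfree H)\ar[r]\ar[d] & \K_*(\CstarRed G)\tens\RK_*(\Bfree H)\ar[d]\\
\K_{*+1}(\CstarRed(G\times H))\ar[r] & \RStrg_*^{G\times H}(\ucov{M}\times\Efree H)\ar[r] & \RK_*^{G\times H}(\ucov{M}\times\Efree H)\ar[r] & \K_*(\CstarRed(G\times H))
\end{tikzcd}
\]
whose vertical arrows send $x\tens\alpha$ to $x\times\alpha$.

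Step one: check exactness of the top row. The sequence \labelcref{eq:HigsonRoeRepresentable} is exact, and tensoring an exact sequence of abelian groups with a flat abelian group preserves exactness. Rationally, $\RK_*(\Bfree H)\tens\Q$ is a $\Q$-vector space and hence flat. Integrally, the torsion-freeness hypothesis makes $\RK_*(\Bfree H)$ a flat $\Z$-module.

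Step two: identify the outer comparison maps as classical Künneth maps and show they are isomorphisms. Since $G$ acts freely on $\ucov{M}$ and $H$ acts freely on $\Efree H$, the $\K$-homology comparison becomes the topological cross product $\RK_*(M)\tens\RK_*(\Bfree H)\to\RK_*(M\times\Bfree H)$, which fits into the standard Künneth short exact sequence in representable $\K$-homology; its $\mathrm{Tor}$-term vanishes rationally and, under torsion-freeness of $\RK_*(\Bfree H)$, integrally. The $\K$-theory comparison maps are the external cross product on $\K$-theory of reduced group $\Cstar$-algebras; the Baum--Connes conjecture for $H$ with trivial coefficients gives $\K_*(\CstarRed H)\cong\RK_*(\Bfree H)$, and together with the Baum--Connes Künneth theorem of \cite{Kuenneth_BC} this makes $\K_*(\CstarRed G)\tens\RK_*(\Bfree H)\to\K_*(\CstarRed(G\times H))$ a rational isomorphism, and an integral isomorphism when $\RK_*(\Bfree H)$ is torsion-free. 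Viewed as a cyclic six-term exact sequence, four of the six comparison maps are now isomorphisms, and a direct application of the five-lemma to two overlapping windows forces the two remaining structure-group comparison maps to be isomorphisms as well.

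The main obstacle is the compatibility verification: one must confirm that the external product on the Higson--Roe sequence, as constructed in the earlier sections, reduces under the Morita equivalence $\Roe[G]\ucov{M}\simeq\CstarRed G$ (for the $\K$-theory columns) and under the free-action identification $\RK_*^G(\ucov{M})\cong\RK_*(M)$ (for the $\K$-homology column) to the classical external products used in the two Künneth theorems above. This multiplicativity is essentially dictated by the constructions of the external products on the Higson--Roe exact sequence developed earlier in the paper, but tracking it through all the identifications is where most of the bookkeeping lies.
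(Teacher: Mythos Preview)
Your proposal is correct and follows essentially the same approach as the paper: set up the ladder between the tensored Higson--Roe sequence of $\ucov{M}$ and the Higson--Roe sequence of $\ucov{M}\times\Efree H$, verify exactness of the top row via flatness, identify the $\K$-homology and $\K$-theory comparison maps as Künneth maps (the latter using Baum--Connes for $H$ and \cite{Kuenneth_BC}), and conclude by the five-lemma. The paper's proof is slightly terser and does not dwell on the compatibility bookkeeping you flag, but the argument is the same.
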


\subsection{Slant products}
The main technical innovation of our paper is the construction of \emph{slant products} between the various groups which appear in the Higson--Roe sequence and the \(\K\)-theory of stable Higson corona.
The most general of these incorporate proper actions of countable discrete groups on all involved spaces as is needed for the proof of \cref{intro_thm_struct_injective_gamma}.
However, for simplicity, start with an exposition of the non-compact setting ignoring all group actions.
The next theorem and the following properties is a summary of \cref{sec_slant_products}. 

\begin{thm}\label{main_thm_intro_slant}
Let $X$ be a proper metric space, and let $Y$ be a proper metric space of bounded geometry.

For each element $\theta \in \K_{1-q}(\sHigCorRed Y)$ we construct natural slant products~$/\theta$ such that we have a commuting diagram
\begin{equation}
\xymatrix{
\Strg_p(X \times Y) \ar[r] \ar[d]^{/ \theta} & \K_p(X \times Y) \ar[r] \ar[d]^{/ \theta} & \K_p(\Roe(X \times Y)) \ar[d]^{/ \theta} \ar[r]^-{\partial} & \Strg_{p-1}(X \times Y) \ar[d]^{/ \theta}\\
\Strg_{p-q}(X) \ar[r] & \K_{p-q}(X) \ar[r] & \K_{p-q}(\Roe X) \ar[r]^-{\partial} & \Strg_{p-1-q}(X)
}
\label{eq_slant_mainthm_intro}
\end{equation}
and such that the slant products have the properties listed below in \cref{properties_slant}.
\end{thm}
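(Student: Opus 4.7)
The plan is to deduce all four vertical maps of~\eqref{eq_slant_mainthm_intro} from a single coherent \Cstar-algebraic pairing, so that commutativity reduces to naturality of six-term sequences. Given $\theta \in \K_{1-q}(\sHigCorRed Y)$, I view $\theta$ via the defining extension
\[ 0 \to \Cz(Y) \tens \cK \to \sHigComRed Y \to \sHigCorRed Y \to 0 \]
and intend to construct, at the level of \Cstar-algebras, a bilinear pairing
\[ \sHigComRed Y \tens \Roe(X \times Y) \longrightarrow \Roe X \tens \cK, \]
which, combined with the boundary map of this extension, induces the required slant $/\theta\colon \K_p(\Roe(X \times Y)) \to \K_{p-q}(\Roe X)$. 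The construction will exploit that a function $f \in \sHigComRed Y \subseteq \Cb(Y, \cK(\elltwo))$ acts as a multiplier of $\Roe(X\times Y)$ by pointwise multiplication on $L^2(X\times Y)\tens \elltwo$, and that the product with a controlled Roe operator, up to perturbations governed by the Higson variation of~$f$, can be ``traced out'' in the $Y$-direction using the compact-operator values of~$f$ to yield an element of $\Roe X \tens \cK$. Bounded geometry of $Y$ is used to control these traces uniformly.

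The same construction is functorial in the propagation parameter and therefore extends to the localization algebra $\Loc(X \times Y)$. This yields a morphism of the short exact sequence
\[ 0 \to \Roe(X\times Y) \to \Loc(X \times Y) \to \Loc(X \times Y)/\Roe(X \times Y) \to 0 \]
to the corresponding sequence for $X$, shifted in degree by~$q$. Invoking the standard identifications $\Strg_\ast(Z) = \K_{\ast+1}(\Loc(Z)/\Roe(Z))$ and $\K_\ast(Z) = \K_\ast(\Loc Z)$ (via the localization theorem) produces the three leftmost vertical arrows in~\eqref{eq_slant_mainthm_intro}, while the rightmost column comes from the same data cyclically shifted. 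Commutativity is then formal: the horizontal maps in the Higson--Roe sequence arise from the \Cstar-morphisms in the above extension, and \Cstar-algebraic morphisms of extensions always induce commuting diagrams of six-term sequences.

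The main obstacle is a rigorous construction of the \Cstar-algebraic pairing in the first step. The naive product $f\cdot D$ lies only in the multiplier algebra of $\Roe(X\times Y)$, not in $\Roe X \tens \cK$, so making sense of the ``trace in the $Y$-direction'' requires an intermediate algebra of $Y$-averaged controlled operators together with uniform estimates that combine the compact-operator values of the Higson functions with bounded geometry of $Y$. Equally delicate is extending the construction to families with localization parameter $t \in [1,\infty)$ while preserving the appropriate equicontinuity in~$t$; it is precisely this dependence that couples the pairing to the obstruction ideal and hence to the structure group~$\Strg$. Once these points are handled, the vertical arrows assemble into a morphism of Higson--Roe long exact sequences yielding~\eqref{eq_slant_mainthm_intro}, and the naturality and multiplicativity properties listed after the theorem follow by unwinding the explicit \Cstar-algebraic description.
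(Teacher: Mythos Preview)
Your proposal has the right overall shape—build a single \Cstar-level gadget, show it is compatible with the relevant short exact sequences, and let naturality of six-term sequences give commutativity—but two of the key ingredients are wrong, and one of them is the crux of the construction.

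First, the pairing you are aiming for, a $*$-homomorphism $\sHigComRed Y \tensmax \Roe(X\times Y) \to \Roe X \otimes \cK$ obtained by ``tracing out the $Y$-direction'', does not exist. If $S\in\Roe(X\times Y)$ and $f\in\sHigComRed Y$, then $\tilde\rho_Y(f)\circ(S\otimes\id_{\elltwo})$ acts on $H_X\otimes H_Y\otimes\elltwo$; the compact values of $f$ give compactness in $\elltwo$, but $S$ entangles $H_X$ and $H_Y$ and there is no mechanism that produces local compactness in the $X$-variable alone. The paper does \emph{not} try to land in $\Roe X$. Instead it introduces the \Cstar-algebra $\Fipro(\tilde\rho_X)$ generated by all finite-propagation operators on $\tilde H_X$ with respect to the $X$-representation $\tilde\rho_X=\rho_X\otimes\id$, observes that $\Roe(\tilde\rho_X)$ is an ideal in it, and shows (this is where bounded geometry and vanishing variation are used) that the commutator $[\tau(S),\tilde\rho_Y(f)]$ lies in $\Roe(\tilde\rho_X)$. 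This yields a $*$-homomorphism
\[
\Psi\colon \Roe(X\times Y)\tensmax\sHigCorRed Y \longrightarrow \Fipro(\tilde\rho_X)/\Roe(\tilde\rho_X),
\]
and the slant product is the composite of the external K-theory product, $\Psi_*$, and the \emph{boundary map of the ideal inclusion} $\Roe(\tilde\rho_X)\hookrightarrow\Fipro(\tilde\rho_X)$. The boundary of the Higson extension $0\to\Cz(Y,\cK)\to\sHigComRed Y\to\sHigCorRed Y\to 0$ plays no role in the definition; it only reappears later when one compares with the usual K-homology slant product via co-assembly.

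Second, your short exact sequence for the Higson--Roe sequence is backwards. The Roe algebra is the \emph{quotient} of the localization algebra via evaluation at $1$, not a subalgebra: the sequence is $0\to\Locz X\to\Loc X\xrightarrow{\operatorname{ev}_1}\Roe X\to 0$, and $\Strg_*(X)=\K_*(\Locz X)$, not $\K_{*+1}(\Loc X/\Roe X)$. Accordingly the paper builds localization-algebra analogues $\Psi_{\LSym}$ and $\Psi_{\LSym,0}$ of $\Psi$ and then proves commutativity of \eqref{eq_slant_mainthm_intro} by assembling the $3\times 3$ grid with rows $0\to\Locz\to\Loc\to\Roe\to 0$ and columns $0\to\RoePlaceholder\to\mathrm{E}^*_?\to\mathrm{E}^*_?/\RoePlaceholder\to 0$; the outer boundary maps of such a grid commute up to $-1$, which exactly matches the built-in sign $(-1)^p$ in the definition of the slant products. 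Your plan to deduce commutativity from a morphism of extensions is correct in spirit, but it must be run on these sequences, not on the ones you wrote.
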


We recall the relevant definition of bounded geometry at the beginning of \cref{sec_slantconstruction}, and the stable Higson corona $\sHigCorRed Y$ and the corresponding coarse co-assembly map are recalled at the beginning of \cref{sec_slant_products}.
\begin{rem}[Notation]\label{rem_HRPlaceholder}
To state certain results more concisely, we will use the symbol \(\HRPlaceholder\) as a generic placeholder for any constituent of the Higson--Roe sequence.
That is, \(\HRPlaceholder_\ast(X)\) can stand for either \(\Strg_\ast(X)\), \(\K_\ast(X)\) or \(\K_\ast(\Roe X)\).
\end{rem}

\begin{properties}\label{properties_slant}
The slant products in \cref{main_thm_intro_slant} satisfy the following properties:
\begin{myenumi}
\item\label{item_1_mainthm_slant} If $Y$ has continuously bounded geometry, then the slant product
\[
/\theta\colon \K_p(X \times Y) \to \K_{p-q}(X)
\]
is compatible with coarse co-assembly $\mu^\ast\colon \K_{1-q}(\sHigCorRed Y) \to \K^q(Y)$, that is, for all $x \in \K_p(X \times Y)$ we have
\[x /\theta = x /\mu^\ast(\theta),\]
where the slant product on the right hand side is the usual slant product of locally finite $\K$-homology with compactly supported $\K$-theory.

\item\label{item_2_mainthm_slant} For any element $y \in \K_q(\Roe Y)$ the composition\footnote{The construction of the external product $\blank \times y$ is recalled in \cref{sec:crossProducts}.}
\begin{equation*}
\K_p(\Roe X) \xrightarrow{\times y} \K_{p+q}(\Roe(X \times Y)) \xrightarrow{/\theta} \K_p(\Roe X)
\end{equation*}
equals multiplication with $\langle y, \theta\rangle \in \Z$ on $\K_p(\RoeSymbol X)$, where this is the pairing constructed by \citeauthor{EmeMey}~\cite[Section~6]{EmeMey}, possibly up to a sign $(-1)^q$.\footnote{\citeauthor{EmeMey} did not specify their sign convention for the pairing.}

Furthermore, for $y \in \K_q(Y)$ the compositions
\begin{align*}
\K_p(X) \xrightarrow{\times y} \K_{p+q}(X \times Y) \xrightarrow{/\theta} \K_p(X)\\
\Strg_p(X) \xrightarrow{\times y} \Strg_{p+q}(X \times Y) \xrightarrow{/\theta} \K_p(X)
\end{align*}
equal multiplication by $\langle y, \mu^\ast(\theta)\rangle \in \Z$, where this is the usual pairing between locally finite $\K$-homology and compactly supported $\K$-theory.

\item\label{item_3_mainthm_slant} Denote by $\beta \in \K_{1-n}(\sHigCorRed\IR^n)$ the class corresponding to the Bott element of Euclidean space, that is, $\mu^*(\beta) \in \K^n(\R^n) \cong \Z$ is the generator.

Then the slant product $/\beta\colon \HRPlaceholder_{p+n}(X \times \IR^n) \to \HRPlaceholder_p(X)$ is an isomorphism and coincides with \(n\)-fold suspension isomorphism\footnote{Or in other words, the Mayer--Vietoris boundary map associated to the (coarsely) excisive cover \(X \times \R = X \times (-\infty, 0] \cup X \times [0, \infty) \)}.

\item\label{item_4_mainthm_slant} The slant products are compatible with the coarsification and co-coarsification maps\footnote{The co-coarsification maps are sometimes called \enquote{character maps} in the literature.}.

Recall that the coarse assembly map $\mu \colon \K_p(Z) \to \K_p(\RoeSymbol(Z))$ factors~as
\[\K_p(Z) \xrightarrow{\coarsify} \KX_p(Z) \xrightarrow{\mu} \K_p(\RoeSymbol(Z))\]
and that the coarse co-assembly map $\mu^\ast\colon \K_{1-q}(\sHigCorRed Z) \to \K^q(Z)$ as
\[\mu^\ast\colon \K_{1-q}(\sHigCorRed Z) \xrightarrow{\mu^*} \KX^q(Z) \xrightarrow{\coarsify^\ast} \K^q(Z).\]
We have slant products on the coarsification of locally finite $\K$-homology such that the diagram
\[
\xymatrix{
\K_p(X \times Y) \ar[r]^-{\coarsify} \ar[d]^{/ \theta} & \KX_p(X \times Y) \ar[d]^{/ \theta} \ar[r]^-{\mu} & \K_{p}(\RoeSymbol(X \times Y)) \ar[d]^{/ \theta}\\
\K_{p-q}(X) \ar[r]^-{\coarsify} & \KX_{p-q}(X) \ar[r]^-{\mu} & \K_{p-q}(\RoeSymbol X)
}
\]
commutes and such that for $x \in \KX_p(X \times Y)$ we have $x / \theta = x / \mu^*(\theta)$. 
The slant product on the right hand side is between the coarsifications of locally finite $\K$-homology and of compactly supported $\K$-theory (see \cref{defn_coarsified_products}).
\qedhere
\end{myenumi}
\end{properties}
Property~\labelcref{item_1_mainthm_slant} in combination with commutativity of the middle square in the diagram \labelcref{eq_slant_mainthm_intro} translates to the formula
\begin{equation*}
\mu(x/\mu^*(\theta)) = \mu(x)/\theta
\end{equation*}
for all $x \in \K_p(X \times Y)$ and all $\theta \in \K_{1-q}(\sHigCorRed Y)$.

As an example, we mention the following result which can be obtained from the existence of the slant products with the above properties:
\begin{cor}[\cref{cor:conditionsCoAssIso}]
\label{cor_conditionsCoAssIso_intro}
Let $Y$ be either
\begin{myenumi}
\item a uniformly contractible, proper metric space of continuously bounded geometry which is scaleable,
\item a uniformly contractible, proper metric space of continuously bounded geometry which admits an expanding and coherent combing, or
\item the universal cover $\Efree G$ of the classifying space $\Bfree G$ of a group $G$, if $\Bfree G$ is a finite complex and $G$ is coarsely embeddable into a Hilbert space.
\end{myenumi}

Then for every proper metric space $Y$ the external product map
\[\HRPlaceholder_m(X) \otimes \K_{n}(Y) \to \HRPlaceholder_{m+n}(X \times Y)\]
is rationally injective.
Here we used the notation from \cref{rem_HRPlaceholder}.
\end{cor}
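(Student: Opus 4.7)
My plan is to deduce the corollary from the slant products constructed in Theorem~\ref{main_thm_intro_slant} together with the compatibility of slant and external product recorded in Property~\ref{properties_slant}\,(\ref{item_2_mainthm_slant}). The decisive input is that in each of the three cases the coarse co-assembly map
\[\mu^\ast\colon \K_{1-\ast}(\sHigCorRed Y) \to \K^\ast(Y)\]
is rationally an isomorphism: for (iii) this is \cite[Theorem~9.2]{EmeMey}, for (ii) it is \cite[Theorem~5.10]{engel_wulff}, and for (i) it is the standard consequence of scaleability.

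I would then proceed as follows. Suppose that
\[\xi = \sum_{i=1}^{k} x_i \otimes y_i \in \HRPlaceholder_m(X) \otimes \K_n(Y) \otimes \Q\]
maps to zero under the external product, and assume without loss of generality that $y_1,\dotsc,y_k$ are $\Q$-linearly independent. For every $\theta \in \K_{1-n}(\sHigCorRed Y)$, applying the slant product $/\theta$ to the identity $\sum_i x_i \times y_i = 0$ and invoking Property~\ref{properties_slant}\,(\ref{item_2_mainthm_slant}) term by term yields
\[0 \;=\; \Bigl(\sum_i x_i \times y_i\Bigr) \big/ \theta \;=\; \sum_{i=1}^{k} \langle y_i, \mu^\ast(\theta)\rangle\, x_i \quad \text{in } \HRPlaceholder_m(X) \otimes \Q.\]
To force $x_1 = \dotsb = x_k = 0$ rationally it therefore suffices to produce classes $\theta_1,\dotsc,\theta_k$ for which the matrix $\bigl(\langle y_i, \mu^\ast(\theta_j)\rangle\bigr)_{i,j}$ is invertible over $\Q$; by the rational surjectivity of $\mu^\ast$, this amounts to finding $\phi_1,\dotsc,\phi_k \in \K^n(Y) \otimes \Q$ for which $\bigl(\langle y_i, \phi_j\rangle\bigr)_{i,j}$ has full rank.

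The principal obstacle is thus the rational non-degeneracy of the pairing between locally finite $\K$-homology and compactly supported $\K$-theory on the finite-dimensional $\Q$-subspace spanned by $y_1,\dotsc,y_k$. The plan here is to exploit that these classes share a common compact support and that uniform contractibility together with bounded geometry allow one to enclose them in a finite subcomplex $K \subseteq Y$ for which the classical rational Poincar\'e-type duality between $\K_\ast(K) \otimes \Q$ and $\K^\ast(K) \otimes \Q$ is perfect; detection classes in $\K^n(K)$ are then lifted to $\K^n(Y)$ via an exhaustion argument, again relying on uniform contractibility---and in case~(iii) also on equivariance and the finiteness of $\Bfree G$. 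Carrying out this finite-dimensional reduction uniformly across the three geometric settings is the main technical point; once it is in place, rational injectivity of the external product follows formally from the slant product formalism.
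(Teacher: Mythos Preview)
Your overall strategy---reduce to rational surjectivity of the co-assembly map and then use the slant products together with Property~\ref{properties_slant}\,\ref{item_2_mainthm_slant} to detect---is exactly the paper's approach (carried out via \cref{thm_coass_surj} with trivial $H$). The difficulty lies in your final step.

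Locally finite $\K$-homology classes on a non-compact $Y$ do \emph{not} have compact support; that is precisely what distinguishes $\K_\ast(Y)=\KK_\ast(\Cz(Y),\C)$ from the representable theory $\RK_\ast(Y)$. So the phrase ``these classes share a common compact support and \dots\ enclose them in a finite subcomplex $K\subseteq Y$'' does not make sense as stated, and no exhaustion or uniform-contractibility argument is needed at this point. (Uniform contractibility enters the paper's proof only earlier, to identify $\KX^\ast(Y)$ with $\K^\ast(Y)$ in cases (i) and (ii).) What you call ``Poincar\'e-type duality'' between $\K_\ast(K)\otimes\Q$ and $\K^\ast(K)\otimes\Q$ is also a misnomer: the relevant statement is simply the rational non-degeneracy of the index pairing.

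The non-degeneracy you need follows at once from the universal coefficient theorem. Since $\Cz(Y)$ is commutative and separable, it lies in the bootstrap class, and the UCT gives
\[\K_n(Y)\otimes\Q \;\cong\; \Hom(\K^n(Y),\Q).\]
Hence linearly independent $y_1,\dotsc,y_k\in\K_n(Y)\otimes\Q$ correspond to linearly independent functionals on $\K^n(Y)$, and one can choose $\phi_1,\dotsc,\phi_k\in\K^n(Y)$ with $\det\bigl(\langle y_i,\phi_j\rangle\bigr)\neq 0$. This is precisely how the paper argues in the proof of \cref{thm_coass_surj}; once you replace your geometric paragraph by this one-line appeal to the UCT, your argument is complete and coincides with the paper's.
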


\subsection{Equivariant slant products}
\label{sec_idea_topological}
As previously mentioned, there is an equivariant version of our slant products.
We exhibit this construction in \cref{sec_equiv_version}.
It requires an equivariant version of the coarse co-assembly map.
If \(Y\) is endowed with a proper action of a countable discrete group \(H\), we obtain an induced action on the stable Higson corona.
Using this, one obtains a co-assembly map of the form
\[\mu^*_H\colon \K_{1-\ast}(\sHigCorRed Y \rtimes_{\mu} H)\to \K_H^{\ast}(Y),\]
where,  in general, \(\mu\) can be any exact crossed product functor in the sense of~\cite[Def.~3.1]{BaumGuentWillExactCrossed}, or the reduced one if \(H\) is exact.
Note that this is a different version of the equivariant co-assembly map than the one considered by~\citeauthor{EM_descent}~\cite{EM_descent,EM_coass}.
We discuss this and related questions around exactness in \cref{label_sec_factorization_assembly_coassembly,sec_weak_containment}.
In the following, we assume that an appropriate choice for \(\mu\) has been fixed.
\begin{thm}
Let \(X\) and \(Y\) be proper metric spaces, where \(Y\) has bounded geometry, which are endowed with proper isometric actions of countable discrete groups \(G\) and \(H\), respectively.
Then for each element $\theta \in \K_{1-q}(\sHigCorRed Y \rtimes_{\mu} H)$, we construct natural slant products~$/\theta$ such that we have a commuting diagram
\begin{align*}
\xymatrix{
\Strg_p^{G\times H}(X \times Y) \ar[r] \ar[d]^{/ \theta} & \K_p^{G \times H}(X \times Y) \ar[r] \ar[d]^{/ \theta} & \K_p(\Roe[G \times H](X \times Y)) \ar[d]^{/ \theta} \ar[r]^-{\partial} & \Strg_{p-1}^{G \times H}(X \times Y) \ar[d]^{/ \theta}\\
\Strg_{p-q}^G(X) \ar[r] & \K_{p-q}^G(X) \ar[r] & \K_{p-q}(\Roe[G] X) \ar[r]^-{\partial} & \K_{p-1-q}^G(X).
}
\end{align*}
\end{thm}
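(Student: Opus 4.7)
The strategy is to adapt the non-equivariant construction from \cref{sec_slant_products} so that it respects proper isometric actions of $G$ and $H$. In the product $X \times Y$ the $G$-action plays only a passive bystander role, so the bulk of the work is to equivariantize the construction in the $Y$-direction and to ensure that it descends to the crossed product $\sHigCorRed Y \rtimes_\mu H$.

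First, I would set up the short exact sequence of $\Cstar$-algebras underlying the equivariant co-assembly map. The proper isometric $H$-action on $Y$ induces $H$-actions on the stable Higson compactification $\HigCom Y$ and on its vanishing ideal, whose quotient is $\sHigCorRed Y$. Applying the fixed exact crossed product functor $\mu$ yields a short exact sequence whose induced six-term sequence provides the boundary map defining $\mu^*_H$ and, dually, allows one to represent a class $\theta \in \K_{1-q}(\sHigCorRed Y \rtimes_\mu H)$ by a lift to $\HigCom Y \rtimes_\mu H$ modulo the ideal, up to the degree shift of the connecting map.

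Next I would reconstruct the slant product at the level of representatives, following the pattern of \cref{main_thm_intro_slant}. One exhibits a pairing in which elements of $\HigCom Y \rtimes_\mu H$ act as multiplication operators in the $Y$-direction against operators representing classes in the $(G \times H)$-equivariant Higson--Roe groups of $X \times Y$, and verifies that the outputs live in the corresponding $G$-equivariant groups of $X$. The bounded geometry of $Y$ is used to choose $H$-admissible geometric modules, and the $H$-equivariance of the multiplication together with the crossed-product structure on $\HigCom Y \rtimes_\mu H$ guarantees that the pairing descends from the level of $H$-algebras to the crossed products. Passing to the quotient $\sHigCorRed Y \rtimes_\mu H$ and composing with the appropriate boundary of the Higson--Roe sequence then produces the slant product $/\theta$ on all three groups simultaneously, and commutativity of the diagram is then formal, since the horizontal morphisms are induced either by $*$-homomorphisms or by canonical connecting maps, all of which commute with pairing against a fixed $\theta$ by the same argument as in the non-equivariant case.

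The main obstacle is bookkeeping the equivariance through the crossed product: one must verify that all relevant multiplications and the key extension not only exist at the level of $H$-\Cstar-algebras but descend to and remain exact under $\rtimes_\mu H$. This is exactly what the exactness of $\mu$ (or the choice $\mu = \red$ when $H$ is itself exact) is used for, and it explains why the hypothesis on $\mu$ appears in the statement. A secondary, essentially technical difficulty is to construct $(G \times H)$-equivariant geometric modules on $X \times Y$ for which the multiplication by elements of $\HigCom Y$ in the $Y$-direction is compatible with the Roe-type algebras on both sides of the slant product, but this is handled by the standard equivariant bounded-geometry techniques recalled in the preceding sections.
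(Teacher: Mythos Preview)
Your high-level plan is in the right spirit, but it omits the one concrete mechanism that makes the equivariant construction work, and it misidentifies where two of the hypotheses are actually used.

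The paper does not ``descend from $H$-algebras to crossed products''; it works directly with the reduced crossed product from the outset by \emph{amplifying} the $Y$-module with an extra $\elltwo(H)$ tensor factor and invoking Fell's absorption principle. Concretely, one replaces $H_Y$ by $\elltwo(H)\otimes H_Y$ (so $H_{X\times Y}=H_X\otimes\elltwo(H)\otimes H_Y$), lets $H$ act via $\lambda_H\otimes u_H$, and thereby obtains a genuine embedding $\sHigComRed Y\rtimes_\red H\hookrightarrow\cB(\tilde H_X)$. The Roe algebra $\Roe[G\times H](\rho_{X\times Y})$ maps in by $S\mapsto S\otimes\id_{\elltwo}$, and the equivariant version of \cref{lem_slant_Roe_cycles_commute} shows these images commute modulo $\Roe[G](\tilde\rho_X)$, giving a $*$-homomorphism $\Phi$ from $\Roe[G\times H](X\times Y)\otimes_{\max}(\sHigComRed Y\rtimes_\red H)$ to $\Fipro[G](\tilde\rho_X)/\Roe[G](\tilde\rho_X)$. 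Without the $\elltwo(H)$ trick you have no concrete representation of the crossed product to multiply against.

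Two further corrections. First, bounded geometry of $Y$ is not used to choose modules; it enters in the commutator estimate (the equivariant \cref{lem_slant_Roe_cycles_commute}) exactly as in the non-equivariant case. Second, the boundary map in the definition of the slant product is \emph{not} a boundary in the Higson--Roe sequence: it is the connecting map of $0\to\Roe[G](\tilde\rho_X)\to\Fipro[G](\tilde\rho_X)\to\Fipro[G](\tilde\rho_X)/\Roe[G](\tilde\rho_X)\to 0$ (and its $\LSym$- and $\LSym,0$-analogues). The exact crossed product $\rtimes_\mu$ enters only at the very end: $\Phi$ vanishes on the ideal $\Cz(Y,\Kom)\rtimes_\red H$, hence factors through the quotient, but that quotient need not equal $\sHigCorRed Y\rtimes_\red H$ unless $H$ is exact; precomposing with the canonical map $\rtimes_\mu\to\rtimes_\red$ and using exactness of $\mu$ fixes this. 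Once all three $\Psi$-maps are in hand, commutativity of the displayed square is indeed proved exactly as in \cref{thm_commutativity_diagram}, with $G$ and $H$ carried along as passengers.
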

The equivariant slant products satisfy formal properties analogous to \cref{properties_slant}.
We refer to \cref{subsec_EquivSlantProp} for the details.
Moreover, in \cref{subsec_compatibility_equiv_nonequiv} we prove that for free actions our equivariant slant product on K-homology is identified with the usual slant product on K-homology of the quotient space up to canonical induction isomorphisms.

The equivariant slant products are applied to prove the following injectivity result for the external products on the Higson--Roe analytic sequence \eqref{eq_analytic_surgery_sequence}.
In the following theorem we write $\ucov M$ for the universal cover of $M$, and we denote $G \coloneqq \pi_1(M)$.

\begin{thm}[{see \cref{CrossWithElementInj}}]\label{thm:topologicalInj}
Let $N$ be a finite complex, \(H = \pi_1 N\), and let $z \in \K_n(N)$.
 Assume that there is $\theta \in \K_{1-n}(\sHigCorRed \ucov N \rtimes_{\mu} H)$ with $\langle z, \mu^*_H(\theta)\rangle = 1$ \parensup{or $\langle z,\mu^*_H(\theta)\rangle \not= 0$, respectively}.

Then for every finite complex $M$, all vertical arrows in the following diagram \parensup{with \(G = \pi_1 M\)} are split-injective \parensup{rationally split-injective, respectively}.
\begin{align*}
\resizebox{\textwidth}{!}{\xymatrix{
\K_{\ast+1}(\CstarRed G) \ar[r]^-{\partial} \ar[d]^-{\times \Ind_H(z)} & \Strg_\ast^{G}(\ucov{M}) \ar[r] \ar[d]^-{\times z} & \K_\ast(M) \ar[r]^-{\Ind_G} \ar[d]^-{\times z} & \K_\ast(\CstarRed G) \ar[d]^-{\times \Ind_H(z)}\\
\K_{\ast+1+n}(\CstarRed (G \times H)) \ar[r]^-{\partial} & \Strg_{\ast+n}^{G \times H}(\ucov{M} \times \ucov{N}) \ar[r] & \K_{\ast+n}(M \times N) \ar[r]^-{\Ind_{G\times H}} & \K_{\ast+n}(\CstarRed (G \times H))
}}
\end{align*}
\end{thm}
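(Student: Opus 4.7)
The plan is to use the equivariant slant product $/\theta$ from the equivariant version of \cref{main_thm_intro_slant} as a left inverse to each of the four vertical arrows. Since $M$ is a finite complex, the $\pi_1 M$-action on $\ucov M$ is free and cocompact, so the canonical induction isomorphisms identify
\[ \K_\ast(M) \cong \K_\ast^G(\ucov M), \qquad \K_\ast(\CstarRed G) \cong \K_\ast(\Roe[G] \ucov M), \]
and likewise for $M \times N$ with the group $G \times H$. Under these identifications, the external products $\times z$ and $\times \Ind_H(z)$ appearing in the vertical arrows of the diagram coincide with the equivariant cross products with the corresponding lifts of $z$ to $\K_n^H(\ucov N)$ (and with $\Ind([\Dirac])$ of that lift on the Roe side), via the comparison established in \cref{subsec_compatibility_equiv_nonequiv}.

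First I would apply the equivariant slant product $/\theta$ to the lower row of the diagram. By the equivariant version of the commuting ladder \labelcref{eq_slant_mainthm_intro}, the four slant products assemble into a morphism of Higson--Roe sequences that is compatible with the boundary and forgetful maps. This produces a coherent candidate for a left inverse of all four vertical arrows simultaneously; so the whole task reduces to computing the composition (cross with $z$, then slant with $\theta$) on each of the four columns.

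The core step is to verify that the composition
\[ \HRPlaceholder_\ast^G(\ucov M) \xrightarrow{\times z} \HRPlaceholder_{\ast+n}^{G \times H}(\ucov M \times \ucov N) \xrightarrow{/\theta} \HRPlaceholder_\ast^G(\ucov M) \]
equals multiplication by the integer $\langle z, \mu^*_H(\theta) \rangle$, and that the same integer is obtained with $z$ replaced by $\Ind_H(z)$ on the Roe algebra columns. These are precisely the equivariant analogues of \cref{properties_slant}\labelcref{item_1_mainthm_slant,item_2_mainthm_slant} developed in \cref{subsec_EquivSlantProp}: the first when $\HRPlaceholder = \Strg$ or $\K$ (via the ordinary K-homology pairing of $z$ with $\mu^*_H(\theta)$), the second on Roe K-theory (via the Emerson--Meyer pairing of $\Ind_H(z)$ with $\theta$). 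Granting these identities, the hypothesis $\langle z, \mu^*_H(\theta) \rangle = 1$ makes each composition the identity, producing integral splittings; the hypothesis $\neq 0$ turns each composition into multiplication by an invertible element of $\Q$ after tensoring, yielding rational split injectivity.

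The main obstacle is ensuring that a \emph{single} integer $\langle z, \mu^*_H(\theta) \rangle$ controls all four columns simultaneously, so that the splittings on the structure group and $\K$-homology sides are compatible with those on the Roe algebra sides through the long exact sequence. This coherence is exactly what the equivariant analogue of property \labelcref{item_1_mainthm_slant} supplies: it encodes the identity $\mu_H(z / \mu^*_H(\theta)) = \mu_H(z) / \theta$, which reconciles the K-homology pairing of $z$ with $\mu^*_H(\theta)$ and the Emerson--Meyer pairing of $\Ind_H(z)$ with $\theta$ from \cite{EmeMey}. Once the equivariant slant product machinery and this compatibility are in hand, the theorem reduces to a formal diagram chase, with the splittings given by the equivariant slant products divided by $\langle z, \mu^*_H(\theta)\rangle$.
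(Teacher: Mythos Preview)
Your proposal is correct and follows essentially the same approach as the paper: the retraction for each vertical external-product map is the equivariant slant product $/\theta$, and the fact that the composition is multiplication by $\langle z,\mu_H^*(\theta)\rangle$ is exactly \cref{cor_equivariant_composition_is_pairing}. Your concern about coherence of the integer across the four columns is slightly overstated: \cref{cor_equivariant_composition_is_pairing} already states the three compositions (on $\K^G$, $\Strg^G$, and $\K(\Roe[G]\blank)$) as multiplication by the \emph{same} pairing $\langle z,\theta\rangle$, and the compatibility $\langle z,\theta\rangle=\langle \Ind(z),\theta\rangle=\langle z,\mu_H^*(\theta)\rangle$ is immediate from \cref{equithm_commutativity_diagram} and \cref{thm_equiKHomSlantCompatible} applied with $X=\{*\}$, so no separate detour through property~\labelcref{item_1_mainthm_slant} is needed.
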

Indeed, a right inverse for the external product maps from the theorem is given by our equivariant slant products with the class \(\theta\).
A similar result was proved by Zenobi \cite[Remark~5.20]{Zenobi:MappingSurgery}.

Using a slightly more sophisticated version of the above theorem (see \cref{thm_coass_surj}), we can deduce \cref{intro_thm_struct_injective_gamma}.
This is because the existence of a $\gamma$-element implies surjectivity of the equivariant coarse co-assembly map for aspherical complexes (see \cref{cor_equiv_coarse_coass_surj}).

\subsection{Geometric applications}\label{subsec:applications}
\subsubsection{The Stolz sequence for positive scalar curvature}
The Stolz sequence for positive scalar curvature~\cite{Stolz98Concordance} is a sequence of bordism groups incorporating Riemannian metrics of positive scalar curvature (psc).
It is in some sense analogous to the surgery sequence from geometric topology.
Given a closed smooth manifold \(M\), we will denote by \(\RiemPos(M)\) the set of Riemannian metrics of positive scalar curvature on \(M\).

The Stolz sequence has received considerable attention in higher index theory starting with work of \citeauthor{PiazzaSchick:StolzPSC}~\cite{PiazzaSchick:StolzPSC} and \citeauthor{XieYuPscLocAlg}~\cite{XieYuPscLocAlg} who established that it admits a map to the analytic sequence of Higson and Roe.
We recall the result here in the case of classifying spaces of groups.
Here it is convenient to use \labelcref{eq:HigsonRoeRepresentable}.
\begin{thm}[{\cite{PiazzaSchick:StolzPSC,XieYuPscLocAlg}}]
  Let \(G\) be a countable discrete group.
  There is a commutative diagram of exact sequences taking Stolz' positive scalar curvature sequence to the analytic sequence of Higson and Roe.
  \[
  \begin{tikzcd}[column sep=small, ampersand replacement=\&]
     \SpinBordism_m(\Bfree G) \rar \dar{\beta} \&
     \StolzRel_m(\Bfree G) \rar{\partial} \dar{\alpha} \&
     \StolzPos_{m-1}(\Bfree G) \rar \dar{\rho} \&
     \SpinBordism_{m-1}(\Bfree G) \rar \dar{\beta} \&
     \StolzRel_{m-1}(\Bfree G) \dar{\alpha}
     \\
     \RK_{m}(\Bfree G) \rar{\mu} \&
     \K_{m}(\CstarRed G) \rar{\partial}
       \& \RStrg_{m-1}^G(\Efree G) \rar
       \& \RK_{m-1}(\Bfree G) \rar{\mu}
       \& \K_{m-1}(\CstarRed G)
  \end{tikzcd}
  \]
\end{thm}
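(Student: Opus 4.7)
The plan is to construct the three vertical maps \(\beta\), \(\alpha\), \(\rho\) and then verify commutativity of the four squares; exactness of the upper row is Stolz's theorem and exactness of the lower row is the Higson--Roe sequence recalled as \labelcref{eq:HigsonRoeRepresentable}. All three vertical maps arise from the spin Dirac operator \(\Dirac\), and everything is driven by the Lichnerowicz identity \(\Dirac^2 = \nabla^*\nabla + \mathrm{scal}/4\), which is what makes positive scalar curvature visible analytically.

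For \(\beta\), send \([M, f] \in \SpinBordism_m(\Bfree G)\) to \(f_*[\Dirac_M] \in \RK_m(\Bfree G)\); bordism invariance is the standard fact that a spin bordism \(W\) realizes a null-bordism in relative locally finite \(\K\)-homology. For \(\alpha\), a representative \((W, g, f)\) of a class in \(\StolzRel_m(\Bfree G)\) consists of a compact spin manifold with boundary equipped with a psc metric \(g\) on \(\partial W\) and a reference map \(f \colon W \to \Bfree G\); attach the cylindrical end \(\partial W \times [0, \infty)\) carrying the product metric \(g + dt^2\), pull back to the \(G\)-cover \(\ucov{W}\), and take the equivariant coarse index of the resulting complete spin Dirac operator. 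The cylinder has uniformly positive scalar curvature, so the operator is invertible at infinity and its index class lies in \(\K_m(\Roe[G] \ucov{W}) \cong \K_m(\CstarRed G)\); this is the higher APS index of Piazza--Schick. For \(\rho\), a closed psc representative \((M, g, f)\) has invertible Dirac operator by Lichnerowicz, and this invertibility promotes its index class to a canonical lift in \(\RStrg_{m-1}^G(\Efree G)\) via the Higson--Roe connecting map.

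Commutativity of the two outer squares reduces to the identity \(\alpha \circ \iota = \mu \circ \beta\) for closed spin manifolds with empty boundary; this is just the statement that the higher index of the Dirac operator equals the assembly of its \(\K\)-homological fundamental class. The square involving \(\rho\) and the map that forgets the psc metric expresses that \(\rho\) is a lift of the fundamental class, which is built into its construction. The main obstacle, and the technical heart of the theorem, is commutativity of the middle square \(\partial \circ \alpha = \rho \circ \partial\), i.e.\ the higher delocalized APS index theorem: the Higson--Roe boundary must send the higher APS index of \((W, g)\) to the rho class of \((\partial W, g)\). The argument requires an explicit parametrix for the cylindrical-end Dirac operator reflecting its product structure near infinity, a careful identification of the Higson--Roe connecting homomorphism with the boundary map on APS-type indices, and a compatibility check at the level of the various subalgebras of the Roe algebra defining \(\Strg\), \(\K(\Roe[G])\) and \(\RK\). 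This is carried out via localization algebras in \cite{PiazzaSchick:StolzPSC} and via equivariant Roe algebras in \cite{XieYuPscLocAlg}.
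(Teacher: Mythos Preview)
The paper does not give its own proof of this theorem: it is stated with the attribution \cite{PiazzaSchick:StolzPSC,XieYuPscLocAlg} and then used as a black box in the applications section. So there is no ``paper's proof'' to compare against; your sketch is effectively a summary of what happens in the cited references, and as such it is accurate in outline. One small correction on attribution: you have the two approaches swapped. It is Xie--Yu who set things up via localization algebras, while Piazza--Schick work with the \(b\)-calculus/cylindrical-end picture and Kasparov theory. Otherwise your identification of the key technical point---the delocalized APS index theorem expressing \(\partial \circ \alpha = \rho \circ \partial\)---is exactly right, and your description of how \(\beta\), \(\alpha\), \(\rho\) are built is the standard one.
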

We briefly explain the constituents of the top sequence above.
First, \(\SpinBordism_\ast(\Bfree G)\) is the usual spin bordism group of \(\Bfree G\).
The group \(\StolzRel_\ast(\Bfree G)\) consists of bordism classes of pairs \((W, g)\), where \(W\) is a compact spin manifold together with a continuous map \(W \to \Bfree G\) and \(g \in \RiemPos(\partial W)\).
Finally, the \emph{positive scalar curvature bordism group} \(\StolzPos_\ast(\Bfree G)\) consists of bordism classes of pairs \((M,g)\), where \(M\) is a closed spin manifold together with a continuous map \(M \to \Bfree G\) and \(g \in \RiemPos(M)\).
The map \(\beta\) is the Atiyah--Bott--Shapiro orientation, \(\alpha\) is the \emph{higher relative index} and \(\rho\) is the \emph{higher \(\rho\)-invariant}.

The positive scalar curvature bordism group admits an external product with the spin bordism group as follows.
\begin{align*}
	\StolzPos_m(\Bfree G) \otimes \SpinBordism_n(\Bfree H) &\to \StolzPos_{m+n}(\Bfree (G \times H))\\
   [M, g] \otimes [N] &\mapsto [M \times N, g \oplus g_{N}]
\end{align*}
Here we choose any Riemannian metric \(g_N\) on \(N\) such that the product metric \(g \oplus g_{N}\) still has positive scalar curvature (this can always be achieved by rescaling).
Any two such choices yield isotopic metrics on the product (again by a rescaling argument) and in particular the same bordism class.
By the product formula for the higher \(\rho\)-invariant~(see \cite{zeidler_secondary}), it is compatible with the external product for the analytic structure group.
That is, the following diagram commutes.
\[
	\xymatrix{
		\StolzPos_m(\Bfree G) \otimes \SpinBordism_n(\Bfree H) \ar[r] \ar[d]_-{\rho \otimes \beta} & \StolzPos_{m+n}(\Bfree (G \times H)) \ar[d]_-{\rho} \\
		\RStrg_m^G(\Efree G) \otimes \RK_n(\Bfree H) \ar[r] & \RStrg_{m+n}^{G \times H}(\Efree (G \times H))
	}
\]
Similarly, we have a diagram involving the relative group.
\[
	\xymatrix{
		\StolzRel_m(\Bfree G) \otimes \SpinBordism_n(\Bfree H) \ar[r] \ar[d]_-{\alpha \otimes \beta} & \StolzRel_{m+n}(\Bfree (G \times H)) \ar[d]_-{\alpha} \\
		\K_m(\CstarRed \Gamma) \otimes \RK_n(\Bfree H) \ar[r] & \K_{m+n}(\CstarRed (G \times H))
	}
\]

We now obtain the following corollary as a consequence of \cref{intro_theorem_injectivity_higson_essential}.
In the following we say that a spin manifold is Higson-essential if it is with respect to the induced \spinC-structure.
\begin{cor}\label{cor_PSCbordismApplication}
	Let \(N\) be a closed spin manifold with \(\pi_1 N = H\) such that its universal covering \(\ucov{N}\) is Higson-essential.
	 Let \([M_i, g_i] \in \StolzPos_m(\Bfree G)\) with \(\rho(M_0, g_0) \neq \rho(M_1, g_1) \in \RStrg_{m}^G(\Efree G)\).
      Then also 
      \[\rho(M_0 \times N, g_0 \oplus g_N) \neq \rho(M_1 \times N, g_1 \oplus g_N) \in \RStrg_{m+n}^{G \times H}(\Efree (G \times H)).\]
    	In particular, \((M_0 \times N, g_0 \oplus g_N)\) and \((M_1 \times N, g_1 \oplus g_N)\) represent different bordism classes in \(\StolzPos_{m+n}(\Bfree (G \times H))\).
\end{cor}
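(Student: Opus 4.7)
The plan is to reduce the statement to an application of \cref{intro_theorem_injectivity_higson_essential} via the product formula for the higher \(\rho\)-invariant displayed above (cf.~\cite{zeidler_secondary}). This formula yields
\[
\rho(M_i \times N, g_i \oplus g_N) = \rho(M_i, g_i) \times [\Dirac_N] \in \RStrg_{m+n}^{G \times H}(\Efree(G \times H))
\]
for \(i = 0, 1\). Taking the difference, it suffices to show that the external product map
\[
\times [\Dirac_N] \colon \RStrg_m^G(\Efree G) \to \RStrg_{m+n}^{G \times H}(\Efree(G \times H))
\]
is injective, for then the hypothesis \(\rho(M_0, g_0) \neq \rho(M_1, g_1) \in \RStrg_m^G(\Efree G)\) immediately forces the desired inequality.

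To establish this injectivity, I would apply \cref{intro_theorem_injectivity_higson_essential} with \(Y = \ucov{N}\): it is a complete \spinC-manifold of continuously bounded geometry equipped with a proper \(H\)-action, and it is Higson-essential by hypothesis. For every \(G\)-cocompact subset \(K \subseteq \Efree G\), the theorem provides a left inverse of \(x \mapsto x \times [\Dirac_{\ucov{N}}] \colon \Strg_m^G(K) \to \Strg_{m+n}^{G \times H}(K \times \ucov{N})\), given by a slant product \(/\theta\) with some fixed \(\theta \in \K_{1-n}(\sHigCorRed \ucov{N})\). Since the slant products are natural in \(X\), passing to the colimit over all such \(K\) extends this to a left inverse of
\[
\times [\Dirac_{\ucov{N}}] \colon \RStrg_m^G(\Efree G) \to \RStrg_{m+n}^{G \times H}(\Efree G \times \ucov{N}).
\]
The main obstacle is then to transfer this injectivity across the natural \((G \times H)\)-equivariant map \(\Efree G \times \ucov{N} \to \Efree(G \times H)\) arising from the classifying map \(\ucov{N} \to \Efree H\). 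I expect this to be handled either by the equivariant slant product machinery of \cref{sec_idea_topological}, using an appropriate lift of \(\theta\) to the K-theory of the crossed product \(\sHigCorRed \ucov{N} \rtimes_{\mu} H\) so that one may invert the product map directly at the level of \(\Efree(G \times H)\), or by working throughout at the cocompact level: one applies the product formula inside \(\Strg_{m+n}^{G \times H}(\ucov{M_i} \times \ucov{N})\) and then compares the resulting classes in a common cocompact subset of \(\Efree(G \times H)\) using naturality of \(/\theta\) in \(X\).

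The bordism statement is then immediate: the higher \(\rho\)-invariant is well-defined on bordism classes, so the map \(\rho \colon \StolzPos_{m+n}(\Bfree(G \times H)) \to \RStrg_{m+n}^{G\times H}(\Efree(G\times H))\) factors through \(\StolzPos_{m+n}(\Bfree(G \times H))\), and distinct values of \(\rho\) force distinct bordism classes.
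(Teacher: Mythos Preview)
Your proposal is correct and matches the paper's (terse) approach: the corollary is declared an immediate consequence of \cref{intro_theorem_injectivity_higson_essential} together with the product formula for~$\rho$ displayed just before the statement. The step you call the main obstacle is routine---the retraction in \cref{intro_theorem_injectivity_higson_essential} is already given by the \emph{equivariant} slant product with $\iota_*(\vartheta) \in \K_{1-n}(\sHigCorRed \ucov{N} \rtimes_\mu H)$ (see \cref{cor_application_of_higson_essential}), and since $\ucov{N} \to L$ is an $H$-equivariant coarse equivalence for every $H$-cocompact $L \subseteq \Efree H$, naturality of the slant product in~$Y$ transports the splitting to $\RStrg_{m+n}^{G\times H}(\Efree(G\times H))$ exactly as in your option~(1).
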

	In \cite[Corollary~6.10]{zeidler_secondary} this corollary was formulated for \(N\) aspherical with \(H = \pi_1 N\) of finite asymptotic dimension.
	This was based on Dranishnikov's theorem that the universal covering of such a manifold is stably hypereuclidean \cite{dranish_geomdedicata}.
	Our present method strictly improves Zeidler's result, because if \(N\) is aspherical and \(H = \pi_1 N\) admits a coarse embedding into Hilbert space (which is far more general than having finite asymptotic dimension), it satisfies the hypothesis of the corollary above (compare~\cref{rem_surj_implies_Higson-essential} and \cref{cor:conditionsCoAssIso}~\labelcref{item:CoarseEmbedd}).

Furthermore, if \(M\) is a closed spin manifold with \(\pi_1 M = G\) and \(g_0, g_1 \in \RiemPos(M)\), then \((M \times [0,1], (g_0, g_1))\) represents a class in \(\StolzRel_{m+1}(\Bfree G)\), where \((g_0, g_1) \in \RiemPos(\partial (M \times [0,1])) = \RiemPos(M) \times \RiemPos(M)\).
The corresponding relative index class in \(\K_{m+1}(\CstarRed G)\) is the \emph{\parensup{higher} index difference} of \(g_0\) and \(g_1\), denoted by \(\inddiff_G(g_0,g_1) \in \K_{m+1}(\CstarRed G)\).
If the index difference is non-zero, then the two metrics are \emph{not concordant} as positive scalar curvature metrics.
In particular, they are not isotopic as psc metrics.

Again we obtain the following from \cref{intro_theorem_injectivity_higson_essential}.
\begin{cor}\label{cor_compact_concordance_obstruction}
	Let \(N\) be a closed spin manifold with \(\pi_1 N = H\) such that its universal covering \(\ucov{N}\) is Higson-essential.
	Let \(M\) be a closed spin manifold with \(\pi_1 M = G\) and \(g_0, g_1 \in \RiemPos(M)\) such that \(\inddiff_G(g_0, g_1) \neq 0 \in \K_{m+1}(\CstarRed G)\).
	Then 
    \[\inddiff_{G \times H}(g_0 \oplus g_N, g_1 \oplus g_N) \neq 0 \in \K_{m+n+1}(\CstarRed(G \times H)).\]
  In particular, \(g_0 \oplus g_N\) and \(g_1 \oplus g_N\) are not concordant as positive scalar curvature metrics on \(M \times N\).
\end{cor}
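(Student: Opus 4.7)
The plan is to reduce the non-vanishing statement to an application of \cref{intro_theorem_injectivity_higson_essential} (taking for $X$ the universal cover $\ucov{M}$, so that $\Roe[G]\ucov{M}$ and $\Roe[G\times H](\ucov{M}\times\ucov{N})$ are Morita equivalent to $\CstarRed G$ and $\CstarRed(G\times H)$, respectively). Since $\ucov{N}$ is Higson-essential by hypothesis, that theorem yields split-injectivity of the external product map
\[
\K_{m+1}(\CstarRed G) \to \K_{m+n+1}(\CstarRed(G\times H)), \qquad x \mapsto x \times \Ind_H([\Dirac_N]).
\]
It therefore suffices to identify $\inddiff_{G\times H}(g_0 \oplus g_N,\, g_1 \oplus g_N)$ with the image of $\inddiff_G(g_0,g_1)$ under this map, and to recall the concordance obstruction.

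The first key step is the product formula
\[
\inddiff_{G\times H}(g_0 \oplus g_N,\, g_1 \oplus g_N) \;=\; \inddiff_G(g_0, g_1) \times \Ind_H([\Dirac_N])
\]
in $\K_{m+n+1}(\CstarRed(G \times H))$. To derive it, represent the left-hand index difference via the cylinder class $[M \times [0,1],\, (g_0,g_1)] \in \StolzRel_{m+1}(\Bfree G)$ under the higher relative index $\alpha$, as recalled in the paragraph before the corollary. Multiplying this by $[N] \in \SpinBordism_n(\Bfree H)$ in the external product $\StolzRel_\ast(\Bfree G) \otimes \SpinBordism_\ast(\Bfree H) \to \StolzRel_{\ast+\ast}(\Bfree(G\times H))$ displayed just before the corollary yields the class $[M \times [0,1] \times N,\, (g_0 \oplus g_N, g_1 \oplus g_N)]$, whose image under $\alpha$ is precisely the left-hand side of the formula. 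On the other hand, the commutative square displayed above the corollary---relating this external product on $\StolzRel \otimes \SpinBordism$ to the external product in $\K$-theory via $\alpha \otimes \beta$---identifies this with $\inddiff_G(g_0,g_1) \times \beta[N] = \inddiff_G(g_0,g_1) \times \Ind_H([\Dirac_N])$, using $\beta[N] = \Ind_H([\Dirac_N])$ as the Atiyah--Bott--Shapiro orientation of a closed spin manifold equals its higher index.

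Combining these two ingredients: the hypothesis $\inddiff_G(g_0,g_1) \neq 0$ together with split-injectivity of $x \mapsto x \times \Ind_H([\Dirac_N])$ gives $\inddiff_{G\times H}(g_0 \oplus g_N,\, g_1 \oplus g_N) \neq 0$. The non-concordance conclusion then follows from the general fact recalled just before the corollary, namely that concordant positive scalar curvature metrics on a common underlying manifold have vanishing higher index difference.

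The main obstacle will be the clean justification of the product formula for $\inddiff$, since it combines the external product on the Stolz relative bordism group with the external product on $\K$-theory of reduced group \textCstar-algebras. This compatibility is encoded in the commutative diagram involving $\alpha \otimes \beta$ stated in \cref{subsec:applications}; at the level of cycles it reflects the Leibniz-type formula for the higher index of a Dirac operator on a Riemannian product, which is due to Piazza--Schick and Xie--Yu and can be invoked as a black box here. Everything else is formal manipulation using the external product structures already constructed earlier in the paper.
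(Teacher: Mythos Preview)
Your proof is correct and follows exactly the strategy the paper intends: the paper states the corollary as an immediate consequence of \cref{intro_theorem_injectivity_higson_essential} (specialized to \(X=\ucov{M}\), \(Y=\ucov{N}\)) combined with the product formula for the higher index difference, and you have supplied precisely these details, deriving the product formula from the commutative square involving \(\alpha\otimes\beta\) in \cref{subsec:applications}.
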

The final conclusion of the previous corollary is also true for almost-spin manifolds~\(N\), see \cref{cor_almost_spin} below.

Examples of pairs of \((M_i, g_i)\) with different \(\rho\)-invariants or pairs of metrics with non-trivial index-differences exist in abundance.
For the \(\rho\)-invariants, such examples always exist if the group \(G\) satisfies the strong Novikov conjecture and has torsion.
For the existence of non-trivial index differences, this is also true for torsion-free groups.
In fact, under the Novikov assumption, lower bounds on the ranks of the groups \(\StolzPos_\ast(\Bfree G)\) and \(\StolzRel_\ast(\Bfree G)\) can be given in terms of the homology of \(G\), see for instance~\cite{WY15FinitePart,BZ17LowDegree,ERW17pscFundamentalGroup,XYZ}.
These results are always proved by showing that secondary index maps such as \(\alpha\) and \(\rho\) have a large range.

We deduce from \cref{intro_thm_struct_injective_gamma} (applying it inside the relevant colimits) that the size of the range is preserved under certain products.

\begin{cor}
  Suppose the group \(H\) has a \(\gamma\)-element and admits a finite model for \(\Bfree H\).
  \begin{myenumi}
    \item   Let \(V \subseteq \StolzPos_m(\Bfree G)\) be a subset such that \(\rho(V) \subseteq \RStrg_m^G(\Efree G)\) generates a subgroup of rank \(\geq k\).
      Then \(V \otimes \SpinBordism_n(\Bfree H)\) generates a subgroup of rank 
      \[ 
        \geq k \cdot \operatorname{rank}(\beta \SpinBordism_n(\Bfree H))
      \]
      in \(\StolzPos_{m+n}(\Bfree (G~\times~H))\).
    \item Let \(V \subseteq \StolzRel_m(\Bfree G)\) be a subset such that \(\alpha(V) \subseteq \K_m(\CstarRed G)\) generates a subgroup of rank \(\geq k\).
      Then \(V \otimes \SpinBordism_n(\Bfree H)\) generates a subgroup of rank 
      \[ 
        \geq k \cdot \operatorname{rank}(\beta \SpinBordism_n(\Bfree H))
      \]
      in \(\StolzRel_{m+n}(\Bfree (G \times H))\).
\end{myenumi}
\end{cor}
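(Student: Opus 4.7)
The plan is to exploit the two commutative diagrams displayed just before the corollary, which intertwine the external product on the bordism side with the external product on the analytic side via the secondary index maps $\rho$ and $\alpha$. The overall strategy is to translate the assumed rank information on $\rho(V)$ (or $\alpha(V)$) to the tensor product, push it through the external product on the analytic side using rational injectivity from \cref{intro_thm_struct_injective_gamma}, and use commutativity of the square to convert the result back into a rank bound for the subgroup of $\StolzPos_{m+n}(\Bfree(G \times H))$ (respectively $\StolzRel_{m+n}(\Bfree(G \times H))$) generated by $V \otimes \SpinBordism_n(\Bfree H)$.

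Concretely, for part~(i), I would first pick $v_1, \dots, v_k \in V$ such that $\rho(v_1), \dots, \rho(v_k)$ are linearly independent over $\Q$ in $\RStrg_m^G(\Efree G)$, together with $w_1, \dots, w_r \in \SpinBordism_n(\Bfree H)$ for which $\beta(w_1), \dots, \beta(w_r)$ are linearly independent, where $r = \operatorname{rank}(\beta \SpinBordism_n(\Bfree H))$. The $kr$ pure tensors $\rho(v_i) \otimes \beta(w_j)$ are then $\Q$-linearly independent in $\RStrg_m^G(\Efree G) \otimes \RK_n(\Bfree H)$. Next I would apply the representable version of \cref{intro_thm_struct_injective_gamma} to deduce that
\[
\RStrg_m^G(\Efree G) \otimes \RK_n(\Bfree H) \to \RStrg_{m+n}^{G \times H}(\Efree(G \times H))
\]
is rationally injective, so that the images of these $kr$ tensors remain $\Q$-linearly independent in the target. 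By commutativity of the Stolz--Higson--Roe square, these images coincide with $\rho$ applied to the external product classes $[M_i \times N_j, g_i \oplus g_{N_j}]$; hence $\rho(V \otimes \SpinBordism_n(\Bfree H))$ spans a subgroup of rank at least $kr$, and since homomorphisms do not increase rank, the same lower bound holds for $\langle V \otimes \SpinBordism_n(\Bfree H)\rangle \subseteq \StolzPos_{m+n}(\Bfree(G \times H))$. Part~(ii) then follows by the completely analogous argument, with $\rho$ replaced by $\alpha$, the analytic target replaced by $\K_{m+n}(\CstarRed(G \times H))$, and \cref{intro_thm_struct_injective_gamma} replaced by \cref{intro_cor_injective_gamma}.

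The one step that is not entirely formal is the passage from \cref{intro_thm_struct_injective_gamma}, stated for the non-representable group $\Strg^G$, to its representable counterpart on $\RStrg^G$ and $\RK$ that we just used; this is precisely what is alluded to by the phrase ``applying it inside the relevant colimits'' in the paragraph preceding the corollary. The task will be to verify that the external product is natural with respect to $G$-cocompact exhaustions of $\Efree G$, so that rational injectivity passes to the filtered colimits defining $\RStrg$ and $\RK$. Since tensoring with a fixed abelian group commutes with filtered colimits and rationalization is exact, this is essentially bookkeeping, and I expect it to be the main \parensup{modest} obstacle in making the argument airtight.
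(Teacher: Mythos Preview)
Your proposal is correct and follows precisely the approach the paper intends: the paper provides no detailed proof of this corollary, only the remark that it is deduced from \cref{intro_thm_struct_injective_gamma} ``applying it inside the relevant colimits,'' together with the two commutative squares relating the external products on the Stolz sequence to those on the Higson--Roe sequence via $\rho$ and $\alpha$. Your argument spells this out exactly, including the colimit bookkeeping, and your use of \cref{intro_thm_struct_injective_gamma} for both parts (the Roe-algebra case there already gives the statement you attribute to \cref{intro_cor_injective_gamma}) is appropriate.
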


\subsubsection{Positive scalar curvature on non-compact manifolds}
As our methods deal with non-compact spaces, there are applications to uniform positive scalar curvature on non-compact manifolds.
However, here it is necessary to restrict the large-scale type of the metrics which we consider.
Otherwise, the fact that \(\R^n\) for \(n \geq 3\) admits metrics of uniform positive scalar curvature would lead to counterexamples to the kind of results we have in mind.
We use a similar setup as in \cite[Section~1.3]{ZeidlerThesis}.

Let \(X\) be a spin \(n\)-manifold and \(g_X\) a fixed complete Riemannian metric on \(X\).
We let \(\Riem(X, g_X)\) denote the set of all those Riemannian metrics \(g\) on \(X\) such that the identity map \((X, d_g) \to (X, d_{g_X})\) is uniformly continuous.
Note that each metric in \(\Riem(X, g_X)\) is automatically complete because \(g_X\) is.
Moreover, the identity map \((X, d_g) \to (X, d_{g_X})\) is coarse because \((X,d_g)\) is a length space and so uniformly continuous maps are automatically large-scale Lipschitz. 
Moreover, we let \(\RiemPos(X, g_X)\) be the set of those metrics in \(\Riem(X, g_X)\) with uniformly positive scalar curvature.
If \(X\) is furnished with a proper isometric action of a countable discrete group \(G\) and \(g_X\) is \(G\)-invariant, we write \(\Riem(X, g_X)^G\) and \(\RiemPos(X, g_X)^G\) for the corresponding subsets of \(G\)-invariant metrics.
Note that if \(\Ind(\Dirac_X) \neq 0 \in \K_m(\Roe[G] X)\), where we define the Roe algebra with respect to the metric \(g_X\), then \(\RiemPos(X, g_X)^G =\emptyset\).
\begin{defn}
  We say that two metrics \(g_0, g_1 \in \RiemPos(X, g_X)^G\) are \emph{concordant} if there exists a metric in \(\RiemPos(X \times \R, g_X \oplus \D t^2)^G\) which restricts to \(g_0 \oplus \D t^2\) on \(X \times (-\infty,0]\) and to \(g_1 \oplus \D t^2\) on \(X \times [1, \infty)\).
\end{defn}
Given \(g_0, g_1 \in \RiemPos(X, g_X)^G\), there is the \emph{\parensup{equivariant} coarse index difference} \(\inddiff(g_0, g_1) \in \K_{n+1}(\Roe[G] X)\) which vanishes if \(g_0\) and \(g_1\) are concordant, see~\cite[Section~2.2.4]{ZeidlerThesis}, \cite[Section~4.4]{zeidler_secondary}.
Note that if \(X = \ucov{M}\) is the universal covering of a closed manifold \(M\), then the equivariant coarse index difference agrees with the index difference considered in the previous subsection via the identification \(\K_\ast(\Roe[G](\ucov{M})) = \K_\ast(\CstarRed G)\).
Moreover, given another complete spin manifold \((Y, g_Y)\)  such that \(g_i \oplus g_Y\) has uniformly positive scalar curvature for \(i =0,1\), then the coarse index difference satisfies the product formula
\[
	\inddiff(g_0 \oplus g_Y, g_1 \oplus g_Y) = \inddiff(g_0, g_1) \times \Ind([\Dirac_Y]).
\]
Using our slant products, we obtain the following results.

\begin{cor}\label{cor_noncompactObstruction}
	Let \((Y, g_Y)\) be a complete spin manifold of continuously bounded geometry which is Higson-essential.
	Let \(M\) be a closed spin manifold and \(G \coloneqq \pi_1 M\).
  \begin{myenumi}
    \item 	If \(\Ind_G[\Dirac_M] \neq 0 \in \K_{m}(\CstarRed G)\), then \(\RiemPos(M \times Y, g_M \oplus g_Y) = \emptyset\).\label{item:noncompactObstruction}
    \item Let \(g_0, g_1 \in \RiemPos(M)\) such that \(\inddiff_G(g_0,g_1) \neq 0 \in \K_{m+1}(\CstarRed G)\).
    Then \(g_0 \oplus g_Y\) and \(g_1 \oplus g_Y\) are not concordant on \(M \times Y\).
    \label{item:noncompactDifference}
  \end{myenumi}
  Analogous statements apply if \(Y\) is rationally Higson-essential and the relevant index class is rationally non-zero.
\end{cor}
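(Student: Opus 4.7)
The plan is to reduce both parts of the corollary to the split-injectivity statement of \cref{intro_theorem_injectivity_higson_essential}, applied with trivial $H = \{e\}$ acting on $Y$, combined with the product formula for the coarse index and the Lichnerowicz--Roe theorem that the equivariant coarse index of the Dirac operator on a complete spin manifold of uniformly positive scalar curvature vanishes.

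For part \labelcref{item:noncompactObstruction} I would argue by contradiction. Assuming $\RiemPos(M \times Y, g_M \oplus g_Y) \neq \emptyset$, pick a representative and lift it to a $G$-invariant metric of uniformly positive scalar curvature on $\ucov M \times Y$ which is coarsely equivalent to $g_{\ucov M} \oplus g_Y$. Lichnerowicz--Roe then yields $\Ind([\Dirac_{\ucov M \times Y}]) = 0$ in $\K_{m+n}(\Roe[G](\ucov M \times Y))$. On the other hand, multiplicativity of the coarse index gives $\Ind([\Dirac_{\ucov M \times Y}]) = \Ind([\Dirac_{\ucov M}]) \times \Ind([\Dirac_Y])$, and under the canonical Morita identification $\K_m(\Roe[G] \ucov M) \cong \K_m(\CstarRed G)$ the first factor coincides with $\Ind_G[\Dirac_M]$, which is nonzero by hypothesis. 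Because $Y$ is Higson-essential, \cref{intro_theorem_injectivity_higson_essential} (with $H = \{e\}$) makes the external product map $x \mapsto x \times \Ind([\Dirac_Y])$ split-injective, so the product is nonzero, contradicting the vanishing.

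Part \labelcref{item:noncompactDifference} follows the same template but using the product formula recalled just before the corollary, $\inddiff(g_0 \oplus g_Y, g_1 \oplus g_Y) = \inddiff(g_0,g_1) \times \Ind([\Dirac_Y])$. Split-injectivity from \cref{intro_theorem_injectivity_higson_essential} again ensures that this right-hand side is nonzero whenever $\inddiff_G(g_0,g_1) \neq 0$, and since concordant positive scalar curvature metrics have vanishing coarse index difference, $g_0 \oplus g_Y$ and $g_1 \oplus g_Y$ cannot be concordant on $M \times Y$. For the rational statements, the rational split-injectivity clause of \cref{intro_theorem_injectivity_higson_essential} under the weaker hypothesis that $Y$ is rationally Higson-essential propagates rational non-vanishing through the same chain of implications.

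I expect the main obstacle to be bookkeeping rather than conceptual: one has to make sure that (a) the coarse index of $\Dirac_{\ucov M \times Y}$ with respect to the lifted metric lives in the same group $\K_{m+n}(\Roe[G](\ucov M \times Y))$ as the external product of the two individual coarse indices---note that $\ucov M \times Y$ is \emph{not} cocompact, so we genuinely need the non-cocompact equivariant Roe algebra targeted by \cref{intro_theorem_injectivity_higson_essential}; and (b) the chosen $g$, respectively the concordance in part \labelcref{item:noncompactDifference}, yields UPSC in the same bornological class as the reference metric used to set up that Roe algebra, which is guaranteed by the uniformly continuous condition built into the definition of $\Riem(\blank, g_M \oplus g_Y)$.
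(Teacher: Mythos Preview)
Your proposal is correct and follows essentially the same approach as the paper: lift metrics to $\ucov{M}\times Y$ (with the trivial group acting on $Y$), invoke the product formula for the coarse index and index difference, and conclude non-vanishing via the split-injectivity of \cref{intro_theorem_injectivity_higson_essential}; the paper's proof is simply a terser rendition of exactly this argument, and your remarks about the bookkeeping in (a) and (b) are precisely the points that the paper handles by the identification $\Riem(M\times Y, g_M\oplus g_Y)=\Riem(\ucov{M}\times Y, g_{\ucov{M}}\oplus g_Y)^G$.
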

\begin{proof}
  Lifting metrics induces an identification 
  \[ \Riem(M \times Y, g_M \oplus g_Y) = \Riem(\ucov{M} \times Y, g_{\ucov{M}} \oplus g_Y)^G,\]
  preserving products, uniform psc and concordances.
  Hence it suffices to prove the corresponding statements in \(\RiemPos(\ucov{M} \times Y, g_{\ucov{M}} \oplus g_Y)^G\).
  The product formula for the index and index difference together with \cref{intro_theorem_injectivity_higson_essential} imply \(\Ind([\Dirac_{\ucov{M} \times Y}]) \neq 0 \in \K_{m+n}(\Roe[G](\ucov{M} \times Y)) \) in part \labelcref{item:noncompactObstruction} and \(\inddiff(\ucov{g}_0 \oplus g_Y, \ucov{g}_1 \oplus g_Y) \neq 0 \in \K_{m+n+1}(\Roe[G](\ucov{M} \times Y)) \) in part \labelcref{item:noncompactDifference}.
\end{proof}

In particular, we deduce results for closed almost spin manifold (that is, the universal covering is spin but not necessarily the manifold itself).

\begin{cor}\label{cor_almost_spin}
  Let \(N\) be a closed manifold with \(\pi_1 N = H\) such that its universal covering \(\ucov{N}\) is spin and Higson-essential.
  Let \(M\) be a closed spin manifold with \(\pi_1 M = G\).
  \begin{myenumi}
    \item If \(\Ind_G[\Dirac_M] \neq 0 \in \K_{m}(\CstarRed G)\), then \(\RiemPos(M \times N) = \emptyset\).
    \item Let \(g_0, g_1 \in \RiemPos(M)\) such that \(\inddiff_G(g_0, g_1) \neq 0 \in \K_{m+1}(\CstarRed G)\).
    Then \(g_0 \oplus g_N\) and \(g_1 \oplus g_N\) are not concordant as positive scalar curvature metrics on \(M \times N\), where \(g_N\) is some Riemannian metric on \(N\) such that \(g_i \oplus g_N \in \RiemPos(M \times N)\)
  \end{myenumi}
\end{cor}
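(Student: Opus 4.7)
The plan is to reduce the statement to the equivariant non-compact setting on the universal cover, so that \cref{intro_theorem_injectivity_higson_essential} (or equivalently the already-proven \cref{cor_noncompactObstruction}, enhanced by an extra isometric $H$-action on the second factor) applies. Since $M$ is spin and $\ucov{N}$ is spin, the product $\ucov{M}\times\ucov{N}$ carries a spin structure together with a proper cocompact isometric action of $G\times H=\pi_1(M\times N)$ preserving that spin structure. Any Riemannian metric on $M\times N$ lifts to a $(G\times H)$-invariant metric on $\ucov{M}\times\ucov{N}$; positive scalar curvature is automatically uniform on the cover by cocompactness, and any concordance over $M\times N\times[0,1]$ lifts to an equivariant concordance over $\ucov{M}\times\ucov{N}\times\R$. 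Thus both conclusions are equivalent to the corresponding equivariant statements on the universal cover.

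Next, I would invoke \cref{intro_theorem_injectivity_higson_essential} with $X=\ucov{M}$ equipped with its proper cocompact $G$-action and $Y=\ucov{N}$ equipped with its proper cocompact $H$-action. The hypotheses are verified as follows: $\ucov{N}$ is Higson-essential by assumption, it has continuously bounded geometry because it is the universal cover of a closed Riemannian manifold, and the $H$-action preserves the lifted spin structure. We conclude that the external product map
\[
\K_{\ast}(\Roe[G]\ucov{M})\;\longrightarrow\;\K_{\ast+n}\bigl(\Roe[G\times H](\ucov{M}\times\ucov{N})\bigr),\qquad x\mapsto x\times\Ind[\Dirac_{\ucov{N}}],
\]
is split-injective. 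Since both actions are cocompact, the Morita equivalences $\Roe[G]\ucov{M}\simeq\CstarRed G$ and $\Roe[G\times H](\ucov{M}\times\ucov{N})\simeq\CstarRed(G\times H)$ convert this into split-injectivity of the map $\K_{\ast}(\CstarRed G)\to\K_{\ast+n}(\CstarRed(G\times H))$ given by multiplication with $\Ind_H[\Dirac_{\ucov{N}}]$.

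For part~(i), I would appeal to the multiplicativity of the equivariant Dirac index for a Riemannian product of a compact and an equivariantly cocompact spin manifold:
\[
\Ind_{G\times H}[\Dirac_{\ucov{M}\times\ucov{N}}]\;=\;\Ind_G[\Dirac_M]\times\Ind_H[\Dirac_{\ucov{N}}].
\]
The non-vanishing hypothesis on the first factor together with the split-injectivity established above yields non-vanishing of the product index class in $\K_{m+n}(\CstarRed(G\times H))$, which in turn obstructs any metric of positive scalar curvature on $M\times N$. For part~(ii), the analogous product formula for the coarse index difference,
\[
\inddiff(\ucov{g}_0\oplus g_{\ucov{N}},\,\ucov{g}_1\oplus g_{\ucov{N}})\;=\;\inddiff(\ucov{g}_0,\ucov{g}_1)\times\Ind[\Dirac_{\ucov{N}}],
\]
combined with the same split-injectivity, implies that the coarse index difference on $\ucov{M}\times\ucov{N}$ is non-zero. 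Since this class is a concordance obstruction for the lifted metrics, and concordance descends along the covering, $g_0\oplus g_N$ and $g_1\oplus g_N$ cannot be concordant as psc metrics on $M\times N$.

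I do not expect a genuine obstacle here: the main technical input is already packaged in \cref{intro_theorem_injectivity_higson_essential}, and the only mild care needed is in spelling out the one-to-one correspondence between metrics (resp.\ psc metrics, resp.\ concordances) on $M\times N$ and $(G\times H)$-invariant ones on $\ucov{M}\times\ucov{N}$, and in verifying that the standard Morita identification of $\K_\ast(\Roe[G\times H](\ucov{M}\times\ucov{N}))$ with $\K_\ast(\CstarRed(G\times H))$ intertwines the coarse index and index-difference classes on the cover with the higher index and higher index-difference classes downstairs. Both are standard in the literature on the Higson--Roe sequence.
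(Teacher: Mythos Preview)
Your approach has a genuine gap in exactly the case this corollary is designed to handle. The hypothesis is only that \(\ucov{N}\) is spin, not that \(N\) is spin; the text immediately preceding the corollary emphasizes that this is the \emph{almost-spin} situation. In that case the deck-transformation action of \(H=\pi_1 N\) on \(\ucov{N}\) need not lift to the spin principal bundle: if it did, the spin structure on \(\ucov{N}\) would descend to one on \(N\). Thus your assertion that ``the \(H\)-action preserves the lifted spin structure'' is unjustified, and there is no \(H\)-equivariant Dirac class \(\Ind[\Dirac_{\ucov N}]\in\K_n(\Roe[H]\ucov N)\) to feed into \cref{intro_theorem_injectivity_higson_essential}. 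Consequently the external product map into \(\K_\ast(\Roe[G\times H](\ucov M\times\ucov N))\) and the product formula you invoke are not available in the generality claimed.

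The paper sidesteps this entirely: it lifts a psc metric (respectively a concordance) on \(M\times N\) only to \(M\times\ucov N\), using the covering \(M\times\ucov N\to M\times N\) without recording any \(H\)-equivariance, and then applies \cref{cor_noncompactObstruction} with \(Y=\ucov N\) carrying the trivial group action. That result (whose proof in turn lifts to \(\ucov M\times\ucov N\) with only the \(G\)-action) already gives the obstruction. Your argument is salvageable by taking \(H=1\) on \(\ucov N\), but then it collapses to exactly the paper's route through \cref{cor_noncompactObstruction}; the extra \(H\)-equivariance you try to exploit is both unnecessary and, in the almost-spin case, unavailable.
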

\begin{proof}
  If \(\RiemPos(M \times N) \neq \emptyset\), then lifting the metric shows \(\RiemPos(M \times \ucov{N}, g_M \oplus \ucov{g}_N) \neq \emptyset\) for any choice of Riemannian metrics \(g_M\) and \(g_N\) on \(M\) and \(N\), respectively.
  Hence the statement follows from \cref{cor_noncompactObstruction}~\labelcref{item:noncompactObstruction}.
  The second part is reduced to \cref{cor_noncompactObstruction} in an analogous fashion.
\end{proof}

For \(g \in \RiemPos(X, g_X)^G\), there is also a \(\rho\)-invariant \(\rho(g) \in \Strg^G_m(X)\).
If \(\rho(g_0) \neq \rho(g_1)\), then the metrics are also not concordant.
\Cref{intro_theorem_injectivity_higson_essential} then implies an analogous version of \cref{cor_noncompactObstruction} for the \(\rho\)-invariant.
However, \(\rho(g_0) \neq \rho(g_1)\) already implies \(\inddiff(g_0, g_1) \neq 0\).
In fact, this \(\rho\)-invariant is a coarse bordism invariant in a suitable sense, see~\cite[Section~2.4.2]{ZeidlerThesis}, but formulating this requires some care to ensure that the structure groups on the two different ends of a bordism remain comparable.
In the right setup, it is then also possible to establish a non-compact version of the final conclusion from \cref{cor_PSCbordismApplication}, but we refrain from expounding the details here.

\subsection{Generalizations and questions}

\subsubsection{Coefficients}

Instead of working with the ordinary Roe algebra and ordinary stable Higson corona, we could have used throughout this paper their corresponding versions with coefficients, that is, Hilbert modules $\cE$, $\cF$ and $\cG$ for the Roe algebras of the corresponding spaces and a \textCstar-algebra $C$ for the stable Higson corona.\footnote{Roe algebras with coefficients in \textCstar-algebras were considered by \citeauthor{higson_pedersen_roe}~\cite{higson_pedersen_roe}. The definition of Roe algebras with coefficients in \textCstar-algebras given in \cite{hanke_pape_schick} is quickly seen to generalize to Hilbert modules; see also \cite{WulffTwisted}.

\citeauthor{EmeMey}~\cite{EmeMey} defined the stable Higson corona from the very beginning  with coefficients in \textCstar-algebras.}
\(\K\)-homology and the analytic structure group can be similarly enriched.

For $a\in \K_m(Y;\cF)$ we then have the following diagram for the external product by $a$:
\begin{align*}
\resizebox{\textwidth}{!}{\xymatrix{
\Strg_n(X;\cE) \ar[r] \ar[d]^{\times a}
& \K_n(X;\cE) \ar[r] \ar[d]^{\times a}
& \K_n(\Roe(X;\cE)) \ar[d]^{\times \mu(a)}\ar[r]
& \Strg_{n-1}(X;\cE)) \ar[d]^{\times a}
\\\Strg_{m+n}(X \times Y;\cE\otimes \cF)) \ar[r]
& \K_{m+n}(X \times Y;\cE\otimes\cF) \ar[r]
& \K_{m+n}(\Roe(X \times Y;\cE\otimes\cF)) \ar[r]
& \Strg_{m+n-1}(X \times Y;\cE\otimes \cF)
}}
\end{align*}
and for $b\in K_{1-m}(\sHigCorRed (Y;C))$ the following version of the slant products by $b$:
\begin{align*}
\resizebox{\textwidth}{!}{\xymatrix{
\Strg_{n}(X \times Y;\cG) \ar[r] \ar[d]^{/b}
& \K_{n}(X \times Y;\cG) \ar[r] \ar[d]^{/\mu^*(b)}
& \K_{n}(\Roe(X \times Y;\cG)) \ar[r] \ar[d]^{/b}
& \Strg_{n-1}(X \times Y;\cG)\ar[d]^{/b}
\\\Strg_{n-m}(X;\cG\otimes C) \ar[r]
& \K_{n-m}(X;\cG\otimes C) \ar[r]
& \K_{n-m}(\Roe(X;\cG\otimes C)) \ar[r]
& \Strg_{n-1-m}(X;\cG\otimes C)
}}
\end{align*}

In particular, take \(X = \ast\) to be a point and consider the standard Hilbert-module \(\mathcal{A} = A \otimes \ell^2\), where \(A\) is some \textCstar-algebra.
Then the Roe algebra \(\Roe(\ast; \mathcal{A})\) is isomorphic to \(A \otimes \Kom(\ell^2)\) and so \(\K_\ast(\Roe(\ast; \mathcal{A})) = \K_\ast(A)\).
In the situation of \cref{intro_theorem_injectivity_higson_essential}, we could then deduce (rational) injectivity of the external product map
\[
  \K_\ast(A) \to \K_{\ast + n}(A \otimes \Roe[H](Y)), \quad x \mapsto x \times \Ind[\Dirac_Y].
\]

We do not discuss this any further in the paper to keep the notation lean.

\subsubsection{Real K-theory}
We have formulated the results of this paper in the framework of complex \(\K\)-theory for simplicity.
However, especially for applications to positive scalar curvature and spin geometry, it would be desirable to establish the analogous statements for \(\mathrm{KO}\)-homology and the corresponding real version of the analytic structure group.
Our construction of slant products is sufficiently abstract and does not use any idiosyncrasies of complex \(\K\)-theory (such as---for instance---using \(2\)-periodicity in an essential way).
Hence the slant products also exist in the real setup and all applications that involve external products with a single element (such as \cref{intro_theorem_injectivity_higson_essential}) go through without essential change.

However, more care has to be taken with results that rely on universal coefficient and Künneth theorems (such as \cref{intro_thm_struct_injective_gamma,thm_intro_kuenneth_full}).
They do not appear to readily generalize and instead would require a more elaborate framework such as in~\cite{Boersema:RealKuenneth}.

\subsubsection{Künneth sequence for the structure group}

In \cref{intro_cor_injective_gamma}, we could have also included the case of the external product for $\K$-homology, that is, $\K_m(M) \otimes \K_{n}(N) \to \K_{m+n}(M \times N)$.
In fact, injectivity of this map holds in full generality due to the Künneth theorem for $\K$-homology.

There is also a Künneth theorem for the $\K$-theory of the reduced group \textCstar-algebras, provided the Baum--Connes conjecture is satisfied by one of the groups.\footnote{Tu \cite{tu_UCT} proved that if $G$ is amenable, then $\CstarRed G$ lies in the bootstrap class and hence satisfies the Künneth formula. That $\CstarRed G$ satisfies the Künneth formula if $G$ satisfies the Baum--Connes conjecture was proven by \citeauthor{Kuenneth_BC}~\cite{Kuenneth_BC}.} Therefore, in this situation we get rational injectivity of \eqref{eq_ext_prod_group_alg} as in the \cref{intro_thm_struct_injective_gamma}. But the assumption of satisfying Baum--Connes is much stronger than admitting a $\gamma$-element.

It is now a natural question if there is also a version of the Künneth sequence for the structure group.
For instance, one might ask if there is a short exact sequence of the form
\begin{equation*}
0 \to (\Strg_*^{G}(\ucov{M}) \otimes \K_*(N)) \oplus (\K_*(M) \oplus \Strg_*^{H}(\ucov{N})) \to \Strg_*^{G \times H}(\ucov{M} \times \ucov{N}) \to \operatorname{?Tor?} \to 0,
\end{equation*}
where $\operatorname{?Tor?}$ is some suitable correction term analogous to the \(\operatorname{Tor}\)-term in the Künneth sequence for \(\K\)-homology.
For this to make sense, it is probably necessary to diagonally divide out \(\Strg_*^{G}(\ucov{M}) \otimes \Strg_\ast^H(\ucov{N})\) in the term on the left.
If such a sequence exists, then it should ideally imply the conclusion of \cref{thm_intro_kuenneth_full}.
However, by the result of \cref{sec_proof_SNC}, proving the existence of such a sequence will---realistically---require at least some hypotheses related to the strong Novikov conjecture.

\subsubsection{Groups with torsion}

Many of our results (such as \cref{intro_thm_struct_injective_gamma}) involve finite classifying spaces of countable discrete groups (that is, finite aspherical complexes) which restricts them to torsion-free groups.
However, in our general constructions (especially in \cref{sec_equiv_version}) we need the groups to act only properly and not necessarily freely.
Thus many of these results will have corresponding versions for groups with torsion if we work with the classifying space for proper actions $\Eub G$ instead---at least if the latter admits a \(G\)-finite model.

\subsubsection{Exactness and the stable Higson corona}

In \cref{sec_equiv_version}, when introducing the equivariant versions of the slant products, we are working with crossed products $\sHigCorRed Y \rtimes_{\mu} H$, where \(\rtimes_\mu\) is any exact crossed product functor.
This is necessary because the relevant co-assembly map might in general not exist for the reduced crossed product unless $H$ is exact.

So the natural question arises in which situations $\sHigCorRed Y$ is an amenable $H$-\textCstar-algebra (see \cref{defn_amenabilities}) and hence all different choices of crossed products for $\sHigCorRed Y \rtimes H$ coincide. This is in general always the case when $H$ is an amenable group, and we show that it furthermore holds in the case that $H$ is a Gromov hyperbolic group acting on itself:
\begin{prop}[\cref{ex_hyperbolic_amenable_action}]
Let $H$ be a Gromov hyperbolic group.
Then $\sHigCorRed H$ is an amenable $H$-\textCstar-algebra and $\sHigCorRed H \rtimes_{\max} H \cong \sHigCorRed H \rtimes_{\red} H$.
\end{prop}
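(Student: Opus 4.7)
The plan is to leverage Adams' classical theorem that a Gromov hyperbolic group $H$ acts topologically amenably on its Gromov boundary $\partial H$, and to transfer this amenability to $\sHigCorRed H$ via a central, $H$-equivariant unital $\ast$-homomorphism $C(\partial H) \to M(\sHigCorRed H)$. Such an embedding realizes $\sHigCorRed H$ as an $H$-equivariant $C(\partial H)$-algebra over an amenable base, from which amenability of $\sHigCorRed H$ as an $H$-\textCstar-algebra in the sense of \cref{defn_amenabilities} follows by a standard transfer principle; the coincidence $\sHigCorRed H \rtimes_{\max} H \cong \sHigCorRed H \rtimes_{\red} H$ is then immediate from amenability.

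The key construction is the central embedding. The identity map on $H$ extends continuously and $H$-equivariantly to a surjection from the Higson compactification $\nu H$ onto the Gromov compactification $H \cup \partial H$, since every bounded continuous function on $H \cup \partial H$ pulls back to a Higson function on $H$: its variation along pairs of points at bounded distance vanishes uniformly at infinity, which is essentially the definition of the boundary topology via Gromov products. Restricting to coronas produces a unital $H$-equivariant inclusion $C(\partial H) \hookrightarrow C(\HigCorSpace H)$. Next, bounded scalar Higson functions on $H$ act on the stable Higson compactification $\sHigCom H$ as central multipliers via pointwise multiplication by the identity of $\elltwo$, manifestly commuting with every element and hence landing in $Z(M(\sHigCom H))$; this action descends to a unital central $\ast$-homomorphism $C(\HigCorSpace H) \to Z(M(\sHigCorRed H))$. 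Composing with the previous inclusion yields the desired $H$-equivariant central embedding $C(\partial H) \hookrightarrow Z(M(\sHigCorRed H))$.

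Combining this with Adams' theorem, topological amenability of $H \curvearrowright \partial H$ makes $C(\partial H)$ an amenable $H$-\textCstar-algebra, and since $\sHigCorRed H$ is an $H$-equivariant $C(\partial H)$-algebra, amenability transfers from the base to the total algebra; this gives amenability of $\sHigCorRed H$ together with the coincidence of maximal and reduced crossed products. I expect the main obstacle to be precisely this last step: invoking the transfer principle for $C(X)$-algebras requires a careful match with the paper's formulation of amenability in \cref{defn_amenabilities}, and one must verify that the central embedding is nondegenerate as a morphism into the multiplier algebra. These should both be formalities---the embedding is unital---but the bookkeeping against the paper's framework is where genuine care is needed.
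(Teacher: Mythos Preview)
Your overall strategy matches the paper's: invoke Adams' theorem to get an amenable $H$-action on $\partial H$, pull it back along the $H$-equivariant surjection $\HigCorSpace H \to \partial H$ to an amenable action on the Higson corona, and then transfer amenability to $\sHigCorRed H$ via a central embedding of $C(\HigCorSpace H)$. The paper packages the last step as \cref{prop_amenable_Higson_corona}, whose proof rests on \cref{lem_embedding_double_commutant_corona}.

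However, your central embedding has a genuine technical error. You claim that scalar Higson functions act as central multipliers via $f \mapsto f \otimes \id_{\elltwo}$, descending to $C(\HigCorSpace H) \to Z(M(\sHigCorRed H))$. But $\sHigCorRed H$ is \emph{unital} (the constant function at $\id_{\elltwo}$ is a unit in $\sHigComRed H$ and survives in the quotient), so $M(\sHigCorRed H) = \sHigCorRed H$ and you are asserting an embedding into $Z(\sHigCorRed H)$ itself. This fails: for nonconstant $f \in \HigCom(H)$ the function $f \otimes \id_{\elltwo}$ does not lie in $\sHigComRed H$, since $(f(x)-f(x'))\id_{\elltwo}$ is never compact. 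The paper makes this obstruction explicit, observing that $Z(\sHigComRed Y) \cong \C$ and hence that \emph{strong} amenability of $\sHigComRed Y$ forces $H$ itself to be amenable. The same obstruction persists in the corona, which contains an embedded copy of $\Lin(\elltwo)$ as constant functions.

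The paper's remedy is exactly to replace the multiplier algebra by the double dual: \cref{lem_embedding_double_commutant_corona} builds an $H$-equivariant unital embedding $C(\HigCorSpace H) \hookrightarrow Z((\sHigCorRed H)^{**})$ by taking ultra-weak limits of $f \otimes k_n$ with $k_n \in \Kom$ approximating $\id_{\elltwo}$. This is why one obtains \emph{amenability} in the sense of \cref{defn_amenabilities}(b) (positive-type functions into $Z(A^{**})$) rather than strong amenability. So the step you flag as ``bookkeeping'' is in fact the substantive point; once you route the embedding through the double dual, your argument becomes the paper's.
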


One can ask about the concrete relation between amenability of $\sHigCorRed H$, the equality $\sHigCorRed H \rtimes_{\max} H \cong \sHigCorRed H \rtimes_{\red} H$ and the exactness of $H$. In \cref{sec_weak_containment} we get first partial results on this, and our general conjecture is the following:

\begin{conjecture}\label{conj_weak_containment}
Let $Y$ be a proper metric space equipped with an isometric action of a countable discrete group $H$.
Then the following conditions are equivalent:
\begin{myenuma}
\item\label{item_conj_equiv_amenable_one} The group $H$ acts amenably on the Higson compactification of $Y$.
\item\label{item_conj_equiv_amenable_two} $\sHigCorRed Y$ is an amenable $H$-\textCstar-algebra.
\item\label{item_conj_equiv_amenable_three} The group $H$ is exact and we have $\sHigCorRed Y \rtimes_{\max} H \cong \sHigCorRed Y \rtimes_{\red} H$.
\end{myenuma}
\end{conjecture}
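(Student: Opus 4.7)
The strategy is to establish $(a) \Rightarrow (b) \Rightarrow (c)$ by standard operator-algebraic methods, while identifying the reverse direction $(c) \Rightarrow (b)$ as the principal obstacle. Throughout, write $h\overline{Y}$ for the Higson compactification of $Y$, so that $\HigCom Y = C(h\overline{Y})$.

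For $(a) \Rightarrow (b)$: By the fundamental theorem of Anantharaman-Delaroche, amenability of the action $H \curvearrowright h\overline{Y}$ is equivalent to amenability of $C(h\overline{Y})$ as an $H$-\textCstar-algebra. Since $\Kom(\elltwo)$ with trivial $H$-action is nuclear, the tensor product $C(h\overline{Y}) \tens \Kom(\elltwo)$ is again an amenable $H$-\textCstar-algebra. One then identifies $\sHigComRed Y$ with an $H$-invariant \textCstar-subalgebra of $C(h\overline{Y}) \tens \Kom(\elltwo)$, using that the Higson-type variation condition in the definition of reduced stable Higson functions forces continuous extension to $h\overline{Y}$ with values in the compacts. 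Amenability descends to this $H$-invariant subalgebra and then to the $H$-invariant quotient $\sHigCorRed Y = \sHigComRed Y / \Cz(Y, \Kom(\elltwo))$, yielding $(b)$.

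For $(b) \Rightarrow (c)$: The crossed product identity $\sHigCorRed Y \rtimes_{\max} H \cong \sHigCorRed Y \rtimes_{\red} H$ is immediate from the fact that amenable $H$-\textCstar-algebras satisfy weak containment. Exactness of $H$ follows either from the Brodzki--Cave--Li principle that the existence of an amenable $H$-action on a unital \textCstar-algebra already forces exactness, or, more concretely, by pulling back the approximate positive-type sections witnessing amenability of $\sHigCorRed Y$ along a coarse orbit inclusion $Hy \hookrightarrow Y$ to produce the positive-type functions on $H$ required by Ozawa's characterization of exactness via property~A.

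The principal obstacle is the reverse direction $(c) \Rightarrow (b)$. In general, exactness of $H$ combined with the coincidence $A \rtimes_{\max} H = A \rtimes_{\red} H$ for a single $H$-\textCstar-algebra $A$ is strictly weaker than amenability of $A$ as an $H$-algebra, reflecting the classical gap between nuclearity of a crossed product and topological amenability of the underlying action. The only plausible approach is to exploit the universal large-scale nature of $\sHigCorRed Y$: weak containment for its crossed product should be translated into an approximation of the trivial representation of $H$ by matrix coefficients arising from Higson-type functions on $Y$, which must then be promoted to a net of $H$-equivariant probability measures on $h\overline{Y}$ converging to invariance in the sense of Anantharaman-Delaroche. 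The partial results of \cref{sec_weak_containment} together with the amenable and hyperbolic cases serve as a template, but completing the argument in full generality will likely require a genuinely coarse-geometric refinement of Ozawa's equivalence between exactness and the existence of amenable actions, and hence appears to constitute substantively new ground beyond standard exactness theory.
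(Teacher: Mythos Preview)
This statement is a \emph{conjecture} in the paper; no full proof is given there either. The paper establishes only the forward chain \ref{item_conj_equiv_amenable_one}~$\Rightarrow$~\ref{item_conj_equiv_amenable_two}~$\Rightarrow$~(weak containment) together with \ref{item_conj_equiv_amenable_one}~$\Rightarrow$~(exactness of $H$), via \cref{prop_amenable_Higson_corona}, and you likewise flag the reverse direction as open. So the comparison is really about your argument for the forward implications.

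Your route to \ref{item_conj_equiv_amenable_one}~$\Rightarrow$~\ref{item_conj_equiv_amenable_two} has a genuine gap. You assert that $\sHigComRed Y$ sits as an $H$-invariant sub-\textCstar-algebra of $C(h\overline{Y})\tens\Kom(\elltwo)$, but by definition elements of $\sHigComRed Y$ take values in $\Lin(\elltwo)$, with only the \emph{differences} $f(x)-f(x')$ lying in $\Kom(\elltwo)$; there is no such embedding. Even for the unreduced $\sHigCom Y$ the argument would not go through, because amenability of $H$-\textCstar-algebras does not in general descend to $H$-invariant subalgebras: the witnessing positive-type functions take values in $Z(B^{**})$, and for a subalgebra $A\subset B$ there is no natural map $Z(B^{**})\to Z(A^{**})$. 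The paper circumvents this by going in the opposite direction. It constructs (\cref{lem_embedding_double_commutant_corona}) an equivariant unital embedding $\Ct(\HigCorSpace Y)\hookrightarrow Z((\sHigCorRed Y)^{**})$ as an ultraweak limit of the maps $f\mapsto f\otimes k_n$ with $k_n\to\id_{\elltwo}$, and then pushes the positive-type functions witnessing strong amenability of the \emph{commutative} algebra $\Ct(\HigCorSpace Y)$ forward along this central embedding. The point is that $Z(\sHigCorRed Y)\cong\C$ (cf.\ the remark after \cref{prop_amenable_Higson_compactification}), so one is forced into the double dual.

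A second, smaller issue: you derive exactness of $H$ directly from \ref{item_conj_equiv_amenable_two}, but the result the paper invokes (\cite[Theorem~5.3]{BEW_amenable}) requires a \emph{commutative} unital strongly amenable $H$-algebra. Accordingly, the paper obtains exactness from \ref{item_conj_equiv_amenable_one} via $\Ct(\HigCorSpace Y)$, not from \ref{item_conj_equiv_amenable_two}; whether amenability of the noncommutative $\sHigCorRed Y$ alone forces exactness of $H$ is itself part of what the conjecture is asking.
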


Note that there is a related statement in the dual situation, that is, for the uniform Roe algebra: $H$ is exact if and only if the uniform Roe algebra $\mathrm{C}^\ast_{\mathrm{u}} H$ is exact \cite{brodzki_niblo_wright}.

\section{Injectivity implies non-vanishing index}
\label{sec_proof_SNC}
In this short section, we provide a proof for our motivating observation which was stated as \cref{intro_thm_InjImpliesNonVanish} in the introduction.
\begin{thm}\label{thm_InjImpliesNonVanish}
  Let \(N\) be a finite complex and $z \in \K_n(N)$ be such that for every closed manifold $M$ the map $\Strg_\ast^{G}(\ucov{M}) \to \Strg_{\ast+n}^{G \times H}(\ucov{M} \times \ucov{N})$, \(x \mapsto x \times z\), where \(G = \pi_1 M\) and \(H = \pi_1 N\), is rationally injective.
  Then $\Ind_H(z) \in \K_n(\CstarRed H)$ is rationally non-zero.
\end{thm}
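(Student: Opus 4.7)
The idea is to argue by contrapositive. Assume that $\Ind_H(z)$ vanishes in $\K_n(\CstarRed H) \otimes \Q$; we will exhibit a single closed manifold $M$ for which the map $x \mapsto x \times z$ fails to be rationally injective on $\Strg_\ast^{G}(\ucov{M})$, contradicting the hypothesis.

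The key input is the naturality of the connecting map $\partial$ in the Higson--Roe sequence \eqref{eq_analytic_surgery_sequence} with respect to external products. Concretely, for every $y \in \K_{p+1}(\CstarRed G)$ one has (possibly up to a sign) the identity
\[
\partial(y) \times z \;=\; \partial\bigl(y \times \Ind_H(z)\bigr)\in \Strg_{p+n}^{G \times H}(\ucov{M} \times \ucov{N}).
\]
This is a standard product--boundary compatibility of the analytic surgery sequence, subsumed by the more systematic naturality statements developed later in the paper. Under the vanishing hypothesis $\Ind_H(z) \otimes \Q = 0$ the right-hand side is rationally zero for every $y$. Hence the entire rational image of $\partial\colon \K_{p+1}(\CstarRed G) \to \Strg_p^G(\ucov{M})$ lies in the rational kernel of $\times z$, for every degree $p$ and every closed manifold $M$ with $\pi_1(M) = G$.

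It therefore suffices to produce a single closed manifold $M$ such that $\partial$ has nontrivial rational image in some degree. By exactness of \eqref{eq_analytic_surgery_sequence} this is equivalent to finding $M$ for which the assembly map $\Ind_G\colon \K_\ast(M) \otimes \Q \to \K_\ast(\CstarRed G) \otimes \Q$ is not rationally surjective. The simplest example is $M = \R\mathrm{P}^{2k+1}$ with $k \geq 1$: here $G = \Z/2$ and $\CstarRed G \cong \C \oplus \C$, so $\K_0(\CstarRed G) \otimes \Q = \Q^2$, while the assembly map factors through $\K_\ast(B\Z/2) \otimes \Q$, which by the Atiyah--Hirzebruch spectral sequence is just $\Q$ concentrated in degree zero (as $H_\ast(\R\mathrm{P}^\infty; \Q)$ is). Thus the image of assembly has rank at most one, so $\partial$ has rank at least one in $\Strg_1^{\Z/2}(\ucov{\R\mathrm{P}^{2k+1}})$, and any $y \in \K_0(\CstarRed \Z/2)$ with $\partial(y) \neq 0$ rationally gives the desired counterexample.

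The main obstacle is the product--boundary compatibility used above; while such identities are routine in higher index theory, their precise sign depends on the chosen model of $\partial$ and on the conventions for external products. Since only the rational vanishing of the right-hand side is needed, the sign ambiguity is harmless. Everything else is elementary, and in fact the same strategy yields many alternative counterexamples, for instance any closed manifold whose fundamental group is a finite group for which the rational assembly map is not surjective.
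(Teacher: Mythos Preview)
Your proof is correct and follows essentially the same approach as the paper: argue by contraposition, use the product--boundary compatibility $\partial(y) \times z = \partial(y \times \Ind_H(z))$ to show that the rational image of $\partial_M$ lies in the kernel of $\times z$, and then exhibit a closed manifold with finite fundamental group for which $\partial_M \otimes \Q \neq 0$. The only cosmetic difference is that the paper treats an arbitrary nontrivial finite group (using that $\K_0(\CstarRed G)\otimes\Q \cong R(G)\otimes\Q$ has rank $>1$ while $\RK_0(\Bfree G)\otimes\Q \cong \Q$) and then factors $\partial_G$ through $\partial_M$, whereas you work directly with the concrete example $M = \R\mathrm{P}^{2k+1}$ and $G = \Z/2$.
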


\begin{proof}
	Suppose by contraposition that rationally $\Ind_H(z) = 0$.
	For every $M$, there is the following commutative diagram.
\[
\xymatrix{
  \K_{\ast+1}(\CstarRed G) \ar[r]^-{\partial_M} \ar[d]^-{\blank\times \Ind_H(z)} &
    \Strg_\ast^G(\ucov{M}) \ar[d]^-{\blank\times z} \\
  \K_{\ast+1+n}(\CstarRed (G \times H)) \ar[r]^-{\partial_{M \times N}} &
    \Strg_{\ast+n}^{G \times H}(\ucov{M} \times \ucov{N})
}
\]
A construction of this diagram is provided in \cref{subsec_crossProducts_equiv}.

	Since the left vertical arrow is rationally zero, we conclude that the image of $\partial_M \otimes \Q$ is contained in the kernel of
	\[ (\blank \times z) \otimes \Q \colon
	 \Strg_\ast^G(\ucov{M}) \otimes \Q \to \Strg_{\ast+n}^{G \times H}(\ucov{M} \times \ucov{N}) \otimes \Q.
	\]
	Thus to complete the proof it suffices to find a closed manifold $M$ such that $\partial_M \otimes \Q \neq 0$.
	Indeed, this happens for instance if $M$ is such that $G = \pi_1 M$ is a non-trivial finite group.
	This is folklore but we briefly explain it for the convenience of the reader.

  Let \(G\) be finite.
  Then
  \[
  \K_i(\CstarRed G) = \K_i(\C[G]) \cong \begin{cases} \operatorname{R}(G) & i = 0, \\
  0 & i = 1,
  \end{cases}
  \]
  where \(\operatorname{R}(G)\) denotes the Grothendieck group of finite-dimensional complex \(G\)-representations.
  Elementary character theory shows that the rank of the abelian group \(\operatorname{R}(G)\) is the number of conjugacy classes of elements in \(G\).
  Hence it is greater than one because \(G\) is non-trivial.
  Moreover, since the homology of a finite group is torsion in all positive degrees, it follows (for instance by an application of the Atiyah--Hirzebruch spectral sequence) that
  \[
    \RK_i(\Bfree G) \otimes \Q \cong \RK_i(\ast) \otimes \Q \cong \begin{cases}
      \Q & i = 0,\\ 0 & i =1,
    \end{cases}
  \]
  where the isomorphism is induced by mapping onto the point.
  Thus, the rational assembly map \(\Q \cong \K_0(\Bfree G) \to \K_0(\C[G]) \otimes \Q \cong \operatorname{R}(G) \otimes \Q\) is not surjective for dimension reasons (in fact, the image is generated by the left-regular representation of \(G\)).
  Putting these observations together, we see that the rational Higson--Roe sequence of \(G\) collapses to the following short exact sequence (with all other terms vanishing):
	\[
		0 \to \underbrace{\RK_0(\Bfree G) \otimes \Q}_{\cong\Q}
		\to \underbrace{\K_0(\CstarRed G) \otimes \Q}_{\cong \operatorname{R}(G) \otimes \Q}
		 \xrightarrow{\partial_G \otimes \Q} \RStrg_1^G(\Efree G) \otimes \Q  \to 0
	\]
  Since the assembly map is not surjective, \(\partial_G \otimes \Q \neq 0\).
	Finally, for any $M$ with $\pi_1 M = G$, the boundary map $\partial_G$ factors as
	\[
		\partial_G \colon \K_{\ast+1}(\CstarRed G) \xrightarrow{\partial_M} \Strg_\ast^G(\ucov{M}) \to \RStrg_\ast^G(\Efree G).
	\]
	Hence $\partial_G \otimes \Q \neq 0$ implies $\partial_M \otimes \Q \neq 0$.
\end{proof}

\begin{rem}
 In fact, the proof shows that injectivity of the external product map \(\Strg_\ast^G(\ucov{M}) \xrightarrow{\blank \times z} \Strg_{\ast+n}(\ucov{M} \times \ucov{N})\) for any \(M\), where \(\pi_1 M = G\) is a non-trivial finite group, suffices to obtain the conclusion of the theorem.
\end{rem}

\section{External products}\label{sec:crossProducts}

In this section we will revisit the construction of the external product maps via localization algebras.
The idea for this approach appeared first in work of Xie and Yu~\cite[Section~2.3.3]{XieYuPscLocAlg} and was fleshed out by Zeidler~\cite[Section~3.3]{zeidler_secondary}.
See also~\cite[Section~9.2]{WillettYuHigherIndexTheory}.
We start in the setup without group actions and discuss the equivariant situation thereafter.

\subsection{Non-equivariant case}\label{subsec_crossProducts_nonequiv}

Let $X$ be a proper metric space.
An \(X\)-module is a separable Hilbert space $H_X$ endowed with a non-degenerate \(\ast\)-representation $\rho_X\colon \Cz(X) \to \Lin(H_X)$.
Given an \(X\) module, the \emph{Roe algebra} $\Roe (\rho_X)$ is defined as the sub-\textCstar\nobreakdash-algebra of $\Lin(H_X)$ generated by all locally compact operators of finite propagation. These notions are defined as follows.
\begin{itemize}
\item An operator $T \in \Lin(H_X)$ is called \emph{locally compact}, if for every $f \in \Cz(X)$ the operators $\rho_X(f) T$ and $T \rho_X(f)$ are compact operators.
\item An operator $T \in \Lin(H_X)$ is said to have \emph{finite propagation} if there exists an $R > 0$ such that $\rho_X(f) T \rho_X(g) = 0$ whenever the supports of $f$ and $g$ are further apart from each other than $R$. In this case, the propagation is said to be bounded by $R$.
\end{itemize}
An \(X\)-module \((H_X, \rho_X)\) is called \emph{ample}, if no non-zero function from $\Cz(X)$ acts by a compact operator.
 From now on we shall assume that we have fixed an ample \(X\)-module \((H_X, \rho_X)\).

The crucial fact about Roe algebras is that their $\K$-theory is independent of the choice of ample \(X\)-module \cite[Corollary~6.3.13]{higson_roe} \cite[Theorem~5.1.15]{WillettYuHigherIndexTheory} up to canonical isomorphism.
Hence we usually suppress it from the notation by writing \(\Roe X\) instead of \(\Roe(\rho_X)\). Nevertheless, later on we will have to consider the Roe algebras of $X$ associated to different representations at once. In those cases, we default to the notation $\Roe(\rho_X)$.

Further, one defines the \emph{localization algebra} $\Loc X$ as the sub-\Cstar-algebra of $\Cb([1,\infty),\Roe X)$ generated by the bounded and uniformly continuous functions $L\colon [1,\infty) \to\Roe X$ such that the propagation
of $L(t)$ is finite for all $t\geq 1$ and tends to zero as $t\to\infty$.
If it is constructed using an ample \(X\)-module, its $\K$-theory is canonically isomorphic to the locally finite $\K$-homology of the space $X$, that is,
\[\K_\ast(\Loc X)\cong \KK_\ast(\Cz(X), \C) \cong \K_\ast(X).\]
This fact was originally established by Yu~\cite{yu_localization} for finite complexes and was generalized to proper metric spaces by Qiao and Roe~\cite{qiao_roe}.
See also \cite[Chapters~6--7]{WillettYuHigherIndexTheory} for a self-contained development of analytic \(\K\)-homology based on localization algebras.\footnote{Note, however, that the localization algebras considered in \cite{WillettYuHigherIndexTheory} are slightly bigger and have somewhat better functoriality properties than the original versions.
We use the original version in this paper because it is more convenient for our subsequent construction of slant products at the cost of a slightly more awkward approach to functoriality, see~\cref{rem_better_functoriality_structgroup}.}
Related results that describe more general \(\KK\)-groups using different versions of the localization algebra can be found in~\cite{DWWLocalization}.

Finally, the ideal in $\Loc X$ consisting of all such functions $L$ with $L(1)=0$ is denoted by $\Locz X$. Its $\K$-theory is called the \emph{analytic structure group of $X$} and denoted $\Strg_\ast(X)\coloneqq\K_\ast(\Locz X)$.

These three \textCstar\nobreakdash-algebras fit into a short exact sequence
\[0\to\Locz X\to\Loc X\xrightarrow{\operatorname{ev}_1}\Roe X\to 0\]
and the induced long exact sequence is the \emph{Higson--Roe sequence}
\[\dots\to \K_{*+1}(\Roe X)\to \Strg_*(X)\to \K_*(X)\xrightarrow{\Ind} \K_*(\Roe X)\to\dots\]
with the map $\Ind=(\operatorname{ev}_1)_*$ induced by evaluation at $1$ being the index map.

Given another proper metric space $Y$, we consider the above-mentioned \textCstar-algebras of $Y$ and $X\times Y$ associated to a chosen ample representation $\rho_Y\colon \Cz(Y)\to\Lin(H_Y)$ and the corresponding tensor product representation $\rho_{X\times Y}\colon \Cz(X\times Y)\to \Lin(H_{X\times Y})$ on $H_{X\times Y}\coloneqq H_X\otimes H_Y$ which is again ample.

Note that the tensor product of two locally compact operators of finite propagation is again locally compact and of finite propagation. Hence
\begin{equation}
\Roe X\otimes\Roe Y\subset \Roe(X\times Y)\label{eq_precrossroe}
\end{equation}
 and this inclusion induces the external product
\[\times\colon\K_m(\Roe X)\otimes\K_n(\Roe Y)\to \K_{m+n}(\Roe(X\times Y))\,.\]

If $L_1$ and $L_2$ are functions in the generating subset of $\Loc X$, $\Loc Y$, respectively, then one readily verifies that the function
\[L\colon [1,\infty)\to\Roe(X\times Y)\,,\quad t\mapsto L_1(t)\otimes L_2(t)\]
also satisfies the propagation condition and yields an element of $\Loc(X\times Y)$. If $L_1\in\Locz X$, then we will have $L\in\Locz(X\times Y)$. This gives rise to isometric $*$-homomorphisms
\begin{align}
\Loc X\otimes \Loc Y&\to\Loc(X\times Y)\label{eq_precrossloc}
\\\Locz X\otimes \Loc Y&\to\Locz(X\times Y)\label{eq_precrosslocz}
\end{align}

\begin{rem}
The fact that this works with the minimal tensor product on the left hand side can be seen as follows.

There are canonical faithful representations of $\Loc X$, $\Loc Y$ and $\Loc(X\times Y)$ on the Hilbert spaces $\ell^2([1,\infty))\otimes H_X$, $\ell^2([1,\infty))\otimes H_Y$ and $\ell^2([1,\infty))\otimes H_{X\times Y}$, respectively. By the definition of the minimal tensor product we can thus see $\Loc X\otimes\Loc Y$ as a sub-\textCstar\nobreakdash-algebra of $\Lin(\ell^2([1,\infty))\otimes H_X\otimes\ell^2([1,\infty))\otimes H_Y)$. Now, conjugation by the Hilbert space projection
\[\ell^2([1,\infty))\otimes H_X\otimes \ell^2([1,\infty))\otimes H_Y\to\ell^2([1,\infty))\otimes H_{X\times Y}\]
onto the diagonal of $[1,\infty) \times [1,\infty)$ is a continuous linear map
\[\Lin(\ell^2([1,\infty))\otimes H_X\otimes\ell^2([1,\infty))\otimes H_Y)\to\Lin(\ell^2([1,\infty))\otimes H_{X\times Y})\]
which is not a $*$-homomorphism. But its restriction to the minimal tensor product $\Loc X\otimes\Loc Y$ is an isometric $*$-homomorphism and has image contained in the cannonically embedded sub-\textCstar\nobreakdash-algebra $\Loc(X\times Y)$, as can be seen on generators as above.
\end{rem}

The two $*$-homomorphisms \eqref{eq_precrossloc} and \eqref{eq_precrosslocz} give rise to external product maps
\begin{align*}
\times\colon\K_m(X)\otimes\K_n(Y)&\to \K_{m+n}(X\times Y)\,,
\\\times\colon \Strg_m(X)\otimes\K_n(Y)&\to \Strg_{m+n}(X\times Y)\,.
\end{align*}

\begin{rem}\label{rem_signconvention}
The definitions of the three external products makes use of the (maximal) external tensor product functor in $\K$-theory, which we denote by
\[\exttensprod\colon \K_m(A)\otimes \K_n(B)\to \K_{m+n}(A\tensmax B)\]
in order to distinguish it from our external products.
Note that this functor is subject to a sign convention, cf.\ \cite[Remark 4.7.5]{higson_roe}\footnote{This reference treats only the minimal tensor product, but exactly the same is true for the maximal tensor product.\label{footnote_HR00_tensor}}.
We will use the sign convention which usually is used in the literature and which has the following compatibility with boundary maps, as seen in \cite[Proposition 4.7.6.(b)]{higson_roe}\footnote{see \cref{footnote_HR00_tensor}}. If \[0\to I\to A\to A/I\to 0\] is a short exact sequence of \textCstar\nobreakdash-algebras and $B$ another \textCstar\nobreakdash-algebra, then the sequence
\[0\to I\tensmax B\to A\tensmax B\to (A/I)\tensmax B\to 0\]
is also exact and the boundary maps of these two short exact sequences and the external tensor products satisfy the equation
\[\partial (x\boxtimes y)=\partial(x)\boxtimes y\]
for all $x\in \K_m(A/J)$ and $y\in \K_n(B)$. Using the graded commutativity of the external tensor product we see that the corresponding equation obtained by tensoring the short exact sequence with $B$ from the left and not from the right is only true up to a sign:
\[\partial (y\boxtimes x)=(-1)^n\cdot y\boxtimes \partial(x)\]
Note that this is the sign convention which makes the usual sign heuristics work: exchanging the order of the symbol $y$ of degree $n$ and the symbol $\partial$ of degree $-1$ in the last equation gives rise to the sign $(-1)^{n\cdot (-1)}$.
\end{rem}

The $*$-homomorphisms obtained from \eqref{eq_precrossroe}, \eqref{eq_precrossloc} and \eqref{eq_precrosslocz} by using the maximal tensor product\footnote{The maximal tensor product is needed here to make the upper row exact.} fit into a commutative diagram:
\[\xymatrix{
0\ar[r]& \Locz X\otimes_{\max}\Loc Y\ar[r]\ar[dd]&\Loc X\otimes_{\max}\Loc Y\ar[r]\ar[dd]& \Roe X\otimes_{\max}\Loc Y\ar[r]\ar[d]^{\id\otimes\operatorname{ev}_1}&0
\\&&&\Roe X\otimes \Roe Y\ar[d]&
\\0\ar[r]& \Locz (X\times Y)\ar[r]&\Loc(X\times Y)\ar[r]& \Roe(X\times Y)\ar[r]&0
}\]
Together with the fact that the external product of $\K$-theory is functorial and compatible with the connecting homomorphisms (cf.\ \cref{rem_signconvention})
this gives for any $z\in \K_n(Y)$ rise to an external product morphism between long exact sequences:
\begin{align*}
\xymatrix{
\K_{\ast+1}(\Roe X) \ar[r]^-{\partial} \ar[d]^-{-\times \Ind(z)} & \Strg_\ast(X) \ar[r] \ar[d]^-{-\times z} & \K_\ast(X) \ar[r]^-{\Ind} \ar[d]^-{-\times z} & \K_\ast(\Roe X) \ar[d]^-{-\times \Ind(z)}\\
\K_{\ast+1+n}(\Roe(X \times Y)) \ar[r]^-{\partial} & \Strg_{\ast+n}(X \times Y) \ar[r] & \K_{\ast+n}(X \times Y) \ar[r]^-{\Ind} & \K_{\ast+n}(\Roe(X \times Y))
}
\end{align*}

\subsection{Equivariant case}\label{subsec_crossProducts_equiv}

Let \((H_X, \rho_X)\) be an \(X\)-module as in \cref{subsec_crossProducts_nonequiv}.
We suppose in addition that \(X\) is furnished with a proper isometric action of a countable discrete group \(G\).
Furthermore, we assume that we have a unitary representation \(u_G \colon G \to \U(H_X)\) which turns \((\rho_X, u_G)\) into a covariant pair\footnote{The group \(G\) acts on functions on \(X\) by \((g \cdot f)(x) = f(g^{-1} x)\).
Being a covariant pair means that $u_G(g) \rho_X(f) u_G(g)^\ast = \rho_X(g \cdot f)$ for all $g\in G$ and $f\in \Cz(X)$.}.
Given this data, we say that \((H_X, \rho_X, u_G)\) is an \(X\)-\(G\)-module.
An \(X\)-\(G\)-module is called \emph{locally free} if for each finite subgroup \(F \subseteq G\) and any \(F\)-invariant Borel subset \(E \subseteq X\), there is a Hilbert space \(H_E\) such that \(1_E H_X\) and \(\elltwo(F) \otimes H_E\) are isomorphic as \(F\)-Hilbert spaces, where  \(\elltwo(F)\) is endowed with the left-regular representation and \(H_E\) is endowed with the trivial representation.
An \(X\)-\(G\)-module is \emph{ample} if it is ample as an \(X\)-module and locally free.
Ample \(X\)-\(G\)-modules always exist~\cite[Lemma~4.5.5]{WillettYuHigherIndexTheory}.

In the following, we let \((H_X, \rho_X, u_G)\) be a fixed ample \(X\)-\(G\)-module.
We also fix an ample \(Y\)-\(H\)-module \((H_Y, \rho_Y, u_H)\), where \(Y\) is another proper metric space furnished with a proper isometric action of some countable discrete group \(H\).

We get \textCstar\nobreakdash-algebras $\Roe[G] X$, $\Loc[G] X$ and $\Locz[G] X$ by considering \emph{equivariant} locally compact operators of finite propagation, respectively suitable families $L$ of them.
Similarly as before, their \(\K\)-theory groups do not depend on the choice of ample \(X\)-\(G\)-module up to canonical isomorphism~\cite[Theorems~5.2.6, 6.5.7, Proposition~6.6.2]{WillettYuHigherIndexTheory}.
They fit into the short exact sequence
\[0\to\Locz[G] X\to\Loc[G] X\xrightarrow{\operatorname{ev}_1}\Roe[G] X\to 0\]
and the induced long exact sequence is then denoted
\[\dots\to \K_{*+1}(\Roe[G] X)\to \Strg_*^G(X)\to \K_*^G(X)\xrightarrow{\Ind} \K_*(\Roe[G] X)\to\dots\]
Writing here $\K_*^G(X)$ for $\K_\ast(\Loc[G] X)$ is justified by the fact that $\K_\ast(\Loc[G] X)$ is naturally isomorphic to the equivariant $\K$-homology of $X$, see~\cite[Proposition~6.6.2]{WillettYuHigherIndexTheory}.
If $G$ acts freely on $X$, then we have
\[\K_\ast(\Loc[G] X) \cong \K_*^G(X) \cong \K_\ast(G \backslash X),\]
compare~\cref{subsec_compatibility_equiv_nonequiv}.

As in the non-equivariant case we can construct now the external products and get a commutative diagram
\begin{align*}
\resizebox{\textwidth}{!}{
\xymatrix{
0\ar[r]& \Locz[G] X\otimes_{\max}\Loc[H] Y\ar[r]\ar[dd]&\Loc[G] X\otimes_{\max}\Loc[H] Y\ar[r]\ar[dd]& \Roe[G] X\otimes_{\max}\Loc[H] Y\ar[r]\ar[d]^{\id\otimes\operatorname{ev}_1}&0
\\&&&\Roe[G] X\otimes \Roe[H] Y\ar[d]&
\\0\ar[r]& \Locz[G\times H] (X\times Y)\ar[r]&\Loc[G\times H](X\times Y)\ar[r]& \Roe[G\times H](X\times Y)\ar[r]&0
}
}
\end{align*}
which induces for any $z\in \K_n^H(Y)$ the external product morphisms between the corresponding long exact sequences:
\begin{align*}
\resizebox{\textwidth}{!}{
\xymatrix{
\K_{\ast+1}(\Roe[G] X) \ar[r]^-{\partial} \ar[d]^-{-\times \Ind(z)} & \Strg^G_\ast(X) \ar[r] \ar[d]^-{-\times z} & \K^G_\ast(X) \ar[r]^-{\Ind} \ar[d]^-{-\times z} & \K_\ast(\Roe[G] X) \ar[d]^-{-\times \Ind(z)}\\
\K_{\ast+1+n}(\Roe[G\times H](X \times Y)) \ar[r]^-{\partial} & \Strg^{G\times H}_{\ast+n}(X \times Y) \ar[r] & \K^{G\times H}_{\ast+n}(X \times Y) \ar[r]^-{\Ind} & \K_{\ast+n}(\Roe[G\times H](X \times Y))
}
}
\end{align*}

If \(G\) acts cocompactly on \(X\), then \(\Roe[G] X\) is Morita equivalent to \(\CstarRed G\), see~\cite[Proposition~5.3.4]{WillettYuHigherIndexTheory}.

On the universal covers of finite complexes, the action is proper, free and cocompact.
In this way we get the diagram considered in \cref{thm:topologicalInj}.

\section{Slant products}
\label{sec_slant_products}

The goal of this section is to construct the various slant products and prove \cref{main_thm_intro_slant}.
We start by giving the definition of the stable Higson compactification and corona of a proper metric space $Y$ (which is usually non-compact) and of the coarse co-assembly map from \cite{EmeMey}.

If $\vartheta\colon Y\to Z$ is a map into another metric space $Z$, then one defines for each $r>0$ the \emph{$r$-variation} of $\vartheta$ as the function
\[\Var_{r}\vartheta\colon Y\to [0,\infty)\,,\quad x \mapsto \sup \{ d(\vartheta(x),\vartheta(y)) \mid y\in Y\text{ with }d(x,y) \leq r\}\,.\]
The funtion $\vartheta$ is said to have \emph{vanishing variation} if $\Var_{r}\vartheta$ converges to zero at infinity for all $r>0$.

Let $\Kom \coloneqq \Kom(\elltwo)$ denote the compact operators on the standard Hilbert space $\elltwo \coloneqq \elltwo(\N)$.
The \emph{stable Higson compactification} of $Y$, denoted by $\sHigCom Y$, is the \textCstar\nobreakdash-algebra of all bounded, continuous functions of vanishing variation $Y \to \Kom$.
The \emph{stable Higson corona} of $Y$ is the quotient $\Cstar$-algebra $\sHigCor Y \coloneqq \sHigCom Y / \Cz(Y, \Kom)$.

If $Y$ is unbounded, the stable Higson compactification and corona $\sHigCom Y$ and $\sHigCor Y$ contain an isometrically embedded copy of $\Kom$ as the sub-\textCstar\nobreakdash-algebra of constant functions. Their reduced $\K$-theory is defined as
\begin{align*}
\tilde\K_*(\sHigCom Y)&\coloneqq \K_*(\sHigCom Y)/\im\left(\Z\cong\K_*(\Kom)\to\K_*(\sHigCom Y)\right)
\\\tilde\K_*(\sHigCor Y)&\coloneqq \K_*(\sHigCor Y)/\im\left(\Z\cong\K_*(\Kom)\to\K_*(\sHigCor Y)\right)
\end{align*}
though beware that this definition is only reasonable for unbounded metric spaces \cite[Remark~3.9]{EmeMey}.

There are reduced versions of the above two $\Cstar$-algebras whose $\K$-theory behaves better for bounded spaces:
Let $\sHigComRed Y$ be the $\Cstar$-algebra of all bounded, continuous functions of vanishing variation $Y \to \Lin(\elltwo)$ with $f(x) - f(x^\prime) \in \Kom$ for all $x, x^\prime \in Y$.
Let $\sHigCorRed Y \coloneqq \sHigComRed Y / \Cz(Y, \Kom)$. We have the isomorphisms
\[\K_*(\sHigComRed Y)\cong\tilde\K_*(\sHigCom Y)\, , \quad \K_*(\sHigCorRed Y)\cong\tilde\K_*(\sHigCor Y)\]
if $Y$ is unbounded \cite[Proposition~5.5]{EmeMey} and for bounded $Y$ we simply define the reduced $\K$-theory by these isomorphisms.

The coarse co-assembly map is the connecting homomorphism
\[\mu^*\colon \K_*(\sHigCorRed Y)\to \K^{1-*}(Y)\]
associated to the short exact sequence
\[0\to \Cz(Y, \Kom)\to \sHigComRed Y\to \sHigCorRed Y\to 0\,.\]
In the rest of the paper, we will only use the reduced versions of the stable Higson compactification and corona.

\subsection{Construction of the slant products}\label{sec_slantconstruction}
From now on we assume that $X$ and $Y$ are proper metric spaces, and that $Y$ has bounded geometry (see \cref{defn:boundedGeometry}.\ref{defn_bounded_geom}).
The representations $\rho_X$, $\rho_Y$ and $\rho_{X\times Y}$ on the Hilbert spaces $H_X$, $H_Y$ and $H_{X\times Y}\coloneqq H_X\otimes H_Y$ shall be exactly as in \cref{sec:crossProducts}.

\begin{defn}[Bounded geometry]\label{defn:boundedGeometry}
The term \emph{bounded geometry} has in the literature different meanings depending to which kind of object it refers to:
\begin{myenuma}
\item\label{defn_bounded_geom} A metric space $Y$ is said to have \emph{bounded geometry} if there exists \(r>0\) and a subset \(\hat{Y}\subset Y$ such that $Y=\bigcup_{\hat y\in \hat{Y}} \Ball_r(\hat y)\) and such that for each \(R>0\) there exists a constant \(K_{R}\) such that for every \(y \in Y\) the number of elements $\#(\hat{Y}\cap \clBall_R(y))$ is bounded by \(K_{R}\).

Here $\Ball_r(-)$ denotes the open ball and $\clBall_r(-)$ denotes the closed ball of radius $r$.
\item\label{defn_bounded_geom_uniform} A metric space $Y$ is said to have \emph{continuously bounded geometry} if for every $r>0$ and \(R > 0\) there exists a constant $K_{r,R}>0$ such that the following conditions hold.
\begin{itemize}
  \item For every \(r > 0\) there is a subset $\hat{Y}_r\subset Y$ such that $Y=\bigcup_{\hat y\in \hat{Y}_r}\Ball_r(\hat y)$ and such that for all $r,R>0$ and \(y \in Y\) the number $\#(\hat{Y}_r\cap \clBall_R(y))$ is bounded by $K_{r,R}$.
  \item For all \(\alpha>0\), we have \(K_\alpha\coloneqq\limsup_{r\to 0}K_{r,\alpha r}<\infty\).
\end{itemize}

\item\label{defn_bounded_geom_manifolds} A complete Riemannian manifold has \emph{bounded geometry} if it has uniformly positive injectivity radius and the curvature tensor and all its covariant derivatives are uniformly bounded.\footnote{Note that completeness is actually a redundant requirement here since it is implied by having a uniformly positive injectivity radius.}

\item\label{defn_bounded_geom_simplicial} A simplicial complex is said to have \emph{bounded geometry} if there is a uniform bound on the number of simplices in the link of each vertex.

Note that this is equivalent to the simplicial complex being uniformly locally finite and finite-dimensional.
\qedhere
\end{myenuma}
\end{defn}

Continuously bounded geometry implies bounded geometry. Bounded geometry of Riemannian manifolds is an even stronger property: It implies continuously bounded geometry of the underlying metric space \cite[Lemma~5.2]{WulffTwisted}. Further, if a simplicial complex has bounded geometry, then the underlying metric space will have continuously bounded geometry.

If a metric space $Y$ is uniformly discrete (for instance, a countable discrete group equipped with a proper, left-invariant metric), then it has bounded geometry if and only if it has continuously bounded geometry (and in this case we can take $\hat Y_r \coloneqq Y$ for each $r > 0$).

\subsubsection{Slant product for the Roe algebra}
\label{subsubsec_slant_Roe}

We begin by constructing the slant product
\begin{equation*}
/\colon \K_p(\Roe(X \times Y)) \otimes \K_{1-q}(\sHigCorRed Y) \to \K_{p-q}(\Roe X)
\end{equation*}
for $X$, $Y$ proper metric spaces, and $Y$ of bounded geometry (\cref{defn:boundedGeometry}.\ref{defn_bounded_geom}).

To construct this slant product we will take the Roe algebra $\Roe X$ on the right hand side not to be the one associated to the representation $\rho_X$, that is $\Roe(\rho_X)$, but the one associated to the representation
\begin{equation}
\tilde\rho_X\coloneqq \rho_X\otimes\id_{H_Y\otimes\elltwo}\colon \Cz(X)\to \Lin(\tilde H_X)\label{eq_varRepOfCzX}
\end{equation}
on $\tilde H_X\coloneqq H_X\otimes H_Y\otimes \elltwo$,
that is $\Roe(\tilde\rho_X)$. Note that the representation $\tilde\rho_X$ is ample and non-degenerate, since $\rho_X$ is.

Let $\Fipro(\tilde\rho_X) \subset \cB(\tilde H_X)$ denote the sub-\textCstar\nobreakdash-algebra generated by all finite propagation operators. Our notation comes from the fact that this \textCstar\nobreakdash-algebra is slightly bigger than the \textCstar\nobreakdash-algebra generated by all pseudolocal operators of finite propagation, which is usually denoted by $\mathrm{D}^*(X)$.
\begin{lem}\label{lem_RoeidealinFipro}
The Roe algebra $\Roe(\tilde\rho_X)$ is an ideal in $\Fipro(\tilde\rho_X)$.
\end{lem}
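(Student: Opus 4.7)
The plan is to prove the ideal property first at the level of the $*$-algebras of finite propagation operators (before taking $C^*$-closures), and then conclude by continuity of multiplication. Write $\Roe_0$ for the $*$-algebra of locally compact operators of finite propagation on $\tilde H_X$ and $\Fipro_0$ for the $*$-algebra of all finite propagation operators; these are dense in $\Roe(\tilde\rho_X)$ and $\Fipro(\tilde\rho_X)$, respectively. It suffices to show that $\Roe_0$ is a two-sided $*$-ideal in $\Fipro_0$, since norm-continuity of composition then yields $\Fipro(\tilde\rho_X)\cdot\Roe(\tilde\rho_X)\cup\Roe(\tilde\rho_X)\cdot\Fipro(\tilde\rho_X)\subseteq\Roe(\tilde\rho_X)$.

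First I would observe that for $S\in\Roe_0$ with propagation $R_S$ and $T\in\Fipro_0$ with propagation $R_T$, both $ST$ and $TS$ have propagation at most $R_S+R_T$, which is the standard computation. Second, I would verify local compactness of $ST$ (the case $TS$ is entirely symmetric). One composition is easy: for $f\in C_0(X)$, the operator $\tilde\rho_X(f)\,ST=(\tilde\rho_X(f)S)\,T$ is compact because $\tilde\rho_X(f)S$ is compact by local compactness of $S$ and $T$ is bounded.

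The main obstacle is the other composition $ST\tilde\rho_X(f)$, since we cannot insert $\tilde\rho_X$ of a cut-off function as freely on the right of $T$. For this, I would exploit that $X$ is proper to pick $g\in C_{\mathrm{c}}(X)$ equal to $1$ on a closed $(R_T+1)$-neighborhood of $\supp f$. The goal is then to show $(1-\tilde\rho_X(g))\,T\tilde\rho_X(f)=0$. Since $1-g$ is merely bounded continuous, not in $C_0(X)$, this requires an approximate unit argument: let $\{h_n\}\subset C_{\mathrm{c}}(X)$ be an approximate unit of $C_0(X)$, so that $\tilde\rho_X(h_n)\to 1$ strongly by non-degeneracy of $\tilde\rho_X$. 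Then
\[
  \tilde\rho_X(h_n-gh_n)\;\xrightarrow{\mathrm{SOT}}\;1-\tilde\rho_X(g),
\]
while each $h_n-gh_n=(1-g)h_n$ lies in $C_{\mathrm{c}}(X)$ with support at distance at least $R_T+1$ from $\supp f$. Hence the finite-propagation property of $T$ gives $\tilde\rho_X(h_n-gh_n)\,T\,\tilde\rho_X(f)=0$ for every $n$, and passing to the strong limit applied to vectors in the range of $T\tilde\rho_X(f)$ yields $(1-\tilde\rho_X(g))\,T\,\tilde\rho_X(f)=0$. Therefore $ST\tilde\rho_X(f)=S\tilde\rho_X(g)\,T\tilde\rho_X(f)$, and compactness follows from local compactness of $S$ applied to $g\in C_0(X)$.

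Finally, I would conclude by combining the finite-propagation bound with the local-compactness conclusion: $ST,TS\in\Roe_0$, so $\Roe_0$ is a two-sided ideal in $\Fipro_0$. Passing to norm closures yields the claim. The only non-routine step is the approximate-unit argument in the second composition, which is the technical heart of the proof and the reason we need non-degeneracy of $\tilde\rho_X$ (which holds by construction of $\tilde\rho_X$ in \labelcref{eq_varRepOfCzX}).
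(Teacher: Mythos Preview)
Your proof is correct and follows essentially the same approach as the paper: reduce to finite-propagation operators, note that propagation adds under composition, handle the easy local-compactness case by associativity, and for the harder case insert a compactly supported cut-off function $g'$ that equals $1$ on a neighborhood of $\supp f$ of radius exceeding the propagation of $T$, so that $ST\tilde\rho_X(f)=S\tilde\rho_X(g')T\tilde\rho_X(f)$. The only difference is that the paper simply writes down the identity $(S\circ T)\circ\tilde\rho_X(g)=(S\circ\tilde\rho_X(g'))\circ T\circ\tilde\rho_X(g)$ without further comment, whereas you spell out the approximate-unit argument justifying $(1-\tilde\rho_X(g'))T\tilde\rho_X(f)=0$; this extra care is appropriate given that the propagation condition is stated only for $C_0$ functions.
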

\begin{proof}
Let $S\in\Roe(\tilde\rho_X)$ and $T\in\Fipro(\tilde\rho_X)$ have finite propagation. Then $S\circ T$ and $T\circ S$ both have finite propagation, too.

To show local compactness, let $g\in \Cz(X)$. We may assume that the support of $g$ is compact and then choose a function $g'\in\Cz(X)$ which is constantly equal to $1$ on the $R$-neighborhood of the support of $g$, where $R>0$ is the propagation of $T$.
Then the four operators
\begin{align*}
\tilde\rho_X(g)\circ (S\circ T)&=(\tilde\rho_X(g)\circ S)\circ T
\\(T\circ S)\circ \tilde\rho_X(g)&=T\circ (S\circ \tilde\rho_X(g))
\\(S\circ T)\circ \tilde\rho_X(g)&=(S\circ \tilde\rho_X(g'))\circ T\circ \tilde\rho_X(g)
\\\tilde\rho_X(g)\circ (T\circ S)&=\tilde\rho_X(g)\circ T\circ\tilde\rho_X(g')\circ S
\end{align*}
are clearly compact, because $S$ is locally compact. Density arguments finish the proof.
\end{proof}

Observe that the tensor product of the representation $\rho_Y$ and the canonical representation of $\Kom$ on $\elltwo$ is a non-degenerate representation of $\Cz(Y, \Kom)$ on $H_Y \otimes \elltwo$.
Thus it extends uniquely to a strictly continuous representation
\begin{equation}\label{eq_rep_of_Cb}
\bar\rho_Y\colon \Mult(\Cz(Y, \Kom))\to \cB(H_Y \otimes \ell^2)
\end{equation}
of the multiplier algebra and subsequent tensoring with the identity on $H_X$ gives us a representation
\begin{equation*}
\tilde\rho_Y\coloneqq \id_{H_X}\otimes \bar\rho_Y\colon\Mult(\Cz(Y, \Kom)) \to \cB(\tilde H_X)
\end{equation*}
whose image clearly commutes with the image of the representation $\tilde\rho_X$.
Since $\Cz(Y, \Kom)$ is an essential ideal in $\Cb(Y, \Kom)$, the latter embedds canonically into $\Mult(\Cz(Y, \Kom))$ and we denote the restrictions of $\bar\rho_Y$ and $\tilde\rho_Y$ to  $\Cb(Y, \Kom)$ and  $\Cz(Y, \Kom)$ by the same letters.

\begin{lem}\label{lem_slant_Roe_cycles_idealizer}
The images of the two representations
\begin{align*}
\tau\colon\Roe(\rho_{X\times Y})&\to \Lin(\tilde H_X) \text{ given by } S\mapsto S\otimes\id_{\elltwo}\,,
\\\tilde\rho_Y\colon\Mult(\Cz(Y,\Kom)) &\to \cB(\tilde H_X) \text{ defined above}\,,
\end{align*}
are contained in $\Fipro(\tilde\rho_X)$. 
\end{lem}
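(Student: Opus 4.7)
The plan is to handle each of the two representations separately, as their finite-propagation properties come from rather different reasons.

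For $\tilde\rho_Y$, I expect the argument to be essentially trivial: every operator in its image has propagation zero. Indeed, since $\tilde\rho_Y(f) = \id_{H_X} \otimes \bar\rho_Y(f)$ acts only on the second and third tensor factors, it commutes with $\tilde\rho_X(g) = \rho_X(g) \otimes \id_{H_Y \otimes \ell^2}$ for every $g \in \Cz(X)$. Hence for any $f_1, f_2 \in \Cz(X)$ with disjoint supports we have
\[
\tilde\rho_X(f_1)\, \tilde\rho_Y(f)\, \tilde\rho_X(f_2) \;=\; \tilde\rho_Y(f)\, \tilde\rho_X(f_1 f_2) \;=\; 0,
\]
so $\tilde\rho_Y(f)$ has propagation $0$ and lies in $\Fipro(\tilde\rho_X)$.

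For $\tau$, the strategy is to show that $\tau$ carries finite-propagation elements of $\Roe(\rho_{X\times Y})$ to finite-propagation operators for $\tilde\rho_X$ with no loss of bound, then invoke density. Concretely, suppose $S \in \Roe(\rho_{X\times Y})$ has propagation at most $R$. For $f_1, f_2 \in \Cz(X)$ with $d(\supp f_1, \supp f_2) > R$, I want to show
\[
\tilde\rho_X(f_1)\,\tau(S)\,\tilde\rho_X(f_2) \;=\; \bigl((\rho_X(f_1)\otimes \id_{H_Y})\,S\,(\rho_X(f_2)\otimes \id_{H_Y})\bigr)\otimes \id_{\ell^2}
\]
vanishes. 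The key idea is to insert a bounded approximate unit $(g_\alpha)$ of $\Cz(Y)$: for each $\alpha$ the operator
\[
(\rho_X(f_1)\otimes \rho_Y(g_\alpha))\,S\,(\rho_X(f_2)\otimes \rho_Y(g_\alpha)) \;=\; \rho_{X\times Y}(f_1\otimes g_\alpha)\,S\,\rho_{X\times Y}(f_2\otimes g_\alpha)
\]
vanishes, because for any reasonable product metric one has $d_{X\times Y}(\supp(f_1\otimes g_\alpha),\supp(f_2\otimes g_\alpha)) \geq d_X(\supp f_1, \supp f_2) > R$. Non-degeneracy of $\rho_Y$ gives $\rho_Y(g_\alpha)\to \id_{H_Y}$ strongly, and joint strong continuity of multiplication on norm-bounded sets then yields the desired vanishing of the limit. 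Hence $\tau(S)$ has propagation at most $R$. Combined with the fact that $\tau$ is a (continuous) $*$-homomorphism and that finite-propagation operators are dense in $\Roe(\rho_{X\times Y})$, this shows $\tau(\Roe(\rho_{X\times Y})) \subset \Fipro(\tilde\rho_X)$.

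The main (mild) technical point I expect is the approximation argument: one has to justify passing from the vanishing of $\rho_{X\times Y}(f_1\otimes g_\alpha)\,S\,\rho_{X\times Y}(f_2\otimes g_\alpha)$ for every $\alpha$ to the vanishing of the limit. Since the operators $\rho_X(f_i)\otimes \rho_Y(g_\alpha)$ are uniformly bounded and converge strongly to $\rho_X(f_i)\otimes \id_{H_Y}$, and $S$ is a fixed bounded operator, strong-operator continuity of composition on bounded subsets is exactly what is needed; no subtle estimate is required. With this in place, both parts of the lemma follow.
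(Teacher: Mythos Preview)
Your proposal is correct and follows essentially the same approach as the paper. The paper also observes that the image of $\tilde\rho_Y$ has propagation zero, and for $\tau$ it isolates your approximation argument as a separate lemma (\cref{lem_propagtioncomparision}), proving exactly as you do that $\tilde\rho_X(f_1)(S\otimes\id_{\ell^2})\tilde\rho_X(f_2)$ is a strong limit of operators $(\rho_{X\times Y}(f_1\otimes\varphi)\,S\,\rho_{X\times Y}(f_2\otimes\varphi))\otimes\id_{\ell^2}$ which vanish by the propagation bound on~$S$.
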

\begin{proof}
The propagation of each operator in the image of $\tilde\rho_Y$ is clearly zero. For $\tau$, the claim follows from the following important lemma.
\end{proof}

\begin{lem}\label{lem_propagtioncomparision}
If $S\in\Lin(H_{X\times Y})$ has finite propagation bounded by $R>0$ with respect to $\rho_{X\times Y}$, then the operator $S\otimes\id_{\elltwo}\in\Lin(\tilde H_X)$ has finite propagation bounded by $R$ with respect to $\tilde\rho_X$.
\end{lem}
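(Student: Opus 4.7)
The plan is to unpack the two propagation conditions, reduce the claim to a statement about operators on $H_X \otimes H_Y$ alone (the $\ell^2$-factor being a silent tensorand), and then pass between the two support conditions via a strong-limit argument using an approximate unit on $C_0(Y)$.

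First I would fix $f, g \in C_0(X)$ with $d(\supp f, \supp g) > R$ in $X$ and compute
\[
\tilde\rho_X(f)\,(S \otimes \id_{\ell^2})\,\tilde\rho_X(g)
= \bigl[(\rho_X(f) \otimes \id_{H_Y})\,S\,(\rho_X(g) \otimes \id_{H_Y})\bigr] \otimes \id_{\ell^2}.
\]
Hence it suffices to prove that the operator in square brackets vanishes in $\Lin(H_X \otimes H_Y)$. Call this operator $T$.

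Next I would introduce an approximate unit $(u_\lambda)_\lambda \subset C_c(Y)$ for $C_0(Y)$. Since $\rho_Y$ is non-degenerate, $\rho_Y(u_\lambda) \to \id_{H_Y}$ strongly, and therefore $(\rho_X(f) \otimes \rho_Y(u_\lambda)) \to (\rho_X(f) \otimes \id_{H_Y})$ strongly, and similarly on the right with $(u_\mu)$. Because multiplication is separately strongly continuous on bounded sets, the net
\[
(\rho_X(f) \otimes \rho_Y(u_\lambda))\,S\,(\rho_X(g) \otimes \rho_Y(u_\mu))
\]
converges strongly to $T$ as $\lambda, \mu$ run through the directed set. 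But each of these operators equals $\rho_{X\times Y}(f \otimes u_\lambda)\,S\,\rho_{X\times Y}(g \otimes u_\mu)$, and
\[
\supp(f \otimes u_\lambda) \subseteq \supp f \times \supp u_\lambda, \qquad \supp(g \otimes u_\mu) \subseteq \supp g \times \supp u_\mu,
\]
so for any product metric on $X \times Y$ (all standard choices satisfy $d_{X \times Y} \ge d_X$ on the $X$-factor) the distance between these supports in $X \times Y$ is at least $d(\supp f, \supp g) > R$. By the propagation hypothesis on $S$, every such operator is zero, so $T = 0$.

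The argument is genuinely straightforward; the only step requiring any thought is the justification that one may promote the support condition on $X$ to a support condition on $X \times Y$, which is where the approximate-unit trick (passing from the non-existent function $1_Y \in C_0(Y)$ to a strong limit of $u_\lambda$'s) does the work. I would not expect any obstacle, but this is the one point where the non-degeneracy of $\rho_Y$ is genuinely used.
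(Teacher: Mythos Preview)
Your proof is correct and follows essentially the same approach as the paper: both reduce to showing that $(\rho_X(f)\otimes\id_{H_Y})\,S\,(\rho_X(g)\otimes\id_{H_Y})$ vanishes by writing it as a strong limit of operators $\rho_{X\times Y}(f\otimes\varphi)\,S\,\rho_{X\times Y}(g\otimes\varphi')$ with $\varphi,\varphi'\in\Cz(Y)$, each of which is zero by the propagation hypothesis. The only cosmetic difference is that the paper uses the same function $\varphi$ on both sides rather than two independent approximate-unit indices.
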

\begin{proof}
Let $g,h\in\Cz(X)$ be functions whose supports are further apart than~$R$. Then $\tilde\rho_X(g)\circ (S\otimes\id_{\elltwo})\circ\tilde\rho_X(h)$ can be written as a strong limit of operators of the form $(\rho_{X\times Y}(g\otimes \varphi)\circ S\circ \rho_{X\times Y}(h\otimes \varphi))\otimes\id_{\elltwo}$ with $\varphi\in \Cz(Y)$, but the latter are all zero, because the supports of $g\otimes \varphi,h\otimes \varphi\in\Cz(X\times Y)$ are further apart than $R$.
\end{proof}

\begin{lem}\label{lem_slant_Roe_cycles_commute}
The image of the representation $\tau$ commutes up to $\Roe(\tilde\rho_X)$ with the image of $\sHigComRed Y$ under the representation $\tilde\rho_Y$.
\end{lem}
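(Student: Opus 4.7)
By \(C^*\)-density and linearity I may assume \(S \in \Roe(\rho_{X\times Y})\) is locally compact of finite propagation \(R\); and subtracting the constant part \(f(y_0)\cdot\id_{\ell^2}\) at a fixed basepoint \(y_0\) (which commutes with \(\tau(S)\), being of the form \(\id_{H_X\otimes H_Y}\otimes f(y_0)\)) reduces the problem to \(g\coloneqq f\in\sHigCom Y\), i.e., a bounded continuous function \(g\colon Y\to\Kom\) of vanishing variation. The commutator \(C\coloneqq[\tau(S),\tilde\rho_Y(g)]\) then automatically has propagation \(\leq R\) with respect to \(\tilde\rho_X\), combining \cref{lem_propagtioncomparision} with the fact that \(\tilde\rho_Y(g)=\id_{H_X}\otimes\bar\rho_Y(g)\) commutes with every \(\tilde\rho_X(\phi)\). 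All remaining work is to show local compactness.

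Given \(\phi\in\Cc(X)\) and \(\epsilon>0\), the plan is to use the vanishing variation of \(g\) to pick a compact \(K\subset Y\) outside which \(\Var_R g<\epsilon\), together with an \((\epsilon/R)\)-Lipschitz cutoff \(\chi\in\Cc(Y)\) equal to \(1\) on \(K\) (so that \(\Var_R\chi\leq\epsilon\) uniformly), and to split \(g=\chi g+(1-\chi)g\). A product-rule computation yields \(\Var_R((1-\chi)g)(y)\leq\epsilon(1+\|g\|_\infty)\) uniformly in \(y\). Combined with the standard commutator estimate for a finite-propagation operator against a slowly-varying multiplier---a bound which I expect to extend from the scalar case to a \(\Kom\)-valued multiplier via a Schur-type matrix-coefficient argument in a basis of \(\tilde H_X\) adapted to a uniformly locally finite net in \(Y\) (afforded by bounded geometry)---this yields
\[\|[\tau(S),\tilde\rho_Y((1-\chi)g)]\|\leq C\|S\|\,\epsilon(1+\|g\|_\infty),\]
so the contribution of the tail piece to \(\tilde\rho_X(\phi)C\) has norm \(O(\epsilon)\).

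For the compactly supported piece, \(\chi g\in\Cc(Y,\Kom)\) has relatively compact image in \(\Kom\) and so may be approximated uniformly in norm by the truncation \(P(\chi g)P\) for a suitable finite-rank projection \(P\in\cB(\ell^2)\); the truncation lies in \(\Cc(Y)\otimes P\Kom P\) algebraically, hence is a finite sum \(\sum_i h_i\cdot K_i\) with \(h_i\in\Cc(Y)\) and \(K_i\in\Kom\). A direct calculation gives \([\tau(S),\tilde\rho_Y(h_iK_i)]=[S,\bar\rho_{X\times Y}(1\otimes h_i)]\otimes K_i\), so after multiplying by \(\tilde\rho_X(\phi)\) and invoking finite propagation \(R\) of \(S\) to introduce an auxiliary \(\phi'\in\Cc(X)\) with \(\phi'\equiv 1\) on the \(R\)-neighborhood of \(\supp\phi\), both summands in the expansion of this commutator reduce to a bounded operator composed with either \(S\cdot\bar\rho_{X\times Y}(\phi'\otimes h_i)\) or \(\bar\rho_{X\times Y}(\phi\otimes h_i)\cdot S\), each of which is compact because the cutoff lies in \(\Cc(X\times Y)\) and \(S\) is locally compact; tensoring with the compact \(K_i\) preserves compactness. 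Letting the truncation \(P\) increase to the identity and using that the compact operators are norm-closed, \(\tilde\rho_X(\phi)[\tau(S),\tilde\rho_Y(\chi g)]\) is compact; combining with the tail estimate and taking \(\epsilon\to 0\) proves compactness of \(\tilde\rho_X(\phi)C\), finishing the proof.

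The principal technical obstacle will be to establish rigorously the commutator norm estimate in the second paragraph: while the scalar version \(\|[T,\bar\rho(h)]\|\leq\|T\|\sup_y\Var_R h(y)\) is a classical consequence of Schur's test, its \(\Kom\)-valued analogue on the triply stabilized Hilbert space \(\tilde H_X\) requires a careful verification that the matrix coefficients of the commutator vanish outside a \(Y\)-diagonal of width \(R\) and are controlled entry-wise in operator norm by \(\Var_R((1-\chi)g)(y)\); I anticipate this follows from an \(L^2\)-valued Schur test in a basis localized via a bounded-geometry net in \(Y\).
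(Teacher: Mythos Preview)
Your proposal is correct, and the Schur-type estimate you flag as the principal obstacle is indeed the heart of the matter and is proved exactly as you anticipate: pick a discrete \(r\)-net \(\hat Y\subset Y\) and a subordinate Borel partition \(\{Z_{\hat y}\}\) as furnished by bounded geometry, replace \(h=(1-\chi)g\) by the step operator \(\hat h=\sum_{\hat y}1_{Z_{\hat y}}\otimes h(\hat y)\), and run the orthogonality/Cauchy--Schwarz argument on \([\tau(S),\id\otimes\hat h]\); the only contributing pairs satisfy \(d(\hat y,\hat z)\leq R+2r\), and the uniform bound \(K_{R+2r}\) on their number gives \(\|[\tau(S),\id\otimes\hat h]\|\leq K_{R+2r}\|S\|\sup\|h(\hat y)-h(\hat z)\|\). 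Combined with \(\|\hat h-\bar\rho_Y(h)\|\leq\sup_{\hat y}\Var_r h(\hat y)\) this yields your \(O(\epsilon)\) bound, once you choose \(K\) and the Lipschitz constant of \(\chi\) with respect to the scale \(R+2r\) rather than \(R\).

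The organization, however, is genuinely different from the paper's. The paper does not reduce to the unstable compactification \(\sHigCom Y\); it works directly with \(f\in\sHigComRed Y\), approximates it by the step operator \(\hat f\), and proves two separate facts: first, that \((\id\otimes\hat f-\tilde\rho_Y(f))\circ\tau(S)\) and its transpose are locally compact (this is where the reduced condition \(f(y)-f(y')\in\Kom\) enters, allowing \(\hat f-\bar\rho_Y(f)\) to be written as a norm limit of operators factoring through \(\Cz(Y)\otimes\Kom\)); and second, that \([\tau(S),\id\otimes\hat f]\) is a norm-convergent series---via the same Schur estimate---whose terms become compact upon multiplying by \(\tilde\rho_X(g)\). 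Your basepoint subtraction is a cleaner way to exploit the reduced condition, and your cutoff split (tail piece small in norm, compactly supported piece handled by finite-rank truncation and then the same \(\phi'\)-insertion trick) replaces the paper's step-function-plus-error scheme. Both routes are comparable in length; yours isolates the norm estimate more sharply, while the paper's keeps everything inside a single approximation.
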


Hence, by the universal property of the maximal tensor product, we get an induced $*$-homomorphism
\begin{equation}
\label{eq_starhom_Phi}
\Phi\colon\Roe(\rho_{X\times Y})\otimes_{\max} \sHigComRed Y\to \Fipro(\tilde\rho_X)/\Roe(\tilde\rho_X)
\end{equation}
given by $S\otimes f\mapsto [\tau(S)\circ\tilde\rho_Y(f)]$.

\begin{proof}
Let $S\in\Roe(\rho_{X\times Y})$ and $f\in\sHigComRed Y$. We may assume that $S$ has finite propagation with respect to $\rho_{X\times Y}$ and hence $\tau(S)$ also has finite propagation with respect to $\tilde\rho_X$ by the \cref{lem_propagtioncomparision}. As $\tilde\rho_Y(f)$ also has finite propagation (namely zero) with respect to $\tilde\rho_X$, so does the commutator $[\tau(S),\tilde\rho_Y(f)]$.

Therefore it remains to show that the commutator is also locally compact with respect to $\tilde\rho_X$. Here we need the bounded geometry of $Y$.

Choose a subset $\hat Y\subset Y$ as in \cref{defn:boundedGeometry}.\ref{defn_bounded_geom}, i.\,e.\ with $Y=\bigcup_{y\in \hat{Y}}\Ball_r(\hat y)$ and such that for each $R'>0$ the number $\#(\hat{Y}\cap \clBall_{R'}(y))$ is uniformly bounded in $y\in Y$ by some constant $K_{R'}>0$. In this proof, the relevant value for $R'$ will be $R'=R+2r$, where $R$ is the propagation of the operator $S$.

By thinning out the open cover $\{\Ball_{r}(\hat{y})\}_{\hat{y} \in \hat{Y}}$, we obtain a decomposition of $Y$ into a family  $\{Z_{\hat{y}}\}_{\hat{y} \in \hat{Y}}$ of pairwise disjoint Borel subsets such that $Z_{\hat{y}} \subseteq \Ball_r(\hat{y})$ for all $\hat{y} \in \hat{Y}$.
The representation \(\rho_Y \colon \Cz(Y) \to \cB(H_Y)\) extends uniquely to the bounded Borel functions on \(Y\) subject to the condition that pointwise converging uniformly bounded sequences of functions are taken to strongly converging sequences of operators.
For a Borel subset \(Z \subseteq Y\), let \(1_Z \in \cB(H_Y)\) denote the projection corresponding to the characteristic function of \(Z\).
Now consider the strongly convergent series
\[\hat{f} \coloneqq \sum_{\hat{y} \in \hat{Y}} 1_{Z_{\hat{y}}} \otimes f(\hat{y}) \in \cB(H_Y \otimes \elltwo)\,.\]

The proof will be completed by showing first that the operators $(\id_{H_X}\otimes\hat f-\tilde\rho(f))\circ \tau(S)$ and $\tau(S)\circ (\id_{H_X}\otimes\hat f-\tilde\rho(f))$ are locally compact and second that the commutator $[\tau(S),\id_{H_X}\otimes\hat f]$ is locally compact.

Let $h_1\leq h_2\leq\dots$ be a sequence of compactly supported functions $Y\to[0,1]$ such that the compact subsets $h_n^{-1}\{1\}$ exhaust $Y$ as $n\to\infty$. Furthermore, let $P_n\in\Kom$ denote the projection onto the span of the first $n$ basis vectors $\delta_1,\dots,\delta_n\in\elltwo$.
It is clear from the construction of $\hat f$ together with the vanishing variation of $f$ that
\[(\id_{H_X}\otimes\hat f-\tilde\rho_Y(f))=\lim_{n\to\infty}(\id_{H_X}\otimes\hat f-\tilde\rho_Y(f))\circ \tilde\rho_Y(h_n\otimes P_n)\]
with convergence in norm.
Using this, we find for every $g\in\Cz(X)$ the equation
\begin{align*}
\tilde\rho_X(g)\circ &(\id_{H_X}\otimes\hat f-\tilde\rho_Y(f))\circ \tau(S)=
\\&=\lim_{n\to\infty}(\id_{H_X}\otimes\hat f-\tilde\rho_Y(f))\circ ((\rho_{X\times Y}(g\otimes h_n)\circ S)\otimes P_n)
\end{align*}
where the right hand side is a norm limit of compact operators, hence itself compact.

If $g\in \Cz(X)$ has compact support, then as in the proof of \cref{lem_RoeidealinFipro} we can choose $g'\colon X\to [0,1]$ of compact support which is equal to $1$ on the $R$-neighborhood of the support of $g$ and one similarly obtains compactness the operator
\[(\id_{H_X}\otimes\hat f-\tilde\rho_Y(f))\circ\tau(S)\circ\tilde\rho_X(g)=(\id_{H_X}\otimes\hat f-\tilde\rho_Y(f))\circ\tilde\rho_X(g')\circ\tau(S)\circ\tilde\rho_X(g)\,.
\]
We have thus shown that $(\id_{H_X}\otimes\hat f-\tilde\rho_Y(f))\circ\tau(S)$ is locally compact and analogously we obtain local compactness of $\tau(S)\circ(\id_{H_X}\otimes\hat f-\tilde\rho_Y(f))$, too.

It remains to show local compactness of the commutator
\begin{align*}
[\tau(S),\hat f]&=\tau(S)\circ\sum_{\hat z\in\hat Y}\id_{H_X}\otimes 1_{Z_{\hat{z}}} \otimes f(\hat{z})-\sum_{\hat y\in\hat Y}(\id_{H_X}\otimes 1_{Z_{\hat{y}}} \otimes f(\hat{y}))\circ\tau(S)
\\&=\sum_{\hat y,\hat z\in\hat Y} \left( (\id_{H_X}\otimes 1_{Z_{\hat{z}}})\circ S\circ (\id_{H_X}\otimes 1_{Z_{\hat{y}}}) \right)\otimes \left(f(\hat z)-f(\hat y)\right)
\\&=\sum_{\begin{smallmatrix}\hat y,\hat z\in\hat Y\\\mathclap{d(\hat y,\hat z)\leq R+2r}\end{smallmatrix}} \left( (\id_{H_X}\otimes 1_{Z_{\hat{z}}})\circ S\circ (\id_{H_X}\otimes 1_{Z_{\hat{y}}}) \right)\otimes \left(f(\hat z)-f(\hat y)\right)
\end{align*}
where the sums converge a priori in the strong operator topology. Let us first show that the last sum converges even in norm. Because of the vanishing variation of $f$ there is a finite subset $L\subset\hat Y$ such that $\|f(\hat z)-f(\hat y)\|<\varepsilon$ whenever $\hat y,\hat z\in\hat Y\setminus L$ satisfy $d(\hat y,\hat z)\leq R+2r$. For arbitrary $v\in \tilde H_X$ the vectors $v_{\hat y}\coloneqq (\id_{H_X}\otimes 1_{Z_{\hat{y}}}\otimes\id_{\elltwo})v$ for $\hat y\in\hat Y$ are pairwise orthogonal and $v=\sum_{\hat y\in\hat Y}v_{\hat y}$, hence $\|v\|^2=\sum_{\hat y\in\hat Y}\|v_{\hat y}\|^2$.
Then the calculation
\begin{align*}
&\bigg\|\sum_{\begin{smallmatrix}\hat y,\hat z\in\hat Y\setminus L\\d(\hat y,\hat z)\leq R+2r\end{smallmatrix}} \left( (\id_{H_X}\otimes 1_{Z_{\hat{z}}})\circ S\circ (\id_{H_X}\otimes 1_{Z_{\hat{y}}}) \right)\otimes \left(f(\hat z)-f(\hat y)\right)v\bigg\|^2=
\\&\ =\sum_{\hat z\in\hat Y\setminus L}\bigg\|\sum_{\begin{smallmatrix}\hat y\in\hat Y\setminus L\\d(\hat y,\hat z)\leq R+2r\end{smallmatrix}} \left( (\id_{H_X}\otimes 1_{Z_{\hat{z}}})\circ S\circ (\id_{H_X}\otimes 1_{Z_{\hat{y}}}) \right)\otimes \left(f(\hat z)-f(\hat y)\right)v\bigg\|^2
\\&\ \leq \sum_{\hat y\in\hat Y\setminus L}\left(K_{R+2r}\cdot\|S\|\cdot \varepsilon\cdot \|v_{\hat y}\|\right)^2
\\&\ \leq K_{R+2r}^2\cdot\|S\|^2\cdot \varepsilon^2\cdot\|v\|^2
\end{align*}
shows the claimed norm convergence.

Now, given $g\in\Cz(X)$ and choosing for each $\hat z\in\hat Y$ a compactly supported function $h_{\hat z}$ which is constantly $1$ on $\Ball_r(\hat z)$, we find that
\begin{align*}
&\tilde\rho_X(g)\circ[\tau(S),\hat f]=
\\&=\sum_{\begin{smallmatrix}\hat y,\hat z\in\hat Y\\\mathclap{d(\hat y,\hat z)\leq R+2r}\end{smallmatrix}} \!\bigg( (\id_{H_X}\otimes 1_{Z_{\hat{z}}})\circ \underbrace{\rho_{X\times Y}(g\otimes h_{\hat z})\circ S}_{\in \Kom(H_{X\times Y})}\circ (\id_{H_X}\otimes 1_{Z_{\hat{y}}}) \bigg)\otimes \bigg(\underbrace{f(\hat z)-f(\hat y)}_{\in\Kom}\bigg)
\end{align*}
is a norm convergent sum of compact operators, hence itself compact. Analogously, $[\tau(S),\hat f]\circ\tilde\rho_X(g)$ is compact, and hence the commutator $[\tau(S),\hat f]$ is locally compact.
\end{proof}

\begin{lem}\label{lem_slant_Roe_cycles_factors}
The $*$-homomorphism $\Phi$ from \eqref{eq_starhom_Phi} factors through the tensor product $\Roe(\rho_{X \times Y}) \otimes_{\max} \sHigCorRed Y$.
In other words, it defines a $*$-homomorphism
\begin{equation}
\label{eq_slant_Roe_cycles_quotient}
\Psi\colon\Roe(\rho_{X \times Y}) \otimes_{\max} \sHigCorRed Y \to  \Fipro(\tilde\rho_X)/\Roe(\tilde\rho_X).
\end{equation}
\end{lem}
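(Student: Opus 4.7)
The plan is to use the exactness of the maximal tensor product to reduce the factorization claim to an explicit estimate. Tensoring the short exact sequence
\[0 \to \Cz(Y, \Kom) \to \sHigComRed Y \to \sHigCorRed Y \to 0\]
on the left with $\Roe(\rho_{X\times Y})$ via $\tensmax$ preserves exactness, so $\Roe(\rho_{X\times Y}) \tensmax \sHigCorRed Y$ is canonically the quotient of $\Roe(\rho_{X\times Y}) \tensmax \sHigComRed Y$ by the image of $\Roe(\rho_{X\times Y}) \tensmax \Cz(Y, \Kom)$. Hence, by the universal property of \textCstar-algebra quotients, $\Psi$ exists as soon as $\Phi$ sends this image into $\Roe(\tilde\rho_X)$, i.e.\ as soon as $\tau(S) \circ \tilde\rho_Y(f) \in \Roe(\tilde\rho_X)$ for all $S \in \Roe(\rho_{X\times Y})$ and all $f \in \Cz(Y, \Kom)$.

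By linearity, continuity, and the density of $\Cz(Y) \odot \Kom$ in $\Cz(Y, \Kom)$, it suffices to consider elementary $f = f_1 \otimes k$ with $f_1 \in \Cz(Y)$, $k \in \Kom$, and $S \in \Roe(\rho_{X\times Y})$ of finite propagation $R$; the general case then follows since $\Roe(\tilde\rho_X)$ is closed. Finite propagation of the composition $\tau(S) \circ \tilde\rho_Y(f_1 \otimes k)$ is immediate from \cref{lem_propagtioncomparision} together with the fact that $\tilde\rho_Y$ takes values in propagation-zero operators with respect to $\tilde\rho_X$. The substantive task is therefore local compactness.

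Fix $g \in \Cc(X)$ and pick $g' \in \Cc(X)$ with $g' \equiv 1$ on the closed $R$-neighborhood of $\supp g$. Extending $\rho_{X \times Y}$ strictly to its multiplier algebra, finite propagation of $S$ gives
\[\rho_{X \times Y}(g \otimes 1) \cdot S = \rho_{X \times Y}(g \otimes 1) \cdot S \cdot \rho_{X \times Y}(g' \otimes 1),\]
since the supports of $g \otimes 1$ and $(1-g') \otimes 1$ in $X \times Y$ lie at distance strictly greater than $R$. Using $\rho_{X \times Y}(g \otimes 1) = \rho_X(g) \otimes \id_{H_Y}$ and $\rho_{X \times Y}(1 \otimes f_1) = \id_{H_X} \otimes \rho_Y(f_1)$, one then computes
\[\tilde\rho_X(g) \cdot \tau(S) \cdot \tilde\rho_Y(f_1 \otimes k) = \bigl[\rho_{X \times Y}(g \otimes 1) \cdot S \cdot \rho_{X \times Y}(g' \otimes f_1)\bigr] \otimes k.\]
As $g' \otimes f_1 \in \Cz(X \times Y)$, local compactness of $S$ makes the bracketed factor compact, and tensoring with the compact operator $k$ preserves compactness. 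A symmetric argument (placing the $\Cc(X)$-cutoff on the opposite side of $S$) handles $\tau(S) \cdot \tilde\rho_Y(f_1 \otimes k) \cdot \tilde\rho_X(g)$, and density of $\Cc(X)$ in $\Cz(X)$ completes the verification of local compactness.

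The main obstacle is conceptual rather than technical: local compactness of $S$ is a statement about $\Cz(X \times Y)$-cutoffs, whereas $\tilde\rho_Y(f)$ provides only a cutoff in the $Y$-variable. Finite propagation of $S$ is precisely the extra ingredient required to pair an $X$-cutoff with the $Y$-cutoff across $S$, producing a genuine $\Cz(X \times Y)$-function to which local compactness can be applied. This is also the step where the compactness of the $\Kom$-component of $f$ enters decisively, upgrading the $\tilde\rho_Y$-factor from a mere bounded multiplier to something that combines with the bracketed $H_X \otimes H_Y$-compact operator to yield an operator in $\Roe(\tilde\rho_X)$.
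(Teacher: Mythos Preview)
Your proof is correct and follows essentially the same approach as the paper: reduce via exactness of $\tensmax$ to showing $\tau(S)\circ\tilde\rho_Y(f_1\otimes k)\in\Roe(\tilde\rho_X)$ for elementary tensors, then use finite propagation together with the cutoff $g'$ to turn the partial $X$- and $Y$-cutoffs into a genuine $\Cz(X\times Y)$-function against which local compactness of $S$ applies. The only cosmetic difference is that the paper first treats the side $\tau(S)\circ\tilde\rho_Y(f_1\otimes k)\circ\tilde\rho_X(g)$, where $\tilde\rho_X(g)$ commutes past $\tilde\rho_Y(f_1\otimes k)$ to give $(S\circ\rho_{X\times Y}(g\otimes f_1))\otimes k$ directly without needing $g'$, and then invokes the $g'$-trick only for the other side.
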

\begin{proof}
Due to exactness of the maximal tensor product, the claim is equivalent to $\Phi$ vanishing on $\Roe(\rho_{X \times Y}) \otimes \Cz(Y,\Kom)=\Roe(\rho_{X \times Y}) \otimes \Cz(Y)\otimes \Kom$.
Therefore, given operators $S\in \Roe(\rho_{X \times Y})$ and $f\otimes T\in \Cz(Y)\otimes \Kom$ we have to show that $\tau(S)\circ\tilde\rho_Y(f\otimes T)\in\Roe(\tilde\rho_X)$.

The finite propagation part of this statement is proven exactly as the one in the preceding lemma. For local compactness we use the formula
\[\tau(S)\circ\tilde\rho_Y(f\otimes T)\circ\tilde\rho_X(g)=(S\circ\rho_{X\times Y}(g\otimes f))\otimes T\in\Kom(H_{X\times Y})\otimes\Kom(\elltwo)\]
and a similar one for $\tilde\rho_X(g)\circ\tau(S)\circ\tilde\rho_Y(f\otimes T)$, involving the same function $g'$ as in the previous lemma.
\end{proof}

\begin{defn}\label{defn_coarseslant}
The slant product between $\K$-theory of the Roe algebra and the $\K$-theory of the reduced stable Higson corona is now defined as $(-1)^p$ times the composition
\begin{align}
\K_p(\Roe(X \times Y)) \otimes \K_{1-q}(\sHigCorRed Y) \ &= \K_p(\Roe(\rho_{X \times Y})) \otimes \K_{1-q}(\sHigCorRed Y) \notag\\
& \to \ \K_{p+1-q}(\Roe(\rho_{X \times Y}) \otimes_{\max} \sHigCorRed Y)\notag\\
& \xrightarrow{\Psi_*} \ \K_{p+1-q}(\Fipro(\tilde\rho_X)/\Roe(\tilde\rho_X))\notag\\
& \xrightarrow{\partial} \ \K_{p-q}(\Roe(\tilde\rho_X))\notag\\
& = \K_{p-q}(\Roe X),\label{eq_slant_Roe_unreduced}
\end{align}
where the first arrow is the external product on $\K$-theory, and the third arrow the boundary operator in the corresponding long exact sequence.
\end{defn}

\subsubsection{Slant products for the localization algebras}\label{subsubsec_slant_localization}

To construct the analogous slant products for the localization algebras
we use the same approach as for the construction of the slant product on the Roe algebra in the previous section.

We define $\FiproLoc(\tilde{\rho}_X)$ as the $\Cstar$-subalgebra of $\Cb([1,\infty),\Fipro(\tilde{\rho}_X))$ generated by the bounded and uniformly continuous functions $S\colon [1,\infty) \to \Fipro(\tilde{\rho}_X)$ such that the propagation of $S(t)$ is finite for all $t \ge 1$ and tends to zero as $t \to \infty$.
Similarly we define $\FiproLocz(\tilde{\rho}_X)$ as the ideal in $\FiproLoc(\tilde{\rho}_X)$ consisting of all maps that vanish at~$1$.
Note that $\Loc(\tilde{\rho}_X)$ is an ideal in $\FiproLoc(\tilde{\rho}_X)$ and $\Locz(\tilde{\rho}_X)$ is even an ideal in all of the three $\FiproLoc(\tilde{\rho}_X)$, $\FiproLocz(\tilde{\rho}_X)$ and of course $\Loc(\tilde{\rho}_X)$.

\begin{lem}\label{lem_slant_Loc_technical}
The following analogues of \cref{lem_slant_Roe_cycles_idealizer,lem_slant_Roe_cycles_commute,lem_slant_Roe_cycles_factors} hold true:
\begin{enumerate}
\item
The images of the two isometric $*$-homomorphisms
\[\tau_\LSym\colon\Loc(\rho_{X\times Y})\to \Cb([1,\infty),\Lin(\tilde H_X))\,,\]
which is obained by applying the functor $\Cb([1,\infty),-)$ to $\tau$, and
\[\tilde\rho_{Y,\LSym}\colon\Mult(\Cz(Y,\Kom)) \xrightarrow{\tilde\rho_Y}\Lin(\tilde H_X)\xrightarrow[\text{as constant functions}]{\text{inclusion}} \Cb([1,\infty),\Lin(\tilde H_X))\,,\]
are contained in $\FiproLoc(\tilde\rho_X)$.
\item The image of $\tau_\LSym$ commutes up to $\Loc(\tilde\rho_X)$ with the image of $\sHigComRed Y$ under $\tilde\rho_{Y,L}$
and the image of $\Locz(\rho_{X\times Y})$ under $\tau_\LSym$ commutes up to $\Locz(\tilde\rho_X)$ with the image of $\sHigComRed Y$ under $\tilde\rho_{Y,L}$.
Hence they induce $*$-homomorphisms
\begin{align*}
\Phi_\LSym\colon\Loc(\rho_{X\times Y})\otimes_{\max} \sHigComRed Y &\to \FiproLoc(\tilde\rho_X)/\Loc(\tilde\rho_X)
\\\Phi_{\LSym,0}\colon\Locz(\rho_{X\times Y})\otimes_{\max} \sHigComRed Y &\to \FiproLoc(\tilde\rho_X)/\Locz(\tilde\rho_X)
\end{align*}
given by $S\otimes f\mapsto [\tau_\LSym(S)\circ\tilde\rho_{Y,\LSym}(f)]$ and the image of $\Phi_{\LSym,0}$ is even contained in $ \FiproLocz(\tilde\rho_X)/\Locz(\tilde\rho_X)$.
\item The $*$-homomorphisms $\Phi_\LSym$ and $\Phi_{\LSym,0}$ factor through $\Loc(\rho_{X\times Y})\otimes_{\max}\sHigCorRed Y$ and $\Locz(\rho_{X\times Y})\otimes_{\max}\sHigCorRed Y$, respectively.
That is, they define $*$-homomorphisms
\begin{align*}
\Psi_\LSym\colon\Loc(\rho_{X \times Y}) \otimes_{\max} \sHigCorRed Y &\to  \FiproLoc(\tilde\rho_X)/\Loc(\tilde\rho_X)\,,
\\\Psi_{\LSym,0}\colon\Locz(\rho_{X \times Y}) \otimes_{\max} \sHigCorRed Y &\to  \FiproLocz(\tilde\rho_X)/\Locz(\tilde\rho_X)\,.
\end{align*}
\end{enumerate}
\end{lem}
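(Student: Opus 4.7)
The plan is to deduce all three assertions from their non-localized counterparts (\cref{lem_slant_Roe_cycles_idealizer,lem_slant_Roe_cycles_commute,lem_slant_Roe_cycles_factors}) by evaluating everything pointwise in $t\in[1,\infty)$, while checking that the defining regularity conditions singling out $\FiproLoc$, $\Loc$ and $\Locz$ inside $\Cb([1,\infty),\Fipro(\tilde\rho_X))$ are inherited from the generators of $\Loc(\rho_{X\times Y})$. Two observations will drive everything. First, $\tilde\rho_{Y,\LSym}(f)$ is constant in $t$, takes values in $\Fipro(\tilde\rho_X)$ by \cref{lem_slant_Roe_cycles_idealizer}, and has propagation zero. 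Second, by \cref{lem_propagtioncomparision} one has $\propagation(\tau(T))\leq\propagation(T)$ for any finite-propagation $T\in\Lin(H_{X\times Y})$, so any expression built from $\tau_\LSym(L)(t)$ and $\tilde\rho_{Y,\LSym}(f)$ has its propagation at each $t$ controlled by $\propagation(L(t))$.

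For part (i), on a generating $L\in\Loc(\rho_{X\times Y})$ the function $t\mapsto\tau(L(t))$ inherits boundedness and uniform continuity from $L$ (since $\tau$ is an isometric $*$-homomorphism) and has propagation tending to zero by the bound just mentioned, so $\tau_\LSym(L)\in\FiproLoc(\tilde\rho_X)$. The constant function $\tilde\rho_{Y,\LSym}(f)$ is trivially in $\FiproLoc(\tilde\rho_X)$ by the zero-propagation observation.

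For part (ii), applying \cref{lem_slant_Roe_cycles_commute} pointwise shows $[\tau(L(t)),\tilde\rho_Y(f)]\in\Roe(\tilde\rho_X)$ for every $t$; combined with the propagation bound and the inherited regularity, this places the commutator in $\Loc(\tilde\rho_X)$. If additionally $L\in\Locz(\rho_{X\times Y})$, then $L(1)=0$ forces the commutator to vanish at $t=1$, hence it lies in $\Locz(\tilde\rho_X)$. The universal property of the maximal tensor product then produces $\Phi_\LSym$ and $\Phi_{\LSym,0}$. For the refinement that the image of $\Phi_{\LSym,0}$ lies in $\FiproLocz(\tilde\rho_X)/\Locz(\tilde\rho_X)$, I simply note that on a generator $S\otimes f$ with $S\in\Locz$ the product $\tau_\LSym(S)\circ\tilde\rho_{Y,\LSym}(f)$ vanishes at $t=1$.

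For part (iii), factorization through $\sHigCorRed Y=\sHigComRed Y/\Cz(Y,\Kom)$ amounts to showing that on an elementary tensor $L\otimes(f\otimes T)$ with $f\otimes T\in\Cz(Y)\otimes\Kom$ the product $\tau_\LSym(L)\circ\tilde\rho_{Y,\LSym}(f\otimes T)$ lies in $\Loc(\tilde\rho_X)$, and in $\Locz(\tilde\rho_X)$ when $L\in\Locz$. Pointwise this is exactly \cref{lem_slant_Roe_cycles_factors}, and the propagation bound (together with vanishing at $t=1$ in the $\Locz$ case) promotes it to the localization algebra. The whole argument is essentially routine given the non-localized lemmas; the only point that requires a bit of care, and which I regard as the mild obstacle, is to verify that the various regularity conditions (boundedness, uniform continuity, propagation decay, and vanishing at $t=1$) all sit on $L$ and survive composition or bracketing with the zero-propagation constant function $\tilde\rho_{Y,\LSym}(f)$ without being lost.
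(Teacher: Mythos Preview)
Your proposal is correct and follows essentially the same approach as the paper: reduce everything to the non-localized \cref{lem_slant_Roe_cycles_idealizer,lem_slant_Roe_cycles_commute,lem_slant_Roe_cycles_factors} by evaluating pointwise in $t$, using \cref{lem_propagtioncomparision} together with subadditivity of propagation under composition to control the propagation-decay condition. If anything, you are slightly more explicit than the paper about checking that boundedness, uniform continuity, and vanishing at $t=1$ are inherited through composition with the zero-propagation constant $\tilde\rho_{Y,\LSym}(f)$.
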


\begin{proof}
The estimates on the propagation in these lemmas rely on \cref{lem_propagtioncomparision} and on the fact that the propagation of the composition of operators is at most the sum of the propagations of the summands, and so we are still fine in our situation here. And due to our definition of $\Loc(-)$ we have to check local compactness in the proofs of the analogous versions of \cref{lem_slant_Roe_cycles_commute,lem_slant_Roe_cycles_factors} only point-wise in time, i.e.\ for fixed $t \in [1,\infty)$, and hence we can directly use the corresponding arguments from the proofs of the \cref{lem_slant_Roe_cycles_commute,lem_slant_Roe_cycles_factors}.
\end{proof}

\begin{defn}\label{defn_localizationslant}
The slant products
\begin{align*}
\K_p(X \times Y) \otimes \K_{1-q}(\sHigCorRed Y) & \to \K_{p-q}(X)\\
\Strg_p(X \times Y) \otimes \K_{1-q}(\sHigCorRed Y) & \to \Strg_{p-q}(X)
\end{align*}
are defined as $(-1)^p$ times compositions analogous to that of \cref{defn_coarseslant} but using the maps $\Psi_{\LSym}$ and $\Psi_{\LSym,0}$, respectively, instead of $\Psi$.
\end{defn}

\subsection{Compatibility with the Higson--Roe sequence}\label{sec_commutativity_diagram}
In this section we prove the following compatibility of the slant products with the Higson--Roe sequence.
\begin{thm}\label{thm_commutativity_diagram}
The diagram
\begin{align*}
\mathclap{
\xymatrix{
\Strg_p(X \times Y) \ar[r] \ar[d]^{/ \theta} & \K_p(X \times Y) \ar[r] \ar[d]^{/ \theta} & \K_p(\Roe(X \times Y)) \ar[d]^{/ \theta} \ar[r]^-{\partial} & \Strg_{p-1}(X \times Y) \ar[d]^{/ \theta}\\
\Strg_{p-q}(X) \ar[r] & \K_{p-q}(X) \ar[r] & \K_{p-q}(\Roe X) \ar[r]^-{\partial} & \Strg_{p-1-q}(X)
}
}
\end{align*}
commutes for every $\theta\in \K_{1-q}(\sHigCorRed Y)$.
\end{thm}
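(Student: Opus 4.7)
The plan is to realize each commutative square as a consequence of the naturality of boundary maps in $\K$-theory, applied to suitable morphisms of short exact sequences of \textCstar-algebras built from those that appear implicitly in the slant product constructions.

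For the two squares not involving the Higson--Roe boundary $\partial$ (that is, the $\Strg \to \K$- and $\K \to \K(\Roe)$-squares), I would first exhibit the morphism of short exact sequences
\begin{equation*}
\begin{tikzcd}[column sep=tiny]
0 \rar & \Locz(\rho_{X\times Y}) \tensmax \sHigCorRed Y \rar \dar{\Psi_{\LSym,0}} & \Loc(\rho_{X\times Y}) \tensmax \sHigCorRed Y \rar \dar{\Psi_{\LSym}} & \Roe(\rho_{X\times Y}) \tensmax \sHigCorRed Y \rar \dar{\Psi} & 0 \\
0 \rar & \FiproLocz(\tilde\rho_X)/\Locz(\tilde\rho_X) \rar & \FiproLoc(\tilde\rho_X)/\Loc(\tilde\rho_X) \rar & \Fipro(\tilde\rho_X)/\Roe(\tilde\rho_X) \rar & 0
\end{tikzcd}
\end{equation*}
whose squares commute by inspection, since $\Psi_{\LSym,0}$, $\Psi_{\LSym}$, and $\Psi$ are defined by the same formula $S \otimes f \mapsto [\tau_{\LSym}(S)\,\tilde{\rho}_{Y,\LSym}(f)]$, with $\Psi$ obtained from $\Psi_{\LSym}$ via evaluation at $t = 1$. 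Combining naturality of the boundary map applied to this morphism with the fact that external product with $\theta$ commutes with the boundary of the Higson--Roe sequence (see \cref{rem_signconvention}) and the $(-1)^p$ sign convention in \cref{defn_coarseslant,defn_localizationslant}, one immediately obtains commutativity of the two middle squares.

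For the square involving $\partial$, I would use the $3\times 3$ diagram of \textCstar-algebras
\begin{equation*}
\begin{tikzcd}[column sep=tiny]
0 \rar & \Locz(\tilde\rho_X) \rar \dar & \Loc(\tilde\rho_X) \rar \dar & \Roe(\tilde\rho_X) \rar \dar & 0 \\
0 \rar & \FiproLocz(\tilde\rho_X) \rar \dar & \FiproLoc(\tilde\rho_X) \rar \dar & \Fipro(\tilde\rho_X) \rar \dar & 0 \\
0 \rar & \FiproLocz(\tilde\rho_X)/\Locz(\tilde\rho_X) \rar & \FiproLoc(\tilde\rho_X)/\Loc(\tilde\rho_X) \rar & \Fipro(\tilde\rho_X)/\Roe(\tilde\rho_X) \rar & 0
\end{tikzcd}
\end{equation*}
in which all rows and columns are exact: the columns by definition of the quotients, the middle row because $\mathrm{ev}_1\colon \FiproLoc(\tilde\rho_X)\to \Fipro(\tilde\rho_X)$ has kernel $\FiproLocz(\tilde\rho_X)$, and the bottom row by the nine lemma. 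In the associated long exact sequences, the boundary maps anti-commute: $\partial_{\mathrm{col},1}\circ \partial_{\mathrm{row,bot}} = -\partial_{\mathrm{row,top}} \circ \partial_{\mathrm{col},3}$ as maps $\K_\ast(\Fipro(\tilde\rho_X)/\Roe(\tilde\rho_X)) \to \K_{\ast-2}(\Locz(\tilde\rho_X))$, where $\partial_{\mathrm{row,top}}$ is exactly the Higson--Roe boundary $\partial$ of $\tilde\rho_X$. Identifying the slant on $\K(\Roe(X\times Y))$ with $(-1)^p \partial_{\mathrm{col},3}\circ \Psi_*$ applied after external product with $\theta$, the slant on $\Strg(X \times Y)$ with $(-1)^{p-1}\partial_{\mathrm{col},1}\circ \Psi_{\LSym,0,*}$ applied after external product, and combining with the morphism of short exact sequences from the first part, produces the $\partial$-square.

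The main technical obstacle is the precise bookkeeping of signs: the minus sign from the $3\times 3$ anti-commutativity, the sign $(-1)^n$ in $\partial(y \boxtimes x) = (-1)^n y\boxtimes \partial(x)$ from \cref{rem_signconvention}, and the $(-1)^p$ factor built into \cref{defn_coarseslant,defn_localizationslant} must all align so that the $\partial$-square commutes without extra sign. Indeed, the choice of the $(-1)^p$ in the definition of the slant product is essentially forced by this consistency requirement. Once the sign analysis is in hand, verifying the commutativity of the squares in the morphism of short exact sequences above is a routine consequence of the explicit formulas for $\Psi$, $\Psi_{\LSym}$, and $\Psi_{\LSym,0}$.
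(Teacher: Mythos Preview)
Your approach is essentially the same as the paper's. The paper organizes the argument as one large four-row diagram whose vertical compositions are the slant products up to sign, and then verifies commutativity row by row: the external product squares by functoriality and the sign convention of \cref{rem_signconvention}, the $\Psi$-squares by the morphism of short exact sequences you wrote, and the boundary squares by naturality together with the $3\times 3$ grid, invoking the same anti-commutativity of outer boundary maps to handle the $\partial$-square and match the sign discrepancy between $(-1)^p$ and $(-1)^{p-1}$.
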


\begin{proof}
Consider the diagram
\begin{align*}
\resizebox{\textwidth}{!}{\xymatrix{
\K_p(\Locz(X \times Y)) \ar[r] \ar[d]^-{\exttensprod \theta} & \K_p(\Loc(X \times Y)) \ar[r] \ar[d]^-{\exttensprod \theta} & \K_p(\Roe(X \times Y)) \ar[d]^-{\exttensprod  \theta} \ar[r]^-{\partial} & \K_{p-1}(\Locz(X \times Y)) \ar[d]^-{\exttensprod \theta}
\\{\begin{matrix}\K_{p+1-q}(\Locz(X \times Y)\\\qquad\otimes_{\max}\sHigCorRed Y) \end{matrix}}\ar[r] \ar[d]^{(\Psi_{\LSym,0})_*}
& {\begin{matrix}\K_{p+1-q}(\Loc(X \times Y)\\\qquad\otimes_{\max}\sHigCorRed Y) \end{matrix}} \ar[r] \ar[d]^{(\Psi_{\LSym})_*}
& {\begin{matrix}\K_{p+1-q}(\Roe(X \times Y)\\\qquad\otimes_{\max}\sHigCorRed Y) \end{matrix}} \ar[d]^{\Psi_*} \ar[r]^-{\partial} &{\begin{matrix}\K_{p-q}(\Locz(X \times Y)\\\qquad\otimes_{\max}\sHigCorRed Y) \end{matrix}} \ar[d]^{(\Psi_{\LSym,0})_*}
\\\K_{p+1-q}\left(\frac{\FiproLocz X}{\Locz X}\right) \ar[r]\ar[d]^{\partial} & \K_{p+1-q}\left(\frac{\FiproLoc X}{\Loc X}\right) \ar[r]\ar[d]^{\partial} & \K_{p+1-q}\left(\frac{\Fipro X}{\Roe X}\right) \ar[r]^-{\partial}\ar[d]^{\partial} & \K_{p-q}\left(\frac{\FiproLocz X}{\Locz X}\right)\ar[d]^{\partial}
\\\K_{p-q}(\Locz X) \ar[r] & \K_{p-q}(\Loc X) \ar[r] & \K_{p-q}(\Roe X) \ar[r]^-{\partial} & \K_{p-1-q}(\Locz X)
}}
\end{align*}
whose rows are the long exact sequences in $\K$-theory induced by the obvious short exact sequences of \textCstar\nobreakdash-algebras and whose vertical compositions are the slant products defined in the previous section up to the signs $(-1)^p$ for the left three columns and $(-1)^{p-1}$ for the right column.

Commutativity of the upper three squares is a well known property of the external tensor product in $\K$-theory, i.\,e.\ functoriality for the left two squares and the sign convention of \cref{rem_signconvention} for the square to the right.

Commutativity of the three squares in the middle row and the left two squares  in the bottom row are due to naturality of the long exact sequence in $\K$-theory, considering that the three $*$-homomorphisms $\Psi_{\LSym,0}$, $\Psi_{\LSym}$ and $\Psi$ together comprise a morphism of short exact sequences and considering that we have the following commutative diagram with exact rows and collumns:
\[\xymatrix{
& 0 \ar[d] & 0 \ar[d] & 0 \ar[d] & \\
0 \ar[r] & \Locz (X) \ar[r] \ar[d] & \Loc(X) \ar[r] \ar[d] & \Roe(X) \ar[r] \ar[d] & 0\\
0 \ar[r] & \FiproLocz(X) \ar[r] \ar[d] & \FiproLoc(X) \ar[r] \ar[d] & \Fipro(X) \ar[r] \ar[d] & 0\\
0 \ar[r] & \FiproLocz(X) / \Locz(X) \ar[r] \ar[d] & \FiproLoc(X) / \Loc(X) \ar[r] \ar[d] & \Fipro(X) / \Roe(X) \ar[r] \ar[d] & 0\\
& 0 & 0 & 0 &
}\]
It is an abstract fact that the outer boundary maps associated to such a grid of exact sequences commute up to multiplication with $-1$.
This proves commutativity of the bottom right square up to \(-1\), and this extra sign is exactly the one needed to match the difference of the signs implemented in the slant products.
\end{proof}

\subsection{The slant product on \texorpdfstring{$\K$}{K}-homology}\label{sec_recovering_usual_slant_prod}
In this section we are going to compare the slant product for the localization algebra $\Loc$ with the usual slant product between $\K$-homology and $\K$-theory.
There are several ways to define the latter, so let us specify that the definition which we want to work with is the one obained from $E$-theory.

Recall that $E$-theory is a bivariant $\K$-theory for \textCstar\nobreakdash-algebras, i.\,e.\ it has properties analogous to those of $\KK$-theory, and it even agrees with $\KK$-theory on nuclear and separable \textCstar\nobreakdash-algebras. One recovers the $\K$-homology and $\K$-theory groups of a locally compact Hausdorff space $X$ as the special cases $\K_*(X)\cong E_*(\Cz(X),\C)$ and $\K^*(X)\cong E_{-*}(\C,\Cz(X))$.
The reason why we prefer $E$-theory over $\KK$-theory is that the localization algebras are closely related to asymptotic morphisms as in the definition of $E$-theory. Even more, the isomorphism between the $\K$-theory of the localization algebra and $\K$-homology which is given in \cite[Corollary 4.2 and Proposition 4.3]{qiao_roe} (see \cref{prop_KofLocisEhom} below) is in fact given as an isomorphism 
\[ \Delta \colon \K_*(\Loc X)\xrightarrow{\cong} E_*(\Cz(X),\C).\]

Our standard references for $E$-theory are the papers \cite{guentnerhigsontrout,guentnerhigson}. Note that they actually only consider the $E$-theory groups $E_0(-,-)$ in degree zero, but the higher $E$-theory groups are obtained from them in the usual way, i.\,e.\ by tensoring suitably with $\Cz(\R^n)$ (or with the $\Z_2$-graded Clifford algebras $\C\ell_n$), just as it is done in $\KK$-theory. Therefore, also the same sign heuristics as in $\KK$-theory apply.

There is just one subtle difference between $E$-theory and $\KK$-theory which one has to get right to make the sign heuristics work: In $\KK$-theory, the Kasparov product of $x\in\KK_m(A,B)$ and $y\in \KK_n(B,C)$ is usually written as $x\otimes_By\in\KK_{m+n}(A,C)$, whereas in $E$-theory, the composition product of $x\in E_m(A,B)$ and $y\in E_n(B,C)$ is usually written like a composition of functions as $y\circ x\in E_{m+n}(A,B)$. Comparing these two notations one realizes that the order of $x$ and $y$ is exchanged and hence it should be expected that they agree only up to a sign $(-1)^{mn}$. This is indeed the case, and it is due to the fact that in taking the products one has to choose identifications $\Cz(\R^m)\otimes\Cz(\R^n)\cong\Cz(\R^{m+n})$, and these identifications have to be chosen differently for $E$-theory than for $\KK$-theory to make the sign heurisics work.

Maybe the best way to visualize this is the commutative diagram
\[\xymatrix{
\KK_m(A,B)\otimes\KK_n(B,C)\ar[r]^-{\otimes_B}\ar[d]^{(-1)^{mn}\cdot\text{flip}}&\KK_{m+n}(A,C)\ar[d]
\\E_n(B,C)\otimes E_m(A,B)\ar[r]^-{\circ}&E_{m+n}(A,C)
}\]
where the left vertical arrow not only maps $\KK$-theory to $E$-theory and exchanges the two factors, but also multiplies by $(-1)^{mn}$.

Note that this difference sticks out in the special case $m+n=0$, $A=C=\C$ and $B=\Cz(X)$: For $x\in \K_n(X)$ and $y\in \K^n(X)$ the $E$-theoretic pairing is $\langle x,y\rangle\coloneqq x\circ y\in E_0(\C,\C)\cong\Z$ and the $\KK$-theoretic pairing is $\langle y,x\rangle\coloneqq y\otimes_{\Cz(X)}x\in \KK_0(\C,\C)\cong\Z$ and these two pairings differ by $(-1)^{-n^2}=(-1)^{n}$,
\[\langle y,x\rangle=(-1)^n \langle x,y\rangle\,,\]
and thanks to the choice of the order of $x,y$ in these notations of the pairing, the formula is again compatible with the sign heuristics. We will always use the $E$-theoretic version of the pairing in this paper.

\begin{defn}\label{defn_Eslant}
The slant product
\begin{align*}
\K_p(X\times Y)\otimes \K^q(Y) & \to \K_{p-q}(X)\\
x\otimes\theta & \mapsto x/\theta
\end{align*}
is defined as
\begin{alignat*}{2}
E_p(\Cz(X)\otimes \Cz(Y),\C)&\otimes E_{-q}(\C,\Cz(Y)) & \ \to \ & E_{p-q}(\Cz(X),\C)
\\x & \otimes \theta & \ \mapsto \ & x\circ (\id_{\Cz(X)}\boxtimes\theta)\,.
\end{alignat*}
Note that for $Y$ a one-point space and $p=q$ one recovers the pairing as a special case of the slant product: $\langle x,\theta\rangle=x/\theta$.
\end{defn}

The two properties of the following lemma are in accordance with the sign heuristics and their proofs are straightforward.

\begin{lem}\label{lem_crossslantcompatibilityinEtheory}
For $x\in \K_m(X)$, $y\in \K_p(Y\times Z)$ and $\theta\in \K^q(Z)$ we have
\[(x\times y)/\theta=x\times(y/\theta)\]
and for $x\in \K_p(X\times Y\times Z)$, $\eta\in \K^q(Y)$ and $\theta\in \K^r(Z)$ we have
\[x/(\eta\times\theta)=(-1)^{qr}\cdot (x/\theta)/\eta\,.\]
\end{lem}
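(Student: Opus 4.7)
The plan is to unwind both identities at the level of the defining compositions in $E$-theory, exploiting the fact that both the external product $\boxtimes$ and the slant product $/$ reduce to specific tensor products and compositions in the monoidal category $E$. Throughout, I think of $x, y, \theta, \eta$ as morphisms in $E$: $x \colon \Cz(X) \to \C$, $y \colon \Cz(Y) \otimes \Cz(Z) \to \C$ in part~(i), and similarly in part~(ii); the external product is literally the tensor product of morphisms $x \boxtimes y = x \otimes y$, while the slant product $(\blank)/\theta$ is postcomposition with $\id \otimes \theta$ as per \cref{defn_Eslant}.

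For identity~(i), both $(x\times y)/\theta$ and $x \times (y/\theta)$ unravel to the composition
\[
\Cz(X) \otimes \Cz(Y) \xrightarrow{\id_{\Cz(X)} \otimes \id_{\Cz(Y)} \otimes \theta} \Cz(X) \otimes \Cz(Y) \otimes \Cz(Z) \xrightarrow{x \otimes y} \C.
\]
To match the two bracketings I would apply the graded exchange law
\[
(f_1 \otimes f_2) \circ (g_1 \otimes g_2) \;=\; (-1)^{|f_2|\,|g_1|}\,(f_1 \circ g_1) \otimes (f_2 \circ g_2)
\]
with $f_1 = x$, $f_2 = y$, $g_1 = \id_{\Cz(X)}$, $g_2 = \id_{\Cz(Y)} \otimes \theta$. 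Since $|g_1| = 0$, no sign appears, and the composition equals $x \otimes (y \circ (\id_{\Cz(Y)} \otimes \theta)) = x \times (y/\theta)$.

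For identity~(ii), the same exchange law does the work, but this time nontrivially. I would rewrite
\[
(x/\theta)/\eta \;=\; x \circ \bigl(\id_{\Cz(X) \otimes \Cz(Y)} \otimes \theta\bigr) \circ \bigl((\id_{\Cz(X)} \otimes \eta) \otimes \id_{\C}\bigr),
\]
so that the last two arrows appear as tensor products decomposed along the same splitting. Applying the exchange law with $f_2 = \theta$ of degree $-r$ and $g_1 = \id_{\Cz(X)} \otimes \eta$ of degree $-q$ produces the sign $(-1)^{(-r)(-q)} = (-1)^{qr}$ and rewrites the composition as $x \circ (\id_{\Cz(X)} \otimes (\eta \otimes \theta)) = x/(\eta \times \theta)$, up to that sign. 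Rearranging gives the asserted formula $x/(\eta \times \theta) = (-1)^{qr} (x/\theta)/\eta$.

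The only genuine content, and the place where one has to be careful, is the graded swap sign in the tensor--composition identity in $E$-theory. This is the $E$-theoretic counterpart of the sign convention recalled in \cref{rem_signconvention} and is precisely what the sign heuristics predict: exchanging the symbol $\eta$ of degree $-q$ past the symbol $\theta$ of degree $-r$ costs $(-1)^{qr}$. Once one commits to this convention and to the identification of slant products with postcomposition by $\id \otimes (-)$, both identities follow by a one-line diagram chase with no further input.
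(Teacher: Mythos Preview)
Your argument is correct and is precisely the straightforward unfolding of the $E$-theoretic definitions that the paper has in mind; indeed, the paper gives no explicit proof beyond the remark that the identities ``are in accordance with the sign heuristics and their proofs are straightforward.'' Your use of the graded interchange law $(f_1 \boxtimes f_2) \circ (g_1 \boxtimes g_2) = (-1)^{|f_2||g_1|}(f_1 \circ g_1) \boxtimes (f_2 \circ g_2)$ is exactly the content of those sign heuristics, and your bookkeeping of the degrees is accurate.
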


The remainder of this section is devoted to proving the following comparision theorem between the $E$-theoretic slant product and the slant product which we had defined for the localization algebra.

\begin{thm}\label{thm_KHomSlantCompatible}
Let $Y$ have continuously bounded geometry.
Then the slant product on the localization algebra from \cref{defn_localizationslant} and the $E$-theoretic slant product are related via the co-assembly map $\mu^\ast\colon \K_{1-q}(\sHigCorRed Y) \to \K^q(Y)$ and the isomorphisms $\Delta\colon\K_*(\Loc(-))\xrightarrow{\cong}\K_*(-)$ by the commutative diagram
\[\xymatrix{
	\K_p(\Loc(X \times Y)) \otimes \K_{1-q}(\sHigCorRed Y)
		\ar[r]^-{/}
		\ar[d]_-{\Delta \otimes \mu^\ast}
	& \K_{p-q}(\Loc X ) \ar[d]_-{\Delta}^-{\cong}
	\\	\K_p(X \times Y) \otimes \K^q(Y)
	\ar[r]^-{/}
	&\K_{p-q}(X)
}\]
\end{thm}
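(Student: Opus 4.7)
The strategy is to show that both sides of the diagram define the same element of $E_{p-q}(\Cz(X), \C)$, by reinterpreting both constructions via the boundary-map machinery in $\K$-theory or $E$-theory and reducing to a single compatibility statement about asymptotic morphisms.

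First, I would rewrite the $E$-theoretic side. Since $\mu^*(\theta)$ is itself the connecting map applied to $\theta$ of the short exact sequence
\[0 \to \Cz(Y,\Kom) \to \sHigComRed Y \to \sHigCorRed Y \to 0,\]
naturality of the $E$-theoretic boundary (together with the sign conventions of \cref{rem_signconvention}) lets me express $\Delta(x) / \mu^*(\theta)$ up to sign as the boundary of the class $\Delta(x) \circ (\id_{\Cz(X)} \boxtimes \theta)$ in the $E$-theory sequence obtained after tensoring with $\Cz(X)$ and evaluating against $\C$.

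Second, I would unravel the Higson--Roe side. By \cref{defn_localizationslant}, the slant product is $(-1)^p$ times the boundary of $(\Psi_L)_*(x \boxtimes \theta)$ in the $\K$-theory sequence associated to the ideal $\Loc(\tilde\rho_X)$ in $\FiproLoc(\tilde\rho_X)$. The map $\Psi_L$ is the quotient of the linear map $\Phi_L \colon \Loc(X \times Y) \otimes_{\max} \sHigComRed Y \to \FiproLoc(\tilde\rho_X)$ defined by $S \otimes f \mapsto \tau_L(S)\,\tilde\rho_{Y,L}(f)$; although $\Phi_L$ is only multiplicative modulo $\Loc(\tilde\rho_X)$, its restriction to $\Loc(X \times Y) \otimes \Cz(Y,\Kom)$ already takes values in $\Loc(\tilde\rho_X)$ by the proof of \cref{lem_slant_Loc_technical}. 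Combined with Qiao--Roe's realisation of $\Delta$ in terms of asymptotic morphisms from \cite{qiao_roe}, this lets me represent both sides at each time $t$ by concrete families of operators on $H_X$ built from a lift $\tilde f$ of $\theta$ and a family $\{a_t\}$ representing $x$.

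The final step is to match these two families of operators. Unwinding through the tensor decomposition $H_{X \times Y} = H_X \otimes H_Y$, one sees that both constructions produce, at time $t$ and paired against a test function $g \in \Cz(X)$, essentially the operator $\rho_X(g) \cdot a_t \cdot (\id_{H_X} \otimes \rho_Y(\tilde f))$ up to compact corrections and an overall sign. The main obstacle is the careful bookkeeping: tracing asymptotic morphisms through the isomorphism $\Delta$, verifying that the restriction of $\Phi_L$ to $\Cz(Y,\Kom)$ induces under $\Delta$ the standard $E$-theoretic external composition, and matching the explicit sign $(-1)^p$ from \cref{defn_localizationslant} with the sign introduced by $E$-theoretic naturality per \cref{rem_signconvention}. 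The continuously bounded geometry of $Y$ enters here exactly as in the proof of \cref{lem_slant_Roe_cycles_commute}, ensuring that the relevant series converge uniformly in $t$ so that all identifications are legitimate at the level of asymptotic morphisms.
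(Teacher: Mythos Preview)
Your outline has the right overall shape (both sides are boundary maps, and one wants to compare them), but it is missing the key technical step that makes the comparison go through, and it also misidentifies where continuously bounded geometry enters.

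The paper's argument hinges on constructing an honest $*$-homomorphism
\[
\bar{\Upsilon}_{\LSym}\colon \Loc(\rho_{X\times Y})\tensmax\sHigComRed Y \longrightarrow \FiproLoc(\tilde\rho_X)/\Cz([1,\infty),\Roe(\tilde\rho_X))
\]
that simultaneously lifts $\Psi_{\LSym}$ (on the corona quotient) and $\Upsilon_{\LSym}$ (on the ideal $\Cz(Y,\Kom)$). This produces a morphism between the two relevant short exact sequences, so naturality of boundary maps gives the compatibility directly. What makes $\bar{\Upsilon}_{\LSym}$ a $*$-homomorphism is \cref{cor:asymptoticCommutatorHigCom}: the commutator $[\tau(T_t),\tilde\rho_Y(f)]$ lies in $\Cz([1,\infty),\Roe(\tilde\rho_X))$ for $f\in\sHigComRed Y$, not merely in $\Loc(\tilde\rho_X)$. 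Your proposal only invokes the weaker fact from \cref{lem_slant_Loc_technical} that the commutator is in $\Loc(\tilde\rho_X)$; without the vanishing at infinity you do not get the map of short exact sequences, and your plan to ``match operators at time $t$'' has no mechanism for comparing the two boundary maps, which come from genuinely different extensions.

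Relatedly, continuously bounded geometry is not used ``as in the proof of \cref{lem_slant_Roe_cycles_commute}'' (that lemma only needs bounded geometry). It is needed precisely for \cref{prop:asymptoticCommuteBndGmtry}, which establishes $[\tau(T_t),\tilde\rho_Y(f)]\to 0$ for $f$ in the stable Higson compactification: since such $f$ are operator-valued one cannot reduce to scalar functions via a tensor decomposition, and the argument genuinely requires the finer control on covers at all small scales that continuously bounded geometry provides. This is the missing ingredient in your sketch.
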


Let us begin by recalling the isomorphisms $\Delta$ between the $\K$-theory of the localization algebra and the $\K$-homology groups.

\begin{prop}[compare {\cite[Corollary 4.2 and Proposition\ 4.3]{qiao_roe}}]\label{prop_KofLocisEhom}
Let $X$ be a proper metric space and $(H_X,\rho_X)$ an $X$-module.
Then
\begin{align*}
\delta(\rho_X)\colon\Loc(\rho_X)\otimes \Cz(X)&\to\Cb([1,\infty),\Kom(H_X))/\Cz([1,\infty),\Kom(H_X))
\\(T_t)_{t\in[1,\infty)}\otimes f&\mapsto [(T_t\circ \rho_X(f))_{t\in[1,\infty)}]
\end{align*}
defines an asymptotic morphism and hence a canonical element of $E_0(\Loc(\rho_X)\otimes \Cz(X),\C)$.
The isomorphism between the $\K$-theory of the localization algebra constructed with an ample $X$-module $(H_X,\rho_X)$ and $\K$-homology is given by the $E$-theoretic composition product
\begin{align*}
\Delta(\rho_X)\colon\K_m(\Loc(\rho_X))&\xrightarrow{\cong} E_m(\Cz(X),\C)\cong\K_m(X)
\\ x&\mapsto \delta(\rho_X)\circ (x\boxtimes \id_{\Cz(X)})\,.
\end{align*}
\end{prop}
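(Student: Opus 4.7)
The plan is to verify the two assertions in turn: first, that $\delta(\rho_X)$ is a well-defined asymptotic morphism, and second, that the resulting composition $\Delta(\rho_X)$ recovers the Qiao--Roe isomorphism.

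For the first claim, I would check that $\delta(\rho_X)$ descends to a $*$-homomorphism from $\Loc(\rho_X) \otimes \Cz(X)$ into the asymptotic algebra $\Cb([1,\infty),\Kom(H_X))/\Cz([1,\infty),\Kom(H_X))$ by verifying three points. \textbf{(i)} Boundedness and continuity of $t \mapsto T_t \rho_X(f)$ are immediate from the defining conditions for $\Loc(\rho_X)$, since $\|T_t\| \le \|T\|$ and $t \mapsto T_t$ is norm-continuous. \textbf{(ii)} Local compactness of each $T_t \in \Roe(\rho_X)$ implies $T_t \rho_X(f) \in \Kom(H_X)$ when $f$ has compact support, and approximating a general $f \in \Cz(X)$ uniformly by compactly supported functions ensures $(T_t \rho_X(f))_t$ genuinely lies in $\Cb([1,\infty),\Kom(H_X))$. \textbf{(iii)} The $*$-homomorphism property modulo $\Cz([1,\infty),\Kom(H_X))$ reduces, after expanding products of elementary tensors and adjoints, to showing $\|[T_t,\rho_X(f)]\| \to 0$ as $t \to \infty$ for every $T \in \Loc(\rho_X)$ and $f \in \Cz(X)$. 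If $\propagation(T_t) \le \varepsilon_t \to 0$ and $\omega_f$ denotes the modulus of uniform continuity of $f$, then a standard partition-of-unity estimate yields $\|[T_t,\rho_X(f)]\| \le 2\|T_t\|\,\omega_f(\varepsilon_t) \to 0$. By the Connes--Higson formalism this exhibits $\delta(\rho_X)$ as an asymptotic morphism, which, together with the stabilization isomorphism $\K_\ast(\Kom(H_X)) \cong \K_\ast(\C)$, furnishes a canonical element of $E_0(\Loc(\rho_X) \otimes \Cz(X),\C)$.

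For the second claim, assuming $(H_X,\rho_X)$ is ample, I would identify the map $\Delta(\rho_X)$ with the Qiao--Roe isomorphism. The key observation is that the $E$-theoretic composition $\delta(\rho_X) \circ (x \boxtimes \id_{\Cz(X)})$ is the coordinate-free reformulation of the cycle-level construction used by Qiao--Roe: starting from a class $x \in \K_m(\Loc(\rho_X))$, external-producting with the identity of $\Cz(X)$ and pushing through $\delta(\rho_X)$ amounts, at the level of representative cycles, to multiplying the time-dependent family representing $x$ by $\rho_X(f)$ and passing to the asymptotic limit---which is precisely Qiao--Roe's prescription. Ampleness enters exactly where their argument demands it, namely in the comparison with Paschke duality and the Mayer--Vietoris/five-lemma induction that reduces the statement to the case of a point. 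Hence $\Delta(\rho_X)$ is an isomorphism.

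The main obstacle is bookkeeping of $E$-theory conventions: one must verify that the $E$-theoretic composition product, together with the external product $x \boxtimes \id_{\Cz(X)}$, matches the cycle-level formula of Qiao--Roe with the correct sign, and that the tensor product $\Loc(\rho_X) \otimes \Cz(X)$ is unambiguously interpreted (the minimal and maximal tensor products coincide by nuclearity of $\Cz(X)$, which removes a potential ambiguity in the definition of the external product in $E$-theory). The analytic inputs---local compactness and the commutator estimate---are routine given the definitions, so the conceptual work lies in this identification with the established Qiao--Roe isomorphism.
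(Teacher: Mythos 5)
Your proposal is correct and matches the paper's treatment: the paper likewise reduces the asymptotic-morphism property of $\delta(\rho_X)$ to the commutator--propagation estimate $\|[T,\rho_X(\vartheta)]\|\leq C\|T\|\,\|\Var_{\propagation(T)}\vartheta\|$ (its \cref{lem:CommutatorVsPropagation}), which is exactly your modulus-of-continuity bound, and it defers the identification of $\Delta(\rho_X)$ with the $\K$-homology isomorphism to Qiao--Roe just as you do. No substantive difference in approach.
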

For later purposes we note that $\delta(\rho_X)$ obviously factorizes through the quotient $(\Loc(\rho_X)/\Cz([1,\infty),\Roe(\rho_X)))\otimes \Cz(X)$.

The first part of this proposition is an easy corollary of the following well-known technical lemma, which we will also need to exploit several further times below.

\begin{lem}[{compare \cite[Proposition~4.1.]{qiao_roe}, \cite[Lemma~6.1.2]{WillettYuHigherIndexTheory}}]\label{lem:CommutatorVsPropagation}
  There exists a constant \(C > 0\) such that the following holds.
	Let \((H_X, \rho_X)\) be an \(X\)-module and \(\vartheta \colon X \to \C\) a bounded Borel function.
  Then for every operator $T \in \cB(K)$ the estimate
	\[ \| [T, \rho_X(\vartheta)] \| \leq C\cdot \|T\|\cdot\|\Var_{\propagation(T)}\vartheta\| \]
	 holds.
\end{lem}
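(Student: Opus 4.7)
The plan is to reduce the estimate to the analysis of a locally constant approximation of $\vartheta$, exploiting the finite propagation of $T$ in combination with a suitable Borel partition of $X$.

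Write $R \coloneqq \propagation(T)$; the inequality is vacuous if $R = \infty$, so assume $R < \infty$. By splitting $\vartheta$ into real and imaginary parts I reduce to the real-valued case at the cost of an innocuous factor absorbed into $C$. Fix a small parameter $\epsilon > 0$. Using that $X$ is proper, choose a countable partition $\{Z_j\}_{j\in J}$ of $X$ into pairwise disjoint Borel subsets of diameter less than $\epsilon$ with chosen representatives $x_j \in Z_j$. The operators $P_j \coloneqq \rho_X(\mathbf{1}_{Z_j})$ form mutually orthogonal projections summing strongly to $\mathrm{id}_{H_X}$, and $\widetilde\vartheta \coloneqq \sum_j \vartheta(x_j)\mathbf{1}_{Z_j}$ is a step-function approximation to $\vartheta$.

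Two core estimates drive the argument. First, the orthogonality of the $P_j$ yields $\|\rho_X(\vartheta) - \rho_X(\widetilde\vartheta)\| \leq \|\Var_\epsilon\vartheta\|_\infty$, so replacing $\vartheta$ by $\widetilde\vartheta$ introduces only an error of size $2\|T\|\cdot\|\Var_\epsilon\vartheta\|_\infty$ in the commutator, which becomes negligible as $\epsilon \to 0$. Second, writing $N_R(Z_j)$ for the closed $R$-neighborhood, the finite-propagation identity $T P_j = \rho_X(\mathbf{1}_{N_R(Z_j)})\,T P_j$ combined with $\rho_X(\widetilde\vartheta)P_j = \vartheta(x_j)P_j$ yields
\[
[T,\rho_X(\widetilde\vartheta)] P_j \;=\; \rho_X\bigl((\widetilde\vartheta - \vartheta(x_j))\,\mathbf{1}_{N_R(Z_j)}\bigr)\,T P_j,
\]
where the multiplier has sup-norm at most $\|\Var_{R+\epsilon}\vartheta\|_\infty$, since any representative $x_k$ with $Z_k \cap N_R(Z_j) \neq \emptyset$ satisfies $d(x_j,x_k) \leq R + \epsilon$.

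Finally, I assemble the blockwise bounds into an operator-norm estimate. The vectors $[T,\rho_X(\widetilde\vartheta)] P_j v$ have ranges contained in $N_R(Z_j)$ and are therefore pairwise orthogonal whenever $d(Z_j,Z_k) > 2R$. A Schur-test style argument exploiting this sparse overlap pattern produces a uniform constant $C$ with $\|[T,\rho_X(\widetilde\vartheta)]\| \leq C\|T\|\cdot\|\Var_{R+\epsilon}\vartheta\|_\infty$; combining this with the approximation error above and letting $\epsilon \to 0$ concludes the argument. The main technical obstacle lies in this last assembly step: the naive triangle-inequality bound is too coarse, and one must carefully exploit the limited overlap of the neighborhoods $N_R(Z_j)$ via the orthogonality relations, while verifying that the resulting constant $C$ is genuinely universal and absorbs any combinatorial factors coming from this overlap pattern.
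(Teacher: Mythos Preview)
The paper does not actually prove this lemma; it is stated with a citation to \cite{qiao_roe} and \cite{WillettYuHigherIndexTheory} and used as a black box. So there is no paper proof to compare against, and your sketch must stand on its own.

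Unfortunately, your approach has a genuine gap precisely at the step you flag as the ``main technical obstacle''. Partitioning the \emph{domain} $X$ into Borel pieces $Z_j$ of diameter $<\epsilon$ cannot produce a universal constant. The number of sets $Z_k$ whose $R$-neighborhood meets a given $N_R(Z_j)$ is governed by the local geometry of $X$: already for $X=\mathbb{R}^n$ it is of order $(R/\epsilon)^n$, and for a general proper metric space it is simply unbounded. No Schur-test or orthogonality argument can erase this factor. Your plan to recover by sending $\epsilon\to 0$ fails twice over: the overlap multiplicity blows up as $\epsilon\to 0$, and the approximation error $\|\Var_\epsilon\vartheta\|_\infty$ need \emph{not} tend to zero because $\vartheta$ is merely a bounded Borel function, not uniformly continuous (think $\vartheta=\mathbf{1}_E$).

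The fix, which is what the cited references do, is to partition the \emph{range} of $\vartheta$ rather than the domain. After reducing to real-valued $\vartheta$, set $V=\|\Var_R\vartheta\|_\infty$, slice $\mathbb{R}$ into consecutive intervals $I_k$ of length $V$, and let $E_k=\vartheta^{-1}(I_k)$ with projections $P_k=\rho_X(\mathbf{1}_{E_k})$. The point is that finite propagation together with the variation bound forces $P_k T P_l=0$ whenever $|k-l|\geq 2$: if $x\in E_l$, $y\in E_k$ and $d(x,y)\leq R$, then $|\vartheta(x)-\vartheta(y)|\leq V$, which is impossible when $|k-l|\geq 2$. Thus $T$ and $[T,\rho_X(\vartheta)]$ are block-tridiagonal with respect to $\{P_k\}$, and each nonzero block $P_k[T,\rho_X(\vartheta)]P_l$ (with $|k-l|\leq 1$) has norm at most a universal multiple of $V\|T\|$ since $\vartheta$ varies by at most $2V$ across $E_k\cup E_l$. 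Summing the three diagonals---within each of which the blocks have orthogonal domains and ranges---yields $\|[T,\rho_X(\vartheta)]\|\leq C\,\|T\|\,V$ with a genuinely universal $C$. The key conceptual difference is that the tridiagonal bandwidth here comes from the one-dimensional structure of the range of $\vartheta$, not from any covering property of $X$.
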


Note that \(\vartheta\) is uniformly continuous iff \(\|\Var_{r}\vartheta\| \to 0\) as \(r \to 0\).

\begin{lem}\label{lem:asymptoticCommute}
	Let $(T_t)_{t \in [1,\infty)} \in \Loc(\rho_{X \times Y})$ and $f \in \Cz(Y, \Kom)$.
	Then
	\begin{equation} \label{eq:asymptoticCommutatorCz}
		\lim_{t \to \infty}\left[ \tau(T_t), \tilde\rho_Y(f) \right] = 0.
	\end{equation}
\end{lem}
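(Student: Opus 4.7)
The plan is to reduce the claim to an application of \cref{lem:CommutatorVsPropagation}, exploiting that $\Cz(Y,\Kom)$ decomposes as a tensor product and that $\tau$ introduces a trivial $\elltwo$-factor which does not interact with the propagation estimate.

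First I would use a density argument to reduce to the case where $f = \varphi \otimes K$ is an elementary tensor with $\varphi \in \Cz(Y)$ and $K \in \Kom$. This works because $\Cz(Y, \Kom) = \Cz(Y) \otimes \Kom$, finite linear combinations of elementary tensors are norm-dense, and the family $\{\tau(T_t)\}_{t \geq 1}$ is uniformly bounded (since $(T_t) \in \Loc(\rho_{X \times Y}) \subseteq \Cb([1,\infty), \Roe(\rho_{X \times Y}))$), so a standard $\epsilon/3$ argument passes the claim through the closure. For such an elementary tensor, unpacking the definitions of $\tilde\rho_Y$ and $\tau$ gives $\tilde\rho_Y(\varphi \otimes K) = (\id_{H_X} \otimes \rho_Y(\varphi)) \otimes K$ on $\tilde H_X = H_X \otimes H_Y \otimes \elltwo$ and $\tau(T_t) = T_t \otimes \id_{\elltwo}$, so the commutator factors as
\[
  [\tau(T_t), \tilde\rho_Y(\varphi \otimes K)] = [T_t, \id_{H_X} \otimes \rho_Y(\varphi)] \otimes K,
\]
whose norm is bounded by $\|K\| \cdot \|[T_t, \id_{H_X} \otimes \rho_Y(\varphi)]\|$.

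Next, the operator $\id_{H_X} \otimes \rho_Y(\varphi)$ is the image of the bounded Borel function $1 \otimes \varphi$ on $X \times Y$ under the canonical Borel extension of the ample representation $\rho_{X \times Y}$, so \cref{lem:CommutatorVsPropagation} applies and yields
\[
  \|[T_t, \id_{H_X} \otimes \rho_Y(\varphi)]\| \leq C \cdot \|T_t\| \cdot \|\Var_{\propagation(T_t)}(1 \otimes \varphi)\|.
\]
The $r$-variation of $1 \otimes \varphi$ on $X \times Y$ is bounded by the $r$-variation of $\varphi$ on $Y$, and every $\varphi \in \Cz(Y)$ on a proper metric space is uniformly continuous (by a routine $\epsilon/2$ argument separating small values outside a compact set from the continuous restriction to its $1$-neighborhood, which is compact by properness), so $\|\Var_r \varphi\| \to 0$ as $r \to 0$.

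To finish, I would apply a second density reduction to assume that $(T_t)$ lies in the generating subset of $\Loc(\rho_{X \times Y})$, so that $\propagation(T_t) \to 0$ by definition. Combined with $\sup_t \|T_t\| < \infty$, the estimate above then forces $\|[\tau(T_t), \tilde\rho_Y(f)]\| \to 0$. The main, but mild, obstacle is coordinating the two density approximations with the propagation hypothesis; both reductions rest on norm boundedness of the relevant families and on bilinearity of the commutator, so neither should present a genuine difficulty.
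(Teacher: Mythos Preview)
Your proof is correct and follows essentially the same approach as the paper: reduce to elementary tensors $\varphi\otimes K$ via density, factor the commutator as $[T_t,\id_{H_X}\otimes\rho_Y(\varphi)]\otimes K$, and apply \cref{lem:CommutatorVsPropagation} together with uniform continuity of $\varphi\in\Cz(Y)$ and the propagation decay from the definition of $\Loc$. You spell out the density reductions and the uniform continuity of $\Cz$-functions a bit more carefully than the paper does, but there is no substantive difference.
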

\begin{proof}
Since $\Cz(Y, \Kom) = \Cz(Y) \otimes \Kom$, we assume by an approximation argument without loss of generality that $f = \tilde{f} \otimes K$, where $\tilde{f} \in \Cz(Y)$ and $K \in \Kom$.
Moreover, we can assume that $\lim_{t \to \infty} \propagation(T_t) = 0$.
Then \cref{lem:CommutatorVsPropagation} implies that there exists $C \geq 0$ such that
\[ \| [T_t, \id_{H_X} \otimes \rho_Y(\tilde{f})] \| \leq C \|T\|\,\|\Var_{\propagation(T_t)} 1 \otimes \tilde{f} \|.\]
We have \(\|\Var_r 1 \otimes \tilde{f}\| \to 0\) as \(r \to 0\) because \(1 \otimes \tilde{f} \colon X \times Y \to \C\) is uniformly continuous.
Hence $[T_t, \id_{H_X} \otimes \rho_Y(\tilde{f})] \to 0$ as $t \to \infty$.
We conclude that
\begin{align*}
 \lim_{t \to \infty} \left[ \tau(T_t), \tilde\rho_Y(f) \right]
 &= \lim_{t \to \infty} \left[ T_t \otimes \id_{\elltwo}, \id_{H_X} \otimes \rho_Y(\tilde{f}) \otimes K \right] \\
 &= \lim_{t \to \infty} \left[ T_t, \id_{H_X} \otimes \rho_Y(\tilde{f}) \right] \otimes K = 0.
\qedhere
\end{align*}
\end{proof}

  \Cref{lem:CommutatorVsPropagation} only applies to scalar-valued functions.
  Therefore the tensor product decomposition \(\Cz(Y, \Kom) = \Cz(Y) \otimes \Kom\) was a crucial aspect in the argument above.
  In particular, this argument does not prove \labelcref{eq:asymptoticCommutatorCz} for functions in the stable Higson compactification, although it would work for the usual (unstable) Higson compactification.
  However, under the assumption of continuously bounded geometry, a more difficult argument yields \labelcref{eq:asymptoticCommutatorCz} also for functions in the stable Higson compactification, which is the content of the following proposition.

\begin{prop}\label{prop:asymptoticCommuteBndGmtry}
\begin{sloppypar}
Suppose that $Y$ has continuously bounded geometry.
Let $(T_t)_{t \in [1,\infty)} \in \Loc(\rho_{X\times Y})$ and $f \colon Y \to \Lin(\elltwo)$ bounded and uniformly continuous.
Then
\begin{equation} \label{eq:asymptoticCommutator}
	\lim_{t \to \infty}\left[ \tau(T_t), \tilde\rho_Y(f) \right] = 0.
\end{equation}
\end{sloppypar}
\end{prop}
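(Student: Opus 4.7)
The obstacle in applying the strategy of \cref{lem:asymptoticCommute} is that the scalar commutator estimate of \cref{lem:CommutatorVsPropagation} does not apply to the $\Lin(\elltwo)$-valued function $f$. My plan is to approximate $f$ on a fine net in $Y$ by a locally constant $\Lin(\elltwo)$-valued step function $\hat f$. For $\hat f$ the commutator with $\tau(T_t)$ can be analyzed by a finite-range matrix computation exactly as in the proof of \cref{lem_slant_Roe_cycles_commute}, while the approximation error $\bar\rho_Y(f) - \hat f$ is controlled by the uniform continuity of $f$. Continuously bounded geometry, as opposed to plain bounded geometry, is exactly the hypothesis that will allow these two estimates to be balanced against each other as the propagation of $T_t$ tends to zero.

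Concretely, I would fix a parameter $r > 0$ to be chosen later as a function of $t$. Using \cref{defn:boundedGeometry}.\ref{defn_bounded_geom_uniform} and thinning the cover $\{\Ball_r(\hat y)\}_{\hat y \in \hat Y_r}$ to a Borel decomposition $Y = \bigsqcup_{\hat y \in \hat Y_r} Z_{\hat y}$ with $Z_{\hat y} \subseteq \Ball_r(\hat y)$, define
\[ \hat f \coloneqq \sum_{\hat y \in \hat Y_r} 1_{Z_{\hat y}} \otimes f(\hat y) \in \cB(H_Y \otimes \elltwo) \]
as a strongly convergent sum. Since the projections $1_{Z_{\hat y}} \otimes \id_{\elltwo}$ commute with $\bar\rho_Y(f)$, a blockwise estimate on the mutually orthogonal subspaces $1_{Z_{\hat y}} H_Y \otimes \elltwo$ yields
\[ \| \bar\rho_Y(f) - \hat f \| \ \leq\ \omega_f(r), \qquad \omega_f(r) \coloneqq \sup_{d(x,x') \leq r} \|f(x) - f(x')\|, \]
with $\omega_f(r) \to 0$ as $r \to 0$ by uniform continuity of $f$. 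In particular $\|[\tau(T_t),\tilde\rho_Y(f) - \id_{H_X} \otimes \hat f]\| \leq 2 \|T_t\| \,\omega_f(r)$, so it suffices to control $[\tau(T_t), \id_{H_X} \otimes \hat f]$.

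For the latter, expanding as in the proof of \cref{lem_slant_Roe_cycles_commute}, only pairs $(\hat y, \hat z)$ with $d(\hat y,\hat z) \leq R(t) + 2r$ contribute, where $R(t) \coloneqq \propagation(T_t) \to 0$ as $t \to \infty$. Running the orthogonality argument from that proof gives the operator norm bound
\[ \bigl\| \bigl[\tau(T_t),\ \id_{H_X} \otimes \hat f \bigr] \bigr\|\ \leq\ K_{r, R(t) + 2r} \cdot \|T_t\| \cdot \omega_f(R(t) + 2r). \]
To finish, I would choose $r = r(t)$ with $r(t) \to 0$ and $R(t)/r(t) \to 0$—say $r(t) \coloneqq \sqrt{R(t)}$ once $R(t) < 1$—so that eventually $R(t) + 2 r(t) \leq 3\, r(t)$; the second clause of \cref{defn:boundedGeometry}.\ref{defn_bounded_geom_uniform} then gives $K_{r(t), R(t)+2r(t)} \leq K_{r(t), 3\, r(t)} \leq K_3 + 1$ for large $t$, where $K_3 = \limsup_{r \to 0} K_{r,3r} < \infty$. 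Combining both bounds and using $\sup_t\|T_t\| < \infty$ yields $\|[\tau(T_t), \tilde\rho_Y(f)]\| \to 0$, which is \eqref{eq:asymptoticCommutator}. The main difficulty is this joint rescaling of $r$ against $R(t)$: the two opposing requirements, $\omega_f(r)$ small and the counting constants $K_{r, R(t)+2r}$ bounded, only reconcile thanks to the continuity clause $K_\alpha < \infty$, which is exactly what distinguishes continuously bounded geometry from plain bounded geometry.
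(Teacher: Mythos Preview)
Your proof is correct and follows essentially the same approach as the paper: both approximate $f$ by a step function $\hat f$ on a Borel decomposition subordinate to the net $\hat Y_r$, bound $\|\bar\rho_Y(f)-\hat f\|$ via uniform continuity, and control $[\tau(T_t),\id\otimes\hat f]$ by the orthogonality computation from \cref{lem_slant_Roe_cycles_commute}, with the continuously-bounded-geometry clause $K_\alpha<\infty$ furnishing the crucial uniform bound on the counting constants. The only cosmetic difference is that the paper runs an $\varepsilon$--$\delta$ argument (fix $\varepsilon$, pick $\delta$, set $r=\delta/4$, then choose $t_0$ so that $\propagation(T_t)<\delta/2$, landing on $K_{r,4r}\leq K_4+1$), whereas you parametrize $r=r(t)$ directly and arrive at $K_{r(t),3r(t)}\leq K_3+1$; these are equivalent packagings of the same idea.
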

\begin{proof}
	The argument is similar as in the proof of \cite[Lemma~5.6]{WulffTwisted} and \cref{lem_slant_Roe_cycles_commute} above.
	First, we assume as usual that $\lim_{t \to \infty} \propagation(T_t) = 0$.
	Now let $\varepsilon > 0$.
	Choose $\delta > 0$ such that $\|f(y) - f(y^\prime) \| < \varepsilon$ if $d(y, y^\prime) < \delta$.
	Let $r \coloneqq \delta / 4$.
 	Let $\hat{Y}_r \subseteq Y$ be a subset witnessing our conditions for continuously bounded geometry.
	After perhaps making $\delta  = 4r$ smaller, we can ensure that $K_{r, 4 r} \leq K_4 + 1 < \infty$.
	By thinning out the cover $(\Ball_{r}(\hat{y}))_{\hat{y} \in \hat{Y}_r}$, we obtain a pairwise disjoint cover $(Z_{\hat{y}})_{\hat{y} \in \hat{Y}_r}$ consisting of Borel sets such that $Z_{\hat{y}} \subseteq \Ball_r(\hat{y})$ for all $\hat{y} \in \hat{Y}_r$.
  The representation \(\rho_Y \colon \Cz(Y) \to \cB(H_Y)\) extends uniquely to the bounded Borel functions on \(Y\) subject to the condition that pointwise converging uniformly bounded sequences of functions are taken to strongly converging sequences of operators.
  For a Borel subset \(Z \subseteq Y\), let \(1_Z \in \cB(H_Y)\) denote the operator corresponding to the characteristic function of \(Z\).
	Now consider the strongly convergent series $\hat{f} \coloneqq \sum_{\hat{y} \in \hat{Y}_r} 1_{Z_{\hat{y}}} \otimes f(\hat{y}) \in \cB(H_Y \otimes \elltwo)$.
	Then $\| \bar\rho(f) - \hat{f} \| \leq \varepsilon$.
	We can write the commutator of $\tau(T_t) = T_t \otimes \id_{\elltwo}$ and $\id_{H_X} \otimes \hat{f}$ as follows.
	\begin{gather}
		[T_t \otimes \id_{\elltwo}, \id_{H_X} \otimes \hat{f}] =  \sum_{\hat{y}, \hat{z} \in \hat{Y}_r}
			  \left( (\id_{H_X} \otimes 1_{Z_{\hat{z}}}) T_t (\id_{H_X} \otimes 1_{Z_{\hat{y}}}) \right) \otimes (f(\hat{y}) - f(\hat{z})) \nonumber
\\
	= \sum_{\substack{\hat{y}, \hat{z} \in \hat{Y}_r,\\ d(\hat{y}, \hat{z}) \leq \propagation(T_t) + 2 r}}
		  \left( (\id_{H_X} \otimes 1_{Z_{\hat{z}}}) T_t (\id_{H_X} \otimes 1_{Z_{\hat{y}}}) \right) \otimes (f(\hat{y}) - f(\hat{z}))
      \label{eq:rewriteCommutator}
\end{gather}
Let \(v \in H_X \otimes H_Y \otimes \elltwo\) be an arbitrary vector.
In the following we will estimate the norm of the vector \([T_t \otimes \id_{\elltwo}, \id_{H_X} \otimes \hat{f}] v\) for large \(t\).
For each \(\hat{y} \in \hat{Y}_r\), let \(v_{\hat{y}} \coloneqq (\id_{H_X} \otimes 1_{Z_{\hat{y}}} \otimes \id_{\elltwo}) v\).
Then the family \((v_{\hat y})_{\hat y}\) consists of pairwise orthogonal vectors as \((\id_{H_X} \otimes 1_{Z_{\hat y}} \otimes \id_{\elltwo})_{\hat y}\) is a family of pairwise orthogonal projections.
The sum \(\sum_{\hat y} v_{\hat y}\) converges to \(v\) because \(\sum_{\hat y} \id_{H_X} \otimes 1_{Z_{\hat y}}\otimes \id_{\elltwo} \) strongly converges to the identity.
Let $t_0 \geq 1$ such that $\propagation(T_t) < \delta / 2$ for all $t \geq t_0$.
In this case, $\propagation(T_t) + 2 r < \delta / 2 + \delta / 2 = \delta$.
Recall that \(\delta\) is chosen to ensure \(\|f(\hat{y}) - f(\hat{z})\| < \varepsilon\) whenever \(d(\hat{y}, \hat{z}) < \delta\).
We now obtain the following estimate for all \(t \geq t_0\).
\begin{align*}
  \| &[T_t \otimes \id_{\elltwo}, \id_{H_X} \otimes \hat{f}] v \|^2  =\\
  &\underset{\labelcref{eq:rewriteCommutator}}{=}
  \left\| \sum_{\substack{\hat{y}, \hat{z} \in \hat{Y}_r,\\ d(\hat{y}, \hat{z}) \leq \propagation(T_t) + 2 r}}
		  \left( (\id_{H_X} \otimes 1_{Z_{\hat{z}}}) T_t  \otimes (f(\hat{y}) - f(\hat{z}))\right) \ v_{\hat y} \right\|^2 \\
  &= \sum_{\hat z \in \hat{Y}_r} \left\| \sum_{\substack{\hat{y} \in \hat{Y}_r,\\ d(\hat{y}, \hat{z}) \leq \propagation(T_t) + 2 r}}
		  \left( (\id_{H_X} \otimes 1_{Z_{\hat{z}}}) T_t  \otimes (f(\hat{y}) - f(\hat{z}))\right) \ v_{\hat y} \right\|^2\\
 &\leq \sum_{\hat z \in \hat{Y}_r} K_{r, \delta} \sum_{\substack{\hat{y} \in \hat{Y}_r,\\ d(\hat{y}, \hat{z}) < \delta}}
     \left\| \left( (\id_{H_X} \otimes 1_{Z_{\hat{z}}}) T_t  \otimes (f(\hat{y}) - f(\hat{z}))\right) \ v_{\hat y} \right\|^2 \\
&\leq \sum_{\hat z \in \hat{Y}_r} K_{r, \delta} \sum_{\substack{\hat{y} \in \hat{Y}_r,\\ d(\hat{y}, \hat{z}) < \delta}}
    \|T_t\|^2 \underbrace{\|f(\hat{y}) - f(\hat{z})\|^2}_{< \varepsilon^2} \|v_{\hat y}\|^2 \\
&\leq \varepsilon^2 K_{r, \delta} \|T_t\|^2 \underbrace{\sum_{\substack{\hat{z}, \hat{y} \in \hat{Y}_r,\\ d(\hat{z}, \hat{y}) < \delta}}
    \|v_{\hat y}\|^2}_{\leq K_{r, \delta} \|v\|^2}
    \leq \varepsilon^2 K_{r, \delta}^2 \|T\|^2 \|v\|^2.
\end{align*}
In the first and the last inequality above we use that the number of elements in the set \(\hat{Y}_r \cap \Ball_{\delta}(\hat{z})\) is bounded by \(K_{r, \delta}\) by the definition of continuously bounded geometry.
We obtain
\[
	\left\| [ T_t \otimes \id_{\elltwo}, \id_{H_X} \otimes \hat{f} ] \right\|
		\leq \varepsilon K_{r, \delta} \|T\| = \varepsilon K_{r, 4r} \|T\| \leq \varepsilon (K_4+1) \|T\|.
\]
Finally, we conclude that for all \(t \geq t_0\) the estimate
\[
	\left\| [ \tau(T_t), \tilde\rho_Y(f) ] \right\|
		\leq \varepsilon (K_4+1) \|T\| + 2 \|T\| \underbrace{\|\bar\rho(f) - \hat{f} \|}_{\leq \varepsilon} \leq \varepsilon \|T\| (K_4 + 3)
\]
holds.
As $\varepsilon > 0$ was arbitrary, this proves the claim.
\end{proof}

\begin{cor} \label{cor:asymptoticCommutatorHigCom}
Suppose that $Y$ has continuously bounded geometry.
Let $T \in \Loc(\rho_{X \times Y})$ and $f \in \sHigComRed Y$.
Then
\begin{equation}
	t \mapsto \left[ \tau(T_t), \tilde\rho_Y(f) \right] \in \Cz([1,\infty), \Roe(\tilde\rho_X))\,.
\end{equation}
\end{cor}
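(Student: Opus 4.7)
The statement asserts three properties of \(\Phi(t) \coloneqq [\tau(T_t), \tilde\rho_Y(f)]\): for each fixed \(t\), \(\Phi(t) \in \Roe(\tilde\rho_X)\); the map \(t \mapsto \Phi(t)\) is norm continuous; and \(\|\Phi(t)\| \to 0\) as \(t \to \infty\). The first two are essentially formal: since \(\Loc(\rho_{X \times Y}) \subset \Cb([1,\infty),\Roe(\rho_{X \times Y}))\), each \(T_t\) lies in \(\Roe(\rho_{X \times Y})\), so \cref{lem_slant_Roe_cycles_commute} applied with \(S = T_t\) gives \(\Phi(t) \in \Roe(\tilde\rho_X)\); norm continuity in \(t\) follows from norm continuity of \(t \mapsto T_t\) together with norm continuity of the commutator with the fixed operator \(\tilde\rho_Y(f)\).

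The essential step is the vanishing at infinity. For bounded uniformly continuous \(f\), this is exactly \cref{prop:asymptoticCommuteBndGmtry}. To extend to \(f \in \sHigComRed Y\), my plan is a density argument: given \(\varepsilon > 0\), choose a uniformly continuous \(\tilde f \in \sHigComRed Y\) with \(\|f - \tilde f\| < \varepsilon\), and then apply the triangle inequality
\[
\|\Phi(t)\| \leq \|[\tau(T_t), \tilde\rho_Y(\tilde f)]\| + 2 \sup_s \|T_s\| \cdot \varepsilon.
\]
The first summand tends to zero by the proposition applied to \(\tilde f\), the supremum on the right is finite because \(T \in \Cb([1,\infty),\Roe(\rho_{X\times Y}))\), and letting \(\varepsilon \to 0\) gives \(\lim_{t \to \infty} \|\Phi(t)\| = 0\).

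The one nontrivial ingredient is therefore the norm-density of uniformly continuous elements in \(\sHigComRed Y\). I would produce such approximations by exploiting the continuously bounded geometry of \(Y\): choose a subset \(\hat{Y}_r\) as in \cref{defn:boundedGeometry}.\labelcref{defn_bounded_geom_uniform} together with a uniformly Lipschitz partition of unity \(\{\varphi_{\hat y}\}_{\hat y \in \hat{Y}_r}\) subordinate to a cover by balls of radius \(r\), and set \(\tilde f \coloneqq \sum_{\hat y} \varphi_{\hat y} \cdot f(\hat y)\). Uniform Lipschitzness of the \(\varphi_{\hat y}\), combined with the uniform bound on the local multiplicity of the cover, makes \(\tilde f\) uniformly continuous, while \(\|f - \tilde f\|\) is controlled by the modulus of variation \(\Var_{2r}f\) outside a large compact set plus the ordinary modulus of continuity of \(f\) on that compact set; both can be made arbitrarily small by shrinking \(r\) and choosing the compact set via vanishing variation. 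I expect this density argument to be the main technical obstacle; once it is established, the corollary is just an assembly of \cref{lem_slant_Roe_cycles_commute} and \cref{prop:asymptoticCommuteBndGmtry}.
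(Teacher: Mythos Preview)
Your argument is correct, but you have manufactured a difficulty that does not exist. The paper's proof is a single line: ``Combine \cref{prop:asymptoticCommuteBndGmtry} and \cref{lem_slant_Roe_cycles_commute}.'' The reason this suffices is that every $f\in\sHigComRed Y$ is \emph{already} uniformly continuous, so \cref{prop:asymptoticCommuteBndGmtry} applies directly and no density step is needed.

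Here is the elementary fact you are missing: on a proper metric space, a continuous function with vanishing variation is automatically uniformly continuous. Given $\varepsilon>0$, vanishing variation with $r=1$ yields a compact $K\subset Y$ such that $\Var_1 f(x)<\varepsilon$ for $x\notin K$. On the compact $1$-neighborhood $K_1$ of $K$, continuity gives uniform continuity, hence some $\delta\in(0,1]$ with $\|f(x)-f(y)\|<\varepsilon$ for $x,y\in K_1$, $d(x,y)<\delta$. Now for arbitrary $x,y$ with $d(x,y)<\delta$: if $x\in K$ then both $x,y\in K_1$ and the compact-set estimate applies; if $x\notin K$ then $\|f(x)-f(y)\|\leq\Var_1 f(x)<\varepsilon$. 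Either way we are done.

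Your partition-of-unity approximation would work, and the ingredients you list (uniform Lipschitz partitions subordinate to bounded-multiplicity covers) are available under continuously bounded geometry, but it duplicates effort: the very estimate you invoke to control $\|f-\tilde f\|$ (small variation at scale $2r$ off a compact, ordinary continuity on the compact) is exactly the argument above showing $f$ itself is uniformly continuous. So the ``main technical obstacle'' you anticipate dissolves once you notice this, and the corollary becomes the trivial assembly of the two cited results that the paper records.
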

\begin{proof}
Combine \cref{prop:asymptoticCommuteBndGmtry} and \cref{lem_slant_Roe_cycles_commute}.
\end{proof}

The first step towards the proof of \cref{thm_KHomSlantCompatible} is to reformulate the $E$-theoretic slant product in terms of the localization algebra as follows.
Consider the map
\begin{align}
\Upsilon_{\LSym} \colon \Loc(\rho_{X\times Y}) \otimes \Cz(Y, \Kom) & \to \Loc(\tilde\rho_X) / \Cz([1,\infty),\Roe(\tilde\rho_X)),\label{eq_defn_Ypsilon}\\
(T_t)_{t \in [1,\infty)} \otimes f & \mapsto \left[ (\tau(T_t) \circ \tilde\rho_Y(f))_{t \in [1,\infty)} \right].\notag
\end{align}

One verifies readily that the expression $(\tau(T_t) \circ \tilde\rho_Y(f))_{t \in [1,\infty)}$ defines an element of the localization algebra $\Loc(\tilde\rho_X)$.
\Cref{lem:asymptoticCommute} implies by the by now familiar argument that $\Upsilon_{\LSym}$ is a well-defined $\ast$-homomorphism.

\begin{lem}\label{lem_alternativeKhomslant}
Under the isomorphism $\Delta(\rho_{X\times Y})$ and $\Delta(\tilde\rho_X)$ from \cref{prop_KofLocisEhom}, the $E$-theoretic slant product of \cref{defn_Eslant} agrees with the composition
\begin{align*}
	\K_p(\Loc(X \times Y)) \otimes \K^{q}(Y)
    &\cong \K_p(\Loc(\rho_{X \times Y})) \otimes \K_{-q}(\Cz(Y, \Kom)) \\
  &\xrightarrow{\boxtimes} \K_{p-q}(\Loc(\rho_{X\times Y}) \otimes \Cz(Y, \Kom)) \\
	&\xrightarrow{({\Upsilon_{\LSym}})_\ast} \K_{p-q}( \Loc(\tilde\rho_X) / \Cz([1, \infty), \Roe(\tilde\rho_X))) \\
	&\xrightarrow{\cong} \K_{p-q}(\Loc(\tilde\rho_X)) \cong \K_{p-q}(\Loc(X)),
\end{align*}
where the third map is the inverse of the isomorphism induced on $\K$-theory by the canonical projection $\Loc(\tilde\rho_X) \to \Loc(\tilde\rho_X) / \Cz([1,\infty), \Roe(\tilde\rho_X))$.
\end{lem}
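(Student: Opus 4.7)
The plan is to unfold both sides of the claimed equality into $E$-theory composition products and reduce the lemma to a single identity of asymptotic morphisms, which can then be verified by a direct computation on elementary tensors together with the Morita equivalence between $\Cz(Y,\Kom)$ and $\Cz(Y)$.

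Consider $x \in \K_p(\Loc(\rho_{X\times Y}))$ and $\theta \in \K_{-q}(\Cz(Y,\Kom))$, and let $\theta_0 \in \K_{-q}(\Cz(Y)) = \K^q(Y)$ denote the class corresponding to $\theta$ via Morita equivalence, concretely $\theta = \theta_0 \boxtimes [e]$ with $[e] \in \K_0(\Kom)$ the class of a rank-one projection. Unfolding $\Delta(\rho_{X\times Y})$ via \cref{prop_KofLocisEhom} and applying \cref{defn_Eslant}, the $E$-theoretic slant product rewrites as
\[
\Delta(\rho_{X\times Y})(x) / \theta_0 \;=\; \delta(\rho_{X\times Y}) \circ (x \boxtimes \id_{\Cz(X\times Y)}) \circ (\id_{\Cz(X)} \boxtimes \theta_0) \in E_{p-q}(\Cz(X),\C).
\]
On the other hand, $\Cz([1,\infty))$ is contractible, so $\Cz([1,\infty), \Roe(\tilde\rho_X))$ has trivial $\K$-theory and the inverse $q^{-1}_\ast$ appearing in the statement is well defined. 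Since $\delta(\tilde\rho_X)$ factors through $(\Loc(\tilde\rho_X)/\Cz([1,\infty), \Roe(\tilde\rho_X))) \otimes \Cz(X)$ by \cref{prop_KofLocisEhom}, the alternative composition equals
\[
\delta(\tilde\rho_X) \circ \bigl((\Upsilon_{\LSym})_\ast(x \boxtimes \theta) \boxtimes \id_{\Cz(X)}\bigr).
\]

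The key step is the identity of asymptotic morphisms
\[
\delta(\tilde\rho_X) \circ (\Upsilon_{\LSym} \otimes \id_{\Cz(X)}) \;=\; (\delta(\rho_{X\times Y}) \otimes \id_{\Kom}) \circ \sigma,
\]
where $\sigma$ is the canonical isomorphism
\[
\Loc(\rho_{X\times Y}) \otimes \Cz(Y,\Kom) \otimes \Cz(X) \xrightarrow{\cong} \Loc(\rho_{X\times Y}) \otimes \Cz(X \times Y) \otimes \Kom
\]
reordering tensor factors via $\Cz(X) \otimes \Cz(Y) \cong \Cz(X \times Y)$. It suffices to verify this on elementary tensors $T \otimes (f_0 \otimes K) \otimes g$ with $T \in \Loc(\rho_{X\times Y})$, $f_0 \in \Cz(Y)$, $K \in \Kom$, $g \in \Cz(X)$, where both sides send the input to the asymptotic morphism represented by $t \mapsto (T_t \circ \rho_{X\times Y}(g \otimes f_0)) \otimes K$. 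This uses only the pointwise identity $\tilde\rho_Y(f_0 \otimes K) \circ \tilde\rho_X(g) = \rho_{X\times Y}(g \otimes f_0) \otimes K$ on $\tilde H_X = H_X \otimes H_Y \otimes \elltwo$, and then extends by continuity.

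Substituting this identity into the alternative composition and permuting factors, the alternative slant product turns into
\[
(\delta(\rho_{X\times Y}) \otimes \id_{\Kom}) \circ (x \boxtimes \id_{\Cz(X\times Y)} \boxtimes \id_{\Kom}) \circ (\id_{\Cz(X)} \boxtimes \theta_0 \boxtimes [e]).
\]
The factor $[e]$ paired against the corresponding $\id_{\Kom}$ implements the Morita isomorphism $\K_\ast(\C) \xrightarrow{\cong} \K_\ast(\Kom)$ and thereby collapses the $\Kom$-factor to $\C$; what remains is precisely the $E$-theoretic slant product displayed at the beginning. The main obstacle is the careful bookkeeping of the various tensor-factor permutations together with the $E$-theoretic sign conventions recalled in \cref{rem_signconvention}; no new analytic ingredient is needed beyond the computations already carried out in \cref{sec_slantconstruction}.
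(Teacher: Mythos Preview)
Your argument is correct and follows essentially the same strategy as the paper: reduce to a single identity of asymptotic morphisms and verify it on elementary tensors via the equation $\tilde\rho_Y(f_0\otimes K)\circ\tilde\rho_X(g)=\rho_{X\times Y}(g\otimes f_0)\otimes K$. The only difference is that the paper first observes that the composition agrees with its unstabilized counterpart built from $\Upsilon'_\LSym\colon \Loc(\rho_{X\times Y})\otimes\Cz(Y)\to\Loc(\rho'_X)/\Cz([1,\infty),\Roe(\rho'_X))$ with $\rho'_X=\rho_X\otimes\id_{H_Y}$; this drops the $\ell^2$ tensor factor at the outset, so the key identity becomes simply $\delta(\rho_{X\times Y}) = \delta(\rho'_X)\circ(\Upsilon'_\LSym\otimes\id_{\Cz(X)})$ up to the swap of $\Cz(X)$ and $\Cz(Y)$, and no Morita step is needed at the end. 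Your route keeps the $\Kom$ factor throughout and collapses it via $[e]$ at the last step, which is equally valid but requires the extra bookkeeping you flag.
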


\begin{proof}
First of all we notice that the composition in the statement obviously gives the same map as its non-stabilized counterpart, i.\,e.\ the analogous composition defined using the $*$-homomorphism
\begin{align*}
\Upsilon'_{\LSym} \colon \Loc(\rho_{X\times Y}) \otimes \Cz(Y) & \to \Loc(\rho'_X) / \Cz([1,\infty),\Roe(\rho'_X)),\\
(T_t)_{t \in [1,\infty)} \otimes f & \mapsto \left[ (T_t \circ (\rho'_Y(f)))_{t \in [1,\infty)} \right]
\end{align*}
where $\rho'_X\coloneqq \rho_X\otimes\id_{H_Y}$ and $\rho'_Y\coloneqq\id_{H_X}\otimes\rho_Y(f)$ denote the canonical representations of $\Cz(X)$ and $\Cz(Y)$ on $H_{X\times Y}$.

Now the claim is equivalent to commutativity of the diagram
\[\xymatrix{
\K_p(\Loc(\rho_{X\times Y}))\otimes \K^q(Y)\ar[r]^{\boxtimes}\ar[d]^{\Delta(\rho_{X\times Y})\otimes\id}
&\K_{p-q}(\Loc(\rho_{X\times Y})\otimes\Cz(Y))\ar[d]^{(\Upsilon'_{\LSym})_*}
\\E_p(\Cz(X\times Y),\C)\otimes E_{-q}(\C,\Cz(Y))\ar[d]^{/}
&\K_{p-q}(\Loc(\rho'_X) / \Cz([1,\infty),\Roe(\rho'_X))) \ar[dl]_{\Delta(\rho'_X)}^{\cong}
\\E_{p-q}(\Cz(X),\C)
&\K_{p-q}(\Loc(\rho'_X))\ar[l]_{\Delta(\rho'_X)}^{\cong}\ar[u]^{\cong}
}\]
in which the lower right triangle commutes tautologically.

The vertical left arrows take $x\otimes\theta$ to
\[\delta(\rho_{X\times Y})\circ (x\boxtimes\id_{\Cz(X\times Y)})\circ(\id_{\Cz(X)}\boxtimes\theta)=\delta(\rho_{X\times Y})\circ (x\boxtimes\id_{\Cz(X)}\boxtimes\theta)\]
whereas the composition of the right three arrows of the pentagon map it to
\[\delta(\rho'_X)\circ(((\Upsilon'_{\LSym})_*\circ(x\boxtimes\theta))\boxtimes\id_{\Cz(X)})=\delta(\rho'_X)\circ((\Upsilon'_{\LSym})_*\boxtimes\id_{\Cz(X)})\circ(x\boxtimes\theta\boxtimes\id_{\Cz(X)})\,.\]
Hence it suffices to show that the two asymptotic morphisms $\delta(\rho_{X\times Y})$ and $\delta(\rho'_X)\circ(\Upsilon'_{\LSym}\otimes\id_{\Cz(X)})$ agree up to precomposing with the homomorphism exchanging the two tensor factors $\Cz(X)$ and $\Cz(Y)$ in the domain.
But this holds due to the equation
\[T_t\circ\rho_{X\times Y}(f\otimes g)=T_t\circ \rho'_Y(g)\circ\rho'_X(f)\,.\qedhere\]
\end{proof}

\begin{proof}[Proof of \cref{thm_KHomSlantCompatible}]
By combining \cref{lem_slant_Loc_technical} and \cref{cor:asymptoticCommutatorHigCom} we get a $\ast$-homomorphism
\begin{align*}
\bar{\Upsilon}_{\LSym} \colon \Loc(\rho_{X \times Y}) \tensmax \sHigComRed Y & \to \FiproLoc(\tilde\rho_X) / \Cz([1,\infty),\Roe(\tilde\rho_X)),\\
(T_t)_{t \in [1,\infty)} \otimes f & \mapsto \left[ (\tau(T_t) \circ \tilde\rho_Y(f))_{t \in [1,\infty)} \right].
\end{align*}
Its restriction to \(\Loc(\rho_{X \times Y}) \tensmax \Cz(Y, \Kom)\) is \(\Upsilon_{\LSym}\), see \cref{eq_defn_Ypsilon}.
Therefore we have the commutative diagram
\[
	\resizebox{\textwidth}{!}{
	\xymatrix{
 	0 \ar[r] & \Loc(\rho_{X \times Y}) \tens \Cz(Y, \Kom) \ar[r] \ar[d]^-{\Upsilon_{\LSym}} & \Loc(\rho_{X \times Y}) \tensmax \sHigComRed(Y) \ar[d]^-{\bar{\Upsilon}_\LSym} \ar[r] & \Loc(\rho_{X \times Y}) \tensmax \sHigCorRed(Y) \ar[d]^-{\Psi_{\LSym}} \ar[r] & 0 \\
	0 \ar[r] & \sfrac{\Loc(\tilde\rho_X)}{\Cz([1, \infty), \Roe(\tilde\rho_X))} \ar[r] & \sfrac{\FiproLoc(\tilde\rho_X)}{\Cz([1,\infty), \Roe(\tilde\rho_X))} \ar[r] & \sfrac{\FiproLoc(\tilde\rho_X)}{\Loc(\tilde\rho_X)} \ar[r] & 0 \\
	0 \ar[r] & \Loc(\tilde\rho_X) \ar[r] \ar@{->>}[u] & \FiproLoc(\tilde\rho_X) \ar[r] \ar@{->>}[u] & \sfrac{\FiproLoc(\tilde\rho_X)}{\Loc(\tilde\rho_X)} \ar@{=}[u] \ar[r] & 0,
	}}
	\]
	where the arrows directed upwards are the canonical projections.
	These arrows induce isomorphisms on $\K$-theory because $\Cz([1, \infty), \Roe(\tilde\rho_X))$ is contractible.
	It induces a diagram in $\K$-theory
\[\xymatrix{
\K_p(\Loc(\rho_{X \times Y})) \otimes \K_{1-q}( \sHigCorRed(Y))\ar[r]^-{\id\otimes\mu^*}\ar[d]^{\boxtimes}
&\K_p(\Loc(\rho_{X \times Y})) \otimes \K_{-q}( \Cz(Y, \Kom))\ar[d]^{\boxtimes}
\\\K_{p+1-q}(\Loc(\rho_{X \times Y}) \tensmax \sHigCorRed(Y))\ar[r]^-{\partial}\ar[d]^{(\Psi_{\LSym})_*}
&\K_{p-q}(\Loc(\rho_{X \times Y}) \tens \Cz(Y, \Kom))\ar[d]^{(\Upsilon_{\LSym})_*}
\\\K_{p+1-q}( \sfrac{\FiproLoc(\tilde\rho_X)}{\Loc(\tilde\rho_X)})\ar[r]^-{\partial}
&\K_{p-q}(\Loc(\tilde\rho_X))
}\]
in which the right hand vertical composition is the $E$-theoretic slant product up to the isomorphisms $\Delta$ by \cref{lem_alternativeKhomslant} and the composition of the left vertical arrows with the bottom horizontal arrow is $(-1)^p$ times the slant product from \cref{defn_localizationslant}. The lower square of the diagram commutes and the upper square commutes only up to a sign $(-1)^p$ (see \cref{rem_signconvention}).
\end{proof}

\subsection{Composing slant with external products}\label{sec_composition_cross_prods}

In analogy to the first part of \cref{lem_crossslantcompatibilityinEtheory} we shall now prove the following theorem, which says that the external and slant products which we have constructed are compatible in the sense that $(x\times z)/\theta=x\times(z/\theta)$.
It would also be nice to have an analogue of the second part of \cref{lem_crossslantcompatibilityinEtheory}, but this would require the construction of a secondary\footnote{The external product has to be a secondary one, i.\,e.\ one with a degree shift, to make the degrees work out: if $x$, $\eta$, $\theta$ have degrees $p$, $1-q$, $1-r$, respectively, then $(x/\theta)/\eta$ has degree $p-q-r$, and in order to let $x/(\theta\times\eta)$ have the same degree, the external product $\theta\times\eta$ must have degree $1-q-r$ and not $(1-q)+(1-r)$.} external product of the form
\[\K_{1-q}(\sHigCorRed Y)\otimes \K_{1-r}(\sHigCorRed Z)\to \K_{1-q-r}(\sHigCorRed (Y\times Z))\]
and it is completely unclear how such an external product could be constructed.

\begin{thm}\label{thm_slantcrosscomp}
The compositions
\[
\xymatrix@R-2pc{
\K_m(X)\ar[r]^-{\times z}&\K_{m+p}(X\times Y\times Z)\ar[r]^-{/\theta}&\K_{m+p-q}(X\times Y)
\\\Strg_m(X)\ar[r]^-{\times z}&\Strg_{m+p}(X\times Y\times Z)\ar[r]^-{/\theta}&\Strg_{m+p-q}(X\times Y)
\\\K_m(\Roe X)\ar[r]^-{\times z}&\K_{m+p}(\Roe(X\times Y\times Z))\ar[r]^-{/\theta}&\K_{m+p-q}(\Roe (X\times Y))
}\]
are equal to the external product with the apropriate slant product $z/\theta$ for all $m,p,q\in \Z$ and all $z,\theta$ as follows:
\begin{itemize}
\item In the first two compositions $z\in \K_p(Y\times Z)$ and in the third one
$z\in \K_p(\Roe (Y\times Z))$. 
\item In the first composition either $\theta\in \K^q(Z)$ or $\theta\in \K_{1-q}(\sHigCorRed Z)$ and in the other two $\theta\in \K_{1-q}(\sHigCorRed Z)$.
\end{itemize}
\end{thm}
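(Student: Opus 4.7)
The strategy is to compare the two compositions at the level of the \(\ast\)-homomorphisms used to define the slant products, and then invoke naturality of boundary maps combined with the sign conventions from \cref{rem_signconvention,defn_coarseslant}. For the Roe-algebra row, the slant product \(/\theta\colon \K_\ast(\Roe(X\times Y\times Z))\to \K_{\ast-q}(\Roe(X\times Y))\) is built using the ampliation \(\tilde\rho_{X\times Y}=\rho_{X\times Y}\otimes \id_{H_Z\otimes \elltwo}\), while \(/\theta\colon \K_\ast(\Roe(Y\times Z))\to \K_{\ast-q}(\Roe Y)\) uses \(\tilde\rho_Y=\rho_Y\otimes \id_{H_Z\otimes \elltwo}\). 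Since \(\tilde H_{X\times Y}=H_X\otimes \tilde H_Y\), the tensor product of operators defines a \(\ast\)-homomorphism
\[
\nu\colon \Roe X\tensmax \Fipro(\tilde\rho_Y)\to \Fipro(\tilde\rho_{X\times Y}),\qquad S\otimes T\mapsto S\otimes T,
\]
where propagations in the \(X\)- and \(Y\)-directions add independently and local compactness is preserved. The restriction \(\nu'\colon \Roe X\tensmax \Roe(\tilde\rho_Y)\to \Roe(\tilde\rho_{X\times Y})\) implements the external product on Roe algebras from \cref{sec:crossProducts}, so \(\nu\) descends to a \(\ast\)-homomorphism \(\bar\nu\) between the quotients.

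Next I would verify on elementary tensors \(S_X\otimes S_{YZ}\otimes f\) that the square
\[
\xymatrix{
\Roe X\tensmax \Roe(Y\times Z)\tensmax\sHigCorRed Z \ar[r]^-{\id\otimes \Psi_Y}\ar[d] & \Roe X\tensmax \bigl(\Fipro(\tilde\rho_Y)/\Roe(\tilde\rho_Y)\bigr) \ar[d]^-{\bar\nu}\\
\Roe(X\times Y\times Z)\tensmax \sHigCorRed Z \ar[r]^-{\Psi_{X\times Y}} & \Fipro(\tilde\rho_{X\times Y})/\Roe(\tilde\rho_{X\times Y})
}
\]
commutes, where the left vertical arrow is the external product inclusion from \cref{sec:crossProducts} tensored with \(\id_{\sHigCorRed Z}\). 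Both paths send the elementary tensor to the class of \(S_X\otimes(\tau_Y(S_{YZ})\circ \tilde\rho_Z(f))\); the universality of the maximal tensor product then extends the equality to the whole algebra.

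Finally I would apply \(\K\)-theory to the square and evaluate on \(x\boxtimes z\boxtimes \theta\). The map \(\nu\) is a morphism from the \(\Roe X\)-left-tensored short exact sequence of \(\Fipro(\tilde\rho_Y)/\Roe(\tilde\rho_Y)\) to the defining sequence of \(\Fipro(\tilde\rho_{X\times Y})/\Roe(\tilde\rho_{X\times Y})\), so naturality of \(\partial\) yields \(\partial\bar\nu_*=\nu'_*\partial\). Combining with the identity \(\partial(x\boxtimes w)=(-1)^m\,x\boxtimes \partial w\) from \cref{rem_signconvention} and the sign \((-1)^p\) implicit in \(z/\theta\), the element \(\partial\Psi_{X\times Y\,*}((x\times z)\boxtimes\theta)\) evaluates to \((-1)^{m+p}\,x\times(z/\theta)\); the outer sign \((-1)^{m+p}\) attached to \((x\times z)/\theta\) in \cref{defn_coarseslant} then cancels, proving \((x\times z)/\theta = x\times(z/\theta)\).

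The \(\Strg\)- and \(\K\)-homology rows are handled identically, with \(\Roe\) and \(\Fipro\) replaced by their localized counterparts \(\Loc,\Locz,\FiproLoc,\FiproLocz\) and \(\Psi_{X\times Y},\Psi_Y\) by \(\Psi_\LSym,\Psi_{\LSym,0}\) from \cref{lem_slant_Loc_technical}. For the remaining first composition with \(\theta\in \K^q(Z)\), the statement is the first identity of \cref{lem_crossslantcompatibilityinEtheory}. The main obstacle is the careful sign bookkeeping together with verifying that \(\nu\) is well defined under all the ampliation conventions, so that the non-stabilized \(\Roe X\) combines correctly with the stabilized \(\Fipro(\tilde\rho_Y)\) on \(H_X\otimes \tilde H_Y=\tilde H_{X\times Y}\) and sends the defining ideal to the defining ideal on the right-hand side.
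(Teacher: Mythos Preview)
Your proposal is correct and follows essentially the same approach as the paper: both arguments construct the tensor-product \(\ast\)-homomorphism \(\nu\) (the paper writes it for \(\Locz(\rho_X)\tensmax\FiproLoc(\hat\rho_Y)\to\FiproLocz(\tilde\rho_{X\times Y})\) since it spells out the \(\Strg\)-row instead of the Roe-row), verify that it intertwines \(\id\otimes\Psi\) with the \(\Psi\) of the product space, and then chase the external tensor product through the resulting diagram using the sign convention \(\partial(x\boxtimes w)=(-1)^m x\boxtimes\partial w\) to match the \((-1)^p\) and \((-1)^{m+p}\) built into the two slant products. The paper organizes this as a single \(4\times 3\) diagram whose lower-left square commutes only up to \((-1)^m\), but the content is identical to your verbal description, and your appeal to \cref{lem_crossslantcompatibilityinEtheory} for the case \(\theta\in\K^q(Z)\) is exactly what the paper does.
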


\begin{proof}
For the first of these three compositions and $\theta\in\K^q(Z)$ this is exactly the first part of \Cref{lem_crossslantcompatibilityinEtheory}. So it remains to show the cases where $\theta\in \K_{1-q}(\sHigCorRed Z)$ for the slant products we have constructed in \cref{sec_slantconstruction}. We shall only consider the second composition, because everything else goes through completely analogously.

Let $\rho_X$, $\rho_Y$, $\rho_Z$ be representations of $\Cz(X)$, $\Cz(Y)$, $\Cz(Z)$ on $H_X$, $H_Y$, $H_Z$, respectively. Denote by $\rho_{Y\times Z}$, $\rho_{X\times Y\times Z}$ the tensor product representations of $\Cz(Y\times Z)$, $\Cz(X\times Y\times Z)$ on $H_Y\otimes H_Z$, $H_X\otimes H_Y\otimes H_Z$, respectively. Furthermore, we define representations $\tilde{\rho}_{X\times Y}$ of $\Cz(X\times Y)$ and $\tilde{\rho}_Z$ of $\Mult(\Cz(Z,\Kom))$ on $H_X\otimes H_Y\otimes H_Z\otimes\ell^2$ in complete analogy to the definition of $\tilde\rho_X$ and $\tilde\rho_Y$ previously in this section and similarily we define representations $\hat{\rho}_{Y}$ of $\Cz(Y)$ and $\hat{\rho}_Z$ of $\Mult(\Cz(Z,\Kom))$ on $H_Y\otimes H_Z\otimes\ell^2$.

Note that taking tensor products of operators pointwise in time gives rise to $*$-homomorphisms
\begin{align*}
\Locz(\rho_X)\tensmax\FiproLoc(\hat\rho_Y)&\to \FiproLocz(\tilde\rho_{X\times Y})
\\\Locz(\rho_X)\tensmax\Loc(\hat\rho_Y)&\to \Locz(\tilde\rho_{X\times Y})
\end{align*}
with the second one being the restriction of the first one, and hence also a quotient $*$-homomorphism
\[\Locz(\rho_X)\tensmax\frac{\FiproLoc(\hat\rho_Y)}{\Loc(\hat\rho_Y)}\to \frac{\FiproLocz(\tilde\rho_{X\times Y})}{\Locz(\tilde\rho_{X\times Y})}\]
and these fit into the following diagram.
\begin{align*}
\resizebox{\textwidth}{!}{
\xymatrix{
{\begin{matrix}\K_m(\Locz(\rho_X))\otimes\\ \K_p(\Loc(\rho_{Y\times Z}))\\\otimes \K_{1-q}(\sHigCorRed Z)\end{matrix}}
\ar[r]^-{\exttensprod\tens\id}\ar[d]^-{\id\tens\exttensprod}
&{\begin{matrix}\K_{m+p}{\begin{pmatrix}\Locz(\rho_X)\\\tensmax \Loc(\rho_{Y\times Z})\end{pmatrix}}\\\otimes \K_{1-q}(\sHigCorRed Z)\end{matrix}}
\ar[r]\ar[d]^-{\exttensprod}
&{\begin{matrix}\K_{m+p}(\Locz(\rho_{X\times Y\times Z}))\\\otimes \K_{1-q}(\sHigCorRed Z)\end{matrix}}
\ar[d]^-{\exttensprod}
\\{\begin{matrix}\K_m(\Locz(\rho_X))\otimes \\\K_{p+1-q}{\begin{pmatrix}\Loc(\rho_{Y\times Z})\\\tensmax \sHigCorRed Z\end{pmatrix}}\end{matrix}}
\ar[r]^-{\exttensprod}\ar[d]^-{\id\otimes(\Psi_{\LSym})_*}
&\K_{m+p+1-q}{\begin{pmatrix}\Locz(\rho_X)\\\tensmax \Loc(\rho_{Y\times Z})\\\tensmax \sHigCorRed Z\end{pmatrix}}
\ar[r]\ar[d]^-{(\id\otimes\Psi_{\LSym})_*}
&\K_{m+p+1-q}{\begin{pmatrix}\Locz(\rho_{X\times Y\times Z})\\\tensmax \sHigCorRed Z\end{pmatrix}}
\ar[d]^{(\Psi_{\LSym,0})_*}
\\{\begin{matrix}\K_m(\Locz(\rho_X))\otimes \\\K_{p+1-q}\left(\frac{\FiproLoc(\hat\rho_Y)}{\Loc(\hat\rho_Y)}\right)\end{matrix}}\ar[r]^-{\exttensprod}
\ar[d]^{\id\otimes\partial}
&\K_{m+p+1-q}{\begin{pmatrix}\Locz(\rho_X)\\\tensmax \frac{\FiproLoc(\hat\rho_Y)}{\Loc(\hat\rho_Y)}\end{pmatrix}}
\ar[r]\ar[d]^{\partial}
&\K_{m+p+1-q}\left(\frac{\FiproLocz(\tilde\rho_{X\times Y})}{\Locz(\tilde\rho_{X\times Y})}\right)\ar[d]^-{\partial}
\\{\begin{matrix}\K_m(\Locz(\rho_X))\otimes \\\K_{p-q}(\Loc(\hat\rho_Y))\end{matrix}}
\ar[r]^-{\exttensprod}
&\K_{m+p-q}{\begin{pmatrix}\Locz(\rho_X)\\\otimes\Loc(\hat\rho_Y)\end{pmatrix}}
\ar[r]
&\K_{m+p-q}(\Locz(\tilde\rho_{X\times Y}))
}
}
\end{align*}
All squares in this diagram commute, except the lower left one, which only commutes up to a sign $(-1)^m$ due to our sign convention in \cref{rem_signconvention}.

The top and bottom rows are the external products for the localization algebras.
The left column is $(-1)^p$ times the slant product while the right column is $(-1)^{m+p}$ times the slant product. The signs match up perfectly, thus proving the claim.
\end{proof}

Spezializing to the case where $Y$ is a one-point space will give us Property~\ref{properties_slant}~\ref{item_2_mainthm_slant} as a corollary, but first we have to introduce the pairings.
\begin{defn}
The pairing
\[\langle-,-\rangle\colon \K_p(\Roe Y) \otimes \K_{1-q}(\sHigCorRed Y)\to \K_{p-q}(\Roe\{*\})\cong \begin{cases}\Z&p-q\text{ even}\\0&p-q\text{ odd}\end{cases}\]
is defined as the special case of the slant products for the space $X,Y$ where $X=\{*\}$ is a single point.
The same construction applied to the localization algebra instead of the Roe algebra also yields a pairing
\[\langle-,-\rangle\colon \K_p(Y) \otimes \K_{1-q}(\sHigCorRed Y)\to \K_{p-q}(\{*\})\cong \begin{cases}\Z&p-q\text{ even}\\0&p-q\text{ odd}\,.\end{cases}\qedhere\]
\end{defn}

\begin{rem}\
  
\begin{enumerate}
\item It is easy to see that the first of these pairings agrees with the one defined in \cite[Section 6]{EmeMey}, but possibly only up to a sign $(-1)^p$. This is because Emerson and Meyer have not specified in detail which sign conventions they use (cf.\ the remarks about signs at the beginning of \cref{sec_recovering_usual_slant_prod}).
\item Compatibility of the slant products with the assembly maps $\mu_Y$ and $\mu_{\{*\}}$ of $Y$ and $\{*\}$ (see \cref{thm_commutativity_diagram}) shows that the second pairing is a special case of the first one: We have $\langle z,\theta\rangle=\langle \mu_Y(z),\theta\rangle$ for all $z\in \K_p(Y)$ and $\theta\in \K_{1-q}(\sHigCorRed Y)$.
\item Recall from the beginning of \cref{sec_recovering_usual_slant_prod} that the pairing of $\K$-homology with $\K$-theory is also only a special case of the slant product between them. Therefore, if $Y$ has continuously bounded geometry, then \cref{thm_KHomSlantCompatible} also implies $\langle z,\theta\rangle=\langle z,\mu_Y^*(\theta)\rangle$ for all $z\in \K_p(Y)$ and $\theta\in \K_{1-q}(\sHigCorRed Y)$.
\item Note that in the case $X=\{*\}$, the $\K$-theory of $\Roe(\rho_X)$ is isomorphic to the $\K$-theory of $\C$ for any non-degenerate faithful representation, because $\Roe(\rho_X)\cong\Kom(H_X)$ with $H_X\not=0$.
Hence we don't have to choose an ample representation for the construction of the pairings, but we may take the non-ample representation of $\C$ on $H_X=\C$ by multiplication instead, simplifying the formulas significantly.\qedhere
\end{enumerate}
\end{rem}

\begin{cor}\label{cor_slantcrosscomp}
The compositions
\[
\xymatrix@R-2pc{
\K_m(X)\ar[r]^-{\times z}&\K_{m+p}(X\times Y)\ar[r]^-{/\theta}&\K_{m+p-q}(X)
\\\Strg_m(X)\ar[r]^-{\times z}&\Strg_{m+p}(X\times Y)\ar[r]^-{/\theta}&\Strg_{m+p-q}(X)
\\\K_m(\Roe X)\ar[r]^-{\times z}&\K_{m+p}(\Roe(X\times Y))\ar[r]^-{/\theta}&\K_{m+p-q}(\Roe X)
}\]
are equal to the multiplication with $\langle z,\theta\rangle$, which is either an integer, if $p-q$ is even, or zero by construction, if $p-q$ odd, for all $m,p,q\in \Z$ and all $z,\theta$ as follows:
\begin{itemize}
\item In the first two compositions $z\in \K_p(Y)$ and in the third one 
$z\in \K_p(\Roe Y)$. 
\item In the first composition either $\theta\in \K^q(Y)$ or $\theta\in \K_{1-q}(\sHigCorRed Y)$ and in the other two $\theta\in \K_{1-q}(\sHigCorRed Y)$.
\end{itemize}
\end{cor}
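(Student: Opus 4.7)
The plan is to deduce this directly from \cref{thm_slantcrosscomp} by specializing the middle factor in that theorem to a single point. Concretely, I would apply \cref{thm_slantcrosscomp} with its ``$Y$'' replaced by $\{*\}$ and its ``$Z$'' relabelled as the ``$Y$'' of the corollary. Under the canonical identifications $X \times \{*\} \times Y = X \times Y$ and $X \times \{*\} = X$, each of the three compositions in \cref{thm_slantcrosscomp} becomes, respectively, exactly one of the three compositions in the corollary, with $z \in \HRPlaceholder_p(Y) \cong \HRPlaceholder_p(\{*\} \times Y)$ (for the appropriate $\HRPlaceholder$) and $\theta$ either in $\K^q(Y)$ or in $\K_{1-q}(\sHigCorRed Y)$ exactly as in the statement.

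The conclusion of \cref{thm_slantcrosscomp} then identifies each composition with the external product by $z/\theta \in \HRPlaceholder_{p-q}(\{*\})$, where the slant product on the right is computed over the point. By the very definition of the pairings $\langle -, - \rangle$ introduced just before the corollary, this element is precisely $\langle z, \theta\rangle$. Since $\K_{p-q}(\{*\}) \cong \Z$ when $p-q$ is even and is $0$ when $p-q$ is odd, this already gives the asserted integer or vanishing statement.

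It remains to verify that the external product by an integer $n \in \K_0(\{*\}) \cong \Z$ coincides with scalar multiplication by $n$ on $\HRPlaceholder_m(X)$. As remarked just before the corollary, for $X = \{*\}$ one need not choose an ample module: the one-dimensional representation $\rho_{\{*\}} \colon \C \to \cB(\C) = \C$ may be used. With this choice, the \textCstar-algebras $\Roe(\rho_{\{*\}})$, $\Loc(\rho_{\{*\}})$, $\Locz(\rho_{\{*\}})$ all reduce to (shifted) copies of $\C$ or contractible algebras, and the inclusion maps \labelcref{eq_precrossroe}, \labelcref{eq_precrossloc}, \labelcref{eq_precrosslocz} become the canonical unital tensor identifications $A \otimes \C \cong A$. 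Hence the external product with $n \in \K_0(\C) \cong \Z$ acts as multiplication by $n$ on each of the groups in the Higson--Roe sequence of $X$, which yields the corollary.

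No genuine obstacle arises here: the content is entirely packaged in \cref{thm_slantcrosscomp}, and the only point requiring slight care is the identification of the external product by $n \in \Z$ with multiplication by $n$, which is standard but worth making explicit in our setup because we defined the external products at the level of \textCstar-algebras rather than via an abstract characterization.
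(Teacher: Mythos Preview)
Your proposal is correct and follows exactly the approach the paper intends: the sentence preceding the definition of the pairings explicitly says that specializing \cref{thm_slantcrosscomp} to the case where its middle factor is a one-point space yields this corollary, and the pairing is by definition the slant product over a point. Your extra paragraph verifying that external product with $n\in\K_0(\{*\})\cong\Z$ is multiplication by $n$ is a welcome bit of care that the paper leaves implicit.
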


Now we also get Property~\ref{properties_slant}~\ref{item_3_mainthm_slant} as a corollary.
\begin{cor}
Denote by $\beta\in\K_{1-n}(\sHigCorRed\R^n)$ the Bott element of the Euclidean space, i.\,e.\ $\mu^*(\beta)\in\K^n(\R^n)\cong\Z$ is the generator which pairs to one with  the fundamental class $[\Dirac_{\R^n}] \in \K_n(\IR^n)$ of Euclidean space: $\langle [\Dirac_{\R^n}],\beta\rangle=1$. Then the slant products
\begin{align*}
\K_{p+n}(X\times\R^n)&\xrightarrow{\blank/\beta=\blank/\mu^*(\beta)}  \K_p(X)
\\\Strg_{p+n}(X\times\R^n)&\xrightarrow[\hphantom{\blank/\beta=\blank/\mu^*(\beta)}]{\blank/\beta}\Strg_p(X)
\\\K_{p+n}(\Roe(X\times\R^n))&\xrightarrow[\hphantom{\blank/\beta=\blank/\mu^*(\beta)}]{\blank/\beta}\K_p(\Roe X)
\end{align*}
coincide with the $n$-fold Mayer--Vietories boundary maps.
\end{cor}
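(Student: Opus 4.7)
The plan is to exhibit the slant product $\cdot/\beta$ as a one-sided inverse of the external product with $[\Dirac_{\R^n}]$ (resp. $\Ind[\Dirac_{\R^n}]$ in the Roe-algebra case), and then to appeal to the standard fact that this external product is a two-sided isomorphism whose inverse is the $n$-fold Mayer--Vietoris boundary.

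First I would apply \cref{cor_slantcrosscomp} with $Y = \R^n$, $z = [\Dirac_{\R^n}] \in \K_n(\R^n)$ (resp. $z = \Ind[\Dirac_{\R^n}] \in \K_n(\Roe \R^n)$), and $\theta = \beta$. The normalization $\langle [\Dirac_{\R^n}], \beta\rangle = 1$, which via the remark following the pairing definition implies also $\langle \Ind[\Dirac_{\R^n}], \beta\rangle = 1$, yields
\[
  (x \times [\Dirac_{\R^n}])/\beta = x, \qquad (x \times \Ind[\Dirac_{\R^n}])/\beta = x
\]
in the respective cases. Hence $\cdot/\beta$ is a left inverse of external product with $[\Dirac_{\R^n}]$ (resp. $\Ind[\Dirac_{\R^n}]$).

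Second, I would invoke the classical suspension/Bott periodicity statement: for each of $\K_\ast(X \times \R^n)$, $\Strg_\ast(X \times \R^n)$ and $\K_\ast(\Roe(X \times \R^n))$, the external product with $[\Dirac_{\R^n}]$ (resp. with its coarse index) is an isomorphism whose inverse agrees with the $n$-fold iterate of the Mayer--Vietoris boundary for the excisive decomposition $X \times \R = X\times(-\infty,0] \cup X\times[0,\infty)$ in each factor. For locally finite $\K$-homology and for $\K$-theory of the Roe algebra this is the well-known identification of Bott periodicity with the Mayer--Vietoris connecting map (in the coarse case this relies on the coarse Mayer--Vietoris sequence and the coarse Baum--Connes isomorphism for $\R^n$). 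For the structure group it then follows by a five-lemma argument applied to the Higson--Roe sequence, together with the naturality of the external product across that sequence as recalled in \cref{subsec_crossProducts_nonequiv}.

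Combining these two steps, $\cdot/\beta$ is a one-sided inverse of an isomorphism and therefore coincides with its two-sided inverse, namely the $n$-fold Mayer--Vietoris boundary, which is the conclusion. The main obstacle is the bookkeeping of sign conventions: the slant product carries the factor $(-1)^p$ from \cref{defn_coarseslant,defn_localizationslant}, the external product is graded-commutative per \cref{rem_signconvention}, and the $n$-fold iterated Mayer--Vietoris boundary depends on the chosen ordering of the $\R$-factors. These sign choices have to be tracked consistently so that the normalization $\langle [\Dirac_{\R^n}], \beta\rangle = 1$ indeed forces the identification on the nose rather than up to a sign.
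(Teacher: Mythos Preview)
Your proposal is correct and follows essentially the same approach as the paper: use \cref{cor_slantcrosscomp} to see that $\cdot/\beta$ is a left inverse to the suspension map $\times[\Dirac_{\R^n}]$, then invoke that the latter is an isomorphism whose inverse is the $n$-fold Mayer--Vietoris boundary. The only cosmetic difference is that the paper cites \cite[Theorem~5.5]{zeidler_secondary} directly for the suspension isomorphism on $\K_\ast$ and $\Strg_\ast$ (and remarks that the same proof works for the Roe algebra), rather than deducing the structure-group case from the other two via the five lemma as you suggest.
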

\begin{proof}
It was shown 
 in \cite[Theorem~5.5]{zeidler_secondary} that the external products
\begin{align*}
\K_p(X)&\xrightarrow{\times [\Dirac_{\R^n}]} \K_{p+n}(X \times \R^n)&\Strg_p(X)\,,&\xrightarrow{\times [\Dirac_{\R^n}]} \Strg_{p+n}(X \times \R^n)\,,
\end{align*}
which are also called suspension maps, are isomorphisms and that their inverses are given by the $n$-fold Mayer--Vietoris boundary. Essentially the same proof also shows that the external product
\[\K_p(\Roe X) \xrightarrow{\times \Ind [\Dirac_{\R^n}]} \K_{p+n}(\Roe(X \times \R^n))\]
is also an isomorphism whose inverse is given by the $n$-fold coarse Mayer--Vietoris boundary map.
Hence, it suffices to show that the slant products with $\beta$
are also inverses to these suspension maps.

By the preceding corollary, the composition of the external product followed by the slant product is in each of these three cases equal to
 multiplication by $\langle [\Dirac_{\R^n}],\beta\rangle=1$. We conclude that the slant product by $\beta$ is the left inverse to the suspension map. Because the suspension map is an isomorphism, this suffices to conclude the claim.
\end{proof}

\subsection{Naturality of the slant products}\label{sec_functoriality}

Naturality of our slant products is of course a property that has to be expected.
We prove it in this section and use it directly in the next to coarsify all of our previous statements.

Throughout this section let $X,X',Y,Y'$ denote proper metric spaces with $Y$ and $Y'$ having bounded geometry. We consider the Hilbert spaces $H_X$, $H_Y$, $H_{X\times Y}$, $\tilde H_X$ and the ample representations $\rho_X$, $\rho_Y$, $\rho_{X\times Y}$, $\tilde\rho_X$, $\bar\rho_Y$, $\tilde\rho_Y$ as before and let the Hilbert spaces $H_{X'}$, $H_{Y'}$, $H_{X'\times Y'}$, $\tilde H_{X'}$ and the ample representations $\rho_{X'}$, $\rho_{Y'}$, $\rho_{X'\times Y'}$, $\tilde\rho_{X'}$, $\bar\rho_{Y'}$, $\tilde\rho_{Y'}$ be chosen and constructed in complete analogy.

\begin{defn}[{\cite[Definition 6.3.9]{higson_roe}}]\label{defn:coarseCoveringIsometry}
Let $\alpha\colon X\to X'$ be a coarse map. An isometry $V\colon H_X\to H_{X'}$ \emph{covers} a coarse map $\alpha\colon X\to X'$, if
\[\{(x',\alpha(x))\in X'\times X'\mid (x',x)\in\supp(V)\}\]
is an entourage of $X'$.
\end{defn}

\begin{prop}[{\cite[Section 6.3]{higson_roe}}]
Any coarse map $\alpha\colon X\to X'$ is covered by an isometry $V\colon H_X\to H_{X'}$. Conjugation by $V$ yields a $*$-homomorphism
\[\Ad_V\colon \Roe(\rho_X)\to \Roe(\rho_{X'})\,,\quad T\mapsto VTV^*\]
and the induced map in $\K$-theory $(\Ad_V)_*\colon \K_*(\Roe X)\to \K_*(\Roe X')$ is independent of the choice of covering isometry $V$ and depends functorial on the coarse map $\alpha$. For this reason we will denote it by $\alpha_*$. \qed
\end{prop}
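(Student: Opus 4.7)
The proposition packages four claims: existence of a covering isometry, the property that \(\Ad_V\) actually lands in \(\Roe(\rho_{X'})\), independence of \((\Ad_V)_*\) from the choice of \(V\), and functoriality in \(\alpha\). I would tackle them in this order, since each builds on the previous.

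For existence, the plan is to pick a Borel decomposition \(\{E_i\}\) of \(X\) into sets of uniformly bounded diameter (for instance by thinning an open cover by balls of some radius \(r\)), to choose points \(x'_i \in X'\) within bounded distance of \(\alpha(E_i)\), and to use ampleness of \(\rho_{X'}\) to produce, for each \(i\), an isometric embedding of \(\rho_X(1_{E_i}) H_X\) into a pairwise orthogonal subspace of \(\rho_{X'}(1_{\Ball_{r'}(x'_i)}) H_{X'}\); ampleness supplies infinite-dimensional local ``room'' in \(H_{X'}\). The resulting \(V\) then has support in \(\bigcup_i \Ball_{r'}(x'_i) \times E_i\), which is a covering entourage because \(\alpha\) is bornologous.

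For the \(\ast\)-homomorphism property, I would bound \(\supp(VTV^*) \subseteq \supp(V) \circ \supp(T) \circ \supp(V)^{-1}\) and combine the covering condition with bornologousness of \(\alpha\) to see that \(VTV^*\) has finite propagation in \(X'\). Local compactness follows by a standard cutoff argument: for compactly supported \(f \in \Cz(X')\), properness of \(\alpha\) lets one find \(g \in \Cz(X)\) of compact support with \(\rho_{X'}(f) V = \rho_{X'}(f) V \rho_X(g)\), so that \(\rho_{X'}(f) V T V^*\) reduces to a composition involving the compact operator \(\rho_X(g) T\).

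Independence is the main technical obstacle. Given two covering isometries \(V_0, V_1\) for \(\alpha\), a direct calculation shows that
\[ U \coloneqq \begin{pmatrix} V_0 V_1^* & 1 - V_0 V_0^* \\ 1 - V_1 V_1^* & V_1 V_0^* \end{pmatrix} \]
is a unitary in \(M_2(\cB(H_{X'}))\) whose entries all have finite propagation in \(X'\) (by the same support calculation using the covering condition), so that \(U\) lies in the multiplier algebra \(M_2(\Mult(\Roe(\rho_{X'})))\). One then checks the identity \(\Ad_U \circ \Ad_{V_1 \oplus 0} = \Ad_{V_0 \oplus 0}\) of \(\ast\)-homomorphisms \(\Roe(\rho_X) \to M_2(\Roe(\rho_{X'}))\), so the two homomorphisms differ by an inner automorphism implemented by a multiplier unitary, which acts trivially on K-theory; composing with the corner isomorphism \(\K_*(\Roe(\rho_{X'})) \cong \K_*(M_2(\Roe(\rho_{X'})))\) yields \((\Ad_{V_0})_* = (\Ad_{V_1})_*\). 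Functoriality is then immediate: if \(V\) covers \(\alpha\) and \(W\) covers \(\beta\), then \(WV\) covers \(\beta \circ \alpha\) (composing supports and using bornologousness of \(\alpha\)) and \(\Ad_{WV} = \Ad_W \circ \Ad_V\) on Roe algebras, whence \((\beta \circ \alpha)_* = \beta_* \circ \alpha_*\).
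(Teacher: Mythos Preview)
Your proposal is correct and follows the standard argument from \cite[Section~6.3]{higson_roe}, which is precisely what the paper cites; the paper itself gives no proof (the proposition ends with \(\qed\) immediately after the statement). Your \(2\times 2\) unitary \(U\) is indeed unitary and satisfies \(U(V_1\oplus 0)=V_0\oplus 0\), and its entries have finite propagation in \(X'\) for the reasons you indicate, so the independence and functoriality arguments go through as written.
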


\begin{prop}[{see \cite{EmeMey}}]\label{prop_stablehigsonfunctoriality}
The reduced stable Higson corona---and hence also its $\K$-theory---is contravariantly functorial under coarse maps. If $\beta\colon Y\to Y'$ is a coarse map and an element of $[f]\in\sHigCorRed Y'$ is represented by a function $f\in\sHigComRed Y'$, then the class $\beta^*[f]\in \sHigCorRed Y$ is represented by any function $g\in\sHigComRed Y$ for which the (possibly non-continuous) function $g-f\circ\beta$ converges to zero at infinity.\qed
\end{prop}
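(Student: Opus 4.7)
The plan is to construct $\beta^*$ by smoothing the a priori non-continuous composition $f \circ \beta$ into a continuous function $g$ and then verifying that the resulting class $[g] \in \sHigCorRed Y$ is well-defined. Given $f \in \sHigComRed Y'$, the function $f \circ \beta \colon Y \to \Lin(\elltwo)$ is bounded and Borel. Since $\beta$ is coarse, it is coarsely Lipschitz: for every $r > 0$ there exists $s > 0$ with $\Var_r(f \circ \beta)(y) \leq \Var_s(f)(\beta(y))$, and the right-hand side tends to zero as $y \to \infty$ by properness of $\beta$ and vanishing variation of $f$.

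To obtain a continuous representative I would choose a locally finite cover of $Y$ by balls $\{\Ball_{r_i}(y_i)\}_i$ of uniformly bounded radii $r_i \leq R$, together with a subordinate partition of unity $(\varphi_i)_i$, and set
\[ g(y) \coloneqq \sum_i \varphi_i(y)\, f(\beta(y_i)). \]
Local finiteness ensures this is a finite sum pointwise, so $g$ is continuous and bounded by $\|f\|_\infty$. Three properties must now be verified: first, $\|g(y) - f(\beta(y))\| \leq \max_{i:\,y \in \Ball_{r_i}(y_i)} \|f(\beta(y_i)) - f(\beta(y))\| \leq \Var_{s(R)}(f)(\beta(y)) \to 0$ as $y \to \infty$; second, by an analogous estimate $\Var_r(g)$ also vanishes at infinity; and third, fixing a base point $y_0 \in Y$ and exploiting $\sum_i(\varphi_i(y) - \varphi_i(y')) = 0$, one may write
\[ g(y) - g(y') = \sum_i \bigl(\varphi_i(y) - \varphi_i(y')\bigr)\bigl(f(\beta(y_i)) - f(\beta(y_0))\bigr), \]
a finite linear combination of operators in $\Kom$, so $g(y) - g(y') \in \Kom$. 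Hence $g \in \sHigComRed Y$.

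Finally I would verify the formal properties and define $\beta^*[f] \coloneqq [g]$. Any two such continuous approximations $g_1, g_2$ of $f \circ \beta$ differ pointwise by a compact operator (by the base-point trick applied to $g_1 - g_2$) and vanish at infinity, so their difference lies in $\Cz(Y, \Kom)$; this simultaneously proves well-definedness and the recipe stated in the proposition, namely that any $g \in \sHigComRed Y$ with $g - f\circ\beta$ vanishing at infinity represents $\beta^*[f]$. Properness of $\beta$ ensures $\Cz(Y', \Kom)$ pulls back into $\Cz(Y, \Kom)$, so $\beta^*$ descends to a \Cstar-homomorphism $\sHigCorRed Y' \to \sHigCorRed Y$, and contravariant functoriality $(\alpha \circ \beta)^* = \beta^* \circ \alpha^*$ follows because both composites give continuous approximations to $(f \circ \alpha) \circ \beta$, differing by an element of $\Cz(Y, \Kom)$. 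The main obstacle is the simultaneous control of continuity, vanishing variation, and compactness of pointwise differences for $g$; the finite-sum structure from local finiteness of $(\varphi_i)$ together with the base-point trick is precisely what delivers all three conditions at once.
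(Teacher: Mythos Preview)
The paper does not prove this proposition; it is stated with a terminal \qed and a citation to Emerson--Meyer. Your argument is essentially the standard one and is correct. In fact, the paper later uses exactly your partition-of-unity formula $g=\sum_{\hat y}\varphi_{\hat y}\cdot f(\beta(\hat y))$ inside the proof of \cref{thm_coarseslantfunctorial} (there under a bounded-geometry hypothesis) to produce an explicit representative of $\beta^*[f]$.

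Two small points deserve tightening. First, since no bounded geometry is assumed in the proposition, you should obtain the cover not as literal balls but as a locally finite open refinement (paracompactness of metric spaces) of the cover by unit balls; each member then has diameter at most $2$, which is all your estimates use. Second, your ``base-point trick'' for well-definedness is slightly imprecise as stated: for arbitrary $g_1,g_2\in\sHigComRed Y$ with $g_j-f\circ\beta\to 0$ at infinity you only know a priori that $(g_1-g_2)(y)-(g_1-g_2)(y')\in\Kom$; letting $y'\to\infty$ (assuming $Y$ is unbounded) then exhibits $(g_1-g_2)(y)$ as a norm limit of compact operators, hence compact. The bounded case is trivial since then $\sHigCorRed Y$ is just the Calkin algebra.
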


\begin{thm}\label{thm_coarseslantfunctorial}
The slant product for the Roe algebra is natural under pairs of coarse maps $\alpha\colon X\to X'$ and $\beta\colon Y\to Y'$ in the sense that
\[\alpha_*(x/\beta^*(\theta))=(\alpha\times \beta)_*(x)/\theta\]
for all $x\in \K_*(\Roe(X\times Y))$ and $\theta\in \K_*(\sHigCorRed Y')$.
\end{thm}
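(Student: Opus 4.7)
The plan is to lift the identity from K-theory down to the level of \(*\)-homomorphisms into \(\Fipro(\tilde\rho_{X'})/\Roe(\tilde\rho_{X'})\), and then apply K-theory together with naturality of the external product and of the boundary map. First, I would choose isometries $V\colon H_X\to H_{X'}$ and $W\colon H_Y\to H_{Y'}$ covering $\alpha$ and $\beta$, respectively. Then $V\otimes W$ covers $\alpha\times\beta$, and $\tilde V\coloneqq V\otimes W\otimes\id_{\elltwo}\colon\tilde H_X\to\tilde H_{X'}$ covers $\alpha$ with respect to the ample representations $\tilde\rho_X$ and $\tilde\rho_{X'}$. Consequently, $\Ad_{\tilde V}$ sends $\Fipro(\tilde\rho_X)\to\Fipro(\tilde\rho_{X'})$ and $\Roe(\tilde\rho_X)\to\Roe(\tilde\rho_{X'})$, and on the Roe ideal it induces $\alpha_*$ under the canonical identification $\K_*(\Roe(\tilde\rho_X))\cong\K_*(\Roe X)$.

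The central claim to establish is that the two compositions
\[
\Ad_{\tilde V}\circ\Psi\circ(\id\otimes\beta^*),\quad \Psi'\circ(\Ad_{V\otimes W}\otimes\id)\colon \Roe(\rho_{X\times Y})\otimes_{\max}\sHigCorRed Y'\to \Fipro(\tilde\rho_{X'})/\Roe(\tilde\rho_{X'})
\]
agree at the level of K-theory (where $\Psi'$ denotes the analogue of $\Psi$ for $X',Y'$), for then naturality of the external product and of the connecting map $\partial$ completes the argument. Evaluating both compositions on a generator $S\otimes[f]$ with $f\in\sHigComRed Y'$ and choosing $g\in\sHigComRed Y$ such that $g-f\circ\beta$ vanishes at infinity (so that $[g]=\beta^*[f]$), one uses $(V\otimes W)^*(V\otimes W)=\id_{H_{X\times Y}}$ to verify
\[
\tilde V\,\tau(S)\,\tilde\rho_Y(g)\,\tilde V^{\,*} = \tau_{X'}\bigl(\Ad_{V\otimes W}(S)\bigr)\cdot\bigl(VV^*\otimes (W\otimes\id_{\elltwo})\bar\rho_Y(g)(W^*\otimes\id_{\elltwo})\bigr),
\]
which is to be compared against $\tau_{X'}(\Ad_{V\otimes W}(S))\cdot(\id_{H_{X'}}\otimes\bar\rho_{Y'}(f))$. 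Their difference decomposes as
\[
\tau_{X'}(\Ad_{V\otimes W}(S))\cdot\bigl((\id-VV^*)\otimes\bar\rho_{Y'}(f)\bigr)\;+\;\tau_{X'}(\Ad_{V\otimes W}(S))\cdot\bigl(VV^*\otimes D\bigr),
\]
where $D\coloneqq \bar\rho_{Y'}(f)-(W\otimes\id_{\elltwo})\bar\rho_Y(g)(W^*\otimes\id_{\elltwo})$.

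The first summand vanishes identically, since $V^*(\id-VV^*)=0$ forces $\tau_{X'}(\Ad_{V\otimes W}(S))\cdot((\id-VV^*)\otimes\id)=0$. The second summand is the main obstacle: one must show that $\tau_{X'}(\Ad_{V\otimes W}(S))\cdot(VV^*\otimes D)$ lies in $\Roe(\tilde\rho_{X'})$. This is a stable version of the assertion that $W$ covers $\beta$: combining the covering property of $W$ with $g-f\circ\beta\to 0$ at infinity and the bounded geometry of $Y'$, one argues by the discrete approximation technique of \cref{lem_slant_Roe_cycles_commute} that $D$ behaves, when multiplied by operators of finite propagation supported in the image of $V\otimes W$, like an element of $\bar\rho_{Y'}(\Cz(Y',\Kom))$; then the same local-compactness argument as in \cref{lem_slant_Roe_cycles_factors} places the product in $\Roe(\tilde\rho_{X'})$. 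Once this technical point is secured, the induced maps on the quotient coincide, and applying K-theory together with naturality of $\boxtimes$ and of $\partial$ (using \cref{rem_signconvention} to track the signs entering \cref{defn_coarseslant}) yields the asserted identity $\alpha_*(x/\beta^*\theta)=(\alpha\times\beta)_*(x)/\theta$.
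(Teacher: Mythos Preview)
Your reduction is correct and matches the paper: choosing covering isometries, setting up the pentagon of \(*\)-homomorphisms, and observing that the first summand vanishes because \(V^*(\id-VV^*)=0\) are all fine. Your second summand is, after simplification via \(V^*V=W^*W=\id\), exactly the difference \(T_2-T_1\) the paper analyzes.

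The gap is in your treatment of the second summand. Your claim that \(D=\bar\rho_{Y'}(f)-(W\otimes\id)\bar\rho_Y(g)(W^*\otimes\id)\) ``behaves like an element of \(\bar\rho_{Y'}(\Cz(Y',\Kom))\)'' is wrong: for fixed representatives \(f,g\), the operator \(D\) neither decays at infinity nor takes compact values, and no local-compactness argument \`a la \cref{lem_slant_Roe_cycles_factors} applies. What the paper actually does is quite different. Since changing representatives \(f\), \(g\) within their classes alters \(T_1,T_2\) only by elements of \(\Roe(\tilde\rho_{X'})\), it suffices to produce \emph{specific} representatives for which \(\|T_1-T_2\|\) is arbitrarily small; closedness of \(\Roe(\tilde\rho_{X'})\) then gives membership. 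The paper achieves this by choosing \(f\) so that its \(s\)-variation is globally bounded by \(\varepsilon\) (not merely vanishing at infinity) for a controlled scale \(s\), constructing \(g\) from \(f\circ\beta\) via a partition of unity, and then estimating the norm of the intertwining difference \((W\otimes\id)\bar\rho_Y(g)-\bar\rho_{Y'}(f)(W\otimes\id)\) using discrete approximants \(\hat f,\hat g\) and the bounded geometry of \(Y\) (the constant \(K_s\) counting points of \(\hat Y\) in balls). Note that the key bounded-geometry input is that of \(Y\), not \(Y'\) as you write; the bounded geometry of \(Y'\) is used only to obtain a locally finite cover.
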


\begin{proof}
Assume that $V\colon H_X\to H_{X'}$ is an isometry covering $\alpha$ and $W \colon H_Y\to H_{Y'}$ is an isometry covering $\beta$. Then $V\otimes W\colon H_{X\times Y}\to H_{X'\times Y'}$ covers $\alpha\times \beta$ and $V\otimes W\otimes\id_{\elltwo}\colon \tilde H_X\to \tilde H_{X'}$ covers $\alpha$. The latter is in complete analogy to \cref{lem_propagtioncomparision}.

We have to show commutativity of the diagram
\[\xymatrix@C=-6em{
&{\K_p(\Roe(\rho_{X\times Y}))\otimes \K_{1-q}(\sHigCorRed Y')}\ar[dl]_{\id\otimes \beta^*}\ar[dr]^-{(\Ad_{V\otimes W})_*\otimes\id}\ar[dd]&
\\\K_p(\Roe(\rho_{X\times Y}))\otimes \K_{1-q}(\sHigCorRed Y)\ar[dd]&&\K_p(\Roe(\rho_{X'\times Y'}))\otimes \K_{1-q}(\sHigCorRed Y')\ar[dd]
\\&{\K_{p+1-q}(\Roe(\rho_{X\times Y})\otimes\sHigCorRed Y')}\ar[dl]_{(\id\otimes \beta^*)_*}\ar[dr]^-{(\Ad_{V\otimes W}\otimes\id)_*}&
\\\K_{p+1-q}(\Roe(\rho_{X\times Y})\otimes \sHigCorRed Y)\ar[d]^{\Psi}&&\K_{p+1-q}(\Roe(\rho_{X'\times Y'})\otimes \sHigCorRed Y')\ar[d]^{\Psi'}
\\\K_{p+1-q}(\Fipro(\tilde\rho_X)/\Roe(\tilde\rho_X))\ar[rr]^{(\Ad_{V\otimes W\otimes\id})_*}\ar[d]&&\K_{p+1-q}(\Fipro(\tilde\rho_{X'})/\Roe(\tilde\rho_{X'}))\ar[d]
\\\K_{p-q}(\Roe(\tilde\rho_X))\ar[rr]^{(\Ad_{V\otimes W\otimes\id})_*}&&\K_{p-q}(\Roe(\tilde\rho_{X'}))
}\]
where $\Psi'$ is defined in complete analogy to $\Psi$ and we have used that the conjugation by $V\otimes W\otimes\id$ clearly also maps $\Fipro(\tilde\rho_X)$ to $\Fipro(\tilde\rho_{X'})$.

The three quadrilaterals clearly commute. It remains to investigate the pentagon and here it is sufficient to show commutativity of the underlying pentagon of $*$-homomorphisms:
\begin{equation}\label{eq_coarseslantfunctorialpentagon}\xymatrix@C=-1em{
&\Roe(\rho_{X\times Y})\otimes\sHigCorRed Y'\ar[dl]_{\id\otimes \beta^*}\ar[dr]^-{\Ad_{V\otimes W}\otimes\id}&
\\\Roe(\rho_{X\times Y})\otimes \sHigCorRed Y\ar[d]^{\Psi_{\LSym}}&&\Roe(\rho_{X'\times Y'})\otimes \sHigCorRed Y'\ar[d]^{\Psi'_{\LSym}}
\\\Fipro(\tilde\rho_X)/\Roe(\tilde\rho_X)\ar[rr]^{\Ad_{V\otimes W\otimes\id}}&&\Fipro(\tilde\rho_{X'})/\Roe(\tilde\rho_{X'})
}\end{equation}
Given $S\in \Roe(\rho_{X\times Y})$ and $f\in\sHigComRed Y'$, the left path along the pentagon maps $S\otimes [f]\in \Roe(\rho_{X\times Y})\otimes\sHigCorRed Y'$ to the element of $\Fipro(\tilde\rho_{X'})/\Roe(\tilde\rho_{X'})$ represented by the operator
\[T_1\coloneqq (V\otimes W\otimes\id)\circ (S\otimes\id)\circ (V^*\otimes (\bar\rho_Y(g)\circ( W^*\otimes\id)))\]
with $g\in\sHigComRed Y$ as in \cref{prop_stablehigsonfunctoriality}.
The right path, on the other hand, maps it to the element represented by
\[T_2\coloneqq (V\otimes W\otimes\id)\circ (S\otimes\id)\circ (V^*\otimes ((W^*\otimes\id)\circ\bar\rho_Y(f)))\,.\]
It has to be shown that the difference of these two operators lies in $\Roe(\tilde\rho_{X'})$.

We already know that choosing different representatives $f$ and $g$ only changes these $T_1$ and $T_2$ by elements of $\Roe(\tilde\rho_{X'})$. Hence it suffices to show that the difference $T_1-T_2$ can be made arbitrarily small by choosing suitable representatives $f$ and $g$. This amounts to showing that the difference
\[(W\otimes\id)\circ\bar\rho_Y(g)-\bar\rho_{Y'}(f)\circ(W\otimes\id)\]
can be made arbitrarily small, which will be done in the remaining part of the proof.

Exploiting the bounded geometry of $Y$ we choose $\hat Y\subset Y$, $r>0$ and $K_R>0$ for each $R>0$ exactly as in \cref{defn:boundedGeometry}.\ref{defn_bounded_geom}, that is, such that $Y=\bigcup_{\hat y\in \hat Y}\Ball_r(\hat y)$ and for each $R>0$ the number of elements $\#(\hat Y\cap\clBall_R(y))$ is bounded by $K_R$ uniformly in $y$. Furthermore, we choose $\hat Y'\subset Y'$ and $r'>0$ such that the family of balls $\{\Ball_{r'}(\hat y')\}_{\hat y'\in\hat Y'}$ is a locally finite open cover of $Y'$. This can be done due to the bounded geometry of $Y'$, although we don't need the full strength of bounded geometry of $Y'$ at this point.

Similar to what we have done in the proof of \cref{lem_slant_Roe_cycles_commute} we choose decompositions of $Y$ and $Y'$ into families  $\{Z_{\hat{y}}\}_{\hat{y} \in \hat{Y}}$ and  $\{Z'_{\hat{y}'}\}_{\hat{y}' \in \hat{Y}'}$ of pairwise disjoint Borel subsets which are subordinate to the open covers  $\{\Ball_r(\hat y)\}_{\hat{y} \in \hat{Y}}$ and  $\{\Ball_{r'}(\hat y')\}_{\hat{y}' \in \hat{Y}'}$. Given Borel subsets $Z\subset Y$ or $Z'\subset Y'$, we again denote by $1_Z \in \Lin(H_Y)$ and $1_{Z'} \in \Lin(H_{Y'})$ the projections corresponding to the characteristic functions of $Z$ and $Z'$ under the canonical extensions of $\rho_Y$ and $\rho_{Y'}$ to Borel functions, respectively.

Let $s_1>0$ be such that $d(\beta(y),\beta(z))\leq s_1$ for all $y,z\in Y$ with $d(y,z)\leq r$ and define the following number, which is finite, because $W$ covers $\beta$:
\[s_2\coloneqq\sup\{d(y',\beta(y))\mid (y',y)\in\supp(W)\}\,.\]
If $\hat y\in\hat Y$ and $\hat y'\in\hat Y'$ are such that $d(\hat y',\beta(\hat y))$ is bigger than
\[s\coloneqq s_1+s_2+r'\,,\]
then the the distance between $Z'_{\hat y'}$ and $\beta(Z_{\hat y})$ is bigger than $s_2$ and therefore
$1_{Z'_{\hat y'}}\circ W\circ 1_{Z_{\hat y}}=0$.

Finally, let $\varepsilon>0$. We may assume that our element of $\sHigCorRed Y'$ is represented by a function $f\in\sHigComRed Y'$ whose $s$-variation $\Var_s(f)$ is bounded by $\varepsilon$ everywhere. A priori this is only the case outside of a compact subset, but multiplying our function with a slowly varying function $X\to[0,1]$ which is $0$ on a large compact set and $1$ outside of an even larger compact yields a new representative with the demanded property.
We can furthermore use a partition of unity $\{\varphi_{\hat y}\}_{\hat{y} \in \hat{Y}}$ subordinate to the open cover $\{\Ball_r(\hat y)\}_{\hat{y} \in \hat{Y}}$ to define the function
\[g\coloneqq\sum_{\hat y\in\hat Y}f(\beta(\hat y))\varphi_{\hat y}\in\sHigComRed Y \,.\]
Clearly, $g-f\circ \beta$ converges to zero at infinity and $g$ is therefore a valid representative of $\beta^*[f]$.

Now define the strongly convergent series
\begin{align*}
\hat f&\coloneqq\sum_{\hat y'\in\hat Y'}1_{Z'_{\hat y'}}\otimes f(\hat y')\in\Lin(H_{Y'}\otimes\elltwo)\,,
\\\hat g&\coloneqq\sum_{\hat y\in\hat Y}1_{Z_{\hat y}}\otimes f(\beta(\hat y))\in\Lin(H_{Y}\otimes\elltwo)\,.
\end{align*}
It is clear from the the bound on the $s$-variation of $f$ that $\|\hat f-\bar\rho_{Y'}(f)\|\leq\varepsilon$ and $\|\hat g-\bar\rho_{Y}(g)\|\leq\varepsilon$.
Our goal is to estimate the norm of the operator
\begin{align*}
\hat T&\coloneqq(W\otimes\id)\circ\hat g-\hat f\circ(W\otimes\id)
\\&=\sum_{\substack{\hat y\in\hat Y\\\hat y'\in\hat Y'}} \left(1_{Z'_{\hat y'}}\circ W\circ 1_{Z_{\hat y}}\right) \otimes(f(\beta(\hat y))-f(\hat y'))
\\&=\sum_{\substack{\hat y\in\hat Y\,,\,\hat y'\in\hat Y'\\d(\hat y',\beta(\hat y))\leq s}} \left(1_{Z'_{\hat y'}}\circ W\circ 1_{Z_{\hat y}}\right) \otimes(f(\beta(\hat y))-f(\hat y'))\,.
\end{align*}
Again as in the proof of \cref{lem_slant_Roe_cycles_commute} we decompose an arbitrary $v\in\tilde H_X$ into the vectors $v_{\hat y}\coloneqq(\id_{H_X}\otimes 1_{Z_{\hat y}}\otimes\id_{\elltwo})v$ with $\hat y\in \hat Y$ and calculate
\begin{align*}
\|\hat Tv\|^2&=\sum_{\hat y'\in\hat Y'}\left\|\sum_{\substack{\hat y\in\hat Y\\d(\hat y',\beta(\hat y))\leq s}} \left(1_{Z'_{\hat y'}}\circ W\circ 1_{Z_{\hat y}}\right) \otimes(f(\beta(\hat y))-f(\hat y'))v\right\|^2
\\&\leq \sum_{\hat y'\in\hat Y'}\left(K_s\cdot \varepsilon\cdot \|v_{\hat y}\|\right)^2=K_s^2\cdot \varepsilon^2\cdot \|v\|^2\,.
\end{align*}
Hence we have the norm estimate
\[\left\|(W\otimes\id)\circ\bar\rho_Y(g)-\bar\rho_{Y'}(f)\circ(W\otimes\id)\right\|\leq 2\varepsilon+\|\hat T\|\leq (2+K_s)\varepsilon\,,\]
proving the claim.
\end{proof}

The proof of \cref{thm_coarseslantfunctorial} can easily be adapted to yield analogous statements for the localization algebras. In order to do so, we first have to recall the functoriality of their $\K$-theories.

\begin{defn}\label{def:equivcoveringiso}
Suppose that $\alpha\colon X\to X'$ is a uniformly continuous coarse map between proper metric spaces and $\rho_X\colon \Cz(X)\to\Lin(H_X)$, $\rho_{X'}\colon \Cz(X')\to \Lin(H_{X'})$ representations on Hilbert spaces. A uniformly continuous family of isometries $V\colon [1,\infty)\to\Lin(H_X,H_{X'}), t\mapsto V_t$ is said to \emph{cover} $\alpha$ if the number
  \[\omega_{\alpha, V}(t) \coloneqq \sup\{d(y, \alpha(x))\mid (y,x) \in\supp(V_t) \}\]
  is finite for all \(t \geq 1\) and satisfies \(\omega_{\alpha, V}(t) \to 0\) as \(t \to \infty\).
\end{defn}

\begin{prop}[{compare \cite[Proposition~3.2]{qiao_roe}, \cite[Theorem 6.6.3]{WillettYuHigherIndexTheory}}]\label{prop_LocFunctoriality}
If  $\rho_X\colon \Cz(X)\to\Lin(H_X)$ is a representation and  $\rho_{X'}\colon \Cz(X')\to \Lin(H_{X'})$ an ample one, then any uniformly continuous coarse map $\alpha\colon X\to X'$ is covered by a uniformly continuous family of isometries $V$. Conjugation by $V$ yields a $*$-homomorphism
\[\Ad_{V}\colon \Loc(\rho_{X})\to\Loc(\rho_{X'})\,,\quad L\mapsto [t\mapsto V_t\circ L_t\circ V_t^*]\]
which gives rise to a commutative diagram
\[\xymatrix{
0\ar[r] & \Locz(\rho_{X})\ar[r]\ar[d]^{\Ad_V} & \Loc(\rho_{X})\ar[r]\ar[d]^{\Ad_V}  & \Roe(\rho_{X})\ar[r]\ar[d]^{\Ad_{V_1}} &0
\\0\ar[r] & \Locz(\rho_{X'})\ar[r] & \Loc(\rho_{X'})\ar[r] & \Roe(\rho_{X'})\ar[r] &0  \,.
}\]
The vertical maps in the induced commutative diagram in $\K$-theory
\[\mathclap{
\xymatrix{
\K_{\ast+1}(\Roe X) \ar[r]^-{\partial} \ar[d]^-{(\Ad_{V_1})_*} & \Strg_\ast(X) \ar[r] \ar[d]^-{(\Ad_V)_*} & \K_\ast(X) \ar[r]^-{\Ind} \ar[d]^-{(\Ad_V)_*} & \K_\ast(\Roe X) \ar[d]^-{(\Ad_{V_1})_*}\\
\K_{\ast+1}(\Roe X') \ar[r]^-{\partial} & \Strg_{\ast}(X') \ar[r] & \K_{\ast}(X') \ar[r]^-{\Ind} & \K_{\ast}(\Roe X')
}}\]
are independent of the choice of the covering isometry and depend functorial on $\alpha$. For this reason they will all be denoted by $\alpha_*$.
\end{prop}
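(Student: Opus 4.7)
The plan is to follow the strategy of Qiao--Roe and Willett--Yu, adapted to the slightly smaller version of the localization algebra used in this paper. The argument has four logically separable steps, and I would present them in this order.

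First, I would construct a covering family $V$. Since $\alpha$ is uniformly continuous, pick a sequence $\delta_n\searrow 0$ so that $d(x,y)\leq \delta_n$ implies $d(\alpha(x),\alpha(y))\leq 1/n$. Choose Borel partitions $\{E_n^i\}_i$ of $X$ with $\diam(E_n^i)\leq \delta_n$, set $F_n^i\coloneqq $ a Borel neighbourhood of $\alpha(E_n^i)$ of diameter $\leq 2/n$, and invoke ampleness of $\rho_{X'}$ together with local absorption of representations (as in \cite[Section~6.3]{higson_roe}) to produce an isometry $V_n\colon H_X\to H_{X'}$ with $\rho_{X'}(1_{F_n^i})V_n=V_n\rho_X(1_{E_n^i})$ for all $i$. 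Interpolating between the $V_n$ linearly in $t\in[n,n+1]$ (and renormalising to keep them isometric) produces a uniformly continuous family $V\colon[1,\infty)\to \Lin(H_X,H_{X'})$ with $\omega_{\alpha,V}(t)\to 0$.

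Second, I would verify that $\Ad_V$ defines a $*$-homomorphism $\Loc(\rho_X)\to\Loc(\rho_{X'})$. For a generator $L\in\Loc(\rho_X)$ and any $t\geq 1$, an elementary computation with $\rho_{X'}(f)V_t=V_t\rho_X(f\circ\alpha)+R_t$ (where $\|R_t\|$ is controlled by $\omega_{\alpha,V}(t)$ and $\Var_{\omega_{\alpha,V}(t)}f$) shows that $V_tL_tV_t^*$ is locally compact with propagation bounded by $\propagation(L_t)+2\omega_{\alpha,V}(t)$, which tends to zero. Uniform continuity in $t$ is preserved because $V$ is uniformly continuous and $L$ is bounded. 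The inclusion $\Ad_V(\Locz(\rho_X))\subseteq \Locz(\rho_{X'})$ is immediate from evaluation at $t=1$, and compatibility with the evaluation map $\Loc\to\Roe$ recovers the Higson--Roe construction $\Ad_{V_1}$, giving the displayed commutative diagram of short exact sequences.

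Third, I would establish independence of $V$ on $\K$-theory. Given two covering families $V,W$, I would use the fact that a second summand of an ample representation is absorbed unitarily by it (functorially up to homotopy in the Loc-category) to reduce to the case where the target representation is $\rho_{X'}\oplus\rho_{X'}$. Then the path
\[
  U_t(s)\coloneqq\begin{pmatrix}\cos(s)V_t & -\sin(s)W_t \\ \sin(s)V_t & \cos(s)W_t\end{pmatrix},\qquad s\in[0,\pi/2],
\]
is a homotopy of covering isometries from $V\oplus 0$ to $0\oplus W$, continuous in both $s$ and $t$, and conjugation by $U(s)$ yields a homotopy of $*$-homomorphisms $\Loc(\rho_X)\to\Loc(\rho_{X'}\oplus\rho_{X'})$; this equates the induced maps on $\K$-theory after absorption.

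Fourth, functoriality follows by choosing covering isometries $V$ for $\alpha\colon X\to X'$ and $V'$ for $\alpha'\colon X'\to X''$ and noting that $V'_tV_t$ covers $\alpha'\circ\alpha$, with $\omega_{\alpha'\circ\alpha,V'V}(t)\leq \omega_{\alpha',V'}(t)+\eta_{\alpha'}(\omega_{\alpha,V}(t))$, where $\eta_{\alpha'}$ is a modulus of uniform continuity for $\alpha'$; this vanishes as $t\to\infty$, so by step~three $(\alpha'\circ\alpha)_*=\alpha'_*\circ\alpha_*$.

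The main obstacle is step~three: ensuring the rotation homotopy and the absorption trick are implemented within the localization algebra---that is, simultaneously preserving the \emph{uniform} continuity in the $t$-parameter and the vanishing of propagations as $t\to\infty$---rather than merely within the Roe algebra where the argument of \cite{higson_roe} directly applies.
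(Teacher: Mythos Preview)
The paper does not supply its own proof of this proposition; it simply cites Qiao--Roe and Willett--Yu. Your outline follows exactly that standard route, and the overall architecture (construct $V$, check $\Ad_V$ lands in $\Loc$, rotation homotopy for independence, composition for functoriality) is correct.

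Two technical points deserve care. In step one, ``interpolating linearly and renormalising to keep them isometric'' is not safe as stated: a convex combination $(1-s)V_n+sV_{n+1}$ need not be injective, and taking a polar part can destroy the support control you need for $\omega_{\alpha,V}(t)\to 0$. The standard fix is to pass to $H_{X'}\oplus H_{X'}$ (still ample) and rotate: on $[n,n+1]$ use $t\mapsto\bigl(\cos(\tfrac{\pi}{2}(t-n))V_n,\ \sin(\tfrac{\pi}{2}(t-n))V_{n+1}\bigr)$ as an isometry $H_X\to H_{X'}\oplus H_{X'}$, then rotate back; this stays isometric, uniformly continuous, and keeps $\omega_{\alpha,V}$ controlled by $\max(\omega_{\alpha,V_n},\omega_{\alpha,V_{n+1}})$. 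In step three, your displayed $2\times 2$ matrix is an operator $H_X\oplus H_X\to H_{X'}\oplus H_{X'}$ and at $s=0$ gives $V\oplus W$, not $V\oplus 0$; what you actually want is the column $s\mapsto\bigl(\cos(s)V_t,\ \sin(s)W_t\bigr)\colon H_X\to H_{X'}\oplus H_{X'}$, which is an isometry for each $s$ and interpolates between the two embeddings. With these corrections the argument goes through.
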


Note that this proposition implies the independence of the $\K$-theory of $\Loc X$, $\Loc X$, $\Roe X$ from the chosen ample module which we have mentioned earlier. Furthermore, it includes special cases of the usual functoriality of the $\K$-homology of spaces under proper continuous maps and of the $\K$-theory of the Roe algebra under coarse maps.

\begin{thm}\label{thm_LocSlantNatural}
The slant products for the localization algebras are natural under pairs of uniformly continuous coarse maps $\alpha\colon X\to X'$ and $\beta\colon Y\to Y'$ in the sense that
\begin{equation}\label{eq_LocSlantNatural}
\alpha_*(x/\beta^*(\theta))=(\alpha\times \beta)_*(x)/\theta
\end{equation}
for all $x\in \K_*(X\times Y)$ or $x\in \Strg_*(X\times Y)$ and $\theta\in \K_*(\sHigCorRed Y')$.
\end{thm}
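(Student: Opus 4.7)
The plan is to adapt the proof of \cref{thm_coarseslantfunctorial} to the localization setting, replacing fixed covering isometries by uniformly continuous covering families. Concretely, I would use \cref{prop_LocFunctoriality} to choose uniformly continuous families of isometries $V\colon [1,\infty)\to\Lin(H_X,H_{X'})$ and $W\colon [1,\infty)\to\Lin(H_Y,H_{Y'})$ covering $\alpha$ and $\beta$, respectively. Their tensor product $V\otimes W$ covers $\alpha\times\beta$, and the stabilized family $\tilde V_t\coloneqq V_t\otimes W_t\otimes\id_{\elltwo}$ covers $\alpha$ with respect to the representations $\tilde\rho_X$ and $\tilde\rho_{X'}$; this is the pointwise, localization-algebra analogue of \cref{lem_propagtioncomparision} and is straightforward from the definitions.

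With these data, I would set up the exact localization analogue of the large commutative diagram used in the proof of \cref{thm_coarseslantfunctorial}, where the vertical compositions compute both sides of \eqref{eq_LocSlantNatural}. The outer quadrilaterals commute on the nose, so the whole problem reduces to commutativity of the pentagon of $*$-homomorphisms
\begin{equation*}
\xymatrix@C=-1em{
&\Loc(\rho_{X\times Y})\tensmax\sHigCorRed Y'\ar[dl]_{\id\tens\beta^*}\ar[dr]^-{\Ad_{V\otimes W}\tens\id}&
\\ \Loc(\rho_{X\times Y})\tensmax\sHigCorRed Y\ar[d]^{\Psi_\LSym}&&\Loc(\rho_{X'\times Y'})\tensmax\sHigCorRed Y'\ar[d]^{\Psi'_\LSym}
\\ \FiproLoc(\tilde\rho_X)/\Loc(\tilde\rho_X)\ar[rr]^-{\Ad_{\tilde V}}&&\FiproLoc(\tilde\rho_{X'})/\Loc(\tilde\rho_{X'})
}
\end{equation*}
together with its $\Locz$-variant. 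The two paths send an elementary tensor $(L_t)_t\otimes[f]$ to families $T_1,T_2\in\FiproLoc(\tilde\rho_{X'})$ which, modulo $\Loc(\tilde\rho_{X'})$, are independent of the choices of representatives $f\in\sHigComRed Y'$ and $g\in\sHigComRed Y$ of $[f]$ and $\beta^*[f]$.

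To prove $[T_1]=[T_2]$ in the quotient I would apply the norm estimate from the proof of \cref{thm_coarseslantfunctorial} pointwise in $t\in[1,\infty)$. The crucial observation is that the constant $s$ appearing there depends on the covering isometry only through the support discrepancy $s_2=\omega_{\beta,W}(t)$, which is finite for every $t$ and tends to zero as $t\to\infty$, hence is bounded on $[1,\infty)$. Choosing the representative $f$ so that its $s$-variation is smaller than a given $\varepsilon>0$ for this uniform value of $s$, the argument from \cref{thm_coarseslantfunctorial} yields $\sup_t\|T_1(t)-T_2(t)\|\le(2+K_s)\varepsilon$. Since $\Loc(\tilde\rho_{X'})$ is a closed ideal of $\FiproLoc(\tilde\rho_{X'})$, the quotient norm of $[T_1-T_2]$ is dominated by this sup; being independent of the representatives, it must vanish.

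The main obstacle is precisely this uniformity in~$t$: in the Roe setting the discrepancy constant $s$ is a single number, whereas here it becomes a function of~$t$ and one must control all bounded-geometry quantities derived from it simultaneously for all $t\geq 1$. Boundedness of $\omega_{\beta,W}$ on $[1,\infty)$, which is implicit in \cref{def:equivcoveringiso} together with the norm-continuity of the family, is what makes the argument go through. The $\Strg_\ast$-version of the statement is then obtained by restricting the entire discussion to $\Locz$: the condition $L_1=0$ is preserved under every map in the pentagon, so the identical estimate proves the corresponding identity in $\FiproLocz(\tilde\rho_{X'})/\Locz(\tilde\rho_{X'})$.
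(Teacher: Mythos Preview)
Your approach is correct and follows essentially the same strategy as the paper: reduce to the pentagon \eqref{eq_coarseslantfunctorialpentagon} for $\Loc$ and $\Locz$, then transport the argument from \cref{thm_coarseslantfunctorial}. The paper's execution is slightly different and in fact sidesteps your main worry. Rather than re-running the norm estimate from \cref{thm_coarseslantfunctorial} uniformly in $t$, the paper simply invokes the \emph{conclusion} of the Roe-algebra pentagon at each fixed $t$: for every $t$ one already knows $T_1(t)-T_2(t)\in\Roe(\tilde\rho_{X'})$. Since the family $t\mapsto T_1(t)-T_2(t)$ is bounded, uniformly continuous, and has propagation tending to zero (all inherited from $L$, $V$, $W$ and the fixed $f,g$), pointwise membership in $\Roe(\tilde\rho_{X'})$ already forces $T_1-T_2\in\Loc(\tilde\rho_{X'})$. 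No uniform choice of $s$ is needed.

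Your route also works, but one step is not quite justified as written: boundedness of $\omega_{\beta,W}$ on $[1,\infty)$ does \emph{not} follow from \cref{def:equivcoveringiso} together with norm-continuity of $W$---norm-closeness of $W_t$ and $W_s$ says nothing directly about their supports. You can repair this either by choosing a covering family with bounded $\omega$ (which is always possible since the induced map on $\K$-theory is independent of the choice, and the standard constructions in \cite{qiao_roe,WillettYuHigherIndexTheory} give this), or by switching to the paper's pointwise argument, which only uses $\omega_{\beta,W}(t)\to 0$.
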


\begin{proof}
The proof is completely analogous to the one of \cref{thm_coarseslantfunctorial}. The main step is to show commutativity of the analogue versions of  \eqref{eq_coarseslantfunctorialpentagon} for $\Loc$ and $\FiproLoc$ as well as for $\Locz$ and $\FiproLocz$.
To this end, one has to show that the difference of two representatives in $\FiproLoc(\tilde\rho_{X'})$ or $\FiproLocz(\tilde\rho_{X'})$ differ by an element of $\Loc(\tilde\rho_{X'})$ or $\Locz(\tilde\rho_{X'})$, respectively. But this is exactly the commutativity of \eqref{eq_coarseslantfunctorialpentagon} applied pointwise to the family of operators at each time in $[1,\infty)$.
\end{proof}

\begin{rem}\label{rem_better_functoriality_structgroup}
We restricted ourselves in \cref{prop_LocFunctoriality} and \cref{thm_LocSlantNatural} to functoriality under uniformly continuous coarse maps only, because this is the functoriality which can be described easily in terms of the localization algebras.
But both statements of \cref{thm_LocSlantNatural} can be generalized further:
\begin{itemize}
\item As we have mentioned above, the functoriality of \cref{prop_LocFunctoriality} is a special case of the functoriality of the $\K$-homology groups under proper continuous maps under the isomorphisms $\K_*(-)\cong\K_*(\Loc(-))$. Hence the results of \cref{sec_recovering_usual_slant_prod} together with the naturality of the coassembly map under continuous coarse maps and the well-known naturality of the topological slant product between $\K$-homology and $\K$-theory immediately implies that Formula \eqref{eq_LocSlantNatural} even holds for all $x\in \K_*(X\times Y)$, $\theta\in \K_*(\sHigCorRed Y')$, proper continuous maps $\alpha\colon X\to X'$ and all continuous coarse maps $\beta\colon Y\to Y'$, if $Y$ and $Y'$ have continuously bounded geometry.

\item By using a different picture of the structure group $\Strg_*(-)=\K_*(\Locz(-))$ one can show that the functoriality extends to continuous coarse maps \cite[Section 12.4]{higson_roe}, \cite[Chapter~6]{WillettYuHigherIndexTheory}. In this case we can show that Formula \eqref{eq_LocSlantNatural} even holds for all $x\in \Strg_*(X\times Y)$, $\theta\in \K_*(\sHigCorRed Y')$ and continuous coarse maps $\alpha\colon X\to X'$ and $\beta\colon Y\to Y'$ by applying the following trick:

Given a continuous coarse map $\alpha\colon X\to X'$ we define a new proper metric space $X''$ as the set $X$ equipped with the metric $d''\coloneqq\max\{d,\alpha^*d'\}$, that is,
\[d''(x,y)=\max\{d(x,y),d'(\alpha(x),\alpha(y))\}\,,\]
where $d$ and $d'$ are the metrics on $X$ and $X'$, respectively.
Consider the commutative diagram
\begin{equation*}
\xymatrix{X\ar[rr]^\alpha &&X'\\&X''\ar[ul]^{\alpha'}\ar[ur]_{\alpha''}&}
\end{equation*}
where $\alpha'$ is the identity map and $\alpha''$ is the same as $\alpha$ on the underlying sets. It follows from $d''\geq d$ and $d''\geq (\alpha'')^*d'$ that both $\alpha'$ and $\alpha''$ are uniformly continuous coarse maps. Furthermore, the inverse $(\alpha')^{-1}$ is continuous, because $\alpha$ is continuous, and it is a coarse map, because $\alpha$ is a coarse map. Hence, $\alpha'$ is both a homeomorphism and a coarse equivalence and therefore induces an isomorphism between the structure groups.
Similarily, the continuous coarse map $\beta$ can be decomposed into $\beta=\beta''\circ(\beta')^{-1}$ with $\beta'$ and $\beta''$ being uniformly continuous coarse maps such that $(\beta')^{-1}$ is a continuous coarse map. The naturality of the slant product under the pair $(\alpha,\beta)$ now follows from the naturality of \cref{thm_LocSlantNatural} under the pairs $(\alpha',\beta')$ and $(\alpha'',\beta'')$.\qedhere
\end{itemize}
\end{rem}

\subsection{Coarsified versions of the external and slant products}
\label{sec_compatibility_Rips}
\begin{sloppypar}
In this section we first recall the notions of coarse $\K$-homology $\KX_*$ and the coarse $\K$-theory $\KX^*$ and define the coarse structure group $\SX_*$. Then we use the results of the previous sections to construct coarsified versions of the external and slant products
\begin{alignat*}{3}
\times\colon&& \KX_m(X)&\otimes \KX_n(Y)&&\to \KX_{m+n}(X\times Y)
\\\times\colon&& \SX_m(X)&\otimes \KX_n(Y)&&\to \SX_{m+n}(X\times Y)
\\/\colon&& \KX_p(X\times Y)&\otimes \KX^q(Y)&&\to \KX_{p-q}(X)
\\/\colon&& \KX_p(X\times Y)&\otimes \K_{1-q}(\sHigCorRed Y)&&\to \KX_{p-q}(X)
\\/\colon&& \SX_p(X\times Y)&\otimes \K_{1-q}(\sHigCorRed Y)&&\to \SX_{p-q}(X)
\end{alignat*}
and show their compatibility with the maps in a coarsified version of the Higson--Roe analytic sequence
\begin{equation}\label{eq_coarseHigsonRoesequence}
\dots\to\K_{*+1}(\Roe X)\to \SX_*(X)\to \KX_*(X)\xrightarrow{\mu} \K_*(\Roe X)\to\dots\,,
\end{equation}
the coarsification maps $\K_*(X)\to \KX_*(X)$, $\Strg_*(X)\to \SX_*(X)$ and the co-coarsification map $\KX^*(Y)\to \K^*(Y)$, as well as with the co-assembly map $\K_{1-q}(\sHigCorRed(Y))\to \KX^q(Y)$.
We also generalize our results obtained in \cref{sec_composition_cross_prods} to the coarsifications.
\end{sloppypar}

\subsubsection{Definition of the coarse theories}
The coarse $\K$-homology $\KX_*$ and the coarse structure groups $\SX_*$ are defined using the Rips complex construction. We first consider the case of discrete proper metric spaces.
\begin{defn}
Let $X$ be a  discrete proper metric space and $R\geq 0$. The Rips complex of $X$ at scale $R$ is (the geometric realization of) the simplicial complex $P_RX$ whose vertex set is $X$ and whose simplices are those spanned by the finite sets of vertices of diameter at most $R$.
\end{defn}

The Rips complexes can be metrized by proper metrics such that all the inclusions $X\subset P_RX\subset P_SX$ for $0\leq R<S$ are
isometric coarse equivalences. These inclusions turn the Rips complexes into a directed system indexed over $\R_{\geq 0}$ and we can define
\begin{align*}
\KX_*(X)&\coloneqq \varinjlim_{R\geq 0}\K_*(P_RX)&\text{and}&
&\SX_*(X)&\coloneqq \varinjlim_{R\geq 0}\Strg_*(P_RX)\,.
\end{align*}
If $\alpha\colon X\to Y$ is a coarse map between discrete proper metric spaces then for each $R\geq 0$ there exists $S\geq 0$ such that $\alpha$ extends linearily to a continuous coarse map $P_RX\to P_SY$ which we denote by the same letter $\alpha$.
If $\beta\colon X\to Y$ is another coarse map which is close to $\alpha$ and $\beta\colon P_RX\to P_TY$ is its linear extension to the Rips complex, then the extensions of $\alpha$ and $\beta$ are homotopic after postcomposing them with the inclusion into $P_UX$ for some $U\gg\max\{S,T\}$. The homotopy is constructed by linear interpolation between $\alpha$ and $\beta$ and consists of uniformly continuous coarse maps which all belong to the same closeness class.
This property together with the homotopy invariance of $\K$-homology and the structure groups\footnote{By \cref{prop_LocFunctoriality} we conclude that the structure group is homotopy invariant for uniformly continuous coarse maps. By \cref{rem_better_functoriality_structgroup} we can extend this to invariance for continuous coarse map, and using \cref{eq_coarseHigsonRoesequence} together with the invariance of $\K_*(\Roe X)$ under coarse homotopies \cite{higson_roe_homotopy} we can push this even further to invariance under maps which are  simultaneously proper continuous and coarse homotopies. But since we do not need this generality here, we will not provide the details of these.} immediately implies functoriality of $\KX_*$ and $\SX_*$ under closeness classes of coarse maps.

\begin{defn}
For any proper metric space $X$ we define
\begin{align*}
\KX_*(X)&\coloneqq \varinjlim_{R\geq 0}\K_*(P_RX')
&\text{and}&
&\SX_*(X)&\coloneqq \varinjlim_{R\geq 0}\Strg_*(P_RX')
\end{align*}
where $X'\subset X$ is any discrete coarsely equivalent subspace. These groups are independent of the choice of discretization $X'\subset X$ up to canonical isomorphism and functorial under closeness classes of coarse maps.
\end{defn}

Note that this coarsening procedure does not yield anything new for the $\K$-theory of the Roe algebra and the stable Higson corona, since the directed systems of Rips complexes consist only of coarse equivalences and therefore $\K_*(\Roe X)\cong\varinjlim_{R\geq 0}\K_*(\Roe(P_RX'))$ and $\K_*(\sHigCorRed X)\cong\varprojlim_{R\geq 0}\K_*(\sHigCorRed(P_RX'))$.
The definition of the coarse $\K$-theory $\KX^*$ is a bit more complicated (cf.\ \cite[Definition 4.3, Note 4.4]{EmeMey}) but we note the following.
\begin{lem}[{cf.\ \cite[Remark 4.5]{EmeMey}}]
For any proper metric space $X$ with discrete coarsely equivalent subspace $X'\subset X$ there is a Milnor-$\varprojlim^1$-sequence
\[0\to{\varprojlim_{R\geq 0}}^1\K^{*+1}(P_RX')\to \KX^*(X)\to \varprojlim_{R\geq 0}\K^*(P_RX')\to 0\]
which is natural for coarse maps in the obvious way, i.\,e.\ the following holds:

If $f\colon X\to Y$ is a coarse map and $X'\subset X$, $Y'\subset Y$ are coarsely equivalent discrete subsets, then every coarse map $f'\colon X'\to Y'$ which is close to the restriction of $f$ to $X'$ induces maps between the left and right terms of the short exact sequences. These induced maps are up to canonical isomorphism independent of the choices of $X',Y'$ and $f'$, and together with $f^*$ on the middle term they constitute a map between short exact sequences.\qed
\end{lem}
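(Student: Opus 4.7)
The plan is to identify $\KX^*(X)$ with the homotopy inverse limit of the tower of K-theory spectra of the Rips complexes $P_R X'$ and then invoke the classical Milnor $\varprojlim^1$-sequence for a tower of spectra. First I would unwind the Emerson--Meyer definition \cite[Definition 4.3, Note 4.4]{EmeMey} to exhibit $\KX^*(X)$ as $\pi_{-*}$ of a homotopy inverse limit $\operatorname{holim}_R E_R$, where $E_R$ is a K-theory spectrum representing $\K^*(P_R X')$ and the structure maps $E_S \to E_R$ (for $R \le S$) are induced by proper inclusion $P_R X' \hookrightarrow P_S X'$ via pullback $C_0(P_S X') \to C_0(P_R X')$. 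This is essentially the content of Note~4.4 in \cite{EmeMey}, where the Rips-scale tower appears built into the definition rather than imposed on top.

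Next I would observe that the directed system indexed by $R \ge 0$ admits a countable cofinal subsystem $(P_{R_n} X')_{n \in \N}$ with $R_n \to \infty$, so both $\varprojlim_R$ and ${\varprojlim_R}^1$ may be computed as derived functors of the sequential inverse limit along this subsystem. This places us in the standard setting of Milnor's short exact sequence for a tower of spectra, which applied to $(E_{R_n})_{n}$ immediately yields the claimed three-term sequence
\[0 \to {\varprojlim_{R\ge 0}}^1 \K^{*+1}(P_R X') \to \KX^*(X) \to \varprojlim_{R\ge 0} \K^*(P_R X') \to 0.\]

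For naturality, I would argue as follows. A coarse map $f\colon X \to Y$, together with a coarse map $f'\colon X' \to Y'$ close to the restriction of $f$, induces for each $R$ a simplicial extension $P_R X' \to P_{S(R)} Y'$ for some scale $S(R) \ge R$; pullback turns this into a morphism between the towers $(C_0(P_R Y'))_R \to (C_0(P_R X'))_R$, and the Milnor sequence is functorial in such morphisms of towers. Independence of the choices of $f'$ (up to closeness) and of $X'$ (up to coarse equivalence) then amounts to saying that close coarse maps induce maps of towers that agree after passing to a larger scale, which follows from the linear interpolation homotopy argument recalled before the lemma together with the homotopy invariance of K-theory of the Rips complexes.

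The main obstacle is the first step: one must confirm that the Emerson--Meyer formalism really assembles $\KX^*$ as a \emph{homotopy} inverse limit of the tower, rather than a strict inverse limit (which would miss the $\varprojlim^1$ contribution). Once the homotopy inverse limit interpretation is in place, the short exactness is a purely formal consequence of the Milnor sequence, and naturality is a routine verification at the level of morphisms of towers.
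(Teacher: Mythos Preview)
The paper does not actually prove this lemma: the statement ends with \qed\ and simply cites \cite[Remark~4.5]{EmeMey}, so there is no argument to compare against. Your proposal is the standard unpacking of that reference---identify $\KX^*(X)$ as the homotopy groups of a homotopy inverse limit of the tower of $\K$-theory spectra of the Rips complexes and apply the Milnor sequence for towers of spectra---and is correct; the naturality argument via maps of towers and linear interpolation is likewise the expected one.
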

Our slant products will factor through the limit on the right hand side and hence we can ignore the $\varprojlim^1$-term for our purposes.

\subsubsection{Maps between the coarse theories}
Now that we have introduced all the relevant groups, let us describe the maps relating them. We start with the coarsified Higson--Roe exact sequence.
\begin{defn}
The coarsified Higson--Roe sequence \eqref{eq_coarseHigsonRoesequence} is obtained as the limit of the Higson--Roe sequences of the Rips complexes $P_RX'$.
\end{defn}
It is again exact and it is clearly natural under coarse maps. Note by the way that the coarse Baum--Connes conjecture for a space $X$ is equivalent to the vanishing of $\SX_*(X)$.

Second, there is the coarsified version of the co-assembly map, which we do not define here in order to avoid having to define $\KX^*$ precisely.
However, we have the following description.
\begin{rem}[{\cite[Definition 4.6]{EmeMey}}]
For any proper metric space $X$ there is a coarsified co-assembly map
\[\mu^*\colon\K_{1-*}(\sHigCorRed X)\to \KX^*(X)\]
whose composition with the homomorphism $\KX^*(X)\to \varprojlim_{R\geq 0}\K^*(P_RX')$ is equal to the limit of the coassembly maps
\[\K_{1-*}(\sHigCorRed X)\cong \K_{1-*}(\sHigCorRed(P_RX'))\xrightarrow{\mu^*} \K^*(P_RX')\,.\]
These maps are all natural under coarse maps.
\end{rem}

Finally, there are also the coarsification and co-coarsification maps, which are defined as follows. Given a proper metric space $X$, we choose a discrete $R$-dense subset $X'\subset X$ for some $R>0$ and a partition of unity $\{\varphi_{x'}\}_{x'\in X'}$ subordinate to the cover $\{\Ball_R(x')\}_{x'\in X'}$ of $X$. Then the map
\begin{equation}
\label{eq_into_Rips}
X\to P_{2R}X'\,,\quad x\mapsto \sum_{x'\in X'}\varphi_{x'}(x)\cdot x'
\end{equation}
is a continuous coarse equivalence. Furthermore, if a second map $X\to P_{2S}X''$ is defined in exactly the same way using another $S$-dense discrete subset $X''$ and another partition of unity, then the two maps become homotopic via a homotopy of continuous coarse equivalences after postcomposing them with the inclusion into $P_T(X'\cup X'')$ for some $T\gg\max\{2R,2S\}$. Therefore, the following maps are independent of all choices (using \cref{rem_better_functoriality_structgroup} to get homotopy invariance of the structure group for continuous coarse maps).\footnote{Under mild assumptions on the proper metric space $X$ we can even arrange \cref{eq_into_Rips} to be a uniformly continuous coarse equivalence \cite[Section~7]{bunke_engel_coarse_assembly} and hence we would not need the better homotopy invariance of the structure group discussed in \cref{rem_better_functoriality_structgroup}.}
\begin{defn}
The coarsification maps
\begin{align*}
\coarsify\colon\K_*(X)&\to  \varinjlim_{R\geq 0}\K_*(P_RX')=\KX_*(X)
\\\coarsify\colon\Strg_*(X)&\to\varinjlim_{R\geq 0}\Strg_*(P_RX')=\SX_*(X)
\end{align*}
and the co-coarsification map
\[ \coarsify^*\colon\KX^*(X)\to\varprojlim_{R\geq 0}\K^*(P_RX')\to\K^*(X)\]
are defined as the maps induced by any map $X\to P_{2R}X'$ as above.
\end{defn}

\begin{lem}
The coarsification and co-coarsification maps are natural under continuous coarse maps.
\end{lem}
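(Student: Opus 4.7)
The plan is to reduce the claim to homotopy invariance of $\K$-homology and the structure groups under homotopies through continuous coarse maps, as discussed in~\cref{rem_better_functoriality_structgroup}. Given a continuous coarse map $f\colon X \to Y$, first pick discrete coarsely equivalent subsets $X' \subset X$ and $Y' \subset Y$, and for each $x' \in X'$ choose a point $f'(x') \in Y'$ at bounded distance from $f(x')$. This defines a coarse map $f'\colon X' \to Y'$ close to $f|_{X'}$ which extends linearly to continuous coarse maps $f'\colon P_R X' \to P_{S(R)} Y'$ between Rips complexes; after passing to the colimit these extensions represent the induced morphisms $f_\ast\colon \KX_\ast(X) \to \KX_\ast(Y)$ and $f_\ast\colon \SX_\ast(X) \to \SX_\ast(Y)$.

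The core step is to compare the two compositions
\[
f' \circ \iota_X\colon X \to P_{U} Y' \qquad \text{and} \qquad \iota_Y \circ f\colon X \to P_U Y'
\]
for $U$ sufficiently large, where $\iota_X$ and $\iota_Y$ are the partition-of-unity maps from~\eqref{eq_into_Rips}. Both are continuous coarse maps into a convex geometric realization, and since $f'$ is close to $f|_{X'}$ while $\iota_X, \iota_Y$ take values in vertices lying within uniformly bounded neighborhoods of their inputs, the two maps are pointwise close at uniformly bounded simplicial distance. Enlarging $U$ once more so that for every $x$ the two points $(f' \circ \iota_X)(x)$ and $(\iota_Y \circ f)(x)$ lie together in a single simplex of $P_U Y'$, the straight-line homotopy $H_t(x) \coloneqq (1-t)(f' \circ \iota_X)(x) + t (\iota_Y \circ f)(x)$ makes sense. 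Continuity in $(x,t)$ follows from continuity of $f$ and of the partitions of unity, and each $H_t$ is a coarse map as a map close to $f' \circ \iota_X$.

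Invoking homotopy invariance through continuous coarse maps then gives $(f' \circ \iota_X)_\ast = (\iota_Y \circ f)_\ast$ on $\K_\ast$ and $\Strg_\ast$; taking the colimit over $R$ yields $\coarsify \circ f_\ast = f_\ast \circ \coarsify$. Naturality of $\coarsify^\ast$ follows from exactly the same homotopy $H_t$ by contravariant functoriality of compactly supported $\K$-theory, after passing through the Milnor $\varprojlim^1$ sequence. The main technical obstacle will be the bookkeeping that keeps $H_t$ inside a Rips complex of bounded scale and uniformly continuous in $(x,t)$; this amounts to showing that the diameters of the simplices traversed, and the relevant closeness constants, depend only on the coarse data of $f$, $f'$ and the chosen discretizations, and not on the point $x$ or the time parameter $t$.
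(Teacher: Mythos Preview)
Your proposal is correct and follows essentially the same approach as the paper: choose discretizations and a discrete approximation $f'$ of $f$, observe that the square formed by $\iota_X$, $\iota_Y$, $f$ and $f'$ commutes up to closeness, and then use linear interpolation inside a sufficiently large Rips complex to produce a homotopy through continuous coarse maps, invoking the homotopy invariance discussed in \cref{rem_better_functoriality_structgroup}. The paper's argument is more terse but structurally identical; your additional remarks about bookkeeping the scale $U$ and handling $\coarsify^\ast$ via the Milnor sequence are the right way to fill in the details.
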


\begin{proof}
Assume that a diagram
\[\xymatrix@C=0ex{
X\ar[d]_\alpha &\supseteq&X'\ar[d]^{\alpha'}
\\Y&\supseteq&Y'
}\]
is given, where $\alpha$ is a continuous coarse map between proper metric spaces and $\alpha'$ is a coarse map between discrete coarsely equivalent subsets such that the diagram commutes up to closeness.
Then for each sufficiently large $R\geq 0$ there is $S\geq 0$ such that the diagram
\[\xymatrix{
X\ar[d]_\alpha\ar[r] &P_{2R}X'\ar[d]^{\alpha'}
\\Y\ar[r]&P_{2S}Y'
}\]
of continuous coarse maps commutes up to closeness. If $S$ is chosen large enough, then the diagram even commutes up to a homotopy via continuous coarse maps which are close to $\alpha$, the homotopy being defined by a linear interpolation. The claim follows.
\end{proof}

The following important property follows immediately from the definition.
\begin{lem}
The coarse assembly and co-assembly maps decompose into the compositions of their coarsified counterparts and the \parensup{co-}coarsification maps, i.\,e.\ the diagrams
\[\xymatrix@C=0ex{
\K_*(X)\ar[rr]^-{\mu}\ar[dr]_-{\coarsify}&&\K_*(\Roe X)&\qquad&\K_{1-*}(\sHigCorRed X)\ar[rr]^-{\mu^*}\ar[dr]_-{\mu^*}&&\K^*(X)
\\&\KX_*(X)\ar[ur]_-{\mu}&&&&\KX^*(X)\ar[ur]_-{\coarsify^*}&
}\]
commute.\qedhere
\end{lem}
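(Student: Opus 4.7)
The plan is to unwind the definitions; the statement is essentially a formal consequence of the naturality of the (co-)assembly maps under the continuous coarse equivalence $f_R \colon X \to P_{2R}X'$ of \eqref{eq_into_Rips} that defines the (co-)coarsification maps.

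For the first triangle, I would fix such a map $f_R$ for sufficiently large $R$. By construction, $\coarsify \colon \K_\ast(X) \to \KX_\ast(X)$ is $(f_R)_\ast$ composed with the structural map into $\varinjlim_S \K_\ast(P_SX')$. Since the discretization $X' \subseteq P_{2R}X'$ is a coarse equivalence, there is a canonical isomorphism $\K_\ast(\Roe P_{2R}X') \xrightarrow{\cong} \K_\ast(\Roe X)$, and the coarsified assembly map $\mu \colon \KX_\ast(X) \to \K_\ast(\Roe X)$ restricts on $\K_\ast(P_{2R}X')$ to the ordinary coarse assembly map $\mu \colon \K_\ast(P_{2R}X') \to \K_\ast(\Roe P_{2R}X')$ followed by this canonical isomorphism. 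Naturality of $\mu$ applied to the continuous coarse map $f_R$ yields a commuting square whose right-hand vertical arrow $(f_R)_\ast \colon \K_\ast(\Roe X) \to \K_\ast(\Roe P_{2R}X')$ is an isomorphism inverse to the canonical identification used above. Chasing the diagram then gives $\mu \circ \coarsify = \mu$.

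For the second triangle, the proof is dual. By definition, $\coarsify^\ast$ factors as $\KX^\ast(X) \to \varprojlim_R \K^\ast(P_RX') \xrightarrow{(f_R)^\ast} \K^\ast(X)$ (for any cofinal $R$), and the coarsified co-assembly map $\mu^\ast \colon \K_{1-\ast}(\sHigCorRed X) \to \KX^\ast(X)$ is, after composing with the canonical map into $\varprojlim_R \K^\ast(P_RX')$, the limit of the ordinary co-assembly maps $\mu^\ast \colon \K_{1-\ast}(\sHigCorRed P_RX') \to \K^\ast(P_RX')$ precomposed with the canonical isomorphisms $\K_{1-\ast}(\sHigCorRed X) \cong \K_{1-\ast}(\sHigCorRed P_RX')$ induced by the coarse equivalences. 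Naturality of the co-assembly map applied to $f_R$ then provides the required commuting square, and passing to the appropriate limit yields $\coarsify^\ast \circ \mu^\ast = \mu^\ast$.

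There is no substantive obstacle. The only point requiring care is to check that the canonical isomorphisms $\K_\ast(\Roe X) \cong \K_\ast(\Roe P_{2R}X')$ and $\K_{1-\ast}(\sHigCorRed X) \cong \K_{1-\ast}(\sHigCorRed P_{2R}X')$ implicit in the construction of the coarsified assembly and co-assembly maps coincide with the maps induced by $f_R$. This follows from the fact that any two covering isometries for a coarse equivalence induce the same maps on the $\K$-theories involved, which is already built into the functoriality statements used in the preceding subsection.
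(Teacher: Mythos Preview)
Your proposal is correct and is precisely the unwinding of definitions that the paper has in mind: the paper states that this lemma ``follows immediately from the definition'' and gives no further argument beyond the \qedhere. Your explicit use of naturality of the Higson--Roe sequence (\cref{prop_LocFunctoriality}) and of the co-assembly map under the continuous coarse equivalence $f_R\colon X\to P_{2R}X'$ of \eqref{eq_into_Rips} is exactly what makes this immediate.
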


Let us recall for future reference the well-known conditions under which the (co-)coarsification maps are isomorphisms. To this end we first recall the following definition:
\begin{defn}
Let $X$ be a metric space. We call $X$ \emph{uniformly contractible}, if for every $R > 0$ there exists an $S \ge R$ such that for every point $x \in X$ the inclusion $B_R(x) \hookrightarrow B_S(x)$ is nullhomotopic.
\end{defn}

The proof of the following result can be found in several places in the literature like \cite[Chapter~2]{roe_index_coarse}, \cite[Proposition~6.105]{buen}, \cite[Theorem~7.6.2]{nowak_yu} or \cite[Theorem~4.8]{EmeMey}.

\begin{prop}\label{prop_uniformly_contractible}
Let $X$ be a proper metric space of bounded geometry. If $X$ is uniformly contractible, then the \parensup{co-}coarsification maps are isomorphisms.
\end{prop}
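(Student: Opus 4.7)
The plan is to show that for every sufficiently large $R$, the canonical continuous coarse map $\iota_R\colon X \to P_R X'$ from \eqref{eq_into_Rips} admits a continuous coarse homotopy inverse $\psi_R\colon P_R X'\to X$. Once this is in place, homotopy invariance of $\K_\ast$, $\Strg_\ast$ and $\K^\ast$ under continuous coarse maps (\cref{prop_LocFunctoriality} together with \cref{rem_better_functoriality_structgroup}) implies that $(\iota_R)_\ast$ and $\iota_R^\ast$ are isomorphisms for every sufficiently large $R$. The transition maps $P_R X'\hookrightarrow P_S X'$ in the directed system defining $\KX_\ast(X)$, $\SX_\ast(X)$ and $\KX^\ast(X)$ factor through $X$ up to the coarse homotopies built below, and so are likewise homotopy equivalences. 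The directed system thus stabilizes up to canonical isomorphism, immediately giving that $\coarsify\colon \K_\ast(X)\to \KX_\ast(X)$ and $\coarsify\colon \Strg_\ast(X)\to \SX_\ast(X)$ are isomorphisms, and that the Milnor $\varprojlim^1$-term vanishes so that $\coarsify^\ast\colon \KX^\ast(X)\to \K^\ast(X)$ is also an isomorphism.

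\textbf{Construction of $\psi_R$.} Fix a uniformly discrete, $R_0$-dense subset $X'\subseteq X$ (which exists by bounded geometry). For $R\geq R_0$, bounded geometry also ensures that $P_R X'$ is a locally finite simplicial complex of dimension bounded by some $d(R)<\infty$, so one can build $\psi_R$ by skeletal induction with uniform propagation bounds. Set $\psi_R|_{X'}$ to be the inclusion $X'\hookrightarrow X$. Suppose inductively that $\psi_R$ has been defined on the $(k{-}1)$-skeleton so that it maps every simplex into a ball of radius $r_{k-1}$ around any of its vertices. For a $k$-simplex $\sigma=[x_0',\dots,x_k']$, the boundary $\partial\sigma$ is then sent into $\Ball_{r_{k-1}+R}(x_0')\subseteq X$; uniform contractibility of $X$ supplies a radius $r_k=\Phi(r_{k-1}+R)$ such that the image of $\partial\sigma$ bounds a disk inside $\Ball_{r_k}(x_0')$, and choosing such a null-homotopy extends $\psi_R$ continuously across $\sigma$. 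Since the recursion terminates after $d(R)$ steps with a bound depending only on $R$, the resulting $\psi_R$ is a continuous coarse map of finite propagation.

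\textbf{Homotopies and main obstacle.} The composition $\psi_R\circ\iota_R\colon X\to X$ is close to $\id_X$ (the displacement is controlled by $R$ and $r_{d(R)}$), and $\iota_{R'}\circ\psi_R\colon P_R X'\to P_{R'}X'$ is close to the inclusion $P_R X'\hookrightarrow P_{R'}X'$ for sufficiently large $R'$. In either direction a coarse homotopy is produced by applying the same simplex-wise uniform-contractibility procedure used in the construction of $\psi_R$: straight-line paths between the map and its approximation of the identity are filled in using the uniform contractibility of $X$ (respectively the linear structure of the Rips complex). The main obstacle, and the reason both hypotheses are needed, is keeping propagation uniformly controlled throughout the skeletal construction: bounded geometry provides finite-dimensionality of $P_R X'$ so that the induction terminates in a uniformly bounded number of steps, while uniform contractibility provides the simplex-filling step with a contraction radius that depends only on the previous radius and on $R$, not on the location in $X$. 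Everything else is a direct application of functoriality and the definitions of the (co)coarsification maps.
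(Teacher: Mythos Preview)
The paper does not actually give a proof of this proposition; it simply cites the standard references \cite{roe_index_coarse}, \cite{buen}, \cite{nowak_yu}, and \cite{EmeMey}. Your sketch is precisely the classical argument found in those sources: build a continuous coarse inverse $\psi_R\colon P_R X'\to X$ by skeletal induction using uniform contractibility, with bounded geometry guaranteeing that $P_R X'$ is finite-dimensional so the induction terminates with uniformly controlled propagation.

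One point deserves a little more care. The homotopy $\iota_{R'}\circ\psi_R\simeq\text{inclusion}$ is unproblematic, since linear interpolation in the Rips complex does the job. But for $\psi_R\circ\iota_R\simeq\id_X$ your phrase ``straight-line paths \dots filled in using the uniform contractibility of $X$'' hides a genuine step: $X$ has no linear structure, so one again needs a skeletal filling argument, and that requires a CW or simplicial structure on the \emph{domain} $X$ (not just on $P_R X'$). The cited references either assume this implicitly (the applications are always to manifolds, simplicial complexes, or Rips complexes), or circumvent it by arguing directly with the directed system: one shows that the maps $(\psi_R)_*$ assemble to a well-defined map $\psi\colon\KX_*(X)\to\K_*(X)$ (this only needs the skeletal argument on the finite-dimensional $P_R X'$), verifies $\coarsify\circ\psi=\id$ via linear interpolation in the Rips complex, and then observes that $\psi\circ\coarsify=\id$ follows because $\iota_S\circ(\psi_R\circ\iota_R)$ and $\iota_S$ are close maps into $P_S X'$, hence linearly homotopic in $P_{S'} X'$ for large $S'$. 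This avoids ever needing a homotopy of self-maps of $X$. Either way, your strategy is correct and matches the literature.
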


\subsubsection{The coarsified external and slant products}\label{subsec_equivcoarseifiedcrossslant}

The construction of the coarsified external and slant products relies on the following easy lemma, which essentially says that the products of Rips complexes $P_RX'\times P_RY'$ can be seen as deformation retracts of the Rips complex $P_R(X'\times Y')$ of the products, up to enlarging the scale of the Rips complexes.

\begin{lem}
Let $X'$ and $Y'$ be discrete proper metric spaces. For every $R\geq 0$ we define the continuous coarse equivalences
\begin{align*}
p_R\colon P_R(X'\times Y')&\to P_RX'\times P_RY'
\\i_R\colon P_RX'\times P_RY'&\to P_{2R}(X'\times Y')
\end{align*}
by the formulas
\begin{align*}
p_R\bigg(\sum_{(x',y')\in X'\times Y'}\!\!\nu_{(x',y')}\cdot (x',y')\bigg)&=\bigg(\sum_{x'\in X'}\sum_{y'\in Y'}\nu_{(x',y')}\cdot x', \sum_{y'\in Y'}\sum_{x'\in X'}\nu_{(x',y')}\cdot y'\bigg)
\\i_R\bigg(\sum_{x'\in X'}\kappa_{x'}\cdot x',\sum_{y'\in Y'}\lambda_{y'}\cdot y'\bigg)&= \sum_{(x',y')\in X'\times Y'}\!\!\kappa_{x'}\lambda_{y'}\cdot (x',y')\,.
\end{align*}
Then $p_{2R}\circ i_R$ is equal to the inclusion $P_RX'\times P_RY'\to P_{2R}X'\times P_{2R}Y'$
and the composition $i_R\circ p_R$ is homotopic to the inclusion $P_R(X'\times Y')\to P_{2R}(X'\times Y')$ via a homotopy of continuous coarse maps which are all close to the inclusion map.\qed
\end{lem}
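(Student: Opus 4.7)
First, the plan is to verify that the two maps $p_R$ and $i_R$ are well defined as asserted. For $p_R$, I would note that if $z = \sum \nu_{(x',y')}(x',y')$ has support of product-metric diameter at most $R$, then each projection $\pi_{X'}(\supp z)$ and $\pi_{Y'}(\supp z)$ also has diameter at most $R$, so $p_R(z)$ indeed lies in $P_RX' \times P_RY'$. For $i_R$, if $\sum_{x'} \kappa_{x'} x'$ and $\sum_{y'} \lambda_{y'} y'$ have supports of diameter $\leq R$ in $X'$ and $Y'$ respectively, then their Cartesian product, which contains the support of $i_R$ applied to the pair, has diameter at most $2R$ in the product metric, forcing the image into $P_{2R}(X' \times Y')$. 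Continuity of both maps is plain from their affine formulas, and once the two identities in the lemma are established they will automatically be coarse equivalences.

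Next I would check $p_{2R} \circ i_R = \mathrm{incl}$ by a direct computation: applying $p_{2R}$ to $\sum_{(x',y')} \kappa_{x'}\lambda_{y'}(x',y')$ and using $\sum_{y'} \lambda_{y'} = 1 = \sum_{x'} \kappa_{x'}$ returns $\bigl(\sum_{x'} \kappa_{x'} x', \sum_{y'} \lambda_{y'} y'\bigr)$ verbatim.

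The main step is the homotopy between $i_R \circ p_R$ and the inclusion $P_R(X' \times Y') \hookrightarrow P_{2R}(X'\times Y')$. I propose the straight-line homotopy
\[ h_t(z) \coloneqq (1-t)\cdot z + t \cdot (i_R \circ p_R)(z), \qquad t \in [0,1], \]
understood as linear interpolation of barycentric coordinates inside $P_{2R}(X'\times Y')$. The crucial observation is that for $z = \sum \nu_{(x',y')}(x',y') \in P_R(X'\times Y')$ with $\diam(\supp z) \leq R$, both $z$ and $(i_R \circ p_R)(z)$ are supported on the ``rectangle'' $\pi_{X'}(\supp z) \times \pi_{Y'}(\supp z)$. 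This rectangle has product-metric diameter at most $2R$, hence spans a bona fide simplex of $P_{2R}(X'\times Y')$. Therefore $h_t(z)$ stays inside that simplex for every $t$, making $h$ a continuous homotopy through continuous coarse maps. Closeness of every $h_t$ to the inclusion then falls out from the same observation: $z$ and $h_t(z)$ always lie in a common simplex of $P_{2R}(X'\times Y')$ whose diameter is controlled by a constant depending only on $R$ and the chosen proper metric on the Rips complex.

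The only subtle point I anticipate is quantifying this closeness uniformly in $z$; for the standard proper metrics on Rips complexes, where simplex diameters are controlled linearly by vertex-set diameters, this is routine bookkeeping. This completes the plan and delivers precisely the data needed for the subsequent construction of the coarsified external and slant products.
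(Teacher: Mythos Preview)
Your proposal is correct and is precisely the natural argument: the paper itself omits the proof entirely (the statement ends with \qed), so there is no alternative approach to compare against. Your straight-line homotopy inside the rectangle $\pi_{X'}(\supp z)\times\pi_{Y'}(\supp z)$, which spans a simplex of $P_{2R}(X'\times Y')$, is exactly the intended verification.
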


\begin{cor}
Given proper metric spaces $X$ and $Y$ with discrete coarsely equivalent subsets $X'\subset X$ and $Y'\subset Y$, the maps $p_R$ and $i_R$ give rise to natural isomorphisms
\begin{align*}
\KX_*(X\times Y)&=\varinjlim_{R\geq 0}\K_*(P_R(X'\times Y'))\cong\varinjlim_{R\geq 0}\K_*(P_RX'\times P_RY')\,,
\\\SX_*(X\times Y)&=\varinjlim_{R\geq 0}\Strg_*(P_R(X'\times Y'))\cong\varinjlim_{R\geq 0}\Strg_*(P_RX'\times P_RY')\,.
\end{align*}
\end{cor}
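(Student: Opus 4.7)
The plan is to combine the previous lemma with the functoriality and homotopy invariance of $\K$-homology and the structure group (using \cref{rem_better_functoriality_structgroup} to cover continuous coarse maps). Let $\HRPlaceholder$ stand for either $\K$-homology or the analytic structure group (cf.\ \cref{rem_HRPlaceholder}). Applying $\HRPlaceholder_\ast(\blank)$ to $p_R$ and $i_R$ gives homomorphisms
\[
(p_R)_\ast \colon \HRPlaceholder_\ast(P_R(X'\times Y')) \to \HRPlaceholder_\ast(P_RX'\times P_RY'), \quad (i_R)_\ast \colon \HRPlaceholder_\ast(P_RX'\times P_RY') \to \HRPlaceholder_\ast(P_{2R}(X'\times Y')).
\]
The first step is to verify that the family $\{(p_R)_\ast\}_{R \geq 0}$ is a morphism of directed systems, i.e.\ compatible with the structure maps induced by the inclusions $P_R \hookrightarrow P_S$ for $R \leq S$. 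This follows since for $R \leq S$ the diagram of continuous coarse maps
\[
	\xymatrix{
		P_R(X'\times Y') \ar[r]^-{p_R} \ar[d] & P_RX'\times P_RY' \ar[d] \\
		P_S(X'\times Y') \ar[r]^-{p_S} & P_SX'\times P_SY'
	}
\]
commutes on the nose, so applying $\HRPlaceholder_\ast$ and taking the colimit over $R$ yields a well-defined homomorphism between the two colimits; analogously for $\{(i_R)_\ast\}$.

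The second step is to show these colimit homomorphisms are mutually inverse. The identity $p_{2R} \circ i_R = (\text{inclusion } P_RX'\times P_RY' \hookrightarrow P_{2R}X'\times P_{2R}Y')$ from the preceding lemma directly gives that the composition $(p_\bullet)_\ast \circ (i_\bullet)_\ast$ equals the identity on the colimit $\varinjlim_R \HRPlaceholder_\ast(P_RX'\times P_RY')$. In the opposite direction, the lemma provides only a homotopy $i_R \circ p_R \simeq (\text{inclusion } P_R(X'\times Y') \hookrightarrow P_{2R}(X'\times Y'))$ through continuous coarse maps all close to the inclusion. Invoking the homotopy invariance of $\K$-homology (standard) and of the analytic structure group for continuous coarse maps (see \cref{rem_better_functoriality_structgroup}), this implies $(i_R)_\ast \circ (p_R)_\ast$ agrees with the structure map, so the composition $(i_\bullet)_\ast \circ (p_\bullet)_\ast$ is also the identity on $\varinjlim_R \HRPlaceholder_\ast(P_R(X'\times Y'))$. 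Hence $(p_\bullet)_\ast$ is an isomorphism.

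Finally, naturality follows from the evident functoriality of the constructions $p_R$ and $i_R$: given coarse maps $\alpha\colon X \to \widetilde{X}$ and $\beta\colon Y \to \widetilde{Y}$ together with discretizations, the maps $p_R$ commute with the induced product maps $\alpha \times \beta$ on Rips complexes up to closeness, which suffices after passing to the colimit. The main technical point is the appeal to homotopy invariance of $\Strg_\ast$ under continuous coarse homotopies, which is supplied by the extension discussed in \cref{rem_better_functoriality_structgroup}; for $\K$-homology this is classical.
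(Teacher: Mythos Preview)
Your proof is correct and is precisely the argument the paper intends: the corollary is stated without proof because it follows immediately from the preceding lemma together with homotopy invariance of $\K_*$ and $\Strg_*$ under continuous coarse maps (the latter via \cref{rem_better_functoriality_structgroup}). You have simply made explicit the standard colimit argument that the paper leaves to the reader.
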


The existence of the coarsified external and slant products are now a direct consequence of this corollary and the naturality of the external and slant products under the pairs of inclusions $P_RX'\subset P_SX'$ and $P_RY'\subset P_SY'$ for $R\leq S$.

\begin{defn}\label{defn_coarsified_products}
The corsified external products
\begin{alignat*}{3}
\times\colon&& \KX_m(X)&\otimes \KX_n(Y)&&\to \KX_{m+n}(X\times Y)
\\\times\colon&& \SX_m(X)&\otimes \KX_n(Y)&&\to \SX_{m+n}(X\times Y)
\end{alignat*}
are obtained by taking the direct limit over the external products
\begin{alignat*}{3}
\times\colon&& \K_m(P_RX')&\otimes \K_n(P_RY')&&\to \K_{m+n}(P_RX'\times P_RY')
\\\times\colon&& \Strg_m(P_RX')&\otimes \K_n(P_RY')&&\to \Strg_{m+n}(P_RX'\times P_RY')
\end{alignat*}
and if $Y$ has bounded geometry then the coarsified slant products
\begin{alignat*}{3}
/\colon&& \KX_p(X\times Y)&\otimes \KX^q(Y)&&\to \KX_{p-q}(X)
\\/\colon&& \KX_p(X\times Y)&\otimes \K_{1-q}(\sHigCorRed Y)&&\to \KX_{p-q}(X)
\\/\colon&& \SX_p(X\times Y)&\otimes \K_{1-q}(\sHigCorRed Y)&&\to \SX_{p-q}(X)
\end{alignat*}
are obtained by taking the direct limit over the slant products
\begin{alignat*}{4}
/\colon&& \K_p(P_RX'\times P_RY')&\otimes \KX^q(Y)&\to& \K_p(P_RX'\times P_RY')\otimes \K^q(P_RY')
\\&&&&\to& \K_{p-q}(P_RX')
\\/\colon&& \K_p(P_RX'\times P_RY')&\otimes \K_{1-q}(\sHigCorRed Y)&\cong& \K_p(P_RX'\times P_RY')\otimes \K_{1-q}(\sHigCorRed (P_RY'))
\\&&&&\to& \K_{p-q}(P_RX')
\\/\colon&& \Strg_p(P_RX'\times P_RY')&\otimes \K_{1-q}(\sHigCorRed Y)&\cong& \Strg_p(P_RX'\times P_RY')\otimes \K_{1-q}(\sHigCorRed  (P_RY'))
\\&&&&\to& \Strg_{p-q}(P_RX')\,.
\end{alignat*}
In the special case that $X$ is a single point we obtain the coarsified versions of the pairings
\begin{alignat*}{3}
\langle-,-\rangle\colon&&\KX_p(Y)&\otimes\KX^q(Y)&\to&\K_{p-q}(\C)
\\\langle-,-\rangle\colon&&\KX_p(Y)&\otimes\K_{1-q}(\sHigCorRed Y)&\to&\K_{p-q}(\C)
\end{alignat*}
whose values lie in $\Z$ if $p-q$ is even and who vanish if $p-q$ is odd.
\end{defn}

The following properties are obvious by applying the properties of the uncoarsified external and slant products proven in Sections \ref{sec_commutativity_diagram}, \ref{sec_recovering_usual_slant_prod}, \ref{sec_composition_cross_prods}, \ref{sec_functoriality} to the Rips complexes and taking limits.

\begin{thm}
The coarsified external and slant products are natural for pairs of coarse maps and compatible with the maps in the coarsified Higson--Roe sequence \eqref{eq_coarseHigsonRoesequence}, the coarsification maps $\K_*(X)\to \KX_*(X)$, $\Strg_*(X)\to \SX_*(X)$ and co-coarsification map $\KX^*(Y)\to \K^*(Y)$, as well as the co-assembly map $\K_{1-q}(\sHigCorRed Y)\to \KX^q(Y)$.\footnote{For compatibility with the co-assembly map we have to note that if $Y$ has bounded geometry, then the Rips complex of any uniformly discrete, coarsely equivalent subset of it has continuously bounded geometry since it will be a simplicial complex of bounded geometry; see \cref{defn:boundedGeometry}.}

Further, taking first the external product with an element $z\in \KX_m(Y\times Z)$ and then the slant product with an element $\theta\in K_{1-n}(\sHigCorRed Z)$ or $\theta\in \KX^n(Z)$ is equal to the external product with $z/\theta\in \KX_{m-n}(Y)$, and in particular if $Y=\{*\}$ is a one-point space, then this composition is equal to multiplication with $\langle z,\theta\rangle = \langle z,\mu^*(\theta)\rangle=\langle \mu(z),\theta\rangle$.\qed
\end{thm}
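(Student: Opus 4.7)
The plan is to reduce every assertion to the corresponding uncoarsified statement applied at each scale of Rips complex and then pass to (co)limits. Fix discrete coarsely equivalent subsets $X' \subseteq X$, $Y' \subseteq Y$, $Z' \subseteq Z$. For each $R \geq 0$ the Rips complexes $P_R X'$, $P_R Y'$, $P_R Z'$ are simplicial complexes of bounded geometry and hence carry continuously bounded geometry metrics (see the discussion after \cref{defn:boundedGeometry}), so that all hypotheses of \cref{thm_commutativity_diagram}, \cref{thm_KHomSlantCompatible}, \cref{thm_slantcrosscomp}, \cref{thm_coarseslantfunctorial} and \cref{thm_LocSlantNatural} are met at every scale.

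For naturality under a pair of coarse maps $\alpha\colon X \to \tilde X$ and $\beta\colon Y \to \tilde Y$, I would first replace them by close discrete coarse maps of the chosen subsets and then extend linearly to uniformly continuous coarse maps $P_R X' \to P_{S(R)} \tilde X'$ and $P_R Y' \to P_{S(R)} \tilde Y'$ between Rips complexes. The uncoarsified naturality theorems \cref{thm_coarseslantfunctorial} and \cref{thm_LocSlantNatural} then give commutativity of the naturality square at scale $R$, and taking the direct limit in $R$ yields naturality of the coarsified products. Compatibility with the boundary maps of the coarsified Higson--Roe sequence \eqref{eq_coarseHigsonRoesequence} is obtained by applying \cref{thm_commutativity_diagram} at each scale and taking colimits, the (co)filtered colimits commuting with the long exact sequences. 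Compatibility with the coarsification maps $\coarsify$ proceeds by applying naturality to the continuous coarse equivalence $X \to P_{2R} X'$ of \eqref{eq_into_Rips}; here one uses the functoriality of the structure group under continuous (not merely uniformly continuous) coarse maps as recorded in \cref{rem_better_functoriality_structgroup}, after which the compatibility becomes an immediate instance of \cref{thm_LocSlantNatural}.

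Compatibility with the co-coarsification map $\KX^q(Y)\to \K^q(Y)$ and with the coarsified co-assembly map is essentially built into \cref{defn_coarsified_products}. Indeed, the coarsified slant product with $\theta\in \K_{1-q}(\sHigCorRed Y)$ is defined using the identification $\K_{1-q}(\sHigCorRed Y)\cong \K_{1-q}(\sHigCorRed(P_R Y'))$ (the Rips inclusions being coarse equivalences), and the coarsified co-assembly map is defined so as to be compatible with this identification. Hence the equation $x / \theta = x / \mu^\ast(\theta)$ at scale $R$ is simply \cref{thm_KHomSlantCompatible} applied to $P_R X' \times P_R Y'$, and passing to the colimit gives the claim over $X \times Y$. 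The compatibility of external-then-slant with external-with-slant is analogously the colimit of \cref{thm_slantcrosscomp} applied to the triples $(P_R X', P_R Y', P_R Z')$; specializing to a one-point space $X$ reproduces the pairing, and the three equivalent descriptions $\langle z,\theta\rangle = \langle z, \mu^\ast(\theta)\rangle = \langle \mu(z),\theta\rangle$ then follow by composing the already established compatibilities between (co-)assembly and (co-)coarsification.

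The main obstacle is not a single nontrivial argument but rather the combinatorial book-keeping: ensuring that for each of the diagrams in the statement the chosen discretizations, scale parameters and induced transition maps can be organized into compatible cofinal subsystems on both sides. This is routine since the inclusions $P_R X' \hookrightarrow P_S X'$ for $R \leq S$ are coarse equivalences and every construction we use is natural under such inclusions by \cref{thm_coarseslantfunctorial}, \cref{thm_LocSlantNatural} and \cref{thm_commutativity_diagram}; in particular every finite diagram under consideration becomes commutative after absorption into a common scale, so that no $\varprojlim^1$-obstruction intervenes.
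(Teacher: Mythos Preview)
Your proposal is correct and follows precisely the approach the paper takes: the paper simply notes that the statement is ``obvious by applying the properties of the uncoarsified external and slant products proven in Sections \ref{sec_commutativity_diagram}, \ref{sec_recovering_usual_slant_prod}, \ref{sec_composition_cross_prods}, \ref{sec_functoriality} to the Rips complexes and taking limits,'' which is exactly the reduction-to-each-scale-then-colimit argument you spell out. The only quibble is a labeling slip in your last paragraph: for the pairing you want to specialize $Y$ (not $X$) to a point, matching \cref{cor_slantcrosscomp}.
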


\section{Equivariant slant products}
\label{sec_equiv_version}

We now generalize the results from the previous section to an equivariant setup.
Throughout this section let $X,Y$ be proper metric spaces and let $G,H$ be countable discrete groups acting properly and isometrically on $X$ and $Y$, respectively. Furthermore we assume that $Y$ has bounded geometry. The cases where $Y$ is required to even have continuously bounded geometry will be pointed out explicitly.

We have already seen in \cref{subsec_crossProducts_equiv} what the equivariant Higson--Roe sequence
\[\dots\to \K_{*+1}(\Roe[G] X)\to \Strg_*^G(X)\to \K_*^G(X)\xrightarrow{\Ind} \K_*(\Roe[G] X)\to\dots\]
is and how the equivariant external products with elements of $\K_*^H(Y)$ and $\K_*(\Roe[H] Y)$ are constructed. This theory is already well established.

On the cohomological side we use crossed products to define equivariant analogues of $\K^*(Y)$ and $\K_*(\sHigCorRed Y)$. First of all, we define the equivariant $\K$-theory of $Y$ as
\[\K^*_H(Y)\coloneqq \K_{-*}(\Cz(Y)\rtimes H)\,.\]
This definition is justified by \cite[Theorem 6.8]{BaumHigsonSchickEquivariantKhomology} which says that $\K_H^0(Y)$ is naturally isomorphic to the Grothendieck group of $H$-equivariant vector bundles on $Y$ if the action of $H$ on $Y$ is cocompact.
Moreover, if the action of $H$ on \(Y\) is free, then it is well known that \(\Cz(Y) \rtimes H\) is Morita equivalent to \(\Cz(H \backslash Y)\) (see also \cref{subsec_compatibility_equiv_nonequiv} below) and therefore \(\K^*_H(Y) \cong \K^*(H \backslash Y)\).

Note that since we assume $H$ to act properly on $Y$, it follows from \cite[Remark 3.4.16]{echterhoff_KK_BC_overview} that the maximal crossed product of $\Cz(Y)$ by $H$ coincides with the reduced one, and consequently also with any other. Hence we only write \(\Cz(Y) \rtimes H\).

At first sight it might look tempting to define $\K^*_H(Y)$ differently\footnote{Not only the definitions are different, but there is apparently not even an isomorphism between them.}
as the $E$-theory group $E_{-*}^H(\C,\Cz(Y))$ in order to define a slant product
\begin{equation}\label{eq_KhomKtheslant}
\K_p^{G\times H}(X\times Y)\otimes \K^q_H(Y)\to\K_{p-q}^G(X)
\end{equation}
as in \cref{defn_Eslant} via $E$-theoretic products, but there is one big problem with this attempt: There simply is no $E$-theoretic product which gets rid of the $H$-equivariance as is needed for \eqref{eq_KhomKtheslant}.
The problem seems to be that both entries of equivariant $E$-theory have to be \textCstar\nobreakdash-algebras which are being acted on by the same group, although we would prefer to consider $\C$ without any action in this case.
Perhaps it is possible to fix this issue by generalizing the notion of equivariant $E$-theory groups to allow for different equivariances in the two entries.
\begin{question}
Let $A$ be a $G$-\textCstar\nobreakdash-algebra, $B$ be a $H$-\textCstar\nobreakdash-algebra and $\alpha$ be a homomorphism between $G$ and $H$.
Is there a meaningful notion of $E$-theory groups $E^\alpha_*(A,B)$ which specialize to $E^G_*(A,B)$ if $G=H$ and $\alpha$ is the identity?
\end{question}

Instead of going the $E$-theory path, we circumvent this problem by defining the equivariant slant product \eqref{eq_KhomKtheslant} in \cref{def_topolequislant} below for $Y$ of continuously bounded geometry ad hoc by turning an equivariant version of \cref{lem_alternativeKhomslant} into a definition.

In order to construct the equivariant analogues of the other slant products, we use $\K_*(\sHigCorRed Y\rtimes_\mu H)$ as the equivariant analogue of $\K_*(\sHigCorRed Y)$, where $\rtimes_\mu$ denotes any exact crossed product functor in the sense of~\cite[Definition~3.1]{BaumGuentWillExactCrossed}. Natural choices for exact crossed product functors are the maximal crossed product, the minimal exact crossed product \cite{BEW_minimal_exact}, the minimal exact and Morita compatible crossed product used in the recent reformulation of the Baum--Connes conjecture due to Baum, Guentner and Willett~\cite[Definition~4.1]{BaumGuentWillExactCrossed},\footnote{Note that Buss, Echterhoff and Willett \cite{BEW_minimal_exact} claimed that the minimal exact and Morita compatible crossed product functor coincides with the minimal exact one. But a gap was found in their proof invalidating this claim, see the erratum in the appendix of arXiv version~3 of \cite{BEW_minimal_exact}.} or the reduced crossed product in the case of $H$ being an exact group.

Before we construct these slant products in \cref{subsec_EquivSlantProd} and prove their properties in \cref{subsec_EquivSlantProp}, let us first explain in the next two sections how the group $\K_*(\sHigCorRed Y\rtimes_\mu H)$ appears in the theory of equivariant co-assembly and show that it contains sufficiently many elements for our purposes.

\subsection{Equivariant co-assembly maps}
\label{label_sec_factorization_assembly_coassembly}
We have already mentioned that the maximal crossed product $\Cz(Y)\rtimes_{\max}H$ coincides with the reduced crossed product $\Cz(Y)\rtimes_{\red}H$ and hence also with any other crossed product. As both the maximal and the reduced crossed product are Morita compatible, the crossed products $\Cz(Y,\Kom)\rtimes_{\max}H$ and $\Cz(Y,\Kom)\rtimes_{\red}H$ are both isomorphic to $(\Cz(Y)\rtimes H)\otimes\Kom$ and hence the same is true for any other crossed product $\Cz(Y,\Kom)\rtimes_\mu H$. It follows that $\K_*(\Cz(Y,\Kom)\rtimes_\mu H)\cong \K_H^{-*}(Y)$ for all crossed product functors $\rtimes_\mu$.

Now, for any exact crossed product $\rtimes_\mu$ we have a short exact sequence
\begin{equation}
\label{eq_ses_crossed_product}
0\to \Cz(Y, \Kom) \rtimes_{\mu} H \to \sHigComRed Y \rtimes_{\mu} H \to \sHigCorRed Y \rtimes_{\mu} H \to 0\,,
\end{equation}
whose connecting homomorphism
\[\mu^*_H\colon \K_*(\sHigCorRed Y \rtimes_{\mu} H)\to \K_H^{1-*}(Y)\]
is an equivariant version of the coarse co-assembly map. Note that if $\rtimes_{\mu^\prime}$ is another exact crossed product and if we have a natural transformation of crossed product functors $\rtimes_\mu \to \rtimes_{\mu^\prime}$, then we get a transformation between the corresponding short exact sequences \eqref{eq_ses_crossed_product} and consequently a commuting triangle
\begin{equation}
\label{eq_transformation_different_crossed_products}
\xymatrix{
\K_*(\sHigCorRed Y \rtimes_{\mu} H) \ar[r] \ar[d] & \K_H^{1-*}(Y)\\
\K_*(\sHigCorRed Y \rtimes_{\mu^\prime} H) \ar[ur]
}
\end{equation}
relating the two equivariant coarse co-assembly maps.

Emerson and Meyer constructed in \cite{EM_descent}, see also \cite[Section~2.3]{EM_coass}, the co-assembly map
\[\mu^\ast_\EM\colon \Ktop_*(H,\sHigCorRed Y) \to \K_H^{1-*}(Y)\,,\]
where the left hand side is defined as in the Baum--Connes conjecture for the coefficient \Cstar-algebra $\sHigCorRed Y$. The Baum--Connes assembly map is, with these coefficients and using $\rtimes_\mu$, a map $\mu_\ast^\BC\colon \Ktop_*(H,\sHigCorRed Y) \to \K_\ast(\sHigCorRed Y \rtimes_{\mu} H)$.\footnote{Note the slight technicality here that $\sHigCorRed Y$ is in general not separable. We are workig here with the convention that in the non-separable case we just take everywhere directed limits over the separable sub-\textCstar-algebras.} It is, by definition, the composition of the maximal version $\Ktop_*(H,\sHigCorRed Y) \to \K_\ast(\sHigCorRed Y \rtimes_{\max} H)$ with the natural quotient map $\K_\ast(\sHigCorRed Y \rtimes_{\max} H) \to \K_\ast(\sHigCorRed Y \rtimes_{\mu} H)$,~\cite[Display~(2.2)]{BaumGuentWillExactCrossed}. We can form the following diagram:
\begin{equation}
\label{eq_diag_alle_assemblies}
\xymatrix{
\Ktop_*(H,\sHigCorRed Y) \ar[rr]^-{\mu^\ast_\EM} \ar[dr]_-{\mu_\ast^\BC} && \K_H^{1-*}(Y)\\
& \K_\ast(\sHigCorRed Y \rtimes_{\mu} H) \ar[ur]_-{\mu^*_H} &
}
\end{equation}

\begin{lem}\label{lem_diag_commutes_assembly_coassembly_EM_BC}
If $\rtimes_\mu$ is an exact crossed product functor,\footnote{Note that it must be defined for non-separable \textCstar-algebras. One way to get such a functor is to use again directed limits over separable sub-\textCstar-algebras \cite[Lemma~8.11]{BEW_exotic_crossed_BC}.} then the Diagram~\eqref{eq_diag_alle_assemblies} commutes.
\end{lem}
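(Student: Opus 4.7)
The plan is to exhibit both $\mu^{\ast}_{\EM}$ and $\mu^{\ast}_{H}\circ\mu^{\BC}_{\ast}$ as two sides of a naturality square for the Baum--Connes assembly map applied to the defining short exact sequence of the stable Higson corona. Concretely, I would start by unfolding the definition of $\mu^{\ast}_{\EM}$: recall that for proper $H$-spaces the Baum--Connes assembly map
\[
	\mu^{\BC}_{\ast}\colon \Ktop_{\ast}(H,\Cz(Y,\Kom)) \longrightarrow \K_{\ast}(\Cz(Y,\Kom)\rtimes_{\mu} H) \cong \K_{H}^{-\ast}(Y)
\]
is an isomorphism (independently of the exact crossed product functor $\rtimes_{\mu}$), and that under this identification, $\mu^{\ast}_{\EM}$ is by construction the composition of the topological boundary map of the sequence
\[
	0 \to \Cz(Y,\Kom) \to \sHigComRed Y \to \sHigCorRed Y \to 0
\]
with this assembly isomorphism. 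That is, $\mu^{\ast}_{\EM} = \mu^{\BC}_{\ast}\circ\partial^{\mathrm{top}}$, where $\partial^{\mathrm{top}}$ is the boundary map of $\Ktop_{\ast}(H,\blank)$ applied to the above short exact sequence.

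Next I would invoke naturality of the assembly map with respect to equivariant $\ast$-homomorphisms of coefficients. Because $\rtimes_{\mu}$ is exact, the short exact sequence above remains exact after crossing with $H$, yielding the six-term sequence whose boundary map is precisely $\mu^{\ast}_{H}$. Naturality of $\mu^{\BC}_{\ast}$ for the two quotient maps and the two inclusions then produces a morphism of six-term exact sequences; in particular a commutative square
\[
	\begin{tikzcd}
		\Ktop_{\ast}(H,\sHigCorRed Y) \ar[r,"\partial^{\mathrm{top}}"] \ar[d,"\mu^{\BC}_{\ast}"'] & \Ktop_{\ast-1}(H,\Cz(Y,\Kom)) \ar[d,"\mu^{\BC}_{\ast}","\cong"'] \\
		\K_{\ast}(\sHigCorRed Y\rtimes_{\mu} H) \ar[r,"\partial"] & \K_{\ast-1}(\Cz(Y,\Kom)\rtimes_{\mu} H).
	\end{tikzcd}
\]
Chasing an element clockwise around this square yields $\mu^{\BC}_{\ast}\circ\partial^{\mathrm{top}} = \mu^{\ast}_{\EM}$, while counter-clockwise it gives $\partial\circ\mu^{\BC}_{\ast} = \mu^{\ast}_{H}\circ\mu^{\BC}_{\ast}$. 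Commutativity of the square is then exactly the commutativity of the triangle~\eqref{eq_diag_alle_assemblies}.

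The only place where some care is needed is the non-separability of the algebras $\sHigComRed Y$ and $\sHigCorRed Y$, since naturality of $\mu^{\BC}_{\ast}$ is usually stated for separable coefficients. This is handled, as mentioned in the setup, by writing everything as a directed colimit over separable $H$-invariant sub-\textCstar-algebras: the three coefficient algebras in the short exact sequence can be exhausted compatibly by separable $H$-sub-\textCstar-algebras, $\Ktop_{\ast}(H,\blank)$ and $\K_{\ast}(\blank\rtimes_{\mu}H)$ both commute with such filtered colimits (the latter using exactness of $\rtimes_{\mu}$ together with the fact that $\K$-theory commutes with colimits, cf.~\cite[Lemma~8.11]{BEW_exotic_crossed_BC}), and the naturality square above is obtained as the colimit of the corresponding naturality squares for separable coefficients. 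That expected technical checking is the only real obstacle; once the colimit formalism is in place, the proof reduces to the naturality statement for assembly, which is standard.
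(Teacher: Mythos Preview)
Your argument is correct and follows the same underlying idea as the paper: the commutativity of the triangle reduces to the compatibility of the Baum--Connes assembly map with the boundary maps of the short exact sequence defining $\sHigCorRed Y$. The difference is one of packaging. You take as given the description $\mu^{\ast}_{\EM}=\mu^{\BC}_{\ast}\circ\partial^{\mathrm{top}}$ and the naturality of $\mu^{\BC}_{\ast}$ with respect to six-term sequences; the paper instead unfolds both of these via the Meyer--Nest framework, identifying $\Ktop_{\ast}(H,A)$ with $\K_{\ast}((A\otimes_{\max}\rmP)\rtimes_{\max}H)$ and the assembly map with the Dirac morphism $\rmD_{\ast}$, so that the naturality square becomes the concrete statement that Kasparov products commute with boundary maps. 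In effect, the paper's proof is the justification for the two facts you invoke as standard: the characterisation of $\mu^{\ast}_{\EM}$ in terms of $\partial^{\mathrm{top}}$ is not literally Emerson--Meyer's definition but rather a consequence of the Meyer--Nest picture, and the compatibility of $\mu^{\BC}_{\ast}$ with boundary maps is most cleanly seen through $\rmD_{\ast}$. Your version is more streamlined; the paper's is more self-contained about why those black boxes hold.
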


\begin{proof}
Note that because of Diagram~\eqref{eq_transformation_different_crossed_products} it is sufficient to consider the case of the maximal crossed product $\rtimes_\mu=\rtimes_{\max}$.
Then this lemma is basically true more or less directly by definition of the map $\mu^*_\EM$ as given in \cite{EM_descent}. For the convenience of the reader let us recall some of the details.

By \cite[Definition~12]{EM_descent} we have the following commutative diagram
\[
\xymatrix{
\K_*((\sHigCorRed Y \otimes_{\max} \rmP) \rtimes_{\max} H) \ar[r]^-{\partial} \ar@{<->}[d]_-{\cong} & \K_{*-1}((\Cz(Y) \otimes_{\max}\rmP)\rtimes_{\max} H) \ar[d]_-{\cong}^-{\rmD_\ast}\\
\Ktop_*(H,\sHigCorRed Y) \ar[r]^-{\mu^\ast_\EM} & \K_H^{1-*}(Y)
}
\]
which is used to define the map $\mu^*_\EM$. Here $\partial$ is the boundary map induced by a certain short exact sequence of \Cstar-algebras, and $\rmP$ is an $H$-\Cstar-algebra which supports the so-called Dirac morphism $\rmD \in \KK^H(\rmP,\IC)$.
It is known that the (maximal version of the) Baum--Connes assembly map with coefficients in a \Cstar-algebra $A$ is equivalent to the map $\rmD_\ast\colon \K_\ast((A \otimes_{\max} \rmP) \rtimes_{\max} H) \to \K_\ast(A \rtimes_{\max} H)$, and the left vertical isomorphism in the previous diagram is the one which identifies the corresponding domains~\cite[Theorem~5.2]{meyer_nest}. That the right vertical map in the above diagram is an isomorphism is explained in \cite[Beginning of Section~2.7]{EM_descent}.

Applying $\rmD_\ast$ to the boundary map in the above diagram, we get
\[
\xymatrix{
\K_*(\sHigCorRed Y \rtimes_{\max} H) \ar[r]^-{\partial} & \K_H^{1-*}(Y)\\
\K_*((\sHigCorRed Y \otimes_{\max} \rmP) \rtimes_{\max} H) \ar[r]^-{\partial} \ar[u]_-{\rmD_\ast} & \K_{*-1}((\Cz(Y) \otimes_{\max}\rmP)\rtimes_{\max} H) \ar[u]_-{\rmD_\ast}^\cong
}
\]
which commutes because Kasparov products are compatible with boundary maps induced from short exact sequences.

The top horizontal map in the last diagram coincides with $\mu^\ast_H$, and the composition
\[\Ktop_*(H,\sHigCorRed Y) \stackrel{\cong}\longleftrightarrow \K_*((\sHigCorRed Y \otimes_{\max} \rmP) \rtimes_{\max} H) \stackrel{\rmD_\ast}\longrightarrow \K_*(\sHigCorRed Y \rtimes_{\max} H)\]
identifies with the Baum--Connes assembly map $\mu_\ast^\BC$, which is a property of the Dirac morphism $\rmD_\ast$; see \cite[Sections~4--6]{meyer_nest} or \cite[Theorem~22]{EM_descent}.
\end{proof}

Commutativity of Diagram~\eqref{eq_diag_alle_assemblies} implies the next corollary stating that in many cases we have enough elements to take equivariant slant products.

Before we go into the corollary, let us recall that $\Eub H$ denotes the classifying space for proper $H$-actions. We say that it is $H$-finite, if it consists only of finitely many $H$-orbits of cells.
Models for $\Eub H$ are unique up to equivariant homotopy, which implies that if we have two $H$-finite models $\Eub H$ and $\Eub H^\prime$, then
\[\K_H^*(\Eub H) \cong \K_H^*(\Eub H^\prime)\,.\]

Furthermore, if we put any equivariant length metric on the $H$-finite model $\Eub H$ and pick any point $p\in \Eub H$, then the map $H\to\Eub H,h\mapsto hp$ is a coarse equivalence which is $H$-equivariant. Note that in general a coarse inverse $\Eub H\to H$ cannot be equivariant, because the action of $H$ on $\Eub H$ is not always free, but the above properties nevertheless ensure that the induced map $\sHigCorRed\Eub H \to\sHigCorRed H$ is an isomorphism of \textCstar\nobreakdash-algebras which is also $H$-equivariant, i.\,e.\ an isomorphism of $H$-\textCstar\nobreakdash-algebras. This isomorphism is even canonical, because the coarse equivalences $H\to\Eub H$ are pairwise close for different choices of the point $p\in\Eub H$. Consequently, we obtain a canonical isomorphism
\begin{equation}\label{eq_GEGiso}
\K_*(\sHigCorRed H \rtimes_{\mu} H) \cong \K_*(\sHigCorRed \Eub H \rtimes_{\mu} H)
\end{equation}
for each $H$-finite model for $\Eub H$.

In addition, any $H$-equivariant homotopy equivalence between two $H$-finite models $\Eub H$ and $\Eub H^\prime$ is automatically a quasi-isometry. Hence it induces a canonical ismomorphisms $\K_*(\sHigCorRed \Eub H \rtimes_{\mu} H) \cong \K_*(\sHigCorRed \Eub H^\prime \rtimes_{\mu} H)$ which is compatible with \eqref{eq_GEGiso}.

Everything that has been said in the above paragraphs tells us that the co-assembly map in the following corollaries is canonical.

\begin{cor}\label{cor_equiv_coarse_coass_surj}
Let $H$ be a countable discrete group that admits a $H$-finite classifying space for proper $H$-actions $\Eub H$.

If $H$ has a $\gamma$-element, then the equivariant coarse co-assembly map
\[\mu^*_H\colon \K_*(\sHigCorRed H \rtimes_{\mu} H) \to \K_H^{1-*}(\Eub H)\]
is surjective for any exact crossed product functor $\rtimes_\mu$.
\end{cor}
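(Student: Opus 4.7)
The plan is to use the commutative diagram~\eqref{eq_diag_alle_assemblies} of Lemma~\ref{lem_diag_commutes_assembly_coassembly_EM_BC} in order to reduce the corollary to the corresponding surjectivity (in fact, bijectivity) of the Emerson--Meyer co-assembly map, and then to invoke the result of~\cite{EM_descent} that the existence of a $\gamma$-element guarantees the latter.

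First, I would replace the coarse space $H$ by $\Eub H$. Fix an $H$-finite model for $\Eub H$ together with an $H$-invariant proper length metric, and pick a point $p \in \Eub H$. The orbit map $H \to \Eub H$, $h \mapsto hp$ is then an $H$-equivariant coarse equivalence, so by the discussion preceding the corollary it induces a canonical $H$-equivariant isomorphism of \textCstar\nobreakdash-algebras $\sHigCorRed \Eub H \xrightarrow{\cong} \sHigCorRed H$, and hence the canonical isomorphism~\eqref{eq_GEGiso} of crossed-product K-theories. Under this identification, the co-assembly map $\mu^*_H$ in the statement of the corollary becomes the equivariant co-assembly map $\mu^*_H\colon \K_\ast(\sHigCorRed \Eub H \rtimes_\mu H) \to \K^{1-\ast}_H(\Eub H)$.

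Second, I would specialize the triangle~\eqref{eq_diag_alle_assemblies} to $Y = \Eub H$. By \cref{lem_diag_commutes_assembly_coassembly_EM_BC}, for any exact crossed product functor $\rtimes_\mu$ the diagram
\[
\xymatrix{
\Ktop_*(H,\sHigCorRed \Eub H) \ar[rr]^-{\mu^\ast_\EM} \ar[dr]_-{\mu_\ast^\BC} && \K_H^{1-*}(\Eub H)\\
& \K_\ast(\sHigCorRed \Eub H \rtimes_{\mu} H) \ar[ur]_-{\mu^*_H} &
}
\]
commutes. Therefore, in order to obtain surjectivity of $\mu^*_H$ it suffices to show that the horizontal map $\mu^*_\EM$ is surjective.

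Finally, the assumption that $H$ has a $\gamma$-element is exactly what is needed to conclude that $\mu^*_\EM$ is an isomorphism: this is~\cite[Corollary~34]{EM_descent}, which is already cited in the introduction. Hence $\mu^*_\EM$ is in particular surjective, and commutativity of the triangle forces $\mu^*_H$ to be surjective as well. The only mild subtlety is the $H$-finiteness hypothesis on $\Eub H$, but this is what makes the Emerson--Meyer target $\K^{1-\ast}_H(\Eub H)$ well defined and ensures that the canonical isomorphism~\eqref{eq_GEGiso} applies, so no additional argument is required. No step here should present a serious obstacle, as all the ingredients---the coarse equivalence $H \simeq \Eub H$, the factorization of $\mu^*_\EM$ through $\mu^*_H$, and the $\gamma$-element input---have already been established earlier in the paper or in~\cite{EM_descent}.
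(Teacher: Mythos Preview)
Your proof is correct and follows essentially the same approach as the paper's own proof: specialize the commutative triangle~\eqref{eq_diag_alle_assemblies} to $Y=\Eub H$ (after the identification~\eqref{eq_GEGiso}), invoke the Emerson--Meyer result that the existence of a $\gamma$-element makes $\mu^*_\EM$ an isomorphism, and deduce surjectivity of $\mu^*_H$ from the factorization. The only cosmetic difference is that the paper cites \cite[Proposition~13]{EM_coass} for the bijectivity of $\mu^*_\EM$, whereas you cite \cite[Corollary~34]{EM_descent}; both are valid sources for this input.
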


\begin{proof}
We consider Diagram~\eqref{eq_diag_alle_assemblies} for the group $H$ and the space $\Eub H$. It follows from \cite[Proposition~13]{EM_coass} that $\mu^*_\EM$ is an isomorphism since $H$ has a $\gamma$-element. Hence by Diagram~\eqref{eq_diag_alle_assemblies} the map $\mu^*_H$ is surjective.
\end{proof}

Let $\rtimes_\mu$ be a correspondence crossed product functor. Without defining what this is, we just note that such crossed product functors admit the descent homomorphism $\KK^H_*(A,B) \to \KK_*(A \rtimes_\mu H, B \rtimes_\mu H)$ which is further compatible with Kasparov products \cite[Proposition~6.1]{BEW_exotic_crossed_BC}. We assume that~$H$ admits a $\gamma$-element. Then the assembly map $\mu_\ast^\BC\colon \Ktop_*(H,A) \to \K_\ast(A \rtimes_{\mu} H)$ is an isomorphism onto the summand $\gamma \cdot \K_\ast(A \rtimes_{\mu} H)$ for every $H$-\Cstar-algebra $A$, where $\gamma$ acts as a projection via the Kasparov product. Hence \cref{cor_equiv_coarse_coass_surj} refines to the statement that
\[\mu^*_H\colon \gamma\cdot\K_*(\sHigCorRed H \rtimes_{\mu} H) \to \K_H^{1-*}(\Eub H)\]
is an isomorphism. Hence, if $\gamma$ acts as the identity, then $\mu^*_H$ is an isomorphism.
\begin{cor}\label{cor_equiv_coass_iso}
Let $H$ be a countable discrete group that admits a $H$-finite classifying space for proper $H$-actions $\Eub H$. Assume further that $H$ is exact and that it satisfies \parensup{the reduced version of} the Baum--Connes conjecture with coefficients.

Then the equivariant coarse co-assembly map
\[\mu^*_H\colon \K_*(\sHigCorRed H \rtimes_\red H) \to \K_H^{1-*}(\Eub H)\]
is an isomorphism, where $\rtimes_\red$ is the reduced crossed product functor.
\end{cor}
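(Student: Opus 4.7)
The plan is to combine the commutative triangle \labelcref{eq_diag_alle_assemblies} from \cref{lem_diag_commutes_assembly_coassembly_EM_BC}, specialized to the reduced crossed product, with the two hypotheses on $H$. Under these hypotheses, both the Baum--Connes assembly map $\mu^{\BC}_*$ and the Emerson--Meyer co-assembly map $\mu^*_\EM$ will be isomorphisms, so commutativity of the triangle forces $\mu^*_H$ to be one as well. This essentially mirrors the argument sketched in the paragraph preceding the corollary, but now the $\gamma$-summand fills out all of $\K_*(\sHigCorRed H \rtimes_\red H)$.

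First I would verify that the triangle \labelcref{eq_diag_alle_assemblies} is available with $\rtimes_\mu = \rtimes_\red$. For this we need $\rtimes_\red$ to be an exact crossed product functor so that the short exact sequence \labelcref{eq_ses_crossed_product} defining $\mu^*_H$ is exact, which holds precisely when $H$ is an exact group; the first hypothesis thus enters here. With the triangle in hand, the two outer maps in \labelcref{eq_diag_alle_assemblies} are treated separately. On the one hand, since $\Eub H$ admits an $H$-finite model and $H$ possesses a $\gamma$-element (which is a consequence of the Baum--Connes hypothesis), \cite[Proposition~13]{EM_coass} yields that $\mu^*_\EM \colon \Ktop_*(H, \sHigCorRed H) \to \K_H^{1-*}(\Eub H)$ is an isomorphism. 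On the other hand, the reduced Baum--Connes conjecture with coefficients, applied with coefficient algebra $\sHigCorRed H$, yields that $\mu^{\BC}_* \colon \Ktop_*(H, \sHigCorRed H) \to \K_*(\sHigCorRed H \rtimes_\red H)$ is an isomorphism. The conclusion $\mu^*_H = \mu^*_\EM \circ (\mu^{\BC}_*)^{-1}$ then follows at once from commutativity of the triangle.

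The principal technical obstacle is the non-separability of the coefficient \textCstar-algebra $\sHigCorRed H$. Both the Baum--Connes conjecture and the construction of $\mu^*_\EM$ are normally stated for separable coefficients, so one must invoke the convention of passing to directed limits over $H$-invariant separable sub-\textCstar-subalgebras (flagged in a footnote earlier in this section) and verify that this limiting procedure is compatible with the naturality of both assembly maps and with the commutativity of the triangle \labelcref{eq_diag_alle_assemblies}. Once this bookkeeping is carried out, the proof reduces to the short diagram chase described above.
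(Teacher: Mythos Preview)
Your proposal is correct and follows essentially the same route as the paper: use the commutative triangle \labelcref{eq_diag_alle_assemblies} with $\rtimes_\mu=\rtimes_\red$, verify that $\mu^*_\EM$ and $\mu^\BC_*$ are both isomorphisms, and conclude. The paper phrases the second half via the $\gamma$-summand picture from the paragraph just before the corollary (Baum--Connes with coefficients forces $\gamma$ to act as the identity on $\K_*(\sHigCorRed H\rtimes_\red H)$), but this is equivalent to your direct invocation of Baum--Connes for the coefficient algebra $\sHigCorRed H$.

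One correction: your parenthetical remark that the existence of a $\gamma$-element is ``a consequence of the Baum--Connes hypothesis'' is not justified---this implication is not known in general. The paper instead derives the $\gamma$-element from \emph{exactness} of $H$ (exact groups act amenably on a compact space and hence admit a $\gamma$-element). Since exactness is part of the hypotheses anyway, this does not affect the validity of your argument, but the attribution should be fixed.
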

\begin{proof}
If $H$ is exact, then it has a $\gamma$-element and the reduced crossed product is exact (and it is always a correspondence crossed product). Therefore the claim follows from the discussion prior to the corollary, since satisfying (the reduced version of) the Baum--Connes conjecture with coefficients gives that $\gamma$ acts as the identity on $\K_*(\sHigCorRed H \rtimes_\red H)$.
\end{proof}

\begin{example}
Gromov hyperbolic groups satisfy the assumptions of \cref{cor_equiv_coass_iso}.
For them $\Eub H$ can be taken as the Rips complex $P_R(H)$ of $H$ for a large enough $R \gg 1$ (\cite[Section~2]{baum_connes_higson}, \cite{meintrup_schick}), the Baum--Connes conjecture with coefficients was proven by Lafforgue \cite{lafforgue_BC,puschnigg_BC}, and exactness follows from them having finite asymptotic dimension (\cite[Page~23]{gromov_asdim}\cite{roe_asdim_hyperbolic}).
\end{example}

\subsection{Exactness of groups and the stable Higson corona}
\label{sec_weak_containment}

In the previous sections we saw that the choice of crossed product $\sHigCorRed Y \rtimes_{\mu} H$ matters, and that we have a connection to exactness of the group $H$.
In the present section we will investigate this connection further, and relate it to the so-called \emph{weak containment} property of $\sHigCorRed Y$, i.e., the question in which cases we have $\sHigCorRed Y \rtimes_{\max} H \cong \sHigCorRed Y \rtimes_{\red} H$.
Most of the results here were developed in discussions with Rufus Willett.\footnote{One can prove similar results for $\sHigCor Y$ instead of $\sHigCorRed Y$. But since we are mainly using $\sHigCorRed Y$ in this paper, we have restricted our attention to it in this section.}

\subsubsection{The case of the (stable) Higson compactification}

Before we can prove the main result of this section (\cref{prop_amenable_Higson_compactification} below), we first need a technical result about the double dual of the stable Higson compactification $\sHigComRed Y$ of a metric space $Y$ and its relation to the (usual) Higson compactification of $Y$.
Recall that the \emph{Higson compactification} is the compact space corresponding to \(\HigCom(Y)\), the unital commutative \textCstar\nobreakdash-algebra of all bounded, continuous functions \(Y \to \C\) of vanishing variation.
The \emph{Higson corona} \(\HigCorSpace Y\) is the compact space defined via \(\Ct(\HigCorSpace Y) \coloneqq \HigCom(Y)/\Cz(Y)\).

\begin{lem}\label{lem_embedding_double_commutant}
Let $Y$ be a proper metric space.

Then there is an embedding of $\HigCom(Y)$ into the center of the double dual of $\sHigComRed Y$, i.\,e., an injective and unital $^\ast$-homomorphism
\begin{equation}
\label{eq_embedding_center_double_commutant}
\HigCom(Y) \to Z((\sHigComRed Y)^\ast{}^\ast)\,.
\end{equation}
If $Y$ is further equipped with an isometric action\footnote{Note that we do not need the action to be proper.} of a countable discrete group~$H$, then the map \eqref{eq_embedding_center_double_commutant} is equivariant for the induced actions of $H$ on the corresponding \textCstar\nobreakdash-algebras.
\end{lem}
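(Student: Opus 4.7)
The plan is to exploit the direct-sum structure of \((\sHigComRed Y)^{**}\) induced by the short exact sequence \(0 \to \Cz(Y, \Kom) \to \sHigComRed Y \to \sHigCorRed Y \to 0\). Since the double-dual functor is exact on short exact sequences of \(\Cstar\)-algebras and the bidual of a closed two-sided ideal is the range of a central projection \(p\) in the bidual of the ambient algebra, one obtains a canonical direct sum \((\sHigComRed Y)^{**} \cong \Cz(Y,\Kom)^{**} \oplus (\sHigCorRed Y)^{**}\) of von Neumann algebras. Using \(\Kom^{**} \cong \Lin(\elltwo)\) together with the nuclearity of \(\Cz(Y)\), the first summand is identified with \(\Cz(Y)^{**} \bar\otimes \Lin(\elltwo)\), whose center is \(\Cz(Y)^{**}\). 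The canonical chain \(\HigCom(Y) \hookrightarrow \Cb(Y) = \Mult(\Cz(Y)) \hookrightarrow \Cz(Y)^{**}\) therefore furnishes a natural injective central embedding \(\HigCom(Y) \hookrightarrow p\cdot(\sHigComRed Y)^{**}\). This embedding is not yet unital: it sends \(1_{\HigCom(Y)}\) to the support projection \(p\) rather than to \(1_{(\sHigComRed Y)^{**}} = p + (1-p)\).

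To correct this defect, the plan is to supplement with a second component \(\phi_2\colon \HigCom(Y)\to Z((\sHigCorRed Y)^{**})\) supported on the complementary direct summand, and to define the full embedding as \(\phi(f) \coloneqq (f, \phi_2(f))\). To construct \(\phi_2\), I would represent \(\sHigComRed Y\) on its universal Hilbert space \(H_u\) and decompose \(H_u = H_u^{\mathrm{nd}} \oplus H_u^{\mathrm{deg}}\) according to whether the restriction of \(\Cz(Y, \Kom)\) is non-degenerate or zero. Uniqueness of the irreducible \(\Kom\)-representation factors \(H_u^{\mathrm{nd}} \cong H_Y^u \otimes \elltwo\) with \(H_Y^u\) an ample \(\Cz(Y)\)-module, so the natural candidate for \(\phi(f)|_{H_u^{\mathrm{nd}}}\) is \(\bar\rho_Y^u(f) \otimes \id_{\elltwo}\), coming from the Borel extension of the \(\Cz(Y)\)-representation. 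On the degenerate part, where \(\sHigComRed Y\) acts through \(\sHigCorRed Y\), the plan is to define \(\phi_2(f)\) as the ultraweak limit in \((\sHigCorRed Y)^{**}\) of the elements \([f \cdot P_n]\), where \(P_n \in \Kom\) is the orthogonal projection onto the span of the first \(n\) standard basis vectors of \(\elltwo\). Each \(f \cdot P_n\) genuinely lies in \(\sHigComRed Y\) because its values are compact operators and its differences \((f(x)-f(x'))P_n\) inherit the vanishing variation of \(f\); moreover, the pointwise identity \((fg)\cdot P_n = (f \cdot P_n)(g \cdot P_n)\), valid because \(f\) and \(g\) are scalar-valued, together with the strong convergence \(P_n \to \id_{\elltwo}\) on any non-degenerate \(\Kom\)-representation delivers multiplicativity and unitality of the limit.

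The main obstacle will be verifying existence of these ultraweak limits in \((\sHigCorRed Y)^{**}\) and that the resulting assignment \(\phi_2\) is a well-defined unital \(^*\)-homomorphism landing in the center. This reduces, via GNS reductions inside the universal representation, to controlling the behavior of the sequence \([f\cdot P_n]\) on representations of \(\sHigCorRed Y\) which do not factor through a non-degenerate \(\Kom\)-representation, and to showing that the commutation defects \([g, f \cdot P_n] = f \cdot [g, P_n]\) for \(g \in \sHigComRed Y\) vanish in the ultraweak limit as \([g(y),P_n]\) tends to zero strongly. Once \(\phi_2\) is in hand, centrality of \(\phi(f)\) in \((\sHigComRed Y)^{**}\) is automatic from the pointwise commutativity of scalar-valued with operator-valued functions, and injectivity is already forced by the \(\Cz(Y)^{**}\)-component. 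Finally, for the equivariant statement, each ingredient—the universal covariant representation, the decomposition \(H_u^{\mathrm{nd}}\oplus H_u^{\mathrm{deg}}\), the ample module \(H_Y^u\), and the Borel extension \(\bar\rho_Y^u\)—can be chosen \(H\)-equivariantly, and since \(H\) acts trivially on \(\elltwo\), the assignment \(f \mapsto f \cdot \id_{\elltwo}\) tautologically intertwines the canonical \(H\)-actions on \(\HigCom(Y)\) and on \((\sHigComRed Y)^{**}\), yielding the claimed equivariance.
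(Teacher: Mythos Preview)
Your approach is viable but substantially more elaborate than the paper's. The paper avoids the short exact sequence splitting entirely: it simply picks an increasing sequence of finite-rank projections \(p_n \to \id_{\elltwo}\) ultraweakly, observes that each \(f \otimes p_n\) already lies in \(\sHigComRed Y\), and defines the embedding by \(f \mapsto \text{uw-lim}_n\,(f \otimes p_n)\) directly inside \((\sHigComRed Y)^{**}\). For \(f \geq 0\) this sequence is bounded and increasing, so the limit exists; centrality is then read off from the Arens product formulas, and injectivity and unitality are immediate. This single limit simultaneously handles both of your summands, so the ``main obstacle'' you flag---existence of the ultraweak limit of \([f\cdot P_n]\) in \((\sHigCorRed Y)^{**}\)---never needs to be isolated and treated separately.

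Your route is not wrong, and the identification \((\Cz(Y)\otimes\Kom)^{**} \cong \Cz(Y)^{**}\,\bar\otimes\,\Lin(\elltwo)\) together with the multiplier embedding \(\HigCom(Y)\hookrightarrow \Cz(Y)^{**}\) is a clean way to see injectivity. But splitting into two pieces and then reassembling introduces bookkeeping (matching the two components, checking they agree with what the direct construction would give, handling the degenerate part of the universal representation) that the paper's one-line construction bypasses. Note also that your description of the obstacle---verifying limits on representations of \(\sHigCorRed Y\) ``which do not factor through a non-degenerate \(\Kom\)-representation''---is slightly off: representations of the corona are exactly those annihilating \(\Cz(Y,\Kom)\), so none of them see \(\Kom\) non-degenerately. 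In any case, monotone convergence in the bidual already disposes of the convergence issue without this case analysis.
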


\begin{proof}
Recall that the Banach space dual of the compact operators $\Kom(H)$ are the trace class operators $\mathcal{S}^1(H)$, and that the Banach space dual of $\mathcal{S}^1(H)$ is $\Lin(H)$, and hence $\Kom(H)^\ast{}^\ast \cong \Lin(H)$ \cite[Theorem~I.8.6.1]{blackadar_operator_algebras}.
Further, $\Lin(H)$ can be equipped with the ultra-weak topology\footnote{This is just the weak-$^\ast$ topology on $\Lin(H)$ if we consider it as the dual of $\mathcal{S}^1(H)$, i.e., a net $(T_\lambda)_{\lambda \in \Lambda}$ in $\Lin(H)$ converges ultra-weakly to $T$ if and only if $(|\tr(ST_\lambda)|)_{\lambda \in \Lambda}$ converges to $|\tr(ST)|$ for every $S \in \mathcal{S}^1(H)$.} whose restriction to the closed unit ball of $\Lin(H)$ coincides with the corresponding restriction of the weak operator topology \cite[Theorem~4.2.4]{murphy}.
Hence, choosing an orthonormal basis $(e_i)_{i \in \IN}$ of $H$ and setting $p_n \in \Kom(H)$ to be the orthogonal projection onto the linear span of $e_1, \ldots, e_n$, we get a sequence $(p_n)_{n \in \IN}$ of compact operators of norm $1$ converging in the weak operator topology to the identity operator $\id_H$ on $H$. Consequently, $(p_n)_{n \in \IN}$ converges ultra-weakly to $\id_H$.

Now choose any sequence of compact operators $(k_n)_{n \in \IN}$ on $\elltwo$ converging ultra-weakly to the identity $\id_{\elltwo}$.
The sequence $(f \otimes k_n)_{n \in \IN}$ for $f \in \HigCom(Y)$, which is a sequence in $\sHigComRed Y \subset (\sHigComRed Y)^\ast{}^\ast$, converges ultra-weakly in $(\sHigComRed Y)^\ast{}^\ast$.
Its limit is, by definition, the image of $f$ under the sought map \eqref{eq_embedding_center_double_commutant}.

It is clear that \eqref{eq_embedding_center_double_commutant} is an injective and unital $^\ast$-homomorphism.
It remains to show that its image is contained in the center of $(\sHigComRed Y)^\ast{}^\ast$. Now in general the extension of the product on $A$ to the correct product on $A^\ast{}^\ast$ was achieved by Arens \cite{arens_1,arens_2}. Looking at the formulas, we see that indeed the map \eqref{eq_embedding_center_double_commutant} ends up in the center of the double dual.

That in the situation of $Y$ being equipped with the action of a group $H$ the map \eqref{eq_embedding_center_double_commutant} will be equivariant, is quickly seen.
\end{proof}

For a discrete group $H$ we recall now the different notions of amenability of $H$-\textCstar-algebra from \citeauthor{BEW_amenable}~\cite[Definitions~2.1~and~4.13]{BEW_amenable}. Note that there are also variants of some of these notions occuring in, e.g., \cite{anantharaman,BrownOzawaCstarFiniteDim}. How these variants are related to each other is explained in \cite[Remark~2.2]{BEW_amenable}.

\begin{defn}\label{defn_amenabilities}
Let $H$ be a discrete group and $A$ be an $H$-\textCstar-algebra.
\begin{myenuma}
\item\label{item_strong_amenability} $A$ is called strongly amenable if there is a net $(\theta_i\colon H \to Z(\mathcal{M}(A)))_{i \in I}$ of positive type functions\footnote{In general, a function $\vartheta\colon H \to B$ is of positive type if for any finite subset $\{h_1, \ldots, h_n\}$ of $H$ the matrix $(\alpha_{h_i}(\vartheta(h_i^{-1} h_j)))_{i,j} \in M_n(B)$ is positive, where $\alpha$ is the action of $H$ on $B$ \cite[Definition~2.1]{anantharaman_annalen}.} such that
\begin{itemize}
\item each $\theta_i$ is finitely supported,
\item for each $i$ we have $\theta_i(e) \le 1$, and
\item for each $h \in H$ we have $\theta_i(h) \to 1$ strictly as $i \to \infty$.
\end{itemize}
\item $A$ is called amenable if there is a net $(\theta_i\colon H \to Z(A^\ast{}^\ast))_{i \in I}$ of positive type functions such that
\begin{itemize}
\item each $\theta_i$ is finitely supported,
\item for each $i$ we have $\theta_i(e) \le 1$, and
\item for each $h \in H$ we have $\theta_i(h) \to 1$ ultra-weakly\footnote{Recall that a net $(T_\lambda)_{\lambda \in \Lambda}$ in $A^\ast{}^\ast$ converges ultra-weakly to $T$ if and only if $(T_\lambda(\varphi))_{\lambda \in \Lambda}$ converges to $T(\varphi)$ for every $\varphi \in A^\ast$.} as $i \to \infty$.\qedhere
\end{itemize}
\end{myenuma}
\end{defn}

\noindent
Note that strong amenability implies amenability \cite[Remark~2.2]{BEW_amenable}.

\begin{prop}\label{prop_amenable_Higson_compactification}
Let $Y$ be a proper metric space equipped with an isometric action of a discrete group $H$.\footnote{Note that we do not need here the action of $H$ on $Y$ to be proper.}

Consider the following three statements:
\begin{myenuma}
\item\label{item_equiv_amenable_one} The group $H$ acts amenably on the Higson compactification of $Y$.
\item\label{item_equiv_amenable_two} $\sHigComRed Y$ is an amenable $H$-\textCstar-algebra.
\item\label{item_equiv_amenable_three} We have $\sHigComRed Y \rtimes_{\max} H \cong \sHigComRed Y \rtimes_{\red} H$.
\end{myenuma}
Then we have \ref{item_equiv_amenable_one} $\Rightarrow$ \ref{item_equiv_amenable_two} $\Rightarrow$ \ref{item_equiv_amenable_three}~and \ref{item_equiv_amenable_one}~additionally implies that $H$ is exact.
\end{prop}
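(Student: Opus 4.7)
The plan is to prove the implications in turn, with \((a) \Rightarrow (b)\) as the core content and the remaining two implications following from standard results.

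For \((a) \Rightarrow (b)\), I will use \cref{lem_embedding_double_commutant} to transport a witnessing net from \(\HigCom(Y)\) to \((\sHigComRed Y)^{**}\). First, by the Anantharaman-Delaroche characterization of amenable actions on compact Hausdorff spaces, amenability of the \(H\)-action on the Higson compactification \(\bar{Y}^{h}\) is equivalent to \(\HigCom(Y) = \Ct(\bar{Y}^{h})\) being strongly amenable as an \(H\)-\textCstar-algebra in the sense of \cref{defn_amenabilities}~\labelcref{item_strong_amenability}. Since \(\HigCom(Y)\) is unital and commutative, \(Z(\mathcal{M}(\HigCom(Y))) = \HigCom(Y)\), so strong amenability yields a net of finitely supported positive-type functions \(\theta_i \colon H \to \HigCom(Y)\) with \(\theta_i(e) \le 1\) and \(\theta_i(h) \to 1\) strictly (which here is just norm convergence, because the algebra is unital). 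I then post-compose with the equivariant unital \(\ast\)-embedding \(\HigCom(Y) \hookrightarrow Z((\sHigComRed Y)^{**})\) provided by \cref{lem_embedding_double_commutant} to obtain a net \(H \to Z((\sHigComRed Y)^{**})\). Positivity of type and finite support are preserved by any equivariant \(\ast\)-homomorphism (of the group-algebra-valued matrices defining positivity of type), the condition \(\theta_i(e) \le 1\) is preserved because the embedding is a unital \(\ast\)-homomorphism, and norm convergence in \(\HigCom(Y)\) passes to norm and hence to ultra-weak convergence in \((\sHigComRed Y)^{**}\). This verifies the conditions of amenability in \cref{defn_amenabilities} for \(\sHigComRed Y\).

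For \((b) \Rightarrow (c)\), I will appeal directly to the general result of Buss--Echterhoff--Willett~\cite{BEW_amenable} that if \(A\) is an amenable \(H\)-\textCstar-algebra then the canonical surjection \(A \rtimes_{\max} H \to A \rtimes_{\red} H\) is an isomorphism; applied to \(A = \sHigComRed Y\) this gives the desired identification of crossed products.

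For the final assertion that \((a)\) implies exactness of \(H\), I will invoke the classical characterization (due to Anantharaman-Delaroche and Ozawa, see the discussion in~\cite{BEW_amenable}) that a countable discrete group is exact if and only if it admits an amenable action on some compact Hausdorff space. The isometric \(H\)-action on \(Y\) extends to an action on the Higson compactification \(\bar{Y}^{h}\) because isometries preserve vanishing variation, and \(\bar{Y}^{h}\) is compact Hausdorff; so if this action is amenable, the characterization gives exactness of \(H\) directly. The main point requiring care is the convergence step in the transport argument for \((a) \Rightarrow (b)\), where one must not confuse strict convergence (which in the unital case collapses to norm convergence) with ultra-weak convergence in the double dual; once this is correctly unpacked, the verification is formal.
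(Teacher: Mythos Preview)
Your proof is correct and follows essentially the same approach as the paper: both use \cref{lem_embedding_double_commutant} to transport the strong amenability witness from $\HigCom(Y)$ to $Z((\sHigComRed Y)^{**})$, invoke the general result that amenability of an $H$-\textCstar-algebra forces the max and reduced crossed products to coincide, and deduce exactness from the existence of an amenable action on a compact space. The paper phrases the exactness step via \cite[Theorem~5.3]{BEW_amenable} (strong amenability of a unital commutative $H$-\textCstar-algebra implies exactness), but this is equivalent to the Anantharaman-Delaroche/Ozawa characterization you cite.
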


\begin{proof}
We start with the implication \ref{item_equiv_amenable_one} $\Rightarrow$ \ref{item_equiv_amenable_two} and while doing this we also establish that \ref{item_equiv_amenable_one}~implies exactness of $H$.

\begin{itemize}
\item We show that \ref{item_equiv_amenable_one}~implies \ref{item_equiv_amenable_two}
If $H$ acts amenably on the Higson compactification of $Y$, then by \cite[Proposition~6.3]{anantharaman} the $H$-\textCstar-algebra $\HigCom(Y)$ is strongly amenable.
Since $\HigCom(Y)$ is commutative, by \cite[Lemma~2.5]{BEW_amenable} strong amenability of $\HigCom(Y)$ in the sense of \cite{anantharaman} is equivalent to strong amenability of it in the sense of \cite[Definition~2.1]{BEW_amenable}.
Therefore there exists a net $(\theta_i\colon H \to Z(\mathcal{M}(\HigCom(Y))))_{i \in I}$ of positive type functions satisfying the corresponding conditions listed in \cref{defn_amenabilities}. But since $\HigCom(Y)$ is unital and commutative, we have $Z(\mathcal{M}(\HigCom(Y))) = \HigCom(Y)$.
Composing with \eqref{eq_embedding_center_double_commutant}, we get a net $(\theta_i\colon H \to Z((\sHigComRed Y)^\ast{}^\ast))_{i \in I}$ showing that $\sHigComRed Y$ is amenable.\footnote{Here one has to know the fact that if $A$ is a unital \textCstar-algebra, then the strict topology on $\mathcal{M}(A) = A$ coincides with the norm topology.}

\item Since $\HigCom(Y)$ is unital, commutative and strongly amenable, this implies that $H$ is exact by \cite[Theorem~5.3]{BEW_amenable} (see also \cite{higson_roe_exact} for the fact that amenable actions on compact Hausdorff spaces imply exactness).

\item The implication \ref{item_equiv_amenable_two} $\Rightarrow$ \ref{item_equiv_amenable_three} is a completely general fact: amenability of an $H$-\textCstar-algebra $A$ implies $A \rtimes_{\max} H \cong A \rtimes_{\red} H$ by \cite[Proposition~4.8]{anantharaman_annalen} (see also \cite[Section~4]{BEW_amenable}).\qedhere
\end{itemize}
\end{proof}

\begin{rem}
Note that $\sHigComRed Y$ is a strongly amenable $H$-\textCstar-algebra if and only if $H$ is amenable.

First, because $\sHigComRed Y$ is unital, the notions of strong amenability as introduced by Anantharaman-Delaroche \cite{anantharaman} and by \citeauthor{BEW_amenable}~\cite{BEW_amenable} coincide for it \cite[Lemma~2.5]{BEW_amenable}.

If the group $H$ is amenable, then every $H$-\textCstar-algebra is strongly amenable.
Assume now that $\sHigComRed Y$ is strongly amenable, that is, we have a net $(\theta_i\colon H \to Z(\mathcal{M}(\sHigComRed Y)))_{i \in I}$ of positive type functions satisfying the corresponding conditions listed in \cref{defn_amenabilities}.
Since $\sHigComRed Y$ is unital we have $\mathcal{M}(\sHigComRed Y) = \sHigComRed Y$, and we further have that $Z(\sHigComRed Y) \cong \IC$. Hence the net $(\theta_i)_{i \in I}$ maps actually into $\IC$. But this means that $H$ is amenable.
\end{rem}

\begin{question}
The above results in combination with the results of \textup{\cite{BEW_amenable}} suggest that for a proper metric space $Y$ equipped with an isometric action of a discrete group $H$ the following conditions could be equivalent to each other:
\begin{myenuma}
\item\label{item_question_amenable_one} The group $H$ acts amenably on the Higson compactification of $Y$.
\item\label{item_question_amenable_two} $\sHigComRed Y$ is an amenable $H$-\textCstar-algebra.
\item\label{item_question_amenable_three} The group $H$ is exact and we have $\sHigComRed Y \rtimes_{\max} H \cong \sHigComRed Y \rtimes_{\red} H$.
\end{myenuma}
\end{question}

The corresponding version of the above question for the (stable) Higson corona should also be true and is stated in the introduction as \cref{conj_weak_containment}.

\subsubsection{The case of the (stable) Higson corona}

In this section we will adapt the results of the previous one to the (stable) Higson corona. We apply these results to the Gromov boundary of a hyperbolic group in \cref{ex_hyperbolic_amenable_action} below.

To treat the case of the (stable) Higson corona, we use the following fact about double duals of quotient \textCstar-algebras: if $0 \to I \to A \to A/I \to 0$ is any short exact sequence of \textCstar-algebras, then we have canonical isomorphisms $(A/I)^\ast{}^\ast \cong A^\ast{}^\ast / I^\ast{}^\ast$ and $A^\ast{}^\ast \cong I^\ast{}^\ast \oplus (A/I)^\ast{}^\ast$ \cite[Section III.5.2.11]{blackadar_operator_algebras}.

The next lemma is an adaption of \cref{lem_embedding_double_commutant}:

\begin{lem}\label{lem_embedding_double_commutant_corona}
Let $Y$ be a proper metric space.

Then there is an embedding of $\Ct(\HigCorSpace Y)$ into the center of the double dual of $\sHigCorRed Y$, i.e., an injective and unital $^\ast$-homomorphism
\begin{equation}
\label{eq_embedding_center_double_commutant_corona}
\Ct(\HigCorSpace Y) \to Z((\sHigCorRed Y)^\ast{}^\ast)\,.
\end{equation}
If $Y$ is further equipped with an isometric action of a discrete group~$H$, then the map \eqref{eq_embedding_center_double_commutant_corona} is equivariant for the induced actions of $H$ on the corresponding \textCstar-algebras.
\end{lem}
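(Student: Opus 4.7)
The strategy is to deduce this from the preceding lemma by passing to an appropriate quotient of the double dual. That lemma furnishes an injective, unital, $H$-equivariant $^\ast$-homomorphism $\iota\colon \HigCom(Y)\to Z((\sHigComRed Y)^{\ast\ast})$ sending $f$ to the ultra-weak limit of the sequence $(f\otimes k_n)$, where $(k_n)$ is any sequence of compact operators converging ultra-weakly to $\id_{\elltwo}$. Applying the general fact about double duals of quotient \textCstar-algebras recalled just before the lemma to the short exact sequence $0\to \Cz(Y,\Kom)\to \sHigComRed Y\to \sHigCorRed Y\to 0$ yields a central projection $p\in (\sHigComRed Y)^{\ast\ast}$ with $\Cz(Y,\Kom)^{\ast\ast}=p(\sHigComRed Y)^{\ast\ast}$ and $(\sHigCorRed Y)^{\ast\ast}=(1-p)(\sHigComRed Y)^{\ast\ast}$; multiplication by $1-p$ then provides an $H$-equivariant, surjective $^\ast$-homomorphism $q\colon (\sHigComRed Y)^{\ast\ast}\to (\sHigCorRed Y)^{\ast\ast}$ that preserves centers.

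First I would verify that the composite $q\circ\iota$ factors through $\Ct(\HigCorSpace Y)=\HigCom(Y)/\Cz(Y)$. For $f\in \Cz(Y)$ each approximating element $f\otimes k_n$ lies in the ideal $\Cz(Y,\Kom)\subset \sHigComRed Y$, so its ultra-weak limit $\iota(f)$ lies in the $\sigma$-weak closure of this ideal inside $(\sHigComRed Y)^{\ast\ast}$, which coincides with $\Cz(Y,\Kom)^{\ast\ast}=\ker q$. Hence $q\circ\iota$ vanishes on $\Cz(Y)$ and descends to a $^\ast$-homomorphism $\bar\iota\colon \Ct(\HigCorSpace Y)\to Z((\sHigCorRed Y)^{\ast\ast})$; unitality, centrality, and $H$-equivariance are inherited from the corresponding properties of $\iota$ and $q$.

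The only nontrivial remaining point, and the main obstacle, is injectivity of $\bar\iota$. Given $f\in \HigCom(Y)\setminus \Cz(Y)$, pick $\varepsilon>0$ and an unbounded sequence $(y_m)$ in $Y$ with $|f(y_m)|\geq\varepsilon$, a free ultrafilter $\omega$ on $\IN$, and a unit vector $v\in\elltwo$. Define the state $\psi\colon \sHigComRed Y\to\C$ by $\psi(g)\coloneqq \lim_{m\to\omega}\langle v,g(y_m)v\rangle$. Since $\|g(y_m)\|\to 0$ for $g\in \Cz(Y,\Kom)$, $\psi$ annihilates this ideal and descends to a state on $\sHigCorRed Y$, whose unique normal extension $\bar\psi$ to $(\sHigCorRed Y)^{\ast\ast}$ satisfies $\bar\psi\circ q=\widetilde\psi$, where $\widetilde\psi$ is the normal extension of $\psi$ to $(\sHigComRed Y)^{\ast\ast}$. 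Normality of $\widetilde\psi$ together with the definition of $\iota$ then yields
\[\bar\psi(\bar\iota([f]))=\widetilde\psi(\iota(f))=\lim_{n\to\infty}\psi(f\otimes k_n)=\lim_{n\to\infty}\langle v,k_nv\rangle\cdot\lim_{m\to\omega}f(y_m)=\lim_{m\to\omega}f(y_m),\]
which is nonzero by the choice of $(y_m)$, forcing $\bar\iota([f])\neq 0$. The equivariance statement under an isometric $H$-action, as already noted, is automatic from the equivariance of $\iota$ and the fact that $q$ commutes with the $H$-action induced on double duals.
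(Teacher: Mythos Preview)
Your proof is correct and follows essentially the same strategy as the paper: pass to the quotient of the double dual via the central projection $1-p$, then establish injectivity by exhibiting a separating state built from a vector state on $\elltwo$ combined with evaluation at a ``point at infinity''. The only cosmetic differences are that the paper realizes the point at infinity as a character on $\HigCorSpace Y$ via Gelfand duality (rather than an ultralimit of a divergent sequence along a free ultrafilter) and verifies vanishing on $\Cz(Y,\Kom)^{\ast\ast}$ by an explicit approximate-unit computation, whereas you get this for free by first factoring the state through $\sHigCorRed Y$.
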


\begin{proof}
We have $\Ct(\HigCorSpace Y) \cong \HigCom(Y) / \Cz(Y)$ and $\sHigCorRed Y \cong \sHigComRed Y / \Cz(Y, \Kom)$.

\begin{sloppypar}
The *-homomorphism $f \mapsto f \otimes \id_{\elltwo}$ inducing \eqref{eq_embedding_center_double_commutant} maps the \textCstar-algebra $\Cz(Y)$ to $Z(\Cz(Y, \Kom)^\ast{}^\ast)$. It extends by \cite[Section III.5.2.10]{blackadar_operator_algebras} to a normal *-homomorphism\footnote{A normal *-homomorphism is one which is continuous for the respective ultra-weak topologies \cite[Section III.2.2]{blackadar_operator_algebras}.} $\Cz(Y)^\ast{}^\ast \to Z(\Cz(Y, \Kom)^\ast{}^\ast)$. Considering also the analogous normal extension of \eqref{eq_embedding_center_double_commutant} to $\HigCom(Y)^\ast{}^\ast$ we conclude that \eqref{eq_embedding_center_double_commutant} induces normal *-homomorphisms mapping the short exact sequence
\[
0 \to \Cz(Y)^\ast{}^\ast \to \HigCom(Y)^\ast{}^\ast \to \Ct(\HigCorSpace Y)^\ast{}^\ast \to 0
\]
to the short exact sequence
\[
0 \to \Cz(Y, \Kom)^\ast{}^\ast \to (\sHigComRed Y)^\ast{}^\ast \to (\sHigCorRed Y)^\ast{}^\ast \to 0
\]
and the image of $\Ct(\HigCorSpace Y)^\ast{}^\ast$ will be contained in $Z((\sHigCorRed Y)^\ast{}^\ast)$. The restriction of the map on $\Ct(\HigCorSpace Y)^\ast{}^\ast$ to its \textCstar-subalgebra $\Ct(\HigCorSpace Y)$ is the sought map \eqref{eq_embedding_center_double_commutant_corona}.
\end{sloppypar}

Let us show injectivity of \eqref{eq_embedding_center_double_commutant_corona}.
General theory \cite[Section~III.5.2.11]{blackadar_operator_algebras} tells us that there is a central projection $p \in (\sHigComRed Y)^\ast{}^\ast$ such that $p\cdot (\sHigComRed Y)^\ast{}^\ast$ is $\Cz(Y, \Kom)^\ast{}^\ast$ and such that $(1-p) \cdot (\sHigComRed Y)^\ast{}^\ast$ is $(\sHigCorRed Y)^\ast{}^\ast$.
This central projection is given as the supremum of an approximate unit $(u_\lambda)_{\lambda \in \Lambda}$ in $\Cz(Y, \Kom)$.
Let $f \in \HigCom(Y)$ be non-zero in $\Ct(\HigCorSpace Y)$. We have to show that it is still non-zero in $(\sHigCorRed Y)^\ast{}^\ast$, i.e., that it does not lie in $\Cz(Y, \Kom)^\ast{}^\ast$.
Because $f$ is non-zero in $\Ct(\HigCorSpace Y)$, there exists a point $x \in \HigCorSpace Y$ such that $\operatorname{ev}_x(f) \not= 0$. We choose any unit vector $v$ in the Hilbert space $H$ (the auxiliary Hilbert space used in the definition of $\sHigComRed Y$) and define a linear functional $\eta$ on $\sHigComRed Y$ by
\[
\eta(g) \coloneqq \operatorname{ev}_x(g_v)\,,
\]
where $g_v \in \HigCom(Y)$ is the function given by $g_v(y) \coloneqq \varphi_v(g(y))$ using the vector state $\varphi_v(T) \coloneqq \langle Tv,v\rangle$ on $H$.
Because $\eta\colon \sHigComRed Y \to \IC$ is a positive linear map, it extends to a normal positive linear map $\eta^\ast{}^\ast\colon (\sHigComRed Y)^\ast{}^\ast \to \IC^\ast{}^\ast \cong \IC$ and we have $\eta^\ast{}^\ast(f \otimes \id_{\elltwo}) = \operatorname{ev}_x(f) \not= 0$.
For $\theta \in \Cz(Y, \Kom)^\ast{}^\ast$ we have $\eta^\ast{}^\ast(\theta) = 0$ since (choosing $(\theta_\mu)_{\mu \in \Lambda^\prime} \in \Cz(Y, \Kom)$ approximating $\theta$ ultra-weakly)
\begin{align*}
\eta^\ast{}^\ast(\theta) & = \eta^\ast{}^\ast(p\cdot\theta)\\
& = \eta^\ast{}^\ast(\lim_{\lambda \to \infty} u_\lambda\cdot\theta)\\
& = \lim_{\lambda \to \infty} \eta^\ast{}^\ast(u_\lambda\cdot\theta)\\
& = \lim_{\lambda \to \infty} \eta^\ast{}^\ast(u_\lambda\cdot \lim_{\mu \to \infty} \theta_\mu)\\
& = \lim_{\lambda \to \infty} \lim_{\mu \to \infty} \eta^\ast{}^\ast(u_\lambda\cdot \theta_\mu)
\end{align*}
(where we used that $\eta^\ast{}^\ast$ is ultra-weakly continuous and multiplication separately ultra-weakly continuous) and $\eta^\ast{}^\ast(u_\lambda\cdot\theta_\mu)=0$ since $u_\lambda\cdot\theta_\mu \in \Cz(Y, \Kom)$. Because $\eta^\ast{}^\ast(f \otimes \id_{\elltwo}) \not= 0$, this means that $f \notin \Cz(Y, \Kom)^\ast{}^\ast$ finishing the proof that \eqref{eq_embedding_center_double_commutant_corona} is injective.

The other statements about the map \eqref{eq_embedding_center_double_commutant_corona} are straight-forward to prove, which finishes this proof.
\end{proof}

\begin{prop}\label{prop_amenable_Higson_corona}
Let $Y$ be a proper metric space equipped with an isometric action of a discrete group $H$.

Consider the following three statements:
\begin{myenuma}
\item\label{item_equiv_corona_amenable_one} The group $H$ acts amenably on the Higson corona of $Y$.
\item\label{item_equiv_corona_amenable_two} $\sHigCorRed Y$ is an amenable $H$-\textCstar-algebra.
\item\label{item_equiv_corona_amenable_three} We have $\sHigCorRed Y \rtimes_{\max} H \cong \sHigCorRed Y \rtimes_{\red} H$.
\end{myenuma}
Then we have \ref{item_equiv_corona_amenable_one} $\Rightarrow$ \ref{item_equiv_corona_amenable_two} $\Rightarrow$ \ref{item_equiv_corona_amenable_three}~and \ref{item_equiv_corona_amenable_one}~additionally implies that $H$ is exact.
\end{prop}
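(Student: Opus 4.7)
The plan is to mirror the proof of \cref{prop_amenable_Higson_compactification} almost verbatim, with the embedding from \cref{lem_embedding_double_commutant_corona} taking the role that \labelcref{eq_embedding_center_double_commutant} played in the compactification case. The key new ingredient that has just been established is precisely \cref{lem_embedding_double_commutant_corona}, which provides an $H$-equivariant, injective, unital $^\ast$-homomorphism $\Ct(\HigCorSpace Y) \to Z((\sHigCorRed Y)^{\ast\ast})$. Everything else then runs on general principles from the theory of amenable \textCstar-dynamical systems.

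First I would prove \labelcref{item_equiv_corona_amenable_one} $\Rightarrow$ \labelcref{item_equiv_corona_amenable_two} together with the exactness assertion. By \cite[Proposition~6.3]{anantharaman}, the hypothesis that $H$ acts amenably on the Higson corona means that the commutative unital $H$-\textCstar-algebra $\Ct(\HigCorSpace Y)$ is strongly amenable in the sense of Anantharaman-Delaroche. Since $\Ct(\HigCorSpace Y)$ is commutative and unital, \cite[Lemma~2.5]{BEW_amenable} lets me pass to strong amenability in the sense of \cref{defn_amenabilities}\labelcref{item_strong_amenability}, producing a net $(\theta_i \colon H \to Z(\mathcal{M}(\Ct(\HigCorSpace Y))))_{i\in I} = (\theta_i \colon H \to \Ct(\HigCorSpace Y))_{i\in I}$ of finitely supported positive type functions with $\theta_i(e) \leq 1$ and $\theta_i(h) \to 1$ strictly (hence, by unitality, in norm) for every $h \in H$. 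Composing each $\theta_i$ with the embedding \labelcref{eq_embedding_center_double_commutant_corona} from \cref{lem_embedding_double_commutant_corona} produces a net of finitely supported positive type functions $H \to Z((\sHigCorRed Y)^{\ast\ast})$. Positivity is preserved because injective $^\ast$-homomorphisms preserve positivity of matrices, and equivariance is preserved because \labelcref{eq_embedding_center_double_commutant_corona} is $H$-equivariant. Norm convergence of $\theta_i(h) \to 1$ in $\Ct(\HigCorSpace Y)$ implies norm convergence, and in particular ultra-weak convergence, of the images in $(\sHigCorRed Y)^{\ast\ast}$, since the embedding is isometric and unital. This is exactly the defining net for $\sHigCorRed Y$ being an amenable $H$-\textCstar-algebra. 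Additionally, strong amenability of the commutative unital $H$-\textCstar-algebra $\Ct(\HigCorSpace Y)$ implies exactness of $H$ by \cite[Theorem~5.3]{BEW_amenable} (alternatively, amenability of the $H$-action on the compact space $\HigCorSpace Y$ directly forces exactness of $H$, cf.~\cite{higson_roe_exact}).

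Second, the implication \labelcref{item_equiv_corona_amenable_two} $\Rightarrow$ \labelcref{item_equiv_corona_amenable_three} is immediate from the general fact that amenability of an $H$-\textCstar-algebra $A$ in the sense of \cref{defn_amenabilities} implies $A \rtimes_{\max} H \cong A \rtimes_{\red} H$; see \cite[Proposition~4.8]{anantharaman_annalen} and the discussion in \cite[Section~4]{BEW_amenable}. Applied to $A = \sHigCorRed Y$, this gives exactly the desired isomorphism.

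I do not expect any genuine obstacle here: the only non-trivial content specific to the Higson corona is already packaged into \cref{lem_embedding_double_commutant_corona}. The mild point to be careful about is that \cref{defn_amenabilities} requires the $\theta_i$ to land in $Z(\mathcal{M}(A))$ for the strong version (used for $\Ct(\HigCorSpace Y)$, which is unital, so $\mathcal{M}(\Ct(\HigCorSpace Y)) = \Ct(\HigCorSpace Y)$) but only in $Z(A^{\ast\ast})$ for the weak version (which is what we need for $\sHigCorRed Y$, which is \emph{not} unital in any useful sense relative to $H$). Thus the map \labelcref{eq_embedding_center_double_commutant_corona} of \cref{lem_embedding_double_commutant_corona} lands in precisely the right target, and the implications go through without having to promote weak amenability to strong.
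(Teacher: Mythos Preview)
Your proposal is correct and is precisely what the paper does: its proof reads in full ``Completely analogous to the proof of \cref{prop_amenable_Higson_compactification}'', and your writeup simply unpacks that analogy using \cref{lem_embedding_double_commutant_corona} in place of \labelcref{eq_embedding_center_double_commutant}.
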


\begin{proof}
Completely analogous to the proof of \cref{prop_amenable_Higson_compactification}.
\end{proof}

\begin{example}\label{ex_hyperbolic_amenable_action}
Let $H$ be a Gromov hyperbolic group.

It is known that in this case $H$ acts amenably on its Gromov boundary \cite[Example~2.7.4]{anantharaman}. Since the Gromov boundary is dominated by the Higson corona, i.e., there is a natural $H$-map $\HigCorSpace H \to \partial_{\text{Gromov}} H$, it follows that~$H$ also acts amenably on its Higson corona.

\cref{prop_amenable_Higson_corona} then implies that $\sHigCorRed H$ is an amenable $H$-\textCstar-algebra and $\sHigCorRed H \rtimes_{\max} H \cong \sHigCorRed H \rtimes_{\red} H$.
\end{example}

\subsection{Construction of the equivariant slant products}\label{subsec_EquivSlantProd}

Just like in Section \ref{subsec_crossProducts_equiv} let us fix an ample $X$-$G$-module $(H_X,\rho_X,u_G)$ and an ample $Y$-$H$-module $(H_Y,\rho_Y,u_H)$.

\subsubsection{The slant product on the equivariant Roe algebra}\label{sec_slant_equiv_Roe_algebra}

We denote by $\rho_Y^\prime\colon \Cz(Y, \Kom) \to \Lin(H_Y \otimes \elltwo)$ the tensor product of the given representation $\rho_Y$ and the canonical representation of $\Kom$ on $\elltwo$. The \(H\)-action on \(Y\) induces an \(H\)-action on $\Cz(Y, \Kom)$ and \((\rho_Y^\prime, u_H\otimes\id_{\elltwo})\) becomes a covariant pair.
Because every automorphism of a \Cstar-algebra extends uniquely to an automorphism of its multiplier algebra, we get a covariant pair \((\bar\rho_Y, u_H \otimes \id_{\elltwo})\) for \((\Mult(\Cz(Y, \Kom), H)\).
Here \(\bar\rho_Y\) is precisely the same as in \eqref{eq_rep_of_Cb}.
We amplify the latter covariant pair via the left-regular representation \(\lambda_H \colon H \to \U(\elltwo(H))\) to obtain a covariant pair \((\hat\rho_Y, \hat{u}_H)\), where \(\hat\rho_Y \coloneqq \id_{\elltwo(H)} \otimes \bar\rho_Y\) and \(\hat{u}_H \coloneqq \lambda_H \otimes u_H \otimes \id_{\elltwo}\).

We consider \(\sHigComRed Y \subset \Lin(\elltwo(H) \otimes H_Y \otimes \elltwo)\) via the representation $\hat\rho_Y$. It follows from the above that the \(H\)-\Cstar-algebra \(\sHigComRed Y\) is covariantly represented on \( \elltwo(H) \otimes H_Y \otimes \elltwo\).
Moreover, by Fell's absorption principle \cite[Proposition~4.1.7]{BrownOzawaCstarFiniteDim} this yields an embedding of the reduced crossed product \(\hat\rho_Y \rtimes \hat{u}_H \colon \sHigComRed Y \rtimes_\red H \hookrightarrow \cB(\elltwo(H) \otimes H_Y \otimes \elltwo)\).

Let us redefine \(H_{X \times Y} \coloneqq H_X \otimes \elltwo(H) \otimes H_Y\).
The Hilbert space \(H_{X \times Y}\) has a unitary representation \(u_{G \times H}\) of \(G \times H\) via \(g,h \mapsto u_G(g) \otimes \lambda_H(h) \otimes u_H(h) \) and a representation \(\rho_{X \times Y}\) of \(\Cz(X \times Y)\) via \(f \tens f^\prime \mapsto  \rho_{X}(f) \otimes \id_{\elltwo(H)} \otimes \rho_Y(f^\prime)\).
This turns \({H}_{X \times Y}\) into an ample \((X \times Y)\)-\((G \times H)\)-module.
Similarly as in \labelcref{eq_varRepOfCzX}, we define \(\tilde{H}_X\coloneqq H_{X \times Y} \otimes \elltwo =H_X\otimes  \elltwo(H) \otimes H_Y\otimes \elltwo\) and
\[
  \tilde\rho_X\coloneqq \rho_X\otimes\id_{\elltwo(H) \otimes H_Y\otimes\elltwo}\colon \Cz(X)\to \Lin(\tilde H_X)\,.
\]
In addition, let \(\tilde\rho_Y \coloneqq \id_{H_X} \otimes \hat\rho_Y\) and \(\tilde{u}_{H} \coloneqq \id_{H_X} \otimes \hat{u}_H\).
Then we obtain  \(\tilde\rho_Y \rtimes \tilde{u}_H = \id_{H_X} \otimes (\hat\rho_Y \rtimes \hat{u}_H) \colon \sHigComRed Y \rtimes_\red H \hookrightarrow \cB(\tilde H_X)\).
Note that the definitions of \(\tilde\rho_Y\) and \(\tilde\rho_X\) are slightly different than in \cref{subsubsec_slant_Roe} because here we use \(\elltwo(H)\) as an additional tensor factor in \(\tilde{H}_X\).
This allows us to use the reduced crossed product.

Now the following equivariant versions of the lemmas from \cref{subsubsec_slant_Roe} hold.
Here we let \(\Fipro[G](\tilde\rho_X) \subset \cB(\tilde H_X)\) denote the \Cstar-algebra generated by all the \(G\)-equivariant operators of finite propagation.
Then by the same argument as for \cref{lem_RoeidealinFipro}, the \Cstar-algebra \(\Roe[G](\tilde\rho_X)\) is an ideal in \(\Fipro[G](\tilde\rho_X)\).

\begin{lem}\label{lem_equi_slant_Roe_cycles_idealizer}
The images of the two representations
\begin{align*}
\tau\colon\Roe[G \times H](\rho_{X\times Y})&\to \Lin(\tilde H_X) \text{ given by } S\mapsto S\otimes\id_{\elltwo}\,,
\\\tilde\rho_Y \rtimes \tilde{u}_H \colon \sHigComRed Y \rtimes_{\red} H &\to \cB(\tilde H_X) \text{ defined above}\,,
\end{align*}
are contained in \(\Fipro[G](\tilde\rho_X)\).
\end{lem}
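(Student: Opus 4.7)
The plan is to mirror the structure of the proof of \cref{lem_slant_Roe_cycles_idealizer}, with the additional obligation of verifying \(G\)-equivariance and the essentially cosmetic adjustment of accommodating the extra tensor factor \(\elltwo(H)\) that now sits inside \(\tilde{H}_X = H_X \otimes \elltwo(H) \otimes H_Y \otimes \elltwo\).

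First I would check \(G\)-equivariance. For \(\tilde\rho_Y \rtimes \tilde{u}_H\), every generator \(\tilde\rho_Y(f) = \id_{H_X} \otimes \hat\rho_Y(f)\) and \(\tilde{u}_H(h) = \id_{H_X} \otimes \hat{u}_H(h)\) acts as the identity on the \(H_X\)-factor, hence commutes with the \(G\)-action \(u_G(g) \otimes \id_{\elltwo(H) \otimes H_Y \otimes \elltwo}\) on \(\tilde H_X\); \(G\)-equivariance of the entire image then follows by density. For \(\tau\), any \(S \in \Roe[G\times H](\rho_{X\times Y})\) commutes with \(u_{G \times H}(g,e) = u_G(g) \otimes \id_{\elltwo(H)} \otimes \id_{H_Y}\), so \(\tau(S) = S \otimes \id_{\elltwo}\) commutes with the \(G\)-action on \(\tilde H_X\).

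Second I would establish finite propagation with respect to \(\tilde\rho_X = \rho_X \otimes \id_{\elltwo(H) \otimes H_Y \otimes \elltwo}\). For \(\tilde\rho_Y \rtimes \tilde{u}_H\), each product \(\tilde\rho_Y(f) \tilde{u}_H(h)\) is of the form \(\id_{H_X} \otimes T\) for some \(T \in \cB(\elltwo(H) \otimes H_Y \otimes \elltwo)\); hence \(\tilde\rho_X(g) \circ (\id_{H_X} \otimes T) \circ \tilde\rho_X(g') = \rho_X(gg') \otimes T\), which vanishes as soon as the supports of \(g, g' \in \Cz(X)\) are disjoint, giving propagation zero. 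Density extends this to all of \(\sHigComRed Y \rtimes_\red H\). For \(\tau\), one proves the direct analogue of \cref{lem_propagtioncomparision}: if \(\propagation(S) \leq R\) with respect to \(\rho_{X \times Y}\), then for \(g, h \in \Cz(X)\) with disjoint supports at distance \(> R\), the operator \(\tilde\rho_X(g) \circ (S \otimes \id_{\elltwo}) \circ \tilde\rho_X(h)\) can be expressed as a strong limit of operators of the form \((\rho_{X \times Y}(g \otimes \varphi) \circ S \circ \rho_{X \times Y}(h \otimes \varphi)) \otimes \id_{\elltwo}\) for \(\varphi \in \Cz(Y)\), each of which vanishes because the supports of \(g \otimes \varphi\) and \(h \otimes \varphi\) are further apart than \(R\). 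The presence of the intermediate \(\elltwo(H)\)-factor does not interfere, because \(\rho_{X \times Y}\) acts trivially on it.

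The main obstacle, if there is one, is purely bookkeeping: one has to keep track of which tensor factors carry which representations and that the \(\elltwo(H)\) appearing in \(H_{X\times Y}\) for the Fell-absorption picture does not affect propagation estimates in the \(X\)-direction. Once this is organised, both assertions reduce verbatim to the corresponding non-equivariant arguments in \cref{lem_slant_Roe_cycles_idealizer} and \cref{lem_propagtioncomparision}.
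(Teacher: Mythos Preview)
Your proposal is correct and follows essentially the same approach as the paper's proof. The paper is terser---it simply notes that $G$-equivariance is clear, refers back to \cref{lem_slant_Roe_cycles_idealizer} for $\tau$, and observes that the operators $\tilde{u}_H(h)$ commute with the image of $\tilde\rho_X$ and hence have propagation zero---but your expanded treatment of the $G$-equivariance and the bookkeeping with the extra $\elltwo(H)$ factor amounts to the same argument.
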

\begin{proof}
  The operators considered here are clearly \(G\)-equivariant.
  Thus for the first part the argument is the same as for \cref{lem_slant_Roe_cycles_idealizer}.
  For the second part, we use in addition that the operators \(\tilde{u}_H(h)\) for \(h \in H\) commute with the image of \(\tilde\rho_X\) and hence have propagation zero.
\end{proof}

\begin{lem}\label{lem_equi_slant_Roe_cycles_commute}
The images of the $*$-homomorphisms \(\Roe[G \times H](\rho_{X\times Y})\to \Fipro[G](\tilde\rho_X)\) and $\sHigComRed Y \rtimes_{\red} H \to \Fipro[G](\tilde\rho_X) $ obtained from the previous lemma commute up to $\Roe[G](\tilde\rho_X)$.
\end{lem}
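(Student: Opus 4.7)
The plan is to reduce this equivariant version to its non-equivariant counterpart, Lemma \ref{lem_slant_Roe_cycles_commute}, by exploiting the $H$-equivariance of operators in $\Roe[G\times H](\rho_{X\times Y})$ and the algebraic structure of the crossed product. First, by norm-closedness of $\Roe[G](\tilde\rho_X)$ inside $\Fipro[G](\tilde\rho_X)$ and by density of finite sums $\sum_{h\in F}\tilde\rho_Y(f_h)\tilde u_H(h)$ in the image of $\sHigComRed Y \rtimes_\red H$, together with density of finite propagation operators in $\Roe[G\times H](\rho_{X\times Y})$, it suffices to show that for every finite-propagation $S\in \Roe[G\times H](\rho_{X\times Y})$, every $f\in \sHigComRed Y$ and every $h\in H$, the commutator $[\tau(S), \tilde\rho_Y(f)\tilde u_H(h)]$ belongs to $\Roe[G](\tilde\rho_X)$.

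The key observation is that $\tau(S)$ and $\tilde u_H(h)$ commute. Indeed, $(G\times H)$-equivariance of $S$ with respect to $u_{G\times H}(g,h)=u_G(g)\otimes\lambda_H(h)\otimes u_H(h)$ specialises at $g=e$ to the statement that $S$ commutes with $\id_{H_X}\otimes\lambda_H(h)\otimes u_H(h)$; amplifying by $\id_{\elltwo}$ gives the claim. Therefore
\[
[\tau(S),\,\tilde\rho_Y(f)\tilde u_H(h)] \;=\; [\tau(S),\,\tilde\rho_Y(f)]\cdot \tilde u_H(h).
\]

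Next I would treat the factor $[\tau(S),\tilde\rho_Y(f)]$ by the verbatim analogue of the argument in Lemma \ref{lem_slant_Roe_cycles_commute}. The proof of Lemma \ref{lem_propagtioncomparision} transfers without change: tensoring with the extra $\elltwo(H)\otimes\elltwo$ factor preserves finite propagation with respect to $\tilde\rho_X=\rho_X\otimes\id$, since $\tilde\rho_X$ only sees the $H_X$ tensor factor. The bounded-geometry argument for local compactness then goes through identically, with the approximation $\hat f=\id_{\elltwo(H)}\otimes\sum_{\hat y}1_{Z_{\hat y}}\otimes f(\hat y)$ obtained from our chosen discretisation $\hat Y\subset Y$: the additional $\elltwo(H)$ tensor factor is invisible to all the norm estimates because it is acted on trivially both by $\tau(S)$ (which acts via $S$ on $H_X\otimes\elltwo(H)\otimes H_Y$) and by $\tilde\rho_Y(f)=\id_{H_X}\otimes\id_{\elltwo(H)}\otimes\bar\rho_Y(f)$. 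Consequently $[\tau(S),\tilde\rho_Y(f)]$ is locally compact of finite propagation as an operator on $\tilde H_X$ viewed as an $X$-module via $\tilde\rho_X$.

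To obtain $G$-equivariance of the result, observe that $\tau(S)$ is $G$-equivariant because $S$ is, whereas $\tilde\rho_Y(f)$ acts trivially on the $H_X$ factor and hence commutes with $u_G(g)\otimes\id$. Hence $[\tau(S),\tilde\rho_Y(f)]\in \Roe[G](\tilde\rho_X)$. Finally, $\tilde u_H(h)=\id_{H_X}\otimes\lambda_H(h)\otimes u_H(h)\otimes\id_{\elltwo}$ is a unitary that commutes with $\tilde\rho_X$ and with $u_G$, so right multiplication by it preserves the ideal $\Roe[G](\tilde\rho_X)$ (local compactness is preserved since $(TU)\tilde\rho_X(g)=T\tilde\rho_X(g)U$, and finite propagation and $G$-equivariance are immediate). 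This completes the argument. The only substantive step is the propagation–commutator estimate of Lemma \ref{lem_slant_Roe_cycles_commute}, which I expect to be merely a matter of bookkeeping here; the genuinely new input is Step 2, where the $H$-equivariance of $S$ is precisely what allows the crossed-product unitaries $\tilde u_H(h)$ to be pulled through $\tau(S)$.
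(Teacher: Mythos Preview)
Your proof is correct and follows essentially the same approach as the paper: both reduce to showing that $[\tau(S),\tilde\rho_Y(f)]\in\Roe[G](\tilde\rho_X)$ via the non-equivariant \cref{lem_slant_Roe_cycles_commute} (applied with the $Y$-module $\elltwo(H)\otimes H_Y$) together with $G$-equivariance, and that $[\tau(S),\tilde u_H(h)]=0$ by $H$-equivariance of $S$. The only cosmetic difference is that the paper checks the two commutators with generators separately and relies on the ideal property, whereas you combine them via $[\tau(S),\tilde\rho_Y(f)\tilde u_H(h)]=[\tau(S),\tilde\rho_Y(f)]\tilde u_H(h)$ and then observe that $\tilde u_H(h)\in\Fipro[G](\tilde\rho_X)$ so right multiplication preserves the ideal.
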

\begin{proof}
 The image of \(\tilde\rho_Y \rtimes \tilde{u}_H \colon \sHigComRed Y \rtimes_{\red} H \to \Fipro[G](\tilde\rho_X)\) has a dense subset consisting of linear combinations of products of operators of the form \(\tilde\rho_Y(f) = \id_{H_X} \otimes \id_{\elltwo(H)} \otimes \bar\rho_{Y}(f) \), where \(f \in \sHigComRed(f)\), with operators of the form \(\tilde{u}_H(h) = \id_{H_X} \otimes \lambda_H(h) \otimes u_H(h) \otimes \id_{\elltwo} \), where \(h \in H\). Thus it suffices to show that the commutators
 \begin{gather}
   [S \otimes \id_{\elltwo}, \id_{H_X} \otimes \id_{\elltwo(H)} \otimes \bar\rho_{Y}(f)], \label{eq_sHigCom_commutator}\\
   [S \otimes \id_{\elltwo}, \tilde{u}_H(h)]
   \label{eq_groupH_commutator}
 \end{gather}
 are contained in \(\Roe[G](\tilde\rho_X)\) for \(S \in \Roe[G \times H](\rho_{X \times Y})\), \(f \in \sHigComRed(Y)\) and \(h \in H\).
 We may also assume that \(S\) has finite propagation.
 Then the proof of \cref{lem_slant_Roe_cycles_commute} (applied to the \(Y\)-module \(\elltwo(H) \otimes H_Y\) with the representation \(\id_{\elltwo(H)} \otimes \rho_Y\)) shows that \labelcref{eq_sHigCom_commutator} is an element of \(\Roe(\tilde\rho_X)\) with finite propagation.
 It is also \(G\)-equivariant because \(S\) is.
 Hence \labelcref{eq_sHigCom_commutator} is an element of \(\Roe[G](\tilde\rho_X)\).
Finally, \(S \otimes \id_{\elltwo}\) is \(H\)-equivariant because \(S\) is.
That is, it commutes with \(\tilde{u}_H(h)\) and thus \labelcref{eq_groupH_commutator} is zero.
\end{proof}

Hence as an analogue of \labelcref{eq_starhom_Phi} we get an induced $*$-homomorphism
\[\Phi\colon\Roe[G \times H](\rho_{X\times Y})\otimes_{\max} (\sHigComRed Y \rtimes_{\red} H) \to \Fipro[G](\tilde\rho_X)/\Roe[G](\tilde\rho_X)\]
given by $S\otimes f \delta_h \mapsto [\tau(S) \circ \tilde\rho_Y(f) \circ \tilde{u}_H(h)]$.

A slight elaboration of the proof of \cref{lem_slant_Roe_cycles_factors} shows the following.

\begin{lem}
The above $*$-homomorphism $\Phi$ factors through the \Cstar-algebra 
\[\Roe[G \times H](\rho_{X \times Y}) \otimes_{\max} \frac{\sHigComRed Y \rtimes_{\red} H}{\Cz(Y, \Kom) \rtimes_{\red} H}.\]
That is, it defines a $*$-homomorphism
\begin{equation}
\Roe[G \times H](\rho_{X \times Y}) \otimes_{\max} \frac{\sHigComRed Y \rtimes_{\red} H}{\Cz(Y, \Kom) \rtimes_{\red} H} \to  \Fipro[G](\tilde\rho_X)/\Roe[G](\tilde\rho_X).
\label{eq_equiv_slant_product_red_quotient}
\end{equation}
\end{lem}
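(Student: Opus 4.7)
The plan is to adapt the argument of \cref{lem_slant_Roe_cycles_factors} to the equivariant setting. By exactness of the maximal tensor product, the claimed factorization is equivalent to showing that $\Phi$ vanishes on the ideal $\Roe[G \times H](\rho_{X \times Y}) \tensmax (\Cz(Y, \Kom) \rtimes_\red H)$. A dense subset of $\Cz(Y, \Kom) \rtimes_\red H$ is spanned by elements of the form $(f \otimes T)\delta_h$ with $f \in \Cz(Y)$, $T \in \Kom(\elltwo)$ and $h \in H$. Hence it suffices to show that for any such generator and any finite-propagation $S \in \Roe[G \times H](\rho_{X \times Y})$, the operator
\[
\tau(S) \circ \tilde\rho_Y(f \otimes T) \circ \tilde{u}_H(h) \in \cB(\tilde H_X)
\]
lies in $\Roe[G](\tilde\rho_X)$.

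For $G$-equivariance, $S$ is $G$-equivariant as an element of the equivariant Roe algebra, and both $\tilde\rho_Y(f \otimes T)$ and $\tilde{u}_H(h)$ act as the identity on the $H_X$-factor of $\tilde H_X$ and hence commute with the $G$-representation. Finite propagation with respect to $\tilde\rho_X$ follows from \cref{lem_propagtioncomparision} applied to $\tau(S)$, combined with the observation that both $\tilde\rho_Y$ and $\tilde{u}_H$ have zero propagation with respect to $\tilde\rho_X$ (again because they act trivially on $H_X$). For local compactness, since $\tilde{u}_H(h)$ is unitary and commutes with every $\tilde\rho_X(g)$, it suffices to handle $\tau(S) \circ \tilde\rho_Y(f \otimes T)$, which reduces to the calculation at the end of the proof of \cref{lem_slant_Roe_cycles_factors}: for $g \in \Cz(X)$ one has
\[
\tau(S) \circ \tilde\rho_Y(f \otimes T) \circ \tilde\rho_X(g) = (S \circ \rho_{X \times Y}(g \otimes f)) \otimes T \in \Kom(H_{X \times Y}) \otimes \Kom(\elltwo),
\]
and the analogous left-sided formula is handled identically by inserting an auxiliary compactly supported function $g' \in \Cz(X)$ equal to $1$ on a sufficiently large neighborhood of $\supp(g)$.

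I do not anticipate any substantive obstacle: the $H$-equivariance contribution is absorbed by the unitary $\tilde{u}_H(h)$, which is a bounded multiplier of $\Roe[G](\tilde\rho_X)$ acting as the identity on $H_X$. It is worth noting that this argument works entirely within the reduced crossed product and does not require any exactness hypothesis on $H$: the reason is that the target quotient is defined directly as $(\sHigComRed Y \rtimes_\red H)/(\Cz(Y, \Kom) \rtimes_\red H)$, rather than as $\sHigCorRed Y \rtimes_\red H$, which would coincide with the former only if the reduced crossed product preserved the exactness of the defining sequence.
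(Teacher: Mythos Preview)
Your proposal is correct and is precisely the ``slight elaboration'' of the proof of \cref{lem_slant_Roe_cycles_factors} that the paper invokes without further detail. Your closing remark about why the reduced crossed product suffices here (and why the issue of exactness only arises when one wants to identify the quotient with $\sHigCorRed Y \rtimes_\red H$) is a useful clarification that the paper leaves implicit.
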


Unfortunately, in general we cannot use \(\sHigCorRed Y \rtimes_{\red} H\) in the above due to a potential failure of exactness in the sequence \(\Cz(Y, \Kom) \rtimes_\red H \to \sHigComRed(Y) \rtimes_\red H \to \sHigCorRed(Y) \rtimes_\red H\).
If \(H\) is an exact group, then this presents no issues.
More generally, we can remedy this by using an exact crossed functor \(\rtimes_\mu\) in the sense of \cite[Definition~3.1]{BaumGuentWillExactCrossed} instead of \(\rtimes_\red\).
Indeed, we deduce the next proposition immediately from \labelcref{eq_equiv_slant_product_red_quotient}.
\begin{prop}
  Let \(\rtimes_\mu\) be an exact crossed product functor.
  Then there is a homomorphism
  \begin{equation}
  \Psi_\mu \colon \Roe[G \times H](\rho_{X \times Y}) \otimes_{\max} (\sHigCorRed Y \rtimes_{\mu} H) \to  \Fipro[G](\tilde\rho_X)/\Roe[G](\tilde\rho_X)
  \end{equation}
  induced by $S\otimes [f] \delta_h \mapsto [\tau(S) \circ \tilde\rho_Y(f) \circ \tilde{u}_H(h)]$.
\end{prop}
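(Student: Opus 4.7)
The plan is to deduce $\Psi_\mu$ by chaining the previously constructed homomorphism in \labelcref{eq_equiv_slant_product_red_quotient} with the natural comparison maps between crossed product functors, using exactness of $\rtimes_\mu$ to identify the correct quotient on the domain side.

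First, I would recall that any crossed product functor $\rtimes_\mu$ in the sense of \cite[Definition~3.1]{BaumGuentWillExactCrossed} factors the canonical quotient from max to red, so there is a surjection $\pi_\mu \colon \sHigComRed Y \rtimes_\mu H \twoheadrightarrow \sHigComRed Y \rtimes_\red H$ which is the identity on the algebraic crossed product. Composing $\pi_\mu$ with the quotient $\sHigComRed Y \rtimes_\red H \twoheadrightarrow (\sHigComRed Y \rtimes_\red H)/(\Cz(Y, \Kom) \rtimes_\red H)$, then tensoring maximally with $\Roe[G\times H](\rho_{X\times Y})$ and post-composing with the homomorphism from \labelcref{eq_equiv_slant_product_red_quotient}, yields a $*$-homomorphism
\[
\tilde{\Psi}_\mu \colon \Roe[G \times H](\rho_{X \times Y}) \otimes_{\max} (\sHigComRed Y \rtimes_\mu H) \to \Fipro[G](\tilde\rho_X)/\Roe[G](\tilde\rho_X)
\]
which sends $S \otimes [f]\delta_h$ to $[\tau(S) \circ \tilde\rho_Y(f) \circ \tilde u_H(h)]$, because all intermediate maps fix the algebraic crossed product.

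Next, I would show that $\tilde{\Psi}_\mu$ factors through the tensor product with the quotient $\sHigCorRed Y \rtimes_\mu H$, which equals $(\sHigComRed Y \rtimes_\mu H)/(\Cz(Y, \Kom) \rtimes_\mu H)$ by exactness of $\rtimes_\mu$. Using exactness of the maximal tensor product, it suffices to show that $\Cz(Y, \Kom) \rtimes_\mu H$ lies in the kernel of the composition $\sHigComRed Y \rtimes_\mu H \twoheadrightarrow (\sHigComRed Y \rtimes_\red H)/(\Cz(Y, \Kom) \rtimes_\red H)$. Here I would invoke the observation from the beginning of \cref{label_sec_factorization_assembly_coassembly} that $\Cz(Y, \Kom) \rtimes_\mu H \cong \Cz(Y, \Kom) \rtimes_\red H$ for every crossed product functor (the action being proper on $\Cz(Y)$, tensored with $\Kom$); consequently the image of $\Cz(Y, \Kom) \rtimes_\mu H$ in $\sHigComRed Y \rtimes_\red H$ under $\pi_\mu$ equals the image of $\Cz(Y, \Kom) \rtimes_\red H$, which is exactly the ideal being quotiented out. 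Hence $\tilde\Psi_\mu$ descends to $\Psi_\mu$ with the prescribed formula on generators.

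There is no real obstacle here — the proposition is essentially a formal diagram chase that simply repackages \labelcref{eq_equiv_slant_product_red_quotient} via the universal and exactness properties of $\rtimes_\mu$. The only point that requires any care is verifying that the ideal one needs to kill corresponds correctly under the comparison of crossed products, which is where exactness of $\rtimes_\mu$ and the $\mu$-invariance of $\Cz(Y, \Kom) \rtimes H$ enter.
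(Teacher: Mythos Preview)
Your proposal is correct and matches the paper's approach exactly: the paper simply states that the proposition follows immediately from \labelcref{eq_equiv_slant_product_red_quotient}, and you have spelled out the implicit diagram chase—composing with the natural transformation $\rtimes_\mu \to \rtimes_\red$, using exactness of $\rtimes_\mu$ to identify $(\sHigComRed Y \rtimes_\mu H)/(\Cz(Y,\Kom)\rtimes_\mu H)$ with $\sHigCorRed Y \rtimes_\mu H$, and invoking the observation from \cref{label_sec_factorization_assembly_coassembly} that all crossed products of $\Cz(Y,\Kom)$ coincide.
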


\begin{defn}
  By the same construction as in \cref{defn_coarseslant}, we obtain the slant product
\begin{equation}
\label{equiv_equiv_slant_roe}
	\K_{p}(\Roe[G \times H](X \times Y)) \otimes \K_{1-q}(\sHigCorRed Y \rtimes_{\mu} H) \to \K_{p-q}(\Roe[G] X)
\end{equation}
for any exact crossed product functor \(\rtimes_\mu\). 
If \(H\) is an exact group, we also obtain \labelcref{equiv_equiv_slant_roe} for \(\mu = \red\).
\end{defn}

\subsubsection{The slant product on the equivariant localization algebras}
\label{sec_equiv_localization_algebras}
We define $\FiproLoc[G](\tilde{\rho}_X)$ as the $\Cstar$-subalgebra of $\Cb([1,\infty),\Fipro[G](\tilde{\rho}_X))$ generated by the bounded and uniformly continuous functions $S\colon [1,\infty) \to \Fipro[G](\tilde{\rho}_X)$ such that the propagation of $S(t)$ is finite for all $t \ge 1$ and tends to zero as $t \to \infty$.
Similarly we define $\FiproLocz[G](\tilde{\rho}_X)$ as the ideal in $\FiproLoc[G](\tilde{\rho}_X)$ consisting of all maps that vanish at~$1$.
Note that $\Loc[G](\tilde{\rho}_X)$ is an ideal in $\FiproLoc[G](\tilde{\rho}_X)$ and $\Locz[G](\tilde{\rho}_X)$ is even an ideal in all of the three $\FiproLoc[G](\tilde{\rho}_X)$, $\FiproLocz[G](\tilde{\rho}_X)$ and of course $\Loc[G](\tilde{\rho}_X)$.

\begin{lem}\label{lem_equiv_slant_Loc_technical}
The following analogue of \cref{lem_slant_Loc_technical} holds.
\begin{enumerate}
\item
The images of the two isometric $*$-homomorphisms
\[\tau_\LSym\colon\Loc[G \times H](\rho_{X\times Y})\to \Cb([1,\infty),\Lin(\tilde H_X))\,,\]
which is obtained by applying the functor $\Cb([1,\infty),-)$ to $\tau$, and
\[
\tilde\rho_{Y,H,\LSym}\colon \sHigComRed Y \rtimes_\red H \xrightarrow{\tilde\rho_Y \rtimes_\red \tilde{u}_H}\Lin(\tilde H_X)\xrightarrow[\text{as constant functions}]{\text{inclusion}} \Cb([1,\infty),\Lin(\tilde H_X))\,,
\]
are contained in $\FiproLoc[G](\tilde\rho_X)$.

\item The image of $\tau_\LSym$ commutes up to $\Loc[G](\tilde\rho_X)$ with the image of $\sHigComRed Y \rtimes_\red H$ under $\tilde\rho_{Y,H,\LSym}$
and the image of $\Locz(\rho_{X\times Y})$ under $\tau_\LSym$ commutes up to $\Locz(\tilde\rho_X)$ with the image of $\sHigComRed Y$ under $\tilde\rho_{Y,\LSym}$.
Hence they induce $*$-homomorphisms
\begin{align*}
\Phi_\LSym\colon\Loc[G \times H](\rho_{X\times Y})\otimes_{\max} \sHigComRed Y \rtimes_\red H &\to \FiproLoc[G](\tilde\rho_X)/\Loc[G](\tilde\rho_X) \\
\Phi_{\LSym,0}\colon\Locz[G \times H](\rho_{X\times Y})\otimes_{\max} \sHigComRed Y \rtimes_\red H &\to \FiproLoc[G](\tilde\rho_X)/\Locz[G](\tilde\rho_X)
\end{align*}
given by $S\otimes f\mapsto [\tau_\LSym(S)\circ\tilde\rho_{Y,H,\LSym}(f)]$ and the image of $\Phi_{\LSym,0}$ is even contained in $ \FiproLocz[G](\tilde\rho_X)/\Locz[G](\tilde\rho_X)$.

\item Let \(\mu\) be an exact crossed product functor.
Then the $*$-homomorphisms $\Phi_\LSym$ and $\Phi_{\LSym,0}$ factor through $\Loc(\rho_{X\times Y})\otimes_{\max}\sHigCorRed Y \rtimes_\mu H$ and $\Locz(\rho_{X\times Y})\otimes_{\max}\sHigCorRed Y \rtimes_\mu H$, respectively.
In other words, they define $*$-homomorphisms
\begin{align*}
\Psi_\LSym\colon\Loc[G \times H](\rho_{X \times Y}) \otimes_{\max} \sHigCorRed Y \rtimes_\mu H &\to  \FiproLoc[G](\tilde\rho_X)/\Loc[G](\tilde\rho_X)\,,
\\\Psi_{\LSym,0}\colon\Locz[G \times H](\rho_{X \times Y}) \otimes_{\max} \sHigCorRed Y \rtimes_\mu H &\to  \FiproLocz[G](\tilde\rho_X)/\Locz[G](\tilde\rho_X)\,.
\end{align*}
\end{enumerate}
\end{lem}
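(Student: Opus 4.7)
The plan is to mimic the proof of the non-equivariant version \cref{lem_slant_Loc_technical}, reducing every assertion to a pointwise-in-$t$ statement about operators at a single localization parameter $t \in [1,\infty)$ and then invoking the equivariant Roe-algebra analogues already established above (\cref{lem_equi_slant_Roe_cycles_idealizer,lem_equi_slant_Roe_cycles_commute} together with the factoring lemma preceding the definition of $\Psi_\mu$). This reduction is legitimate because the defining conditions of the localization algebras — uniform continuity in $t$, finite propagation at each $t$, and propagation tending to $0$ as $t \to \infty$ — behave well under composition of operators and under formation of commutators, while local compactness in the Roe-algebra sense has only to be checked pointwise in $t$.

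For (i), I would observe that $\tau_{\LSym}(S)$ has pointwise propagation bounded by $\propagation(S(t))$ via \cref{lem_propagtioncomparision} (whose argument is local in $t$), and is $G$-equivariant because $S$ is $(G \times H)$-equivariant. The image of $\tilde\rho_{Y,H,\LSym}$ consists of operators constant in $t$ of propagation zero: both $\tilde\rho_Y$ and the unitaries $\tilde u_H(h)$ commute with the image of $\tilde\rho_X$, and $G$-equivariance is built into the construction.

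For (ii), fix $S \in \Loc[G \times H](\rho_{X\times Y})$ and a generator $f_0 \delta_h$ of $\sHigComRed Y \rtimes_{\red} H$. Applying \cref{lem_equi_slant_Roe_cycles_commute} at each $t$ yields that $[\tau(S(t)),\,\tilde\rho_Y(f_0) \circ \tilde u_H(h)] \in \Roe[G](\tilde\rho_X)$; its propagation is bounded by $2\propagation(S(t))$ and hence tends to $0$, while uniform continuity of $t \mapsto S(t)$ transfers to the commutator. Hence the commutator defines an element of $\Loc[G](\tilde\rho_X)$; if moreover $S(1) = 0$, it lies in $\Locz[G](\tilde\rho_X)$ while the full product $\tau_{\LSym}(S) \cdot \tilde\rho_{Y,H,\LSym}(f)$ vanishes at $t = 1$ and hence lies in $\FiproLocz[G](\tilde\rho_X)$. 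Density in $\sHigComRed Y \rtimes_{\red} H$ and the universal property of the maximal tensor product then yield the claimed $*$-homomorphisms.

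For (iii), I would first precompose $\Phi_{\LSym}$ and $\Phi_{\LSym,0}$ with the canonical surjection $\sHigComRed Y \rtimes_{\mu} H \twoheadrightarrow \sHigComRed Y \rtimes_{\red} H$. Since the $H$-action on $\Cz(Y,\Kom)$ is proper, $\Cz(Y,\Kom) \rtimes_{\mu} H = \Cz(Y,\Kom) \rtimes_{\red} H$, so exactness of $\rtimes_{\mu}$ yields the short exact sequence~\eqref{eq_ses_crossed_product}, and exactness of the maximal tensor product reduces the factoring question to showing that $\tau_{\LSym}(S) \cdot \tilde\rho_Y(f_0 \otimes K) \cdot \tilde u_H(h) \in \Loc[G](\tilde\rho_X)$ for all $f_0 \otimes K \in \Cz(Y) \otimes \Kom$; this is the pointwise-in-$t$ equivariant analogue of the computation in \cref{lem_slant_Roe_cycles_factors}. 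The main obstacle will be the careful bookkeeping of the various stabilizing tensor factors — in particular the extra $\ell^2(H)$-factor in $\tilde H_X$ needed for the Fell-absorption embedding of the reduced crossed product, alongside the stabilizing $\ell^2$-factor inherent to the construction — and verifying that the relevant operators commute with $\tilde\rho_X$ so that propagation estimates pass through cleanly. Once this setup is recorded, every analytical ingredient is already available from the equivariant Roe-algebra case.
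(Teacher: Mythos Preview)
Your proposal is correct and follows essentially the same approach as the paper. In fact, the paper gives no explicit proof for this lemma, treating it as a direct adaptation of the non-equivariant \cref{lem_slant_Loc_technical} (whose proof already reduces everything to pointwise-in-$t$ statements and invokes the Roe-algebra lemmas) combined with the equivariant Roe-algebra ingredients of \cref{lem_equi_slant_Roe_cycles_idealizer,lem_equi_slant_Roe_cycles_commute} and the subsequent factoring lemma; your write-up simply makes this reduction explicit.
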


\begin{defn}\label{defn_equiv_localizationslant}
Let \(\mu\) be an exact crossed product functor.
By \cref{lem_equiv_slant_Loc_technical} we obtain the following slant products analogously as in \cref{defn_localizationslant}.
\begin{align*}
\K_p^{G \times H}(X \times Y) \otimes \K_{1-q}(\sHigCorRed Y \rtimes_\mu H) & \to \K_{p-q}^{G}(X),\\
\Strg_p^{G \times H}(X \times Y) \otimes \K_{1-q}(\sHigCorRed Y \rtimes_\mu H) & \to \Strg_{p-q}^G (X).
\end{align*}
If the group \(H\) is exact, we also obtain the above for \(\mu = \red\).
\end{defn}

\subsection{Properties of the equivariant slant products}\label{subsec_EquivSlantProp}
In this section we state the properties of the equivariant slant products which are analogous to those of the non-equivariant one.

The following compatibility of the slant products with the equivariant Higson--Roe sequence is proven in exactly the same way as its non-equivariant counterpart \cref{thm_commutativity_diagram} by decorating everything with the groups $G$ and~$H$.
\begin{thm}\label{equithm_commutativity_diagram}
The diagram
\begin{align*}
\xymatrix{
\Strg_p^{G\times H}(X \times Y) \ar[r] \ar[d]^{/ \theta} & \K_p^{G \times H}(X \times Y) \ar[r] \ar[d]^{/ \theta} & \K_p(\Roe[G \times H](X \times Y)) \ar[d]^{/ \theta} \ar[r]^-{\partial} & \Strg_{p-1}^{G \times H}(X \times Y) \ar[d]^{/ \theta}\\
\Strg_{p-q}^G(X) \ar[r] & \K_{p-q}^G(X) \ar[r] & \K_{p-q}(\Roe[G] X) \ar[r]^-{\partial} & \K_{p-1-q}^G(X)
}
\end{align*}
commutes for every $\theta\in \K_{1-q}(\sHigCorRed Y\rtimes_\mu H)$.\qed
\end{thm}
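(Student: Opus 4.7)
The plan is to imitate the argument of \cref{thm_commutativity_diagram} verbatim, with every \textCstar-algebra replaced by its equivariant counterpart and every occurrence of $\sHigCorRed Y$ replaced by $\sHigCorRed Y \rtimes_\mu H$. All three $*$-homomorphisms $\Psi$, $\Psi_\LSym$ and $\Psi_{\LSym,0}$ in the equivariant setting have already been constructed in \cref{sec_slant_equiv_Roe_algebra,sec_equiv_localization_algebras}, so no new analytic input is required; only the compatibility verifications have to be organized.

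Concretely, I would write down the $3\times 4$ grid whose rows are the long exact sequences in K-theory associated, from top to bottom, to the short exact sequences
\begin{align*}
0 &\to \Locz[G\times H](\rho_{X\times Y}) \to \Loc[G\times H](\rho_{X\times Y}) \to \Roe[G\times H](\rho_{X\times Y}) \to 0,\\
0 &\to \Locz[G\times H](\rho_{X\times Y}) \tensmax (\sHigCorRed Y \rtimes_\mu H) \\
  &\qquad\to \Loc[G\times H](\rho_{X\times Y}) \tensmax (\sHigCorRed Y \rtimes_\mu H) \\
  &\qquad\to \Roe[G\times H](\rho_{X\times Y}) \tensmax (\sHigCorRed Y \rtimes_\mu H) \to 0,\\
0 &\to \FiproLocz[G](\tilde\rho_X)/\Locz[G](\tilde\rho_X) \to \FiproLoc[G](\tilde\rho_X)/\Loc[G](\tilde\rho_X) \to \Fipro[G](\tilde\rho_X)/\Roe[G](\tilde\rho_X) \to 0,
\end{align*}
and then bordered by a fourth row given by the equivariant Higson--Roe sequence of $X$. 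Exactness of the middle row uses exactness of the crossed product functor $\rtimes_\mu$ (which is why we restricted to such functors in the first place); this is the one point where the equivariant argument genuinely needs the assumption on $\mu$. The vertical maps between the first two rows are external products with $\theta \in \K_{1-q}(\sHigCorRed Y \rtimes_\mu H)$; between the second and third rows they are induced by $(\Psi_{\LSym,0},\Psi_\LSym,\Psi_\mu)$, which by construction constitute a morphism of short exact sequences; between the third and fourth rows they are the appropriate boundary maps.

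The commutativity of the individual squares is then checked as follows. The squares between the first and second rows commute by functoriality of the external product in K-theory together with the sign convention from \cref{rem_signconvention} (this introduces the relevant $(-1)$-signs that the definition of the slant product already absorbs). The squares between the second and third rows commute because $(\Psi_{\LSym,0},\Psi_\LSym,\Psi_\mu)$ is a morphism of short exact sequences, which is visible from the uniform formula $S\otimes [f]\delta_h \mapsto [\tau(S)\circ \tilde\rho_Y(f)\circ \tilde u_H(h)]$ that defines all three of them. Finally, the squares between the third and fourth rows are instances of naturality of the connecting homomorphism, except for the rightmost one, which involves the composition of two connecting homomorphisms of an ambient $3\times 3$ grid of short exact sequences of \textCstar-algebras; such outer boundary maps are known to commute up to a sign $-1$, and this extra sign is exactly what matches the discrepancy between the $(-1)^p$ appearing in the definition of $/\theta$ on $\K_p$ and the $(-1)^{p-1}$ appearing on $\K_{p-1}$.

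The only step requiring genuine attention, as opposed to direct transcription, is verifying that the $3\times 3$ grid of \textCstar-algebras with exact rows and columns in the proof of \cref{thm_commutativity_diagram} survives the equivariant reformulation, and this is where one uses that $\rtimes_\mu$ is exact: all three columns
\[
0 \to \Locz[G](\tilde\rho_X) \to \FiproLocz[G](\tilde\rho_X) \to \FiproLocz[G](\tilde\rho_X)/\Locz[G](\tilde\rho_X) \to 0
\]
and its analogues, and all three rows of the grid, are visibly exact. Granting this, the abstract lemma about outer boundary maps in a $3\times 3$ grid then produces the sign that makes the final square commute, and the theorem follows.
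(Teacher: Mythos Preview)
Your proposal is correct and follows exactly the paper's approach, which literally consists of the single sentence ``proven in exactly the same way as its non-equivariant counterpart \cref{thm_commutativity_diagram} by decorating everything with the groups $G$ and~$H$.'' You have simply spelled out that decoration in detail.

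One small misattribution worth noting: exactness of your middle row (the one obtained by applying $\blank\tensmax(\sHigCorRed Y\rtimes_\mu H)$ to the Higson--Roe short exact sequence of $X\times Y$) comes from exactness of the maximal tensor product, not from exactness of $\rtimes_\mu$. The exactness hypothesis on $\rtimes_\mu$ was already used up in \cref{sec_slant_equiv_Roe_algebra,sec_equiv_localization_algebras} to construct $\Psi_\mu$, $\Psi_\LSym$, $\Psi_{\LSym,0}$ (factoring through the quotient $\sHigCorRed Y\rtimes_\mu H$ of $\sHigComRed Y\rtimes_\mu H$); once those maps exist, the present diagram chase needs nothing further from $\mu$.
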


In \cref{sec_recovering_usual_slant_prod} we had shown that our non-equivariant slant product for the localization algebra factors through co-assembly and the usual slant product between $\K$-homology and $\K$-theory.
Now we shall show that our slant product for the equivariant localization algebra also factors through our version of co-assembly from \cref{label_sec_factorization_assembly_coassembly}.
The first part of \cref{sec_recovering_usual_slant_prod} goes through equivariantly.
That is, one obtains an isomorphism
\[
  \K_\ast(\Loc[G](X)) \cong E_\ast^G(\Cz(X), \C)
\]
and the latter group is, by definition, the equivariant \(\K\)-homology group \(\K^G_\ast(X)\) in the E-theory picture.
However, there is no already well known slant product between equivariant $\K$-homology and equivariant $\K$-theory defined via equivariant $E$-theory, as explained earlier.
Instead we turn an equivariant version of \cref{lem_alternativeKhomslant} into a definition.

Consider the map
\begin{align*}
\Upsilon_{\LSym} \colon \Loc[G\times H](\rho_{X\times Y}) \otimes \Cz(Y, \Kom)\rtimes H & \to \Loc[G](\tilde\rho_X) / \Cz([1,\infty),\Roe(\tilde\rho_X)),\\
(T_t)_{t \in [1,\infty)} \otimes f & \mapsto \left[ (\tau(T_t) \circ (\tilde\rho_Y\rtimes\tilde u_H)(f))_{t \in [1,\infty)} \right].
\end{align*}
By enriching the reasoning in \cref{sec_recovering_usual_slant_prod} with the ideas of the proofs of Lemmas \ref{lem_equi_slant_Roe_cycles_idealizer} and \ref{lem_equi_slant_Roe_cycles_commute} we get that $\Upsilon_{\LSym}$ is a well-defined $\ast$-homomorphism.

\begin{defn}\label{def_topolequislant}
The slant product between equivariant $\K$-homology and equivariant $\K$-theory is defined as the composition
\begin{align*}
	\K_p^{G\times H}(X \times Y) \otimes \K^{q}_H(Y)
    &\cong \K_p(\Loc[G\times H](\rho_{X \times Y})) \otimes \K_{-q}(\Cz(Y, \Kom)\rtimes H) \\
  &\xrightarrow{\boxtimes} \K_{p-q}(\Loc[G\times H](\rho_{X\times Y}) \otimes \Cz(Y, \Kom)\rtimes H) \\
	&\xrightarrow{({\Upsilon_{\LSym}})_\ast} \K_{p-q}( \Loc[G](\tilde\rho_X) / \Cz([1, \infty), \Roe[G](\tilde\rho_X))) \\
	&\xrightarrow{\cong} \K_{p-q}(\Loc[G](\tilde\rho_X)) \cong \K_{p-q}^G(X),
\end{align*}
where the third map is the inverse of the isomorphism induced on $\K$-theory by the canonical projection $\Loc[G](\tilde\rho_X) \to \Loc[G](\tilde\rho_X) / \Cz([1,\infty), \Roe[G](\tilde\rho_X))$.
\end{defn}

The proofs of \cref{prop:asymptoticCommuteBndGmtry}, \cref{cor:asymptoticCommutatorHigCom} and \cref{thm_KHomSlantCompatible} in \cref{sec_recovering_usual_slant_prod} now generalize to the equivariant case to yield the following equivariant version of \cref{thm_KHomSlantCompatible}.
\begin{thm}\label{thm_equiKHomSlantCompatible}
Let $Y$ have continuously bounded geometry.
The slant product on the localization algebra from \cref{defn_equiv_localizationslant} and the slant product from \cref{def_topolequislant} are related to each other via the co-assembly map
\[\mu^*_H\colon \K_*(\sHigCorRed Y \rtimes_{\mu} H)\to \K_H^{1-*}(Y)\]
in the sense that $x/\theta=x/\mu_H^*\theta$ for all $x\in\K^{G\times H}_*(X\times Y)$ and $\theta\in \K_*(\sHigCorRed Y \rtimes_{\mu} H)$.
\end{thm}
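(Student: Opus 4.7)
The plan is to adapt the argument of \cref{thm_KHomSlantCompatible} to the equivariant setting, following the same ``intermediate $\ast$-homomorphism and three-row commutative diagram of short exact sequences'' strategy. The key technical input is an equivariant version of the asymptotic commutator estimate from \cref{prop:asymptoticCommuteBndGmtry}: for $(T_t)_t \in \Loc[G\times H](\rho_{X\times Y})$ with $\propagation(T_t) \to 0$, any $f\in\sHigComRed Y$, and any $h\in H$, one has $\lVert[\tau(T_t), \tilde\rho_Y(f)]\rVert\to 0$ and $[\tau(T_t),\tilde u_H(h)] = 0$. The first follows by applying \cref{prop:asymptoticCommuteBndGmtry} directly to the $Y$-module $\elltwo(H)\otimes H_Y$ with representation $\id_{\elltwo(H)}\otimes\rho_Y$, since the geometric estimate there relies solely on continuously bounded geometry of $Y$ and does not interact with the group action; the second holds because $T_t$ is $H$-equivariant. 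Via Fell absorption (under which $\sHigComRed Y \rtimes_{\red} H$ is faithfully represented on $\tilde H_X$), these combine to give $[\tau_\LSym(T), (\tilde\rho_Y\rtimes\tilde u_H)(a)] \in \Cz([1,\infty),\Roe[G](\tilde\rho_X))$ for every $T\in \Loc[G\times H](\rho_{X\times Y})$ and $a\in\sHigComRed Y\rtimes_\red H$.

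With this in hand, one defines an intermediate $\ast$-homomorphism
\[
\bar\Upsilon_\LSym\colon \Loc[G\times H](\rho_{X\times Y}) \tensmax (\sHigComRed Y \rtimes_\mu H) \to \FiproLoc[G](\tilde\rho_X)/\Cz([1,\infty),\Roe[G](\tilde\rho_X))
\]
by $(T_t)_t\otimes(f\delta_h)\mapsto [(\tau(T_t)\circ\tilde\rho_Y(f)\circ\tilde u_H(h))_t]$. Its restriction to the ideal $\Loc[G\times H](\rho_{X\times Y})\otimes (\Cz(Y,\Kom)\rtimes H)$ is the equivariant version of the $\Upsilon_\LSym$ used in \cref{def_topolequislant}, while the map induced on the quotient by this ideal is the $\Psi_\LSym$ from \cref{lem_equiv_slant_Loc_technical}. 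Crucially, the exactness of $\rtimes_\mu$ guarantees that
\[
0\to \Cz(Y,\Kom) \rtimes H \to \sHigComRed Y\rtimes_\mu H \to \sHigCorRed Y\rtimes_\mu H\to 0
\]
is exact, and hence remains exact after tensoring on the left with $\Loc[G\times H](\rho_{X\times Y})$ over $\tensmax$. Assembling the three-row diagram in exact analogy with the proof of \cref{thm_KHomSlantCompatible}---with $\Upsilon_\LSym$, $\bar\Upsilon_\LSym$, $\Psi_\LSym$ as the three vertical maps and with the canonical projections modulo the contractible ideal $\Cz([1,\infty),\Roe[G](\tilde\rho_X))$ inducing $\K$-theory isomorphisms---and chasing an element $x\otimes\theta$, the right-hand column computes (up to the sign $(-1)^p$) the slant product of \cref{defn_equiv_localizationslant}, while the left-hand column together with the boundary map of the top row computes the slant product of \cref{def_topolequislant} pre-composed with $\id\otimes\mu_H^*$. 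The identity $x/\theta = x/\mu_H^*(\theta)$ then follows from commutativity of the upper squares and naturality of the boundary map, the signs matching up exactly as in the non-equivariant case by the convention of \cref{rem_signconvention}.

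The main obstacle is ensuring compatibility of the Fell absorption representation---which naturally carries $\sHigComRed Y\rtimes_\red H$---with the exact crossed product $\rtimes_\mu$ appearing in the theorem statement. Since $\bar\Upsilon_\LSym$ must be defined on $\sHigComRed Y\rtimes_\mu H$, one uses the universal property of $\rtimes_\max$ together with the canonical surjections between crossed products to check that the formula above descends to a well-defined $\ast$-homomorphism on the $\mu$-crossed product, and that this descent is consistent with the $\Psi_\LSym$ defined in \cref{lem_equiv_slant_Loc_technical}. Once this bookkeeping is settled, the remaining verification of commutativity in the diagram, the contractibility argument for the lower ideal, and the sign matching are a routine elaboration of the non-equivariant argument.
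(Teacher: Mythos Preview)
Your proposal is correct and follows essentially the same approach as the paper: the paper simply states that the proofs of \cref{prop:asymptoticCommuteBndGmtry}, \cref{cor:asymptoticCommutatorHigCom}, and \cref{thm_KHomSlantCompatible} generalize to the equivariant case, and your write-up spells out exactly this generalization---the equivariant asymptotic commutator estimate (via the $Y$-module $\elltwo(H)\otimes H_Y$), the intermediate map $\bar\Upsilon_\LSym$, and the three-row diagram---together with the requisite $\rtimes_\mu$ versus $\rtimes_\red$ bookkeeping that the paper leaves implicit.
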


Next, \cref{thm_slantcrosscomp} and \cref{cor_slantcrosscomp} can also be generalized to the equivariant cases.
\begin{thm}
Let $G,H,K$ be countable discrete groups acting properly and isometrically on proper metric spaces $X,Y,Z$ and assume that $Z$ has bounded geometry.
Then the compositions
\[
\xymatrix@R-2pc{
\K^G_m(X)\ar[r]^-{\times z}&\K^{G\times H\times K}_{m+p}(X\times Y\times Z) \ar[r]^-{/\theta}&\K^{G\times H}_{m+p-q}(X\times Y)
\\\Strg^G_m(X)\ar[r]^-{\times z}&\Strg^{G\times H\times K}_{m+p}(X\times Y\times Z) \ar[r]^-{/\theta}&\Strg^{G\times H}_{m+p-q}(X\times Y)
\\\K_m(\Roe[G] X)\ar[r]^-{\times z}&\K_{m+p}(\Roe[G\times H\times K](X\times Y\times Z)) \ar[r]^-{/\theta}&\K_{m+p-q}(\Roe[G\times H](X\times Y))
}\]are equal to the external product with the appropriate slant product $z/\theta$ for all $m,p,q\in \Z$ and all $z,\theta$ as follows:
\begin{itemize}
\item In the first two compositions $z\in \K^{H\times K}_p(Y\times Z)$ and in the third one
$z\in \K_p(\Roe[H\times K] (Y\times Z))$. 
\item In the first composition either $\theta\in \K_K^q(Z)$ or $\theta\in \K_{1-q}(\sHigCorRed Z\rtimes_\mu K)$ and in the other two $\theta\in \K_{1-q}(\sHigCorRed Z\rtimes_\mu K)$. \qed
\end{itemize}
\end{thm}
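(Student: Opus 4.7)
The plan is to adapt the proof of \cref{thm_slantcrosscomp} by decorating every space and algebra in that argument with the actions of the groups $G$, $H$, $K$ and by replacing $\sHigCorRed Z$ with its crossed product $\sHigCorRed Z \rtimes_\mu K$. As there, I would only spell out one of the three compositions in detail (say the one involving $\Strg^G_\ast$) since the other two are completely analogous; and I would reduce the statement involving $\theta \in \K^q_K(Z)$ via the co-assembly factorization of \cref{thm_equiKHomSlantCompatible}, so that the core case is $\theta \in \K_{1-q}(\sHigCorRed Z \rtimes_\mu K)$.

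First I would fix ample $(X,G)$-, $(Y,H)$- and $(Z,K)$-modules and set up the auxiliary representations $\tilde\rho_{X\times Y}$ of $\Cz(X\times Y)$ and $\tilde\rho_Z$, $\hat\rho_Z$ of the various multiplier algebras exactly as in \cref{sec_slant_equiv_Roe_algebra}, but built on the ample $(X\times Y)$-$(G\times H)$-module. The key computation is to verify that the pointwise tensor product
\[
\Locz[G](\rho_X) \tensmax \FiproLoc[H\times K]\bigl(\hat\rho_Y\bigr) \to \FiproLocz[G\times H]\bigl(\tilde\rho_{X\times Y}\bigr)
\]
(and its restriction to $\Locz[G](\rho_X)\tensmax \Loc[H\times K](\hat\rho_Y) \to \Locz[G\times H](\tilde\rho_{X\times Y})$) is a well-defined $*$-homomorphism. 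Equivariance is immediate since $G$ acts on the first factor and $H\times K$ on the second; the finite-propagation estimates are the same as in the non-equivariant setting, using \cref{lem_propagtioncomparision} applied in the $Y$-direction.

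Next I would assemble the large commutative diagram exactly as in the proof of \cref{thm_slantcrosscomp}, but with the algebras $\Locz[G](\rho_X)$, $\Loc[H\times K](\rho_{Y\times Z})$, and the quotient $\sHigCorRed Z \rtimes_\mu K$ in place of the non-equivariant versions. The three columns of this diagram compute: the top-to-bottom left column is (up to $(-1)^p$) the slant product $/\theta\colon \Strg^{H\times K}_{p}(Y\times Z)\tensmax \K_{1-q}(\sHigCorRed Z \rtimes_\mu K) \to \Strg^{H}_{p-q}(Y)$, tensored with $\K_m(\Locz[G](\rho_X))$ via the equivariant external tensor product; the right column is $(-1)^{m+p}$ times the slant product applied after taking external product with $z$. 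All middle squares commute by functoriality of $\exttensprod$ and naturality of the boundary map; the lone square that only commutes up to $(-1)^m$ is the one interchanging the $\exttensprod$ with a boundary map, governed by the sign convention of \cref{rem_signconvention}. The $(-1)^m$ compensates exactly for the difference between the sign $(-1)^p$ on the left column and $(-1)^{m+p}$ on the right column, so the outer square commutes on the nose --- which is the content of $(x\times z)/\theta = x\times(z/\theta)$.

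The case $\theta \in \K^q_K(Z)$ in the first composition then follows by applying \cref{thm_equiKHomSlantCompatible} to $\mu^*_K$: both compositions in that row factor through the slant product with $\theta$ viewed as $\mu^*_K(\tilde\theta)$ for $\tilde\theta \in \K_{1-q}(\sHigCorRed Z \rtimes_\mu K)$ (when $Z$ has continuously bounded geometry; the general bounded-geometry case reduces here because the $E$-theoretic slant of \cref{def_topolequislant} is defined through the same $\Upsilon_\LSym$). I expect the main obstacle to be purely bookkeeping: keeping track of which algebra carries which group action (only $H$ on $\hat\rho_Y$ matters after tracing out $K$ via the crossed product, while $G$ acts only on $\tilde\rho_X$), and verifying that the crossed-product construction $\sHigCorRed Z \rtimes_\mu K$ slots into the pentagon of \cref{lem_equiv_slant_Loc_technical} without clashing with the commutation estimates from \cref{lem_equi_slant_Roe_cycles_commute}. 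Since the $K$-action commutes with operators coming from $X$ and $Y$, this compatibility will hold, and no new analytic input beyond what was used in \cref{subsec_EquivSlantProd} is required.
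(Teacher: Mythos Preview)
Your treatment of the cases $\theta\in \K_{1-q}(\sHigCorRed Z\rtimes_\mu K)$ is essentially the paper's argument: decorate the diagram from the proof of \cref{thm_slantcrosscomp} with the group actions (replacing $H_Z$ by $\ell^2(K)\otimes H_Z$ where appropriate) and track the sign $(-1)^m$ exactly as before. That part is fine.

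The gap is in how you handle $\theta\in \K^q_K(Z)$. You propose to reduce to the crossed-product case via \cref{thm_equiKHomSlantCompatible}, writing $\theta=\mu^*_K(\tilde\theta)$. This fails for two reasons. First, the equivariant co-assembly map $\mu^*_K$ is not surjective in general, so an arbitrary $\theta\in \K^q_K(Z)$ need not lie in its image. Second, \cref{thm_equiKHomSlantCompatible} requires $Z$ to have \emph{continuously} bounded geometry, whereas the present theorem only assumes bounded geometry. Your parenthetical remark about $\Upsilon_\LSym$ points in the right direction but is not a reduction argument.

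The paper's route here is different: since the equivariant slant product with $\theta\in\K^q_K(Z)$ was \emph{not} defined via $E$-theory (so the abstract compatibility of \cref{lem_crossslantcompatibilityinEtheory} is unavailable), one proves this case directly by building the same large diagram as for the other cases, but with the map $\Upsilon_\LSym$ from \cref{def_topolequislant} in place of $\Psi$, $\Psi_\LSym$, $\Psi_{\LSym,0}$. The commutativity of the squares and the sign bookkeeping then go through identically.
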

\begin{proof}[Outline of proof]
The proof works almost word by word the same, just that one has to replace the Hilbert space $H_Z$ by $\ell^2(K)\otimes H_Z$ at several places within the constructions. The only difference is the case $\theta\in \K_K^q(Z)$, because we didn't define the equivariant version of this slant product via $E$-theory and hence it does not follow from abstract properties of $E$-theory. Instead one has to prove it along the lines of the other cases, but using the map $\Upsilon_{\LSym}$ instead of $\Psi,\Psi_{\LSym}$ or $\Psi_{\LSym,0}$.
\end{proof}

\begin{defn}\label{defn_equivariant_pairing}
The pairing
\[\langle-,-\rangle\colon \K_p(\Roe[H] Y) \otimes \K_{1-q}(\sHigCorRed Y\rtimes_\mu H)\to \K_{p-q}(\Roe\{*\})\cong \begin{cases}\Z&p-q\text{ even}\\0&p-q\text{ odd}\end{cases}\]
is defined as the special case of the equivariant slant product where $X=\{*\}$ is a single point equipped with the action of the trivial group.
The same construction applied to the localization algebra instead of the Roe algebra also yields a pairing
\[\langle-,-\rangle\colon \K^H_p(Y) \otimes \K_{1-q}(\sHigCorRed Y\rtimes_\mu H)\to \K_{p-q}(\{*\})\cong \begin{cases}\Z&p-q\text{ even}\\0&p-q\text{ odd}\end{cases}\]
and similarily we also have a pairing
\[
\langle-,-\rangle\colon \K^H_p(Y) \otimes \K_H^{q}(Y)\to \K_{p-q}(\{*\})\cong \begin{cases}\Z&p-q\text{ even}\\0&p-q\text{ odd.}\end{cases}\qedhere
\]
\end{defn}

\begin{cor}\label{cor_equivariant_composition_is_pairing}
The compositions
\[
\xymatrix@R-2pc{
\K^G_m(X) \ar[r]^-{\times z} & \K^{G\times H}_{m+p}(X\times Y) \ar[r]^-{/\theta} & \K^G_{m+p-q}(X)\\
\Strg^G_m(X) \ar[r]^-{\times z} & \Strg^{G\times H}_{m+p}(X\times Y) \ar[r]^-{/\theta} & \Strg^G_{m+p-q}(X)\\
\K_m(\Roe[G] X) \ar[r]^-{\times z} & \K_{m+p}(\Roe[G\times H](X\times Y)) \ar[r]^-{/\theta} & \K_{m+p-q}(\Roe[G] X)
}
\]
are equal to the multiplication with $\langle z,\theta\rangle$, which is either an integer, if $p-q$ is even, or zero by construction, if $p-q$ odd, for all $m,p,q\in \Z$ and all $z,\theta$ as follows:
\begin{itemize}
\item In the first two compositions $z\in \K^H_p(Y)$ and in the third 
$z\in \K_p(\Roe[H] Y)$. 
\item In the first composition either $\theta\in \K_H^q(Y)$ or $\theta\in \K_{1-q}(\sHigCorRed Y\rtimes_\mu H)$ and in the other two $\theta\in \K_{1-q}(\sHigCorRed Y\rtimes_\mu H)$.\qed
\end{itemize}
\end{cor}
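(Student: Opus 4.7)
The plan is to deduce this corollary by specializing the preceding theorem (the equivariant compatibility of external and slant products with three spaces) to the case in which the middle space is a one-point space carrying the trivial action of the trivial group. Concretely, in the theorem I would rename the triple $(X,Y,Z)$ with acting groups $(G,H,K)$ to $(X,\{*\},Y)$ with acting groups $(G,\{e\},H)$, so that the corollary's $(X,Y,G,H)$ plays the role of the theorem's $(X,Z,G,K)$, while the middle factor and its group collapse. Under the canonical identifications $X\times\{*\}\times Y\cong X\times Y$ and $\Roe[G\times\{e\}\times H](X\times\{*\}\times Y)\cong\Roe[G\times H](X\times Y)$ (and likewise for the structure group and $\K$-homology), the theorem's composition becomes exactly one of the three compositions stated in the corollary.

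Having set up the specialization, the theorem asserts that each composition equals the external product with the appropriate slant product $z/\theta$. In the specialized situation this slant product is an element of $\HRPlaceholder^{\{e\}}_{p-q}(\{*\})$, which is $\K_{p-q}(\{*\})\cong\Z$ for $p-q$ even and $0$ for $p-q$ odd. Moreover, by \cref{defn_equivariant_pairing} the quantity $z/\theta$ for a one-point $Y$ is exactly the pairing $\langle z,\theta\rangle$ (for each of the three flavours of $z$ and $\theta$ listed in the corollary). Hence it only remains to check that the external product with an element of $\K_{p-q}(\{*\})\cong\Z$ acts as multiplication by the corresponding integer.

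For the last point I would invoke the elementary fact that the external product in $\K$-theory with the class $n\in\K_0(\C)\cong\Z$ is multiplication by $n$, together with the observation that taking the external product with an element of $\HRPlaceholder(\{*\})$ corresponds under the Morita equivalences $\Cz(\{*\})\simeq\C$ and $\Roe(\{*\})\cong\Kom$ precisely to this $\K$-theoretic external product (with no local finiteness or propagation issue since $\{*\}$ is compact and the trivial group acts trivially). Since the structure group of a point vanishes, the relevant $\HRPlaceholder$-flavour for $z/\theta$ is always $\K_{p-q}$ of either a point or $\Kom$, so the identification with the integer pairing is unambiguous.

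The argument is therefore essentially a formal specialization; the only non-trivial part is ensuring that the external product and slant product constructions behave naturally under collapsing a tensor factor of the middle space to a point and trivializing its group action. This is the potential obstacle, but it follows immediately from the construction of the products in \cref{subsec_EquivSlantProd} because both use the minimal/maximal tensor product together with the canonical identifications $H_{\{*\}}\otimes H\cong H$ and $\elltwo(\{e\})\otimes H\cong H$; in particular all the auxiliary Hilbert space manipulations reduce to the identity on the remaining factors, and the crossed product $\sHigCorRed\{*\}\rtimes_\mu\{e\}$ issue does not even arise since we do not slant by anything coming from the collapsed factor.
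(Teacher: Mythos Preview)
Your proposal is correct and is exactly the argument the paper has in mind: the corollary is obtained from the preceding three-space theorem by specializing the middle factor $(Y,H)$ there to $(\{*\},\{e\})$, whereupon $z/\theta$ becomes by \cref{defn_equivariant_pairing} the pairing $\langle z,\theta\rangle\in\K_{p-q}(\{*\})$ and the external product with this element is multiplication by the corresponding integer. The paper does not spell this out (the statement carries only a \qed), but your write-up matches the intended specialization precisely.
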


Next on the list is naturality.

\begin{defn}
Let $X,X'$ be proper metric spaces equipped with proper isometric actions by the same countable discrete group $G$ and fix an ample $X$-$G$-module $(H_X,\rho_X,u_G)$ and an ample $X'$-$G$-module $(H_{X'},\rho_{X'},u'_G)$. An isometry $V\colon H_X\to H_{X'}$ is said to \emph{equivariantly cover} an equivariant coarse map $\alpha\colon X\to X'$ if it covers $\alpha$ in the sense of \cref{defn:coarseCoveringIsometry} and is in addition equivariant with respect to $u_G$ and $u'_G$. Similarily, a uniformly continuous family of isometries $V\colon[1,\infty)\to\Lin(H_X,H_{X'})$ is said to \emph{equivariantly cover} an equivariant uniformly continuous coarse map $\alpha\colon X\to X'$ if it covers $\alpha$ in the sense of \cref{def:equivcoveringiso} and is in addition equivariant.
\end{defn}

\begin{prop}[{see~\cite[Proposition 4.5.12, Theorems 5.2.6 and 6.6.3]{WillettYuHigherIndexTheory}}]
Equivariantly covering \parensup{uniformly continuous families of} isometries as in the previous definition always exist. Conjugation with an isometry $V$ which equivariantly covers a coarse map $\alpha$ yields a $*$-homomorphism
\[\Ad_V\colon \Roe[G](\rho_X)\to \Roe[G](\rho_{X'})\]
and conjugation with a uniformly continuous family of isometries $V$ which equivariantly covers a unifomly continuous equivariant coarse map $\alpha$ yields $*$-homomorphisms
\[\Ad_V\colon \Loc[G](\rho_X)\to \Loc[G](\rho_{X'})\,,\quad \Ad_V\colon \Locz[G](\rho_X)\to \Locz[G](\rho_{X'})\,.\]
The induced maps on $\K$-theory
\[\K_*(\Roe[G]X)\to \K_*(\Roe[G]X')\,,\quad\K_*^G(X)\to \K_*^G(X')\,,\quad\Strg_*^G(X)\to \Strg_*^G(X')\]
are independent of all choices, depend functorial on $\alpha$ and will all be denoted by $\alpha_*$.
Furthermore, they make the diagram
\[\mathclap{
\xymatrix{
\K_{\ast+1}(\Roe[G] X) \ar[r]^-{\partial} \ar[d]^-{\alpha_*} & \Strg_\ast^G(X) \ar[r] \ar[d]^-{\alpha_*} & \K_\ast^G(X) \ar[r]^-{\Ind} \ar[d]^-{\alpha_*} & \K_\ast(\Roe[G] X) \ar[d]^-{\alpha_*}\\
\K_{\ast+1}(\Roe[G] X') \ar[r]^-{\partial} & \Strg_{\ast}^G(X') \ar[r] & \K_{\ast}^G(X') \ar[r]^-{\Ind} & \K_{\ast}(\Roe[G] X')
}}\]
commute.
\end{prop}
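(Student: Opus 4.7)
The proof proposal is to adapt the non-equivariant arguments from \cite{higson_roe,WillettYuHigherIndexTheory} (and \cref{prop_LocFunctoriality} in the present paper) by systematically building in $G$-equivariance. First I would establish the existence statement. For the Roe algebra case, pick a Borel fundamental domain decomposition of $X$ into $G$-invariant pieces stratified by isotropy type and use the local freeness condition in the definition of an ample $X$-$G$-module: on each stratum $E \subseteq X$ with isotropy group $F \subseteq G$, $1_E H_X \cong \elltwo(F) \otimes H_E$ and analogously for the image set under $\alpha$ (possibly after enlarging scales). Choose compatible isometries $H_E \hookrightarrow H_{E'}$ on the trivial parts, tensor with $\id_{\elltwo(F)}$, and glue them together using a $G$-equivariant Borel partition subordinate to a cover whose mesh controls the support of the resulting global isometry $V$ in terms of the coarseness of $\alpha$. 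For the localization-algebra case, this construction is performed continuously in a parameter $t\in[1,\infty)$, letting the mesh of the cover shrink to zero as $t\to\infty$; uniform continuity in $t$ is ensured by choosing a single smoothly varying partition.

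Next I would verify that $\Ad_V$ has the claimed target. Equivariance of $\Ad_V(T)=VTV^\ast$ under $u_G'$ is immediate from the equivariance of $V$. The propagation bound propagates (literally) through the covering estimate: if $f,g\in\Cz(X')$ have supports separated by more than $\propagation(T)+2\omega_{\alpha,V}$, then $\rho_{X'}(f) V T V^\ast \rho_{X'}(g)=0$. Local compactness of $\Ad_V(T)$ in $T$ follows by approximating $\rho_{X'}(f) V$ by a compact operator using that $V$ covers $\alpha$ and $\alpha$ is proper on preimages of compacts (plus ampleness). For the localization variants, these estimates are performed pointwise in $t$, and the propagation-to-zero condition survives because $\omega_{\alpha,V}(t)\to 0$. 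That $\Ad_V$ restricts correctly to $\Locz[G]$ and fits into the commuting diagram
\[
0 \to \Locz[G](\rho_X) \to \Loc[G](\rho_X) \xrightarrow{\operatorname{ev}_1} \Roe[G](\rho_X) \to 0
\]
mapping to its $X'$-counterpart (with $\Ad_{V_1}$ on the Roe quotient) is immediate from the definitions, since evaluation at $t=1$ commutes with $\Ad_V$.

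The main technical point is independence of $V$. Given two covering isometries $V^0,V^1$ for the same $\alpha$, I would use the standard doubling trick: form $\tilde H_{X'}\coloneqq H_{X'}\oplus H_{X'}$ with the diagonal $G$-action, observe it is again an ample $X'$-$G$-module (whose Roe/localization algebras have canonically identified $\K$-theory via the upper-left corner inclusion, see the argument in \cite[\S 5.1--5.2]{WillettYuHigherIndexTheory}), and note that the isometry $W_s \coloneqq \cos(\pi s/2)\cdot V^0\oplus \sin(\pi s/2)\cdot V^1$, $s\in[0,1]$, provides a norm-continuous path of equivariant covering isometries from $V^0\oplus 0$ to $0\oplus V^1$ inside $\tilde H_{X'}$. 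Applied pointwise in the localization parameter, this produces a homotopy of $\ast$-homomorphisms, hence the same map on $\K$-theory; passing to the corner shows $(\Ad_{V^0})_\ast=(\Ad_{V^1})_\ast$. Functoriality $\alpha_\ast\beta_\ast=(\beta\alpha)_\ast$ is then automatic from $V_\beta V_\alpha$ covering $\beta\circ\alpha$, and compatibility with the Higson--Roe sequence follows from the short-exact-sequence morphism observed above together with naturality of the boundary maps. The main obstacle in this plan is the careful bookkeeping for the equivariant partition of unity on orbit strata in the existence step, since one needs the partition to simultaneously (i) respect the $G$-action, (ii) have diameter controlled by the coarseness of $\alpha$, and (iii) vary continuously in the localization parameter—this is precisely where the local freeness clause of ampleness is used essentially.
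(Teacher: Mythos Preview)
The paper does not actually prove this proposition; it is stated as a citation to \cite[Proposition~4.5.12, Theorems~5.2.6 and~6.6.3]{WillettYuHigherIndexTheory} with no accompanying argument. Your sketch is a faithful reconstruction of the standard proof from that reference: existence via equivariant Borel partitions exploiting local freeness, the propagation/local-compactness check for $\Ad_V$, and the rotation-homotopy trick in the doubled module for independence of $V$ are exactly the ingredients Willett--Yu use, so there is nothing to compare beyond noting that your write-up is more detailed than what the paper itself provides.
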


Again, functoriality of $\K_*^G(-)$ can be extended to $G$-equivariant proper continuous maps and functoriality of $\Strg_*^G(-)$ can be extended to $G$-equivariant continuous coarse maps by using different pictures of these groups. This functoriality can be dealt with in exactly the same way as in the non-equivariant case as described in \cref{rem_better_functoriality_structgroup}.

On the other side, contravariant functoriality of $\K_*(\sHigCorRed(-)\rtimes H)$ under $H$-equivariant coarse maps and of $\K^*_H(-)$ under $H$-equivariant proper continuous maps is clear.

The equivariance can be implemented very easily into the proofs of \cref{thm_coarseslantfunctorial} and \cref{thm_LocSlantNatural} and the proof can also be adapted to the slant product between equivariant $\K$-homology and equivariant $\K$-theory defined in \cref{def_topolequislant} by using the map $\Upsilon_L$ instead of $\Psi_L$. One obtains the following.

\begin{thm}
All the equivariant slant products are natural in the sense that the formula
\[\alpha_*(x/\beta^*(\theta))=(\alpha\times \beta)_*(x)/\theta\]
holds in each of the following cases:
\begin{itemize}
\item $\alpha\colon X\to X'$ and $\beta\colon Y\to Y'$ are equivariant coarse maps, $x\in \K_*(\Roe[G\times H](X\times Y))$ and $\theta\in \K_*(\sHigCorRed Y'\rtimes_\mu H)$.
\item $\alpha\colon X\to X'$ is an equivariant proper continuous map and $\beta\colon Y\to Y'$ is a equivariant continuous coarse maps, $x\in \K_*^{G\times H}(X\times Y)$ and $\theta\in \K_*(\sHigCorRed Y'\rtimes_\mu H)$.
\item $\alpha\colon X\to X'$ and $\beta\colon Y\to Y'$ are equivariant continuous coarse maps, $x\in \Strg_*^{G\times H}(X\times Y)$ and $\theta\in \K_*(\sHigCorRed Y'\rtimes_\mu H)$.
\item $\alpha\colon X\to X'$ and $\beta\colon Y\to Y'$ are equivariant proper continuous maps, $x\in \K_*^{G\times H}(X\times Y)$ and $\theta\in \K_H^*(Y')$.\qed
\end{itemize}
\end{thm}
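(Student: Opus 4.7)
The plan is to adapt the non-equivariant naturality proofs (\cref{thm_coarseslantfunctorial} and \cref{thm_LocSlantNatural}) to the present equivariant setting, enriching the constructions with group actions exactly as was done when passing from \cref{subsubsec_slant_Roe,subsubsec_slant_localization} to \cref{sec_slant_equiv_Roe_algebra,sec_equiv_localization_algebras}. The fourth bullet (slant with equivariant $\K$-theory) is handled along the same lines but using the homomorphism $\Upsilon_{\LSym}$ from \cref{def_topolequislant} in place of $\Psi_\mu$, $\Psi_{\LSym}$ or $\Psi_{\LSym,0}$.

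Concretely, for the first bullet I would choose isometries $V\colon H_X\to H_{X'}$ and $W\colon H_Y\to H_{Y'}$ that equivariantly cover $\alpha$ and $\beta$, respectively. Then $V\otimes\id_{\elltwo(H)}\otimes W$ covers $\alpha\times\beta$ with respect to the ample $(X\times Y)$-$(G\times H)$-module $H_X\otimes\elltwo(H)\otimes H_Y$ defined in \cref{sec_slant_equiv_Roe_algebra}, and $V\otimes\id_{\elltwo(H)}\otimes W\otimes\id_{\elltwo}$ equivariantly covers $\alpha$ on $\tilde H_X$; this last fact is the exact analogue of \cref{lem_propagtioncomparision}. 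Conjugation by these isometries induces the maps $\alpha_*$ and $(\alpha\times\beta)_*$ on the relevant $\K$-theory groups. As in \cref{thm_coarseslantfunctorial}, the claim reduces to commutativity of the pentagon of $\ast$-homomorphisms
\[
\xymatrix@C=-1em{
& \Roe[G\times H](\rho_{X\times Y})\otimes_{\max}(\sHigCorRed Y'\rtimes_\mu H) \ar[dl]_-{\id\otimes (\beta^\ast\rtimes\id)} \ar[dr]^-{\Ad_{V\otimes\id\otimes W}\otimes\id} & \\
\Roe[G\times H](\rho_{X\times Y})\otimes_{\max}(\sHigCorRed Y\rtimes_\mu H) \ar[d]^-{\Psi_\mu} & & \Roe[G\times H](\rho_{X'\times Y'})\otimes_{\max}(\sHigCorRed Y'\rtimes_\mu H) \ar[d]^-{\Psi'_\mu} \\
\Fipro[G](\tilde\rho_X)/\Roe[G](\tilde\rho_X) \ar[rr]^-{\Ad_{V\otimes\id\otimes W\otimes\id}} & & \Fipro[G](\tilde\rho_{X'})/\Roe[G](\tilde\rho_{X'}),
}
\]
and this in turn reduces to checking, on the dense subset of elementary tensors $S\otimes[f]\delta_h$, that the two resulting operators differ by an element of $\Roe[G](\tilde\rho_{X'})$. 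The unitary $\tilde u_H(h)$ commutes with $\tilde\rho_X$ and factors out of the discussion, so that the remaining computation is exactly the norm estimate appearing in the proof of \cref{thm_coarseslantfunctorial}, applied to the bounded-geometry space $Y'$ and the isometry $V\otimes\id_{\elltwo(H)}\otimes W$; the bounded-geometry hypothesis on $Y'$ is not needed because we only use bounded geometry of $Y$ and a locally finite cover of $Y'$.

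For the second and third bullets the same pentagon diagram works with $\Loc$ and $\Locz$ in place of $\Roe$, combined with the commutation estimates from \cref{lem_equiv_slant_Loc_technical}. The arguments become pointwise in time, so nothing beyond what is done at each fixed $t\in[1,\infty)$ in the preceding paragraph is required. In order to extend functoriality from uniformly continuous coarse maps to continuous coarse maps (which is needed in the third bullet for $\alpha$ and in the second and third bullets for $\beta$), I would use the double-metric trick of \cref{rem_better_functoriality_structgroup}: the metric $d''=\max\{d,\alpha^\ast d'\}$ is $G$-invariant whenever $\alpha$ is $G$-equivariant, so the factorisation $\alpha=\alpha''\circ(\alpha')^{-1}$ through $X''$ lives in the equivariant category, and likewise for $\beta$. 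For the second bullet, naturality under equivariant proper continuous maps $\alpha$ is built into the identification $\K_\ast(\Loc[G](X))\cong\K_\ast^G(X)$; alternatively one can deduce this bullet from \cref{thm_equiKHomSlantCompatible} combined with naturality of the equivariant co-assembly map and of the fourth-bullet slant product, observing that $Y$ then has continuously bounded geometry if needed.

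The fourth bullet is established by the same pentagon argument with $\Psi_\mu$ replaced by $\Upsilon_{\LSym}$; here no crossed-product quotient intervenes, since all crossed products of $\Cz(Y,\Kom)$ by the proper $H$-action agree (\cref{label_sec_factorization_assembly_coassembly}), so no exactness issue arises. The main technical obstacle throughout is the verification that the norm estimate in the pentagon commutativity carries over to elements of the crossed product $\sHigCorRed Y'\rtimes_\mu H$; however, because we only need to show that a difference lands in the ideal characterised by local compactness and vanishing propagation, it suffices to carry out the estimate on the dense subset of generators $f\delta_h$ with $f\in\sHigComRed Y'$ and $h\in H$, which reduces everything to the non-equivariant case combined with the commutation properties between $\tilde u_H(h)$ and the remaining factors.
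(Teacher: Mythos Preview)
Your proposal is correct and follows essentially the same approach as the paper, which itself gives only a one-sentence sketch: implement equivariance into the proofs of \cref{thm_coarseslantfunctorial} and \cref{thm_LocSlantNatural}, and handle the fourth bullet by replacing $\Psi_{\LSym}$ with $\Upsilon_{\LSym}$. You have fleshed out the details appropriately, including the pentagon diagram with the extra $\elltwo(H)$ tensor factor and the observation that the group unitaries $\tilde u_H(h)$ factor out harmlessly, and your use of the equivariant double-metric trick from \cref{rem_better_functoriality_structgroup} to upgrade from uniformly continuous coarse maps to continuous coarse maps is exactly what the paper intends.
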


Finally, using equivariant functoriality we get the same coarsification results as in \cref{sec_compatibility_Rips}. More precisely, given any proper metric space $X$ equipped with a proper isometric action by a countable discrete group $G$, it is possible to choose the discrete coarsely equivalent subspace $X'\subset X$ to be $G$-invariant. Then each Rips complex $P_RX'$ inherits a canonical proper isometric $G$-action such that all the inclusion maps $P_RX'\to P_SX'$ for $R\leq S$ are equivariant continuous coarse equivalences.

Then we can define the equivariant coarse $\K$-homology and the equivariant coarse structure group as
\begin{align*}
\KX_*^G(X)&\coloneqq \varinjlim_{R\geq 0}\K_*^G(P_RX')
&\text{and}&
&\SX_*^G(X)&\coloneqq \varinjlim_{R\geq 0}\Strg_*^G(P_RX')
\end{align*}
and similarily there is also a equivariant coarse $\K$-theory which fits into a Milnor-$\varprojlim^1$-sequence
\[0\to{\varprojlim_{R\geq 0}}^1\K^{*+1}_G(P_RX')\to \KX^*_G(X)\to \varprojlim_{R\geq 0}\K^*_G(P_RX')\to 0\,.\]
All of the above groups are functorial under equivariant coarse maps in the obvious way and the Milnor-$\varprojlim^1$-sequence is natural.

Again, the construction does not yield anything new for the $\K_*(\Roe[G]X)$ and $\K_*(\sHigCorRed X\rtimes_\mu G)$. There is a natural equivariant coarsified version of the Higson--Roe sequence
\begin{equation*}
\cdots\to\K_{*+1}(\Roe[G] X)\to \SX_*^G(X)\to \KX_*^G(X)\xrightarrow{\mu_G} \K_*(\Roe[G] X)\to\cdots
\end{equation*}
and also a natural equivariant coarse co-assembly map
\[\mu^*_G\colon\K_{1-*}(\sHigCorRed X\rtimes_\mu G)\to \KX^*_G(X)\,.\]

Furthermore, the map $X\to P_{2R}X'$ defined in \eqref{eq_into_Rips} can also be assumed to be equivariant by using an equivariant partition of unity $\{\varphi_{x'}\}_{x'\in X}$, i.\,e.\ one for which $\varphi_{gx'}(gx)=\varphi_{x'}(x)$ for all $x\in X$, $x'\in X'$ and $g\in G$, and it is unique up to equivariant homotopy equivalence which stays close to the identity. Therefore we have canonical equivariant coarsification maps
\begin{align*}
\coarsify_G\colon \K_*^G(X)&\to \KX_*^G(X)
&\text{and}&
&\coarsify_G\colon\Strg_*^G(X)&\to\SX_*^G(X)
\end{align*}
and a canonical equivariant co-coarsification map
\[\coarsify^*_G\colon\KX_G^*(X)\to\K_G^*(X)\]
which are natural under equivariant continuous coarse maps and make the diagrams
\[\xymatrix@C=0ex{
\K_*^G(X)\ar[rr]^-{\mu_G}\ar[dr]_-{\coarsify_G}&&\K_*(\Roe[G] X)&\qquad&\K_{1-*}(\sHigCorRed X\rtimes_\mu G)\ar[rr]^-{\mu^*_G}\ar[dr]_-{\mu^*_G}&&\K^*_G(X)
\\&\KX_*^G(X)\ar[ur]_-{\mu_G}&&&&\KX^*_G(X)\ar[ur]_-{\coarsify^*_G}&
}\]
commute.

Inserting the equivariance into the constructions of \ref{subsec_equivcoarseifiedcrossslant} now gives rise to equivariant coarsified external and slant products
\begin{alignat*}{3}
\times\colon&& \KX_m^G(X)&\otimes \KX_n^H(Y)&&\to \KX_{m+n}^{G\times H}(X\times Y)
\\\times\colon&& \SX_m^G(X)&\otimes \KX_n^H(Y)&&\to \SX_{m+n}^{G\times H}(X\times Y)
\\/\colon&& \KX_p^{G\times H}(X\times Y)&\otimes \KX^q_H(Y)&&\to \KX_{p-q}^G(X)
\\/\colon&& \KX_p^{G\times H}(X\times Y)&\otimes \K_{1-q}(\sHigCorRed Y\rtimes_\mu H)&&\to \KX_{p-q}^G(X)
\\/\colon&& \SX_p^{G\times H}(X\times Y)&\otimes \K_{1-q}(\sHigCorRed Y\rtimes_\mu H)&&\to \SX_{p-q}^G(X)
\end{alignat*}
and pairings
\begin{alignat*}{3}
\langle-,-\rangle\colon&&\KX_p^H(Y)&\otimes\KX^q_H(Y)&\to&\K_{p-q}(\C)
\\\langle-,-\rangle\colon&&\KX_p^H(Y)&\otimes\K_{1-q}(\sHigCorRed Y\rtimes_\mu H)&\to&\K_{p-q}(\C)
\end{alignat*}
which are natural under pairs of equivariant coarse maps, compatible with the maps in the equivariant coarsified Higson--Roe sequence, the equivariant coarsification and co-coarsification maps and co-assembly.

Furthermore, if $Z$ is a third proper metric space of bounded geometry with a proper isometric action by a countable discrete group $K$, then taking first the external product with an element $z\in \KX_m^{H\times K}(Y\times Z)$ and then the slant product with an element $\theta\in K_{1-n}(\sHigCorRed Z\rtimes_\mu K)$ or $\theta\in \KX^n_K(Z)$ is equal to the external product with $z/\theta\in \KX_{m-n}^H(Y)$, and in particular if $Y=\{*\}$ is a one-point space trivially acted on by $H=1$, then this composition is equal to multiplication with $\langle z,\theta\rangle = \langle z,\mu^*(\theta)\rangle=\langle \mu(z),\theta\rangle$, if \(Z\) has continuously bounded geometry.

\subsection{Compatibility with the non-equivariant version}
\label{subsec_compatibility_equiv_nonequiv}
In this section, we describe two ways in which the equivariant slant products from \cref{sec_equiv_version} are compatible with the construction in \cref{sec_slant_products}.

To formulate the first type of compatibility concisely, we use the symbol \(\RoePlaceholder\) as a placeholder to denote either \(\Roe\), \(\Loc\) or \(\Locz\).
We consider the canonical forgetful map 
\[ \ForgetEquiv \colon \RoePlaceholder[G] X \to \RoePlaceholder X \]
which is the inclusion map if \(\RoePlaceholder[G] X\) is defined on a given \((X, G)\)-module and \(\RoePlaceholder X\) is defined on the underlying \(X\)-module.
On the level of \(\K\)-theory, this induces a map
\[
  \ForgetEquiv_\ast \colon \HRPlaceholder_\ast^G(X) \to \HRPlaceholder_\ast(X),
\]
where we use the notation from \cref{rem_HRPlaceholder}.

Recall that in \cref{sec_slant_products} we have constructed slant products
\[
\\/ \colon \HRPlaceholder_p(X \times Y) \otimes \K_{1-q}(\sHigCorRed Y) \to \HRPlaceholder_{p-q}(X)
\]
and from \cref{sec_equiv_version} we have
\[
	\\/ \colon \HRPlaceholder_p^{G \times H}(X \times Y) \otimes \K_{1-q}(\sHigCorRed Y \rtimes_{\mu} H) \to \HRPlaceholder_{p-q}^G(X).
\]
Moreover, there is the \(\ast\)-homomorphism
\[
\iota \colon \sHigCorRed Y \to \sHigCorRed Y \rtimes_{\mu} H\,,\quad f \mapsto f \delta_e\,,
\]
which is well-defined because $H$ is discrete.
Then we have the following observation:
\begin{prop}\label{lem_compatibility_equiv_nonequiv}
Let $x \in \HRPlaceholder_{p}^{G \times H}(X \times Y)$ and $y \in \K_{1-q}(\sHigCorRed Y)$.
Then
\[
\ForgetEquiv_\ast(x / \iota_\ast(y)) = \ForgetEquiv_\ast(x) / y \in \HRPlaceholder_{p-q}(X).
\]
\end{prop}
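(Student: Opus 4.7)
The strategy is to show that the slant products arise as the $\K$-theoretic shadow of a commuting square of $\ast$-homomorphisms, modulo harmonizing the choice of ample modules.

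First I would align modules. The equivariant construction uses $\tilde H \coloneqq H_X \otimes \elltwo(H) \otimes H_Y \otimes \elltwo$ as the ample $X$-$G$-module underlying $\Roe[G](\tilde\rho_X)$, with $H_X \otimes \elltwo(H) \otimes H_Y$ serving as the ample $(X\times Y)$-$(G\times H)$-module on which $\Roe[G\times H](\rho_{X\times Y})$ is realized. Since $\K$-theory of Roe and (co-)localization algebras is independent of the choice of ample module up to canonical isomorphism, I am free to choose the \emph{non-equivariant} ample $X$-module to be $H_X \otimes \elltwo(H)$ and the non-equivariant ample $(X\times Y)$-module to be $H_X \otimes \elltwo(H) \otimes H_Y$. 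With these choices, the non-equivariant $\tilde\rho_X$ of \cref{subsubsec_slant_Roe} coincides on the nose with the equivariant $\tilde\rho_X$ of \cref{sec_slant_equiv_Roe_algebra}, and the forgetful map $\ForgetEquiv$ is simply the inclusion of equivariant operators into all operators at every relevant stage (for $\Roe$, $\Fipro$, $\Loc$, $\Locz$, and their quotients).

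Next, the key pointwise observation: since $\tilde u_H(e) = \id$, one has $(\tilde\rho_Y \rtimes \tilde u_H)(f\delta_e) = \tilde\rho_Y(f)$ for every $f \in \sHigComRed Y$. Therefore, on an elementary tensor $S \otimes [f]$, the composition $\Psi_\mu \circ (\id \otimes \iota)$ yields the class $[\tau(S) \circ \tilde\rho_Y(f)]$ in $\Fipro[G](\tilde\rho_X)/\Roe[G](\tilde\rho_X)$, while $\Psi \circ (\ForgetEquiv \otimes \id)$ yields the class of the \emph{same} operator $\tau(\ForgetEquiv(S)) \circ \tilde\rho_Y(f)$ in $\Fipro(\tilde\rho_X)/\Roe(\tilde\rho_X)$. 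Hence the diagram
\[
\xymatrix@C=1.5em{
\Roe[G\times H](\rho_{X\times Y}) \tensmax \sHigCorRed Y \ar[r]^-{\id \otimes \iota} \ar[d]_-{\ForgetEquiv \otimes \id} & \Roe[G\times H](\rho_{X\times Y}) \tensmax (\sHigCorRed Y \rtimes_\mu H) \ar[d]^-{\Psi_\mu} \\
\Roe(\rho_{X\times Y}) \tensmax \sHigCorRed Y \ar[d]_-{\Psi} & \Fipro[G](\tilde\rho_X)/\Roe[G](\tilde\rho_X) \ar[d]^-{\ForgetEquiv} \\
\Fipro(\tilde\rho_X)/\Roe(\tilde\rho_X) \ar@{=}[r] & \Fipro(\tilde\rho_X)/\Roe(\tilde\rho_X)
}
\]
commutes as a diagram of $\ast$-homomorphisms. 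Completely analogous squares are obtained in the localization algebra setting by replacing $\Psi$, $\Psi_\mu$ by $\Psi_\LSym$, $\Psi_{\LSym,\mu}$ (respectively $\Psi_{\LSym,0}$, $\Psi_{\LSym,0,\mu}$), which treats the cases $\HRPlaceholder=\K$ and $\HRPlaceholder=\Strg$.

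Finally, I would pass to $\K$-theory. Naturality of the external product $\boxtimes$ in $\K$-theory under $\ast$-homomorphisms, combined with naturality of the boundary map for the short exact sequences $0 \to \Roe[G](\tilde\rho_X) \to \Fipro[G](\tilde\rho_X) \to \Fipro[G](\tilde\rho_X)/\Roe[G](\tilde\rho_X) \to 0$ and its non-equivariant counterpart under the forgetful morphism between them, then yields exactly the identity $\ForgetEquiv_*(x/\iota_*(y)) = \ForgetEquiv_*(x)/y$. The sign $(-1)^p$ entering the definitions of both slant products (\cref{defn_coarseslant,defn_localizationslant,defn_equiv_localizationslant}) is the same on both sides and thus cancels. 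The only real subtlety is the bookkeeping with ample modules; everything else is naturality.
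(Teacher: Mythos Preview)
Your proposal is correct and is precisely a careful unpacking of what the paper means by ``follows directly by comparing definitions'': you align the ample modules so that the equivariant and non-equivariant $\tilde\rho_X$, $\tilde\rho_Y$ coincide, observe that $(\tilde\rho_Y\rtimes\tilde u_H)\circ\iota = \tilde\rho_Y$ because $\tilde u_H(e)=\id$, and then invoke naturality of $\boxtimes$ and of the boundary map. The paper's one-line proof and your argument are the same in substance; you have simply made the bookkeeping explicit.
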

\begin{proof}
Follows directly by comparing definitions.
\end{proof}

The second and more intricate type of compatibility deals with the case of a free action.
Indeed, if  $G$ acts freely on $X$, then there are canonical induction isomorphisms for \(\K\)-theory
\begin{equation}
  \mathsf{Z}_X \colon \K^\ast_G(X) \xrightarrow{\cong} \K^\ast(G\backslash X),
  \label{eq:KTheoryIndIso}
\end{equation}
and \(\K\)-homology
\begin{equation}
  \mathsf{I}_X \colon \K_\ast^G(X) \xrightarrow{\cong} \K_{\ast}(G \backslash X).
  \label{eq:LocIndIso}
\end{equation}

The isomorphism \labelcref{eq:KTheoryIndIso} follows from a canonical Morita equivalence given by Green's imprimitivity theorem, see~\cite[Corollary~4.11]{WilliamsCrossedProducts} for a textbook reference.
For \labelcref{eq:LocIndIso} see \cite[Theorem~6.5.15]{WillettYuHigherIndexTheory}.
The main result of this subsection is the following comparison theorem.
\begin{thm}\label{thm_equivariantComparison}
    Let \(X\) and \(Y\) be proper metric spaces which are endowed with proper and free actions of countable discrete groups \(G\) and \(H\), respectively.
  Then the equivariant slant product on \(\K\)-homology agrees with the usual slant product on the corresponding quotient spaces up to the isomorphisms \labelcref{eq:KTheoryIndIso,eq:LocIndIso}.
  In other words, the following diagram commutes.
  \[
  \begin{tikzcd}
      \K_p^{G \times H}(X \times Y) \otimes \K^q_H(Y) \ar[r, "\\/"] \ar[d, "\mathsf{I}_{X \times Y} \otimes \mathsf{Z}_Y"]
        & \K_{p-q}^G(X) \ar[d, "\mathsf{I}_X"] \\
      \K_p(G \backslash X \times H \backslash Y) \otimes \K^q(H \backslash Y) \ar[r, "\\/"]
        & \K_{p-q}(G \backslash X)
  \end{tikzcd}
  \]
\end{thm}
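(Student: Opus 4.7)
The plan is to implement the three induction isomorphisms as explicit Morita equivalences at the \(\Cstar\)-algebraic level and then to verify that these Morita equivalences intertwine the defining \(\ast\)-homomorphisms of the equivariant and non-equivariant slant products. First I would specialize the ample modules to a convenient form. Since \(G\) acts freely and properly on \(X\), one can pick an ample \(G\backslash X\)-module \((H_{G\backslash X}, \rho_{G\backslash X})\) and take \(H_X \coloneqq \elltwo(G) \otimes H_{G\backslash X}\), with \(\rho_X\) induced by a measurable section \(G\backslash X \to X\) and \(u_G\) acting by the left-regular representation on the \(\elltwo(G)\)-factor. With this choice, compression by the rank-one projection \(p_e \in \Kom(\elltwo(G))\) onto \(\delta_e\) provides a canonical Morita equivalence \(\Loc[G](\rho_X) \sim \Loc(\rho_{G \backslash X})\) inducing \(\mathsf{I}_X\) on \(\K\)-theory (cf.~\cite[Theorem~6.5.15]{WillettYuHigherIndexTheory}). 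An analogous choice \(H_Y = \elltwo(H) \otimes H_{H\backslash Y}\) and the tensor product module for \(X \times Y\) give the corresponding description of \(\mathsf{I}_{X \times Y}\), while Green's imprimitivity bimodule \(\Cz(Y)\), tensored with \(\Kom\), implements \(\mathsf{Z}_Y\) as a Morita equivalence \(\Cz(Y, \Kom) \rtimes H \sim \Cz(H \backslash Y, \Kom)\).

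Next I would fit everything into a single commutative diagram of \(\ast\)-homomorphisms. By \cref{def_topolequislant}, \cref{lem_alternativeKhomslant} and the construction of \(\Upsilon_{\LSym}\) from \cref{sec_recovering_usual_slant_prod}, both slant products are induced by \(\ast\)-homomorphisms
\begin{align*}
\Upsilon_{\LSym}^{\mathrm{eq}} \colon \Loc[G \times H](\rho_{X \times Y}) \otimes \Cz(Y, \Kom) \rtimes H &\to \Loc[G](\tilde\rho_X) / \Cz([1,\infty), \Roe[G](\tilde\rho_X)), \\
\Upsilon_{\LSym}^{\mathrm{q}} \colon \Loc(\rho_{G \backslash X \times H \backslash Y}) \otimes \Cz(H \backslash Y, \Kom) &\to \Loc(\tilde\rho_{G \backslash X}) / \Cz([1,\infty), \Roe(\tilde\rho_{G \backslash X})),
\end{align*}
sending \((T_t) \otimes f\delta_h \mapsto [\tau(T_t) \circ \tilde\rho_Y(f) \circ \tilde u_H(h)]\) and \((T_t) \otimes f \mapsto [\tau(T_t) \circ \tilde\rho_Y(f)]\), respectively. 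Simultaneous compression by \(p_e \otimes p_e\) on the \(\elltwo(G) \otimes \elltwo(H)\)-factors will then produce a square whose vertical arrows realize \(\mathsf{I}_{X \times Y} \otimes \mathsf{Z}_Y\) and \(\mathsf{I}_X\) on \(\K\)-theory, and whose horizontal arrows are \(\Upsilon_{\LSym}^{\mathrm{eq}}\) and \(\Upsilon_{\LSym}^{\mathrm{q}}\). Passing to \(\K\)-theory and combining with the alternative description of the topological slant product in \cref{lem_alternativeKhomslant} will then yield the theorem.

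The main technical obstacle is the compatibility check at the level of generators of \(\Upsilon_{\LSym}^{\mathrm{eq}}\). The crucial observation is that \(\tilde u_H(h)\) acts by left translation by \(h\) on the \(\elltwo(H)\)-factor of \(\tilde H_X\), so that compression by \(p_e\) annihilates all contributions with \(h \neq e\); dually, Green's imprimitivity bimodule identifies a class \([f \delta_h] \in \K_{-q}(\Cz(Y) \rtimes H)\) with the class on the quotient whose fiberwise averaging recovers \(f \delta_h\), and the two operations---spatial compression on one side and Morita identification on the other---agree by a short matrix-coefficient computation. Once this is verified on a dense subset of the domain, continuity extends the compatibility to all of \(\Loc[G \times H](\rho_{X \times Y}) \otimes \Cz(Y, \Kom) \rtimes H\), and the desired commutativity in \(\K\)-theory follows. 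Some additional care is needed to check that the coarse/localization-algebra bookkeeping (i.e.\ that the relevant compressions map \(\Loc[G](\tilde\rho_X)/\Cz([1,\infty),\Roe[G](\tilde\rho_X))\) to the non-equivariant quotient) interacts well with the $p_e \otimes p_e$-compression; this reduces, as usual, to a propagation estimate analogous to \cref{lem_propagtioncomparision}.
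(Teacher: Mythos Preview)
Your strategy---realize the induction isomorphisms concretely and then check compatibility with the defining $*$-homomorphisms $\Upsilon_{\LSym}$---is the same as the paper's, but the implementation you sketch hides the main difficulty. The map $T\mapsto p_e T p_e$ is not a $*$-homomorphism from $\Loc[G](\rho_X)$ to $\Loc(\rho_{G\backslash X})$: writing a $G$-equivariant $T$ on $\ell^2(G)\otimes H_{G\backslash X}$ as $\sum_g \rho_g\otimes T_g$ (with $\rho_g$ right translation), the compression picks out $T_e$, and multiplicativity fails by the cross-terms. These are controlled by propagation only over open sets where $X\to G\backslash X$ trivializes; a measurable section does not respect the topology, so even as $\propagation(T_t)\to 0$ the off-diagonal pieces need not vanish globally in norm. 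The same applies on the $Y$-side: the Morita equivalence $\mathsf{Z}_Y$ does not ``annihilate the $h\neq e$ contributions'' but reorganizes them into compact operators on the bimodule, and matching this with a spatial compression is precisely the subtle point, not a short afterthought.

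The paper handles this by working with the abstract imprimitivity bimodules throughout. It realizes $\mathsf{I}_X$ by a notion of \emph{lift/pushdown}: an asymptotic intertwining condition $\mathfrak T_z S_t\sim T_t\mathfrak T_z$ for all $z\in\mathsf{Z}_X$, which becomes a $*$-isomorphism only after passing to the quotient by an ideal $\LocVanish[G]$ with trivial $\K$-theory; existence of lifts and pushdowns is proved by patching local trivializations with an $\ell^2$-partition of unity (this is where your $p_e$-picture is correct, but only locally). Since $\mathsf{Z}_Y$ replaces the standard Hilbert $\Cz(Y)\rtimes H$-module by a different one, the paper must also extend $\Upsilon_{\LSym}$ to arbitrary full modules $E$ via a further asymptotic device it calls a \emph{connection} (there is no global $\tau$-map for general $E$). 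The actual comparison is then a computation in terms of the creation operators $\mathfrak T_z$, $\mathfrak T_w$, $\mathfrak T_e$ showing that a connection for a pushdown, composed with $\kappa_E(\phi(K))$, is itself a pushdown of the connection composed with $\kappa_E(K)$. Your ``short matrix-coefficient computation'', made rigorous for non-trivial principal bundles, would become exactly this argument.
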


In view of \cref{lem_alternativeKhomslant} and our definitions of equivariant \(\K\)-homology and \(\K\)-theory, \cref{thm_equivariantComparison} is equivalent to the following technical proposition.

\begin{prop}\label{thm_equivariantComparison_technical}
  Suppose we are in the setup of \cref{thm_equivariantComparison}.
  Then the equivariant slant product from \cref{def_topolequislant} agrees with the description of the slant product in \cref{lem_alternativeKhomslant} up to the isomorphisms \labelcref{eq:KTheoryIndIso,eq:LocIndIso}.
  In other words, the following diagram commutes.
  \[
  \begin{tikzcd}
      \K_p(\Loc[G \times H](X \times Y)) \otimes \K_{-q}(\Cz(Y) \rtimes H) \ar[r, "\text{\labelcref{def_topolequislant}}"] \ar[d, "\mathsf{I}_{X \times Y} \otimes \mathsf{Z}_Y"]
        & \K_{p-q}(\Loc[G]X) \ar[d, "\mathsf{I}_X"] \\
      \K_p(\Loc(G \backslash X \times H \backslash Y)) \otimes \K_{-q}(\Cz(H \backslash Y)) \ar[r, "\text{\labelcref{lem_alternativeKhomslant}}"]
        & \K_{p-q}(\Loc(G \backslash X))
  \end{tikzcd}
  \]
\end{prop}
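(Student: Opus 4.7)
The plan is to implement the three induction isomorphisms $\mathsf{I}_{X\times Y}$, $\mathsf{Z}_Y$ and $\mathsf{I}_X$ by explicit $\ast$-homomorphisms of \textCstar-algebras, and then to verify that the two appearances of $\Upsilon_{\LSym}$---the equivariant one from \cref{def_topolequislant} and the non-equivariant one from the proof of \cref{lem_alternativeKhomslant}---fit into a commuting square of \textCstar-algebras. Once this is done, the desired commutativity of the K-theoretic diagram follows by applying K-theory throughout and invoking naturality of the external product and of the connecting map $\K_\ast(\Loc(\tilde\rho)/\Cz([1,\infty),\Roe(\tilde\rho))) \cong \K_\ast(\Loc(\tilde\rho))$ that enters the definition of both slant products.

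To set up the first step, I would exploit the freeness of the $G$- and $H$-actions to model ample equivariant modules as induced modules: fix ample modules $H_{G\backslash X}$, $H_{H\backslash Y}$ of the quotient spaces and work with $H_X \coloneqq \elltwo(G) \otimes H_{G\backslash X}$ and $H_Y \coloneqq \elltwo(H) \otimes H_{H\backslash Y}$, where the covariant structures are given by the left regular representations on the $\elltwo$-factors tensored with the pull-backs of the quotient representations. With these choices, compression by the projection onto the $\delta_e$-subspace in $\elltwo(G)$ realizes a $\ast$-homomorphism $\Loc[G](\rho_X) \to \Loc(\rho_{G\backslash X})$ whose effect on K-theory is precisely the Willett--Yu induction isomorphism $\mathsf{I}_X$; an analogous construction for $X \times Y$ implements $\mathsf{I}_{X\times Y}$. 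In parallel, Green's imprimitivity theorem---realized concretely via Fell's absorption principle---identifies $\Cz(Y,\Kom) \rtimes_{\red} H$ with a stabilization of $\Cz(H \backslash Y, \Kom)$ through compression onto the $\delta_e$-subspace of $\elltwo(H)$, thereby implementing $\mathsf{Z}_Y$. The second step is then to verify that the map $\Upsilon_{\LSym}$ in \cref{def_topolequislant} is intertwined with that of \cref{lem_alternativeKhomslant} under these compressions, which reduces to checking that for $T_t \in \Loc[G \times H](\rho_{X \times Y})$ and $f \delta_h \in \Cz(Y, \Kom) \rtimes_{\red} H$, the operator $\tau(T_t) \circ (\tilde\rho_Y \rtimes \tilde u_H)(f \delta_h)$ on the equivariant $\tilde H_X$ restricts, on the joint $\delta_e$-subspace in the $\elltwo(G)$ and $\elltwo(H)$ factors, to $\tau(\bar T_t) \circ \tilde\rho_{H \backslash Y}(\bar f)$ on the non-equivariant $\tilde H_{G \backslash X}$, where $\bar T_t$ and $\bar f$ denote the compressions of $T_t$ and $f \delta_h$ from the first step.

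The main obstacle I anticipate lies in the bookkeeping for this last identification. The equivariant $\tilde H_X = H_X \otimes \elltwo(H) \otimes H_Y \otimes \elltwo$ contains two independent $\elltwo(H)$-factors---one from the decomposition $H_Y = \elltwo(H) \otimes H_{H \backslash Y}$ and one introduced by the Fell absorption used in $\tilde u_H = \id_{H_X} \otimes \lambda_H \otimes u_H \otimes \id_{\elltwo}$---together with an $\elltwo(G)$-factor from $H_X$, and the compression has to be performed simultaneously on all three. The Fell absorption factor is crucial here: it is precisely what ensures that the covariant pair $(\tilde\rho_Y, \tilde u_H)$ defining $\sHigComRed Y \rtimes_{\red} H \hookrightarrow \cB(\tilde H_X)$ interacts with the imprimitivity compression in such a way that the crossed-product representation of $\Cz(Y, \Kom) \rtimes_{\red} H$ collapses to the canonical representation of $\Cz(H \backslash Y, \Kom)$ on $\tilde H_{G \backslash X}$. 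Once this identification is transparently established, the remainder of the argument is formal.
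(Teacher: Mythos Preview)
Your plan has the right architecture but contains a concrete error that would block the argument as written. The claim that compression onto the $\delta_e$-subspace of $\elltwo(G)$ yields a $\ast$-homomorphism $\Loc[G](\rho_X)\to\Loc(\rho_{G\backslash X})$ is false: for $G$-equivariant operators on $\elltwo(G)\otimes H_{G\backslash X}$ the $(e,e)$-block of a product is $\sum_{g}S_{e,g}T_{g,e}$, not $S_{e,e}T_{e,e}$. The off-diagonal blocks only vanish once the propagation drops below the local injectivity radius of the covering, and since the action is not assumed cocompact this radius need not be bounded below. Thus compression is at best \emph{asymptotically} multiplicative, and the map only becomes a $\ast$-homomorphism after passing to the quotient $\Loc[G]/\LocVanish[G]$ (or $\Loc[G]/\Cz([1,\infty),\Roe[G])$). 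A related looseness is the phrase ``pull-backs of the quotient representations'': there is no canonical representation of $\Cz(X)$ on $H_{G\backslash X}$; a global identification $H_X\cong\elltwo(G)\otimes H_{G\backslash X}$ requires a Borel section of $X\to G\backslash X$, and such a section does not interact well with propagation estimates. Making your compression idea rigorous therefore forces you back to local trivializations and partitions of unity.

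The paper's proof takes a different and more intrinsic route that avoids these pitfalls. Rather than choosing any trivialization, it works directly with the imprimitivity bimodule $\mathsf{Z}_X$ and realizes $\mathsf{I}_X$ via an explicit notion of \emph{lift/pushdown} (an asymptotic intertwining condition with the creation operators $\mathfrak{T}_z\colon\xi\mapsto z\otimes\xi$), which yields a genuine $\ast$-isomorphism only on $\Loc[G]/\LocVanish[G]$. More importantly, it introduces a second ingredient you do not have: a generalization of $\Upsilon_{\LSym}$ to an arbitrary full Hilbert $\Cz(Y)\rtimes H$-module $E$ via a Kasparov-style notion of \emph{connection}, so that the Morita equivalence $E\mapsto E\otimes_{\Cz(Y)\rtimes H}\mathsf{Z}_Y$ can be applied inside the slant product. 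The proof then reduces to a single computation: if $(S_t)$ is a pushdown of $(T_t)$ and $(F_t),(G_t)$ are connections for $(T_t),(S_t)$ respectively, then $G_t\circ\kappa(\phi(K))$ is a pushdown of $F_t\circ\kappa(K)$. Your compression picture would, once repaired, essentially reproduce the lift/pushdown construction in the special case of a global Borel trivialization, but it has no analogue of the connection machinery, and the ``restricts on the joint $\delta_e$-subspace'' check you propose would only hold asymptotically---which is precisely what the paper's framework is built to formalize.
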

To prove \cref{thm_equivariantComparison_technical}, we need an explicit description of the induction isomorphisms \labelcref{eq:KTheoryIndIso,eq:LocIndIso}.
Start with an explicit Morita equivalence which implements \labelcref{eq:KTheoryIndIso}.
We exhibit the construction from \cite[Corollary~4.11]{WilliamsCrossedProducts} for convenience of the reader.
The expression
\[
  \langle f \mid f^\prime \rangle_{\Cz(G \backslash X)}(G x) \coloneqq \sum_{g \in G} \overline{f(g^{-1} y)} f^\prime(g^{-1} y),\quad f, f^\prime \in \Cc(X),
\]
yields a (right) inner product on \(\Cc(X)\) with values in \(\Cc(G \backslash X)\).
Moreover, \(\Cc(G \backslash X)\) acts on \(\Cc(X)\) from the right by
\[
  (f \cdot g)(x) = f(x) g(G x),\quad f \in \Cc(X), g \in \Cc(G \backslash X).
\]
Similarly, there is a left inner product on \(\Cc(X)\) with values in \(\Cc(X)\rtimes_{\mathrm{alg}} G\) defined by
\[
  \prescript{}{\Cz(X)\rtimes G}{\langle} f^\prime \mid f \rangle \coloneqq \sum_{g \in G} f^\prime \ \overline{g \cdot f}\ \delta_g,\quad f^\prime, f  \in \Cc(X).
\]
and the crossed product \(\Cc(X)\rtimes_{\mathrm{alg}} G\) acts from the left on \(\Cc(X)\) by
\[
  \left( \sum_{g \in G} f_g\ \delta_g \right) \cdot f = \sum_{g \in G} f_g (g \cdot f). 
\]
The right action of \(\Cc(G \backslash X)\) commutes with the left action of \(\Cc(X) \rtimes_{\mathrm{alg}} G\).
Simultaneously completing \(\Cc(X)\) and the coefficient algebras, we obtain an \(\Cz(X) \rtimes G\)-\(\Cz(G \backslash X)\)-Morita equivalence bimodule which we denote by \(\mathsf{Z}_X\).
The isomorphism \labelcref{eq:KTheoryIndIso} is given by the \(\ast\)-isomorphism
\[
  \phi \colon \Cz(X) \rtimes G \xrightarrow{\cong} \Kom_{\Cz(G \backslash X)}(\mathsf{Z}_X)
\]
that is induced by the left action.

We denote the conjugate module of \(\mathsf{Z}_X\) by \(\bar{\mathsf{Z}}_X\).
Then we have canonical identifications
\begin{equation}
\mathsf{Z}_Y \otimes_{\Cz(G \backslash X)} \bar{\mathsf{Z}}_X \cong \Kom_{\Cz(G \backslash X)}(\mathsf{Z}_X) \cong \Cz(X) \rtimes G, \label{eq:MoritaInverse1}
\end{equation}
see \cite[Corollary~8.2.15]{BlecherLeMerdyOperatorAlgebras} for the first isomorphism.
Similarly, 
\begin{equation}
  \bar{\mathsf{Z}}_X \otimes_{\Cz(X) \rtimes G} \mathsf{Z}_X \cong \Kom_{\Cz(X) \rtimes G}(\bar{\mathsf{Z}}_X) \cong \Cz(G \backslash X). \label{eq:MoritaInverse2}
\end{equation}

Next, we describe the isomorphism \(\labelcref{eq:LocIndIso}\) in a way that is compatible with the above Morita equivalence.
If we start with an ample \(G \backslash X\)-module \(H_{G \backslash X}\), then \(\mathsf{Z}_X \otimes_{\Cz(G \backslash X)} H_{G \backslash X}\) is an ample \((X,G)\)-module.\footnote{The proof that \(H_X\) constructed in this way is ample can be reduced to the trivial product situation \(X = G \times \underbar{X}\), compare the proof of \cref{lem_lifts}.}
Similarly, if \(H_X\) is an ample \((X, G)\)-module, then \(\bar{\mathsf{Z}}_X \otimes_{\Cz(X) \rtimes G} H_X\) is an ample \(G \backslash X\)-module.
Moreover, by \labelcref{eq:MoritaInverse1,eq:MoritaInverse2} these constructions are mutually inverse up to canonical isomorphisms.
Hence we will assume that the representations on \(H_X\) and \(H_{G \backslash X}\) have been chosen in such a way that we can identify \(H_X = \mathsf{Z}_X \otimes_{\Cz(G \backslash X)} H_{G \backslash X}\) and thus \(H_{G \backslash X} = \bar{\mathsf{Z}}_X \otimes_{\Cz(X) \rtimes G} H_X\).
We introduce the following auxilliary concept to describe \labelcref{eq:LocIndIso}.
\begin{defn}\label{defn_LocIndIso}
  Let \((S_t) \in \Loc(G \backslash X)\) and \((T_t) \in \Loc[G](X)\).
  We say that \((T_t)\) is a \emph{lift} of \((S_t)\) \parensup{or, alternatively, \((S_t)\) is \emph{pushdown} of \((T_t)\)} if for each \(z \in \mathsf{Z}_X\) we have 
  \begin{align*}
    (\mathfrak{T}_z \circ S_t - T_t \circ \mathfrak{T}_z)_{t \in [1, \infty)} &\in \Cz([1, \infty), \Lin(H_{G \backslash X}, H_X)), \\
    (S_t \circ \mathfrak{T}_z^\ast - \mathfrak{T}_z^\ast \circ T_t)_{t \in [1, \infty)}  &\in \Cz([1, \infty), \Lin(H_X, H_{G \backslash X})),
  \end{align*}
  where
  \begin{align*}
    \mathfrak{T}_z \colon H_{G \backslash X} &\to H_X = \mathsf{Z}_X \otimes_{\Cz(G \backslash X)} H_{G \backslash X},
    \quad \xi \mapsto z \otimes \xi. \qedhere
  \end{align*}
\end{defn}
\begin{rem}[Notation]
  To work with the defining conditions of lifts and pushdowns more conveniently, we will use the following notation:
  For two families of linear operators \((T_t)_{t \in [1, \infty)}\) and \((\tilde{T}_t)_{t \in [1, \infty)}\) with the same domain and target space, we write \(T_t \sim \tilde{T}_t\) if \(T_t - \tilde{T}_t \to 0\) in operator norm.
  Then the conditions from \cref{defn_LocIndIso} are equivalent to \(\mathfrak{T}_z \circ S_t \sim T_t \circ \mathfrak{T}_z\) and \(S_t \circ \mathfrak{T}_z^\ast \sim \mathfrak{T}_z^\ast \circ T_t\).
\end{rem}
In our constructions below, we will use the following ideal in the localization algebra.
\[\mathclap{
\LocVanish[G](X) \coloneqq \{ (T_t) \in \Loc[G](X) \mid \forall f \in \Cz(X) \colon  \lim_{t \to \infty} T_t \rho_X(f) = \lim_{t \to \infty} \rho_X(f) T_t = 0\}
}\]
An Eilenberg swindle as in \cite[Lemma~6.4.11]{WillettYuHigherIndexTheory} shows that the \(\K\)-theory of \(\LocVanish[G](X)\) vanishes.
Hence the canonical projection \(\Loc[G](X) \to \Loc[G](X)/\LocVanish[G](X)\) induces an isomorphism on \(\K\)-theory.
\begin{lem}\ \label{lem_lifts}
  \begin{myenumi}
    \item Every \((S_t) \in \Loc(G \backslash X)\) admits a lift and every \((T_t) \in \Loc[G](X)\) admits a pushdown.\label{item:LiftExists}
    \item Let \((T_t) \in \Loc[G](X)\) be a lift of \((S_t) \in \Loc(G \backslash X)\).
    Then we have \((T_t) \in \LocVanish[G](X)\) if and only if \((S_t) \in \LocVanish(G \backslash X)\).\label{item:LiftUnique}
  \end{myenumi}
\end{lem}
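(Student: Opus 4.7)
The plan is to construct explicit lifts and pushdowns via a $t$-dependent partition of unity subordinate to a cover of $G\backslash X$ by evenly covered sets, and then verify the defining asymptotic commutation relations via commutator estimates controlled by the vanishing propagation.

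Since $G$ acts freely and properly on $X$, let $\pi \colon X \to G \backslash X$ denote the quotient map. For each $t \geq 1$, I would choose a locally finite open cover $\{V_{t,i}\}$ of $G\backslash X$ by evenly covered sets of diameter at most $\epsilon_t$, with $\epsilon_t \to 0$ as $t \to \infty$, a subordinate partition of unity $\{\phi_{t,i}^2\}$, and fixed lifts $\tilde V_{t,i} \subset X$ of $V_{t,i}$. Transplanting $\phi_{t,i} \circ \pi$ to $\tilde V_{t,i}$ and extending by zero produces $\tilde\phi_{t,i} \in \Cc(X)$ with $\sum_{g \in G,\,i} g \cdot \tilde\phi_{t,i}^2 = 1$ pointwise on $X$. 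With this data I would define the candidate pushdown and lift by
\[
S_t := \sum_i \mathfrak{T}_{\tilde\phi_{t,i}}^* T_t \mathfrak{T}_{\tilde\phi_{t,i}}, \qquad T_t := \sum_{g \in G,\,i} g \cdot \mathfrak{T}_{\tilde\phi_{t,i}} S_t \mathfrak{T}_{\tilde\phi_{t,i}}^* \cdot g^{-1}.
\]
Local finiteness of the cover combined with properness of the action ensures strong convergence of both sums, $G$-equivariance of the lift is manifest from the averaging, and a direct propagation estimate---using that $\pi$ is a local isometry on each $\tilde V_{t,i}$ once $\epsilon_t$ is below the injectivity scale of the action---bounds the output propagation by the input propagation plus $2\epsilon_t$, hence it tends to zero.

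The main technical step is verifying the defining relation $T_t \mathfrak{T}_z \sim \mathfrak{T}_z S_t$ for $z \in \mathsf{Z}_X$. Reducing by density to $z \in \Cc(X)$ and expanding both sides via the partition-of-unity identity, the difference collapses to a finite sum of commutators of $T_t$ or $S_t$ with compactly supported continuous scalar functions on $X$ or $G\backslash X$; by \cref{lem:CommutatorVsPropagation} each such commutator has operator norm bounded by a constant times the variation of the function over balls of radius equal to the current propagation, so the vanishing propagation forces the commutators to vanish in norm. The adjoint relation $S_t \mathfrak{T}_z^* \sim \mathfrak{T}_z^* T_t$ follows by taking $\ast$, completing~\labelcref{item:LiftExists}.

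For \labelcref{item:LiftUnique}, the asymptotic commutation lets one transfer the $\LocVanish$ condition between the two sides. Suppose $T_t \in \LocVanish[G](X)$ and $f \in \Cc(G\backslash X)$. Decomposing $f = \sum_j f_j$ with each $f_j$ supported in an evenly covered chart, each factor can be written as $f_j = \langle z_j \mid z_j \rangle$ for a lift $z_j \in \Cc(X)$ of $\sqrt{f_j}$ to a single sheet, so $\mathfrak{T}_{z_j}^* \mathfrak{T}_{z_j} = \rho_{G \backslash X}(f_j)$ and hence
\[
\rho_{G\backslash X}(f) S_t = \sum_j \mathfrak{T}_{z_j}^* \mathfrak{T}_{z_j} S_t \sim \sum_j \mathfrak{T}_{z_j}^* T_t \mathfrak{T}_{z_j},
\]
which tends to zero in norm by the $\LocVanish$ hypothesis on $T_t$ together with the compactness of the supports of the $z_j$. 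The converse implication is symmetric, using that any compactly supported $h \in \Cc(X)$ is a finite sum of $G$-translates of terms of the form $|z|^2$ for $z \in \Cc(X)$ supported in an evenly covered chart. The hardest aspect of the whole argument is arranging the output propagation to genuinely vanish rather than being stuck at $\diam(\supp\phi)$ for any fixed $\phi$; this is precisely the reason for refining the partition with $t$.
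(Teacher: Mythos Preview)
Your overall strategy is the paper's: localize via an $\ell^2$-partition of unity over evenly covered charts, lift or push down chart by chart, and verify the defining relation $\mathfrak{T}_z S_t \sim T_t \mathfrak{T}_z$ by commutator estimates driven by the vanishing propagation. Part \labelcref{item:LiftUnique} is also essentially the paper's argument; the paper phrases it slightly more abstractly by using that elements of the form $\prescript{}{\Cz(X)\rtimes G}{\langle} z_1\varphi \mid z_2\rangle = \mathfrak{T}_{z_1\varphi}\mathfrak{T}_{z_2}^*$ densely span $\Cz(X)\rtimes G$ and then computing $fT_t \sim \mathfrak{T}_{z_1}\,\varphi S_t\,\mathfrak{T}_{z_2}^* \to 0$ directly from the lift relation, but your chart-by-chart version is equivalent.

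The one substantive deviation is your $t$-dependent partition, and it is both unnecessary and problematic. It is unnecessary because your propagation worry is misplaced: provided each chart $V_i$ is chosen small enough that the sheet $\tilde V_i \subset X$ maps \emph{isometrically} onto $V_i$ under $\pi$ (always achievable for a proper free isometric action, by taking $V_i$ of diameter less than a quarter of the local displacement $\min_{g\neq e}d_X(x,gx)$), each term $\mathfrak{T}_{g\tilde\phi_i} S_t \mathfrak{T}_{g\tilde\phi_i}^*$ has propagation in $X$ equal to the propagation of $\phi_i S_t \phi_i$ in $G\backslash X$, hence $\leq \propagation(S_t)$; summing does not increase propagation. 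So a \emph{fixed} cover already gives $\propagation(T_t)\to 0$---there is no ``$+\,2\epsilon_t$'' correction and nothing gets stuck at $\diam(\supp\phi)$. This is exactly what the paper does. Your $t$-dependence is problematic because you never address why the resulting map $t\mapsto T_t$ is \emph{uniformly continuous}, as membership in $\Loc[G](X)$ requires; with a partition chosen independently for each $t$ there is no reason for continuity at all.
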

\begin{proof}
\labelcref{item:LiftExists}
The argument here is essentially the same construction as in \cite[Construction~6.5.14]{WillettYuHigherIndexTheory}. Let \(U \subseteq G \backslash X\) be an open subset over which the canonical projection \(\pi \colon X \to G \backslash X\) is trivialized, that is, \(\pi^{-1}(U) \cong G \times U\).
Let \(\mathsf{Z}_{\pi^{-1}(U)}\) be the closure of \(\Cz(\pi^{-1}(U)) \cdot \mathsf{Z}_X\).
Note that this is at the same time the closure of \(\mathsf{Z}_X \cdot \Cz(U)\) and this module implements the Morita equivalence from \(\Cz(\pi^{-1}(U)) \rtimes G\) to  \(\Cz(U)\).
Since \(\pi^{-1}(U) \cong G \times U\), we have
\begin{equation}
  \mathsf{Z}_{\pi^{-1}(U)} \cong \ell^2(G) \otimes \Cz(U)
  \label{eq:trivialZX}
\end{equation}
and 
\begin{equation} 1_{\pi^{-1}(U)} H_X = \mathsf{Z}_{\pi^{-1}(U)} \otimes_{\Cz(U)} 1_U H_{G \backslash X} \cong \ell^2(G) \otimes 1_U H_{G \backslash X}  \label{eq:trivialHX}
\end{equation}
 
Suppose for the moment that \(S_t\) has support inside \(U\).
That is, there exists a continuous function \(\chi \colon G \backslash X \to [0,1]\) with \(\supp(\chi) \subseteq U\) such that \(\chi S_t = S_t \chi = S_t\).
Then set \(T_t \coloneqq 1_{\pi^{-1}(U)} (\id_{\ell^2(G)} \otimes S_t) 1_{\pi^{-1}(U)} \in \Loc[G](X)\), where we implicitly use \labelcref{eq:trivialHX}.
We claim that \(T_t\) is a lift of \(S_t\).
Let \(z \in \Cc(X) \subseteq \mathsf{Z}_X\).
Then, by construction, \(T_t \circ \mathfrak{T}_{z} = T_t \circ \mathfrak{T}_{(\chi \circ \pi) z}\), \(\mathfrak{T}_{z}^\ast \circ T_t= \mathfrak{T}_{(\chi \circ \pi) z}^\ast \circ T_t \), \(\mathfrak{T}_z \circ S_t = \mathfrak{T}_{(\chi \circ \pi) z} \circ S_t\) and \(S_t \circ \mathfrak{T}_z^\ast = S_t \circ \mathfrak{T}_{(\chi \circ \pi) z}^\ast\).
Hence it suffices to check the defining condition of a lift for \(z \in \Cc(\pi^{-1}(U)) \subseteq \mathsf{Z}_{\pi^{-1}(U)} \subseteq \mathsf{Z}_X\).
In view of \labelcref{eq:trivialZX}, we are then further reduced to checking it for elements of the form \(z = \delta_g \otimes \varphi\), where \(g \in G\), \(\varphi \in \Cc(U)\).
Then 
\[
  \mathfrak{T}_{z} \circ S_t - T_t \circ \mathfrak{T}_z = (\xi \mapsto \delta_g \otimes \underbrace{[\varphi, S_t]}_{\to 0} \xi) \to 0
\]
considered as a map \(H_{G \backslash X} \to \ell^2(G) \otimes 1_U H_{G \backslash X} \cong 1_{\pi^{-1}(U)}H_X \subseteq H_X\).
Similarly,
\[
  S_t \circ \mathfrak{T}_{z}^\ast - \mathfrak{T}_z^\ast \circ T_t = (\delta_\gamma \otimes \xi \mapsto \langle \delta_g|\delta_\gamma \rangle \underbrace{[S_t, \varphi]}_{\to 0} \xi) \circ 1_{\pi^{-1}(U)} \to 0,
\]
where the right-hand side is viewed as a composition 
\[H_X \xrightarrow{1_{\pi^{-1}(U)}} 1_{\pi^{-1}(U)}H_X \cong \ell^2(G) \otimes 1_U H_{G \backslash X} \to H_{G \backslash X}.
\]
This shows that \((T_t)\) is a lift of \((S_t)\).

In general, we take a locally finite covering \(\mathcal{U}\) of \(G \backslash X\) such that \(\pi \colon X \to G \backslash X\) is trivial over each \(U \in \mathcal{U}\).
Then choose an \(\ell^2\)-partition\footnote{That means that \((\phi_U^2)\) is a partition of unity.} of unity \((\phi_U)_{U \in \mathcal{U}}\) subordinate to \(\mathcal{U}\).
Then for each \(U \in \mathcal{U}\), the element \((\phi_U S_t \phi_U)\) is supported inside \(U\).
Take the corresponding lift which was constructed in the previous paragraph and denote it by \((T_t^U) \in \Loc[G](X)\).
Then define \(T_t \coloneqq \sum_{U \in \mathcal{U}} T_t^{U}\).
This is a strongly converging sum since the covering \(\mathcal{U}\) is locally finite and one can easily verify that \((T_t) \in \Loc[G](X)\).
We again claim that \((T_t)\) is the desired lift of \((S_t)\).
To prove this, now take \(z \in \mathsf{Z}_X\), \(\varphi \in \Cc(G \backslash X)\).
Then 
\[
  \mathfrak{T}_{z \varphi} \sum_{U \in \mathcal{U}} \phi_U S_t \phi_U = \mathfrak{T}_{z} \sum_{U \in \mathcal{U}} \varphi \phi_U S_t \phi_U \sim \mathfrak{T}_z  \sum_{U \in \mathcal{U}} \varphi \phi_U^2 S_t = \mathfrak{T}_{z} \varphi S_t = \mathfrak{T}_{z \varphi} S_t,
  \]
  where we use that \([S_t, \phi_U] \to 0\) and the fact that the sums in the middle terms have only finitely many non-zero entries because the covering \(\mathcal{U}\) is locally finite.
  Thus 
  \[
    \mathfrak{T}_{z \varphi} \circ S_t \sim \sum_{U \in \mathcal{U}} \mathfrak{T}_{z \varphi} \circ \phi_U S_t \phi_U \sim \sum_{U \in \mathcal{U}} T^U_{t} \circ \mathfrak{T}_{z \varphi} = T_t \circ \mathfrak{T}_{z \varphi},
  \]
  where we again have used that the sums have only finitely many non-zero entries.
    Since elements of the form \(z \varphi\) are dense in \(\mathsf{Z}_X\), we actually obtain
  \[
    \mathfrak{T}_{z} \circ S_t \sim T_t \circ \mathfrak{T}_{z}
  \]
  for all \(z \in \mathsf{Z}_X\).
  An analogous argument shows that \(S_t \circ \mathfrak{T}_z^\ast \sim \mathfrak{T}^\ast_z \circ T_t\).
  This completes the proof that \((T_t)\) is a lift of \((S_t)\).
  
  The existence of pushdowns is proved analogously by first solving the problem for operators which are supported inside \(\pi^{-1}(U) \cong G \times U\).
  Indeed, if an operator on \(\ell^2(G) \otimes 1_{U} H_{G \backslash X} \subseteq H_X\) is \(G\)-equivariant and has sufficiently small propagation in \(X\), then it is necessarily of the form \(\id \otimes S\), and hence has a pushdown.
  The general case can again be reduced to this via an \(\ell^2\)-partition of unity.
  
  \labelcref{item:LiftUnique}
  Suppose that \((S_t) \in \LocVanish(G \backslash X)\) and \((T_t) \in \Loc[G](X)\) is a lift of \((S_t)\).
  Let \(z_1,z_2 \in \mathsf{Z}_X\) and \(\varphi \in \Cz(G \backslash X)\).
  Then with \(f \coloneqq\prescript{}{\Cz(X)\rtimes G}{\langle}z_1 \varphi \mid z_2 \rangle \in \Cz(X) \rtimes G\), we have
  \[
   f T_t = \mathfrak{T}_{z_1 \varphi} \circ \mathfrak{T}_{z_2}^\ast \circ T_t 
   \sim \mathfrak{T}_{z_1 \varphi} \circ S_t \circ \mathfrak{T}_{z_2}^\ast = \mathfrak{T}_{z_1} \circ \underbrace{\varphi S_t}_{\to 0} \circ \mathfrak{T}_{z_2}^\ast \to 0.
  \]
  and 
  \[
   T_t f =  T_t \circ \mathfrak{T}_{z_1 \varphi} \circ \mathfrak{T}_{z_2}^\ast 
   \sim \mathfrak{T}_{z_1 \varphi} \circ S_t  \circ \mathfrak{T}_{z_2}^\ast 
   = \mathfrak{T}_{z_1} \circ \underbrace{\varphi S_t}_{\to 0}  \circ \mathfrak{T}_{z_2}^\ast 
\to 0.\]
Since the left-module structure on \(\mathsf{Z}_Y\) is full, this is enough to check that \((T_t) \in \LocVanish[G](X)\).
  The converse implication is proved analogously.
  \end{proof}
The following proposition is an immediate consequence of \cref{lem_lifts}.
\begin{prop}
  Choosing pushdowns induces a well-defined \(\ast\)-isomorphism
  \[
      \mathsf{I}_X \colon \frac{\Loc[G](X)}{\LocVanish[G](X)} \xrightarrow{\cong} \frac{\Loc(G \backslash X)}{\LocVanish(G \backslash X)},
  \]
  with inverse given by choosing lifts.
  Together with the canonical isomorphisms \(\K_\ast(\Loc[\blank](\blank) \to \K_\ast(\Loc[\blank](\blank)/\LocVanish[\blank](\blank))\), this yields the isomorphism \labelcref{eq:LocIndIso}.
\end{prop}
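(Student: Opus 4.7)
The plan is to define the map $\mathsf I_X$ on representatives by choosing pushdowns, deduce well-definedness from part \labelcref{item:LiftUnique} of \cref{lem_lifts}, verify the $\ast$-algebraic compatibility essentially by taking norm-limits in the relations $\mathfrak{T}_z \circ S_t \sim T_t \circ \mathfrak{T}_z$ and $S_t \circ \mathfrak{T}_z^\ast \sim \mathfrak{T}_z^\ast \circ T_t$, and then exhibit the inverse via choosing lifts. The final $\K$-theoretic statement will then follow because the ideals $\LocVanish[G](X)$ and $\LocVanish(G \backslash X)$ have vanishing $\K$-theory by the standard Eilenberg swindle and therefore the quotient maps $\Loc[\blank](\blank) \to \Loc[\blank](\blank)/\LocVanish[\blank](\blank)$ induce isomorphisms in $\K$-theory.

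More concretely, first I would define a set-theoretic map $\mathsf I_X \colon \Loc[G](X) \to \Loc(G \backslash X)/\LocVanish(G \backslash X)$ by sending $(T_t)$ to the class of any pushdown $(S_t)$, whose existence is \cref{lem_lifts}\,\labelcref{item:LiftExists}. Independence of the choice of pushdown is immediate from \cref{lem_lifts}\,\labelcref{item:LiftUnique}: if $(S_t)$ and $(S_t')$ are two pushdowns of the same $(T_t)$, then $0 \in \Loc[G](X)$ is a lift of $(S_t - S_t')$ (again since lifts are essentially unique modulo $\LocVanish$, as the difference satisfies $\mathfrak{T}_z\circ (S_t-S_t') \sim 0$ and $(S_t-S_t')\circ \mathfrak{T}_z^\ast \sim 0$), whence $(S_t - S_t') \in \LocVanish(G \backslash X)$. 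Linearity is clear from linearity of the defining relations of a lift, and $\mathsf I_X$ vanishes on $\LocVanish[G](X)$ by the ``only if'' direction of \cref{lem_lifts}\,\labelcref{item:LiftUnique}, so it factors through the quotient.

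Next I would verify the $\ast$-algebra structure. For multiplicativity, if $(T_t)$ and $(T_t')$ are lifts of $(S_t)$ and $(S_t')$ respectively, then uniform boundedness of all involved operators together with $\mathfrak{T}_z \circ S_t \sim T_t \circ \mathfrak{T}_z$ gives
\[
\mathfrak{T}_z \circ S_t S_t' \sim T_t \circ \mathfrak{T}_z \circ S_t' \sim T_t T_t' \circ \mathfrak{T}_z,
\]
and analogously $S_t S_t' \circ \mathfrak{T}_z^\ast \sim \mathfrak{T}_z^\ast \circ T_t T_t'$, so $(T_tT_t')$ lifts $(S_tS_t')$. For $\ast$-compatibility, taking adjoints of the two defining relations for a lift swaps them, so $(T_t^\ast)$ lifts $(S_t^\ast)$. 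Together with norm-continuity (immediate from the fact that on the level of representatives the relation $\|T_t - T_t'\| \to 0$ forces $\|S_t - S_t'\| \to 0$ modulo $\LocVanish$, using fullness of the left module structure on $\mathsf Z_X$), this shows that $\mathsf I_X$ is a well-defined $\ast$-homomorphism.

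The inverse is constructed in the same way using lifts instead of pushdowns. Once one checks, again via \cref{lem_lifts}\,\labelcref{item:LiftUnique}, that these two constructions are mutually inverse modulo the $\LocVanish$-ideals, one obtains the claimed $\ast$-isomorphism. Passing to $\K$-theory and invoking the vanishing of $\K_\ast(\LocVanish[G](X))$ and $\K_\ast(\LocVanish(G \backslash X))$ then yields \labelcref{eq:LocIndIso}. The main obstacle in all of this is really bookkeeping: the only non-trivial input is \cref{lem_lifts} together with the fullness of the bimodule $\mathsf Z_X$, which is used to pass from the asymptotic commutation relations with the $\mathfrak{T}_z$ to the actual condition defining $\LocVanish$; the rest is formal manipulation of the asymptotic commutation $\sim$.
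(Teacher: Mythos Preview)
Your proposal is correct and follows exactly the approach the paper intends: the paper simply states that the proposition is an immediate consequence of \cref{lem_lifts}, and you have spelled out the routine verifications (well-definedness, $\ast$-algebraic compatibility, bijectivity) that make this immediate. One minor simplification: once you know the map is a $\ast$-homomorphism between $\Cstar$-algebras, norm-continuity is automatic, so that step need not be argued separately.
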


Since the construction of lifts and pushdowns in \cref{lem_lifts} is the same as in \cite[Construction~6.5.14]{WillettYuHigherIndexTheory}, the isomorphism \(\mathsf{I}_X\) is indeed the same as the one given by \cite[Theorem~6.5.15]{WillettYuHigherIndexTheory}.

To compose our slant product with the Morita equivalence, we need a slightly more general version of the construction from \cref{def_topolequislant}.
To that end, let us fix a—for the moment arbitrary—countably generated Hilbert \(\Cz(Y)\rtimes H\)-module \(E\).
We will construct a \(\ast\)-homomorphism
\begin{equation}
  \Upsilon_{\LSym} \colon \Loc[G\times H](X \times Y) \otimes_{\max} \Kom_{\Cz(Y)\rtimes H}(E)
    \to \frac{\Loc[G](\tilde{\rho}_X)}{\Cz([1,\infty), \Roe[G](\tilde{\rho}_X))},
    \label{eq:generalTopEquSlantProduct}
\end{equation}
where we redefine \(\tilde{H}_X \coloneqq H_X \otimes (E \otimes_{\Cz(Y) \rtimes H} H_Y)\) and 
\[
  \tilde{\rho}_X \rtimes \tilde{u}_G \coloneqq (\rho_X \rtimes u_G)\otimes \id \otimes \id \colon \Cz(X) \rtimes G \to \Lin(\tilde{H}_X)\,.
\]
Note that if \(E = (\Cz(Y)\rtimes H) \otimes \ell^2\) and we also replace \(H_Y\) by \(\ell^2(H) \otimes H_Y\), then this reduces to our previous definitions.
There is a canonical \(\ast\)-homomorphism
\[
  \kappa_E \colon \Kom_{\Cz(Y) \rtimes H}(E) \to \FiproLoc[G](\tilde{\rho}_X),
  \quad
  K \mapsto (\id_{H_X} \otimes K \otimes_{\Cz(Y) \rtimes H} \id_{H_Y}),
\]
viewed as constant functions.
Unfortunately, in general there is no canonical map \(\Loc[G \times H](X \times Y) \to \FiproLoc[G](\tilde{\rho}_X)\) which would be required to precisely mimic \cref{def_topolequislant}.
Instead, we use a similar scheme as in the construction of the induction isomorphisms.
The next definition, along with its name, is inspired by the notion of \enquote{connection} which appears in the construction of the Kasparov product (see for instance \cite[Section~18.3]{blackadar}).
\begin{defn}\label{defn_connection}
  Let \((T_t) \in \Loc[G\times H](X \times Y)\). 
  We say \((F_t) \in \FiproLoc[G](\tilde{\rho}_X)\) is a \emph{connection} for \((T_t)\), if for each \(K \in \Kom_{\Cz(X) \rtimes H}(E)\), we have 
  \begin{align*}
    (F_t \circ \kappa_E(K))_{t \in [1,\infty)} &\in \Loc[G](\tilde{\rho}_X), \\
    (\kappa_E(K) \circ F_t)_{t \in [1,\infty)} &\in \Loc[G](\tilde{\rho}_X), 
  \end{align*}
  and for each \(e \in E\) we have
  \begin{align*}
    (\mathfrak{T}_e \circ T_t - F_t \circ \mathfrak{T}_e)_{t \in [1,\infty)} &\in \Cz([1,\infty), \Lin(H_X \otimes H_Y, \tilde{H}_X)), \\
    (T_t \circ \mathfrak{T}_e^\ast - \mathfrak{T}_e^\ast \circ F_t)_{t \in [1,\infty)}  &\in \Cz([1,\infty), \Lin(\tilde{H}_X, H_X \otimes H_Y)),
  \end{align*}
  where
  \begin{align*}
    \mathfrak{T}_e \colon H_{X} \otimes H_Y &\to \tilde{H}_X \cong E \otimes_{\id \otimes (\Cz(Y) \rtimes H)} (H_X \otimes H_Y),
    \quad \xi \mapsto e \otimes \xi.
    \qedhere
  \end{align*}
\end{defn}

\begin{lem}\ \label{lem_connections}
  \begin{myenumi}
    \item Every \((T_t) \in \Loc[G\times H](X \times Y)\) admits a connection \((F_t) \in \FiproLoc[G](\tilde{\rho}_X)\).\label{item:connectionExists}
    \item If \((F_t) \in \FiproLoc[G](\tilde{\rho}_X)\) is a connection for \((T_t) \in \Loc[G\times H](X \times Y)\) and \(K \in \Kom_{\Cz(Y)\rtimes H}(E)\), then \([F_t, \kappa_E(K)] \in \Cz([1, \infty), \Roe[G](\tilde{\rho}_X)) \). \label{item:connectionCommute}
    \item \label{item:connectionWellDef} Let \((F_t) \in \FiproLoc[G](\tilde{\rho}_X)\) be a connection for \((T_t) \in \Loc[G\times H](X \times Y)\) and \(K \in \Kom_{\Cz(Y)\rtimes H}(E)\).
    \begin{itemize}
      \item If \((T_t) \in \Cz([1,\infty), \Roe[G\times H](X \times Y))\), then \[(F_t \circ \kappa_E(K)) \in \Cz([1,\infty), \Roe[G](\tilde{\rho}_X)).\]
      \item If \((T_t) \in \LocVanish[G \times H](X \times Y)\), then \((F_t \circ \kappa_E(K)) \in \LocVanish[G](\tilde{\rho}_X)\).
    \end{itemize}
  \end{myenumi}
\end{lem}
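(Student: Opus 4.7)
The plan is to mirror the proof of \cref{lem_lifts}, replacing the Morita bimodule $\mathsf{Z}_X$ there by the arbitrary countably generated Hilbert $B$-module $E$ with $B \coloneqq \Cz(Y) \rtimes H$. For $E = B \otimes \ell^2$ the construction recovers the map $\tau$ already used in \cref{sec_equiv_localization_algebras}, and the general case will be reduced to this via Kasparov's stabilization theorem.

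For part~(i), I would fix a Kasparov embedding $\iota \colon E \hookrightarrow B \otimes \ell^2$ realizing $E$ as a complemented submodule, and let
\[
J \coloneqq \iota \otimes_B \id_{H_Y} \colon E \otimes_B H_Y \hookrightarrow (B \otimes \ell^2) \otimes_B H_Y \cong H_Y \otimes \ell^2
\]
be the induced isometry, where the second identification is via $\rho_Y \rtimes u_H$. Setting $\tilde J \coloneqq \id_{H_X} \otimes J \colon \tilde H_X \hookrightarrow H_X \otimes H_Y \otimes \ell^2$, the candidate connection is
\[
F_t \coloneqq \tilde J^* \circ (T_t \otimes \id_{\ell^2}) \circ \tilde J.
\]
Because $\tilde J$ acts trivially on the $H_X$-factor, $(F_t)$ inherits from $(T_t)$ finite propagation along $X$ (compare \cref{lem_propagtioncomparision}) tending to zero as $t \to \infty$, as well as $G$-equivariance, so $(F_t) \in \FiproLoc[G](\tilde\rho_X)$. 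Under $\tilde J$, the map $\mathfrak{T}_e$ becomes $\id_{H_X} \otimes \tilde e$, where $\tilde e \colon H_Y \to H_Y \otimes \ell^2$ is obtained by letting the $B$-valued coordinates of $\iota(e)$ act on $H_Y$ via $\rho_Y \rtimes u_H$. The defect $\mathfrak{T}_e \circ T_t - F_t \circ \mathfrak{T}_e$ therefore transforms, after applying $\tilde J$, into a controlled $\ell^2$-sum of commutators of the form $[T_t, \id_{H_X} \otimes \rho_Y(f) u_H(h)]$ for generators $f \delta_h \in B$. Each such commutator vanishes in norm as $t \to \infty$: $H$-equivariance of $T_t$ lets us push $u_H(h)$ past $T_t$ at zero cost, and the remaining commutator $[T_t, \id_{H_X} \otimes \rho_Y(f)]$ is controlled by \cref{lem:CommutatorVsPropagation}. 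Summability of the $\ell^2$-sum is governed by $\|\iota(e)\|$, and the adjoint identity is symmetric.

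Parts~(ii) and~(iii) should follow from~(i) by a density argument. Since finite-rank operators $\kappa_E(\theta_{e,e'}) = \mathfrak{T}_e \circ \mathfrak{T}_{e'}^*$ are norm-dense in $\kappa_E(\Kom_B(E))$, the connection identities give
\[
F_t \circ \kappa_E(\theta_{e,e'}) \sim \mathfrak{T}_e \circ T_t \circ \mathfrak{T}_{e'}^* \sim \kappa_E(\theta_{e,e'}) \circ F_t,
\]
so $[F_t, \kappa_E(K)] \to 0$ in norm. Finite propagation is immediate because $\kappa_E(K)$ has zero propagation along $X$, and local compactness along $X$ is inherited from $T_t$ through the compression $\mathfrak{T}_e \circ T_t \circ \mathfrak{T}_{e'}^*$; this establishes~(ii). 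Part~(iii) follows from the same approximation: if $(T_t)$ lies in $\Cz([1,\infty), \Roe[G \times H](X \times Y))$ or in $\LocVanish[G \times H](X \times Y)$, then $\mathfrak{T}_e \circ T_t \circ \mathfrak{T}_{e'}^*$ lies in the corresponding ideal of $\Roe[G](\tilde\rho_X)$ or $\LocVanish[G](\tilde\rho_X)$, and norm-density transfers this vanishing to $(F_t \circ \kappa_E(K))$ for all $K \in \Kom_B(E)$.

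The main technical obstacle will be the verification of the connection identity in~(i) uniformly over $e \in E$. A generic vector $\iota(e) = \sum_n b_n \otimes \delta_n \in B \otimes \ell^2$ need not have finitely many nonzero entries with compactly supported $\Cz(Y)$-factors, so a careful approximation argument will be required to combine the pointwise estimates from \cref{lem:CommutatorVsPropagation}, $\ell^2$-summability of $(\|b_n\|)$, and $H$-equivariance of $T_t$ into a single bound that tends to zero as $t \to \infty$.
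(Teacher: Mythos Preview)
Your approach is correct and essentially the same as the paper's: build the connection for the standard module as $T_t \otimes \id_{\ell^2}$ and compress via Kasparov stabilization, then handle (ii) and (iii) by density of rank-one operators $\mathfrak{T}_{e_1}\mathfrak{T}_{e_2}^*$. The ``main technical obstacle'' you flag at the end is not one: the connection identity is required only for each fixed $e \in E$, and since $e \mapsto \mathfrak{T}_e$ is isometric while $\sup_t\|T_t\|$ and $\sup_t\|F_t\|$ are finite, the set of $e$ satisfying $\mathfrak{T}_e T_t - F_t\mathfrak{T}_e \to 0$ is norm-closed in $E$; hence it suffices to check elementary tensors $b \otimes v$ with $b$ in the algebraic crossed product, where no $\ell^2$-summability issue arises. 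One organizational point: the first two clauses of \cref{defn_connection} (that $F_t\kappa_E(K)$ and $\kappa_E(K)F_t$ lie in $\Loc[G](\tilde\rho_X)$) are part of what must be established in (i), not deferred to (ii); for the standard module this is the one-line computation $F_t\kappa_E(f \otimes L) = (T_t\circ(\id_{H_X} \otimes f)) \otimes L$.
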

\begin{proof}
  \labelcref{item:connectionExists}
  We start with the case that \(E =(\Cz(Y) \rtimes H) \otimes \ell^2\) is the standard Hilbert module over \(\Cz(Y) \rtimes H\).
  In this case, \(\tilde{H}_X = H_X \otimes H_Y \otimes \ell^2\) and a connection for \((T_t) \in \Loc[G \times H](X \times Y)\) is given by \(F_t \coloneqq T_t \otimes \id_{\ell^2}\).
  This is because for \(e = f \otimes v \in E =(\Cz(Y) \rtimes H) \otimes \ell^2\), we have 
  \[
    \mathfrak{T}_{e} \circ T_t - F_t \circ \mathfrak{T}_e = (\xi \mapsto \underbrace{[(\id \otimes f), T_t]}_{\to 0} \xi \otimes v) \to 0
  \]
  as an operator \(H_X \otimes H_Y \to H_X \otimes H_Y \otimes \ell^2\) and, similarly, \(T_t \circ \mathfrak{T}_e^\ast - \mathfrak{T}_e^\ast \circ F_t \to 0\).
  Moreover, if \(K = f \otimes L \in \Kom_{\Cz(Y) \rtimes H}(E) \cong (\Cz(Y) \rtimes H) \otimes \Kom(\ell^2) \), then 
  \[
    F_t \circ \kappa_E(K) = (T_t \circ (\id_{H_X} \otimes f)) \otimes L \in \Loc[G](\tilde{\rho}_X)
  \]
  and similarly for \(\kappa_E(K) \circ F_t\).
  
  In general, we apply Kasparov's stabilization theorem to embed \(E\) into \((\Cz(Y) \rtimes H) \otimes \ell^2\) such that there exists an adjointable projection \(P\) on \((\Cz(Y) \rtimes H) \otimes \ell^2\) whose image is \(E\).
  To complete the argument, observe that if \((\hat{F}_t)\) is a connection for \((T_t)\) with respect to the standard Hilbert module, then \(F_t \coloneqq P \hat{F}_t P\) yields a connection with respect to \(E\).
  
  \labelcref{item:connectionCommute}
  By the definition of the compact operators on a Hilbert module, the image of \(\kappa_E\) is the closed linear span of elements \(\mathfrak{T}_{e_1} \circ \mathfrak{T}_{e_2}^\ast = \kappa_E(| e_1 \rangle \langle e_2|)\), where \(e_1, e_2 \in E\).
  Hence the desired statement follows from the defining conditions of a connection:
  \[
    F_t \circ \mathfrak{T}_{e_1} \circ \mathfrak{T}_{e_2}^\ast \sim \mathfrak{T}_{e_1} \circ T_t  \circ \mathfrak{T}_{e_2}^\ast \sim  \mathfrak{T}_{e_1} \circ \mathfrak{T}_{e_2}^\ast \circ F_t.
  \]
  
  \labelcref{item:connectionWellDef}
  Suppose that \((T_t) \in \Cz([1,\infty), \Roe[G\times H](X \times Y))\).
  As in the previous part, we can assume that \(\kappa_E(K) = \mathfrak{T}_{e_1} \circ \mathfrak{T}_{e_2}^\ast\).
  Then 
  \[
    F_t \circ \kappa_E(K) = F_t \circ \mathfrak{T}_{e_1} \circ \mathfrak{T}_{e_2}^\ast \sim
    \mathfrak{T}_{e_1} \circ \underbrace{T_t}_{\to 0} \circ \mathfrak{T}_{e_2}^\ast \to 0,
  \]
  as required.
  If \((T_t) \in \LocVanish[G \times H](X \times Y)\), 
  then let \(\varphi \in \Cz(Y)\) and assume  \(\kappa_E(K) = \mathfrak{T}_{e_1 \varphi} \circ \mathfrak{T}_{e_2}^\ast\).
  This is justified because \(\Cz(Y)\) contains an approximate identity of \(\Cz(Y) \rtimes H\).
  Then for each \(f \in \Cz(X)\), we have
  \begin{align*}
  \tilde{\rho}_X(f) \circ F_t \circ \kappa_E(K) &= \tilde{\rho}_X(f) \circ F_t \circ \mathfrak{T}_{e_1 \varphi} \circ \mathfrak{T}_{e_2}^\ast \\
  &\sim \tilde{\rho}_X(f) \circ \mathfrak{T}_{e_1 \varphi} \circ T_t \circ \mathfrak{T}_{e_2}^\ast \\
  &= \mathfrak{T}_{e_1} \circ \underbrace{(f \otimes \varphi) T_t}_{\to 0} \circ \mathfrak{T}_{e_2}^\ast \to 0
  \end{align*}
  and hence \(F_t \circ \kappa_E(K) \in \LocVanish[G](\tilde{\rho}_X)\).
\end{proof}
We now can define \labelcref{eq:generalTopEquSlantProduct} by 
\[
  (T_t) \otimes K \mapsto [(F_t \circ \kappa_E(K))_{t \in [1,\infty)}],
\]
where we choose a connection \((F_t)\) for each \((T_t)\).
This is well-defined by \cref{lem_connections}.
In fact, it shows that this yields a well-defined \(\ast\)-homomorphism
\begin{equation}
  \Upsilon_{\LSym} \colon \frac{\Loc[G \times H](X \times Y)}{\LocVanish[G \times H](X \times Y)} \otimes_{\max} \Kom_{\Cz(Y)\rtimes H}(E)
    \to \frac{\Loc[G](\tilde{\rho}_X)}{\LocVanish[G](\tilde{\rho}_X)}.
\end{equation}
Since the \(\K\)-theory of \(\LocVanish\) vanishes, this still defines a map
\begin{equation}
  \K_p(\Loc[G \times H](X \times Y)) \otimes \K_{-q}(\Kom_{\Cz(Y)\rtimes H}(E)) \to \K_{p-q}(\Loc[G](X))
  \label{eq:generalTopEquSlantProductKTheory}
\end{equation}
by a completely analogous recipe as \cref{def_topolequislant}.
If \(E\) is a full Hilbert \(\Cz(Y) \rtimes H\)-module, then \(\K_{\ast}(\Kom_{\Cz(Y) \rtimes H}) \cong \K_\ast(\Cz(Y) \rtimes H)\).
In this case, it can be verified that \labelcref{eq:generalTopEquSlantProductKTheory} agrees with the previous definition \labelcref{def_topolequislant} by embedding \(E\) into the standard module \((\Cz(Y) \rtimes H) \otimes \ell^2\) via the Kasparov stabilization theorem and replacing \(H_Y\) by \(\ell^2(H) \otimes H_Y\).

\begin{proof}[Proof of \cref{thm_equivariantComparison_technical}]
  We start with an arbitrary countably generated Hilbert-\(\Cz(Y)\rtimes H\)-module \(E_Y\) that is full.
  Then let \(E_{H \backslash Y} \coloneqq E_Y \otimes_{\Cz(Y)\rtimes H} \mathsf{Z}_Y\) and \(\phi \colon \Kom_{\Cz(Y) \rtimes H}(E_Y) \to \Kom_{\Cz(H \backslash Y)}(E_{H \backslash Y})\) be the canonical map \(S \mapsto S \otimes \id\).
  The map \(\phi\) implements the isomorphism \labelcref{eq:KTheoryIndIso} in this setup.
  In view of the previous discussion, it suffices to prove that the following diagram commutes.
  \[
    \begin{tikzcd}
      \frac{\Loc[G \times H](X \times Y)}{\LocVanish[G \times H](X \times Y)} \otimes_{\max} \Kom_{\Cz(Y) \rtimes H}(E_Y) \ar[r, "\Upsilon_{\mathrm{L}}"] \ar[d, "\mathsf{I}_{X \times Y} \otimes \phi"]
      & \frac{\Loc[G](\tilde{\rho}_X)}{\LocVanish[G](\tilde{\rho}_X)} \ar[d, "\mathsf{I}_{X}"]\\
      \frac{\Loc(G \backslash X \times H \backslash Y)}{\LocVanish(G \backslash X \times H \backslash Y)} \otimes_{\max} \Kom_{\Cz(H \backslash Y)}(E_{H \backslash Y}) \ar[r, "\Upsilon_{\mathrm{L}}"] 
      & \frac{\Loc(\tilde{\rho}_{G \backslash X})}{\LocVanish(\tilde{\rho}_{G \backslash X})}
    \end{tikzcd}
  \]
  Note that we have a canonical identification
  \begin{align}
    E_{H\backslash Y} \otimes_{\Cz(H \backslash Y)} H_{H \backslash Y} 
    &= (E_Y \otimes_{\Cz(Y)\rtimes H} \mathsf{Z}_Y) \otimes_{\Cz(H \backslash Y)} H_{H \backslash Y} \nonumber\\
    &= E_Y \otimes_{\Cz(Y)\rtimes H} (\mathsf{Z}_Y \otimes_{\Cz(H \backslash Y)} H_{H \backslash Y}) \nonumber\\
    &= E_Y \otimes_{\Cz(Y)\rtimes H} H_Y \label{eq:identifyModulesY}
  \end{align}
  and hence
  \begin{align*}
    \mathsf{Z}_{X} \otimes_{\Cz(G \backslash X)} \tilde{H}_{G \backslash X} 
    &= (\mathsf{Z}_{X} \otimes_{\Cz(G \backslash X)} H_{G \backslash X}) \otimes (E_{H\backslash Y} \otimes_{\Cz(H \backslash Y)} H_{H \backslash Y}) \\
    &= H_X \otimes (E_Y \otimes_{\Cz(Y)\rtimes H} H_Y) = \tilde{H}_X.
  \end{align*}
  Therefore it makes sense to consider the arrows entering the lower right corner to land in the same \textCstar-algebra.

  Now let \((T_t) \in \Loc[G \times H](X \times Y)\) and \(K \in \Kom_{\Cz(Y)\rtimes H}(E_Y)\).
  Choose a pushdown \((S_t) \in \Loc(G \backslash X \times H \backslash Y)\) for \((T_t)\).
  Moreover, choose a connection \((F_t) \in \FiproLoc[G](\tilde{\rho}_X)\) for \((T_t)\) and a connection \((G_t) \in \FiproLoc(\tilde{\rho}_{G \backslash X})\) for \((S_t)\).
  To prove the desired commutativity, we need to verify that \(G_t \circ \kappa_{E_{H \backslash Y}}(\phi(K)) \) is a pushdown of \(F_t \circ \kappa_{E_Y}(K)\).
  First observe that \labelcref{eq:identifyModulesY} identifies \(\phi(K) \otimes_{\Cz(H \backslash Y)} \id\) with \(K \otimes_{\Cz(Y) \rtimes H} \id\).
  Then let \(z \in \mathsf{Z}_X\), \(w \in \mathsf{Z}_Y\), \(e \in E_Y\) and fix the following notation as in \cref{defn_LocIndIso,defn_connection}.
  \begin{align*}
    \mathfrak{T}_{z} \colon H_{G \backslash X} &\to H_X \\
    \mathfrak{T}_{w} \colon H_{H \backslash Y} &\to H_Y \\
    \mathfrak{T}_{e} \colon H_Y &\to E_Y \otimes_{\Cz(Y) \rtimes H} H_Y
  \end{align*}
  We have \(e \otimes w \in E_Y \otimes_{\Cz(Y) \rtimes H} \mathsf{Z}_Y = E_{H \backslash Y}\) and up to the identification \labelcref{eq:identifyModulesY} the following maps are equal.
  \[
    \mathfrak{T}_{e \otimes w} = \mathfrak{T}_{e} \circ \mathfrak{T}_{w} \colon H_{H \backslash Y} \to E_{H \backslash Y} \otimes_{\Cz(H \backslash Y)} H_{H \backslash Y} = E_Y \otimes_{\Cz(Y) \rtimes H} H_Y
  \]
  As before, the image of \(\kappa_{E_Y}\) is the closed linear span of operators of the form \(\mathfrak{T}_{e_1} \circ \mathfrak{T}_{e_2}^\ast\), where \(e_1, e_2 \in E_Y\).
  Thus we can assume without loss of generality that 
  \[K \otimes_{\Cz(Y) \rtimes H} \id_{H_Y} = \phi(K) \otimes_{\Cz(H \backslash Y)} \id_{H_Y} = \mathfrak{T}_{e_1} \circ \mathfrak{T}_{e_2}^\ast.\]
  Hence
  \begin{align*}
    F_t \circ \kappa_{E_Y}(K) \circ (\mathfrak{T}_{z} \otimes \id) 
    &= F_t \circ (\id \otimes \mathfrak{T}_{e_1} \circ \mathfrak{T}_{e_2}^\ast) \circ (\mathfrak{T}_z \otimes \id) \\
    &=F_t \circ  (\id \otimes \mathfrak{T}_{e_1}) \circ (\mathfrak{T}_z \otimes \mathfrak{T}_{e_2}^\ast)\\
    &\sim (\id \otimes \mathfrak{T}_{e_1}) \circ T_t \circ (\mathfrak{T}_z \otimes \mathfrak{T}_{e_2}^\ast)
  \end{align*}
  because \((F_t)\) is a connection for \((T_t)\).
  We can furthermore assume that \(e_2 = e_2^\prime \prescript{}{\Cz(Y)\rtimes H}{\langle} w^\prime \mid w \rangle\), where \(e_2^\prime \in E_Y\), \(w, w^\prime \in \mathsf{Z}_Y\) because such elements are dense.
  Then \(\mathfrak{T}_{e_2} = \mathfrak{T}_{e_2^\prime} \circ \mathfrak{T}_{w^\prime} \circ \mathfrak{T}_{w}^\ast\) and we continue 
  \begin{align*}
    (\id \otimes \mathfrak{T}_{e_1}) \circ T_t & \circ (\mathfrak{T}_z \otimes \mathfrak{T}_{e_2}^\ast) \\
    &= (\id \otimes \mathfrak{T}_{e_1}) \circ T_t \circ (\mathfrak{T}_z \otimes \mathfrak{T}_{w}) \circ (\id \otimes \mathfrak{T}_{w^\prime}^\ast \circ \mathfrak{T}_{e_2^\prime}^\ast) \\
    &\sim (\id \otimes \mathfrak{T}_{e_1}) \circ (\mathfrak{T}_z \otimes \mathfrak{T}_{w}) \circ S_t \circ  (\id \otimes \mathfrak{T}_{w^\prime}^\ast \circ \mathfrak{T}_{e_2^\prime}^\ast) \\
    &= (\mathfrak{T}_z \otimes \id) \circ (\id \otimes \underbrace{\mathfrak{T}_{e_1} \circ \mathfrak{T}_{w}}_{\mathfrak{T}_{e_1 \otimes w}}) \circ S_t \circ (\id \otimes \mathfrak{T}_{w^\prime}^\ast \circ \mathfrak{T}_{e_2^{\prime}}^\ast) \\
    &\sim (\mathfrak{T}_z \otimes \id) \circ G_t \circ (\id \otimes \mathfrak{T}_{e_1} \circ \mathfrak{T}_{w}) \circ (\id \otimes \mathfrak{T}_{w^\prime}^\ast \circ \mathfrak{T}_{e_2^{\prime}}^\ast) \\
    &= (\mathfrak{T}_z \otimes \id) \circ G_t \circ (\id \otimes \mathfrak{T}_{e_1} \circ \mathfrak{T}_{e_2}^\ast) \\
    &= (\mathfrak{T}_z \otimes \id) \circ G_t \circ (\id \otimes \phi(K) \otimes \id) \\
    &= (\mathfrak{T}_z \otimes \id) \circ G_t \circ \kappa_{E_{H \backslash Y}}(\phi(K)).
  \end{align*}
  because \((S_t)\) is a pushdown of \((T_t)\) and \((G_t)\) is a connection for \((S_t)\).
  Hence 
  \[
    F_t \circ \kappa_{E_Y}(K) \circ (\mathfrak{T}_{z} \otimes \id) \sim (\mathfrak{T}_z \otimes \id) \circ G_t \circ \kappa_{E_{H \backslash Y}}(\phi(K)).
  \]
  A completely analogous argument also proves that 
  \[
    (\mathfrak{T}_{z}^\ast \otimes \id) \circ F_t \circ \kappa_{E_Y}(K) \sim  G_t \circ \kappa_{E_{H \backslash Y}}(\phi(K)) \circ (\mathfrak{T}_z^\ast \otimes \id).
  \]
  Thus \((G_t)\) is a pushdown of \((F_t)\), as required.
\end{proof}
This finishes the proof of \cref{thm_equivariantComparison}.

\section{Injectivity of external products}
\label{sec_applications}
In this section, we deduce our injectivity results for external product maps using the machinery developed in \cref{sec_slant_products,sec_equiv_version}.
In the following statements, we fix an exact crossed product functor \(\mu\) or \(\mu = \red\) if \(H\) is exact.
Here and in the following, we will use the notation convention from \cref{rem_HRPlaceholder}.
We start with an injectivity result for the external product with a single element.
This is a direct consequence of our equivariant slant products.

\begin{thm}\label{CrossWithElementInj}
	Let $Y$ be a proper metric space of continuously bounded geometry endowed with a proper action of a countable discrete group \(H\).
	Let \(y \in \K_n^H(Y)\) be such that there exists \(\vartheta \in \K_{1-n}(\sHigCorRed Y \rtimes_{\mu} H)\) with \( \langle y, \mu^\ast_H \vartheta \rangle = 1\) \parensup{or with \(\langle y, \mu^\ast_H \vartheta \rangle \neq 0\)}.

	Then for every proper metric space \(X\) endowed with a proper action of a countable discrete group \(G\), the external product
	\[\HRPlaceholder^G_{\ast}(X) \xrightarrow{\times y} \HRPlaceholder^{G \times H}_{\ast+n}(X \times Y)\]
	is \parensup{rationally} split-injective.
\end{thm}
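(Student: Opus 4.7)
The plan is to realize the equivariant slant product $\blank/\vartheta$ as an explicit (rational) left inverse to the external product $\blank\times y$. Specifically, I would set
\[
	r \colon \HRPlaceholder^{G \times H}_{\ast+n}(X \times Y) \xrightarrow{\ \blank/\vartheta\ } \HRPlaceholder^G_{\ast}(X)
\]
to be the equivariant slant product with $\vartheta$ constructed in \cref{subsec_EquivSlantProd}; in the Roe-algebra case $\HRPlaceholder^G_\ast = \K_\ast(\Roe[G]\blank)$, the external product \enquote{$\blank\times y$} is understood as $\blank\times \Ind_H(y)$, following the conventions of \cref{subsec_crossProducts_equiv}. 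The bounded geometry of $Y$ and the exactness of $\rtimes_\mu$ (or of $H$, if $\mu=\red$) are exactly what is needed for $r$ to be defined.

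The crucial step is then to invoke \cref{cor_equivariant_composition_is_pairing}, which identifies the composite $r \circ (\blank \times y)$ with multiplication by the pairing $\langle y,\vartheta\rangle$ on $\HRPlaceholder^G_\ast(X)$ (respectively with $\langle \Ind_H(y),\vartheta\rangle$ in the Roe-algebra row). Since $Y$ has \emph{continuously} bounded geometry, \cref{thm_equiKHomSlantCompatible} yields $\langle y,\vartheta\rangle = \langle y,\mu^\ast_H(\vartheta)\rangle$, and the compatibility of the equivariant slant product with the assembly map (i.e.\ \cref{equithm_commutativity_diagram} specialised to $X=\{\ast\}$) gives $\langle \Ind_H(y),\vartheta\rangle = \langle y,\vartheta\rangle$. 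Thus in every case the composite $r \circ (\blank\times y)$ is multiplication by $\langle y,\mu^\ast_H(\vartheta)\rangle \in \Z$.

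By the hypothesis, this integer equals $1$ (respectively, is nonzero), so $r$ is a left inverse to $\blank\times y$ integrally (respectively, rationally after tensoring with $\Q$), which is exactly (rational) split-injectivity. There is no serious obstacle here beyond the bookkeeping of checking that the pairing formula of \cref{cor_equivariant_composition_is_pairing} and the identification via co-assembly apply uniformly to each of the three rows of the Higson--Roe sequence; both of these have already been arranged in \cref{sec_equiv_version}, so the statement is essentially a formal consequence of the equivariant slant product machinery.
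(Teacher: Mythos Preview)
Your proposal is correct and follows essentially the same approach as the paper: the slant product $\blank/\vartheta$ is shown to be a (rational) retraction of $\blank\times y$ via \cref{cor_equivariant_composition_is_pairing}. You are in fact slightly more careful than the paper's one-line proof, since you explicitly invoke \cref{thm_equiKHomSlantCompatible} (and the compatibility with assembly) to identify the pairing $\langle y,\vartheta\rangle$ appearing in \cref{cor_equivariant_composition_is_pairing} with the pairing $\langle y,\mu^\ast_H\vartheta\rangle$ appearing in the hypothesis; this is where the continuously bounded geometry assumption is actually used.
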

\begin{proof}
	It follows from \cref{cor_equivariant_composition_is_pairing} that a retraction for \(\times y\) is given by (a rational multiple of) the map
	\(
		\HRPlaceholder_{p+n}^{G \times H}(X \times Y)) \xrightarrow{\\/ \vartheta} \HRPlaceholder_{p}^G(X)
	\).
\end{proof}

If the equivariant coarse co-assembly map is rationally surjective, then we obtain rational injectivity of the entire external product map in the presence of a free action on the second factor.
\begin{thm}\label{thm_coass_surj}
Let $Y$ be a proper metric space of continuously bounded geometry endowed with a proper and free action of a countable discrete group \(H\).
Suppose that the equivariant coarse co-assembly map
\[
\mu^*_H \colon \K_{1-*}(\sHigCorRed Y \rtimes_\mu H)\to \K^{*}_H(Y)
\]
is rationally surjective.

Then for every proper metric space $X$ endowed with a proper action of a countable discrete group \(G\), the external product
\[\HRPlaceholder^G_m (X) \otimes \K_{n}^H(Y) \to \HRPlaceholder_{m+n}^{G \times H} (X \times Y)\]
is rationally injective for every \(m, n \in \Z\).
\end{thm}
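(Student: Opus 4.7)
The approach is to extend the single-element retraction argument of \cref{CrossWithElementInj} by using rational surjectivity of $\mu^*_H$ to produce sufficiently many slant-product retractions. Let $\alpha = \sum_{i=1}^k x_i \otimes z_i \in \HRPlaceholder^G_m(X) \otimes \K^H_n(Y)$ be an element with $\sum_i x_i \times z_i = 0$ rationally in $\HRPlaceholder^{G\times H}_{m+n}(X \times Y)$; the goal is to conclude $\alpha = 0$ rationally.

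Applying the equivariant slant $\blank/\theta$ for an arbitrary $\theta \in \K_{1-n}(\sHigCorRed Y \rtimes_\mu H)$ to the relation $\sum_i x_i \times z_i = 0$ and invoking the equivariant composition formula \cref{cor_equivariant_composition_is_pairing} gives the identity
\[
\sum_i \langle z_i, \theta\rangle \, x_i = 0 \quad \text{in } \HRPlaceholder^G_m(X) \otimes \Q.
\]
By \cref{thm_equiKHomSlantCompatible} this pairing factors as $\langle z_i, \mu^*_H \theta\rangle$ through the natural pairing $\K^H_n(Y) \otimes \K^n_H(Y) \to \Z$, so rational surjectivity of $\mu^*_H$ yields the relation $\sum_i \langle z_i, \eta\rangle \, x_i = 0$ in $\HRPlaceholder^G_m(X) \otimes \Q$ for every $\eta \in \K^n_H(Y)$.

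After replacing the $z_i$ by a $\Q$-basis of their span in $\K^H_n(Y) \otimes \Q$ (and rewriting $\alpha$ accordingly), the proof reduces to producing dual functionals $\eta_j \in \K^n_H(Y) \otimes \Q$ satisfying $\langle z_i, \eta_j\rangle = \delta_{ij}$; plugging $\eta_j$ into the displayed identity then yields $x_j = 0$ rationally for each $j$. Because the $H$-action on $Y$ is free and proper, \cref{thm_equivariantComparison} together with the Morita equivalence $\Cz(Y) \rtimes H \sim \Cz(H\backslash Y)$ identifies the relevant pairing with the classical cap pairing between locally finite $\K$-homology and compactly supported $\K$-theory of the quotient space $H\backslash Y$.

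The main obstacle is therefore establishing rational non-degeneracy of this classical pairing in the first variable. The continuously bounded geometry of $Y$ descends to $H\backslash Y$, which can be approximated by Rips complexes of bounded geometry; a colimit argument over cocompact subsets combined with the universal coefficient theorem for the $\K$-theory of such complexes provides the required rational non-degeneracy, completing the proof.
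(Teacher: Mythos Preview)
Your argument is essentially the same as the paper's: both rely on the equivariant slant products, \cref{cor_equivariant_composition_is_pairing}, the identification of the equivariant pairing with the classical pairing on $H\backslash Y$ via \cref{thm_equivariantComparison}, and the rational non-degeneracy of that pairing coming from the universal coefficient theorem. The paper organizes this as a single commutative diagram---choosing a $\Q$-basis $(\theta_i)$ of $\K^n_H(Y)\otimes\Q$, lifting to $\vartheta_i$ via rational surjectivity of $\mu^*_H$, and assembling all slant products $\blank/\vartheta_i$ into a map to a product---whereas you phrase it elementwise by taking a kernel element and producing dual functionals; these are dual presentations of the same proof.

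One cosmetic point: your final paragraph introduces Rips complexes and a colimit over cocompact subsets to justify rational non-degeneracy. This is an unnecessary detour. The paper simply applies the universal coefficient theorem for the locally finite $\K$-homology and compactly supported $\K$-theory of the (locally compact) quotient space $H\backslash Y$ directly, obtaining $\K_n(H\backslash Y)\otimes\Q\cong\Hom(\K^n(H\backslash Y),\Q)$, and then transports this via \cref{thm_equivariantComparison}. No Rips complexes or approximation by finite complexes are needed at this step.
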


\begin{proof}
  First observe that the universal coefficient theorem implies that the pairing between \(\K\)-theory and homology induces an isomorphism
  \[
    \K_n(H \backslash Y) \otimes \Q \cong \Hom(\K^n(H \backslash Y), \Q).
  \]
  Since the action of \(H\) on \(Y\) is free, it follows from \cref{defn_equivariant_pairing,thm_equivariantComparison} that the pairing between \(\K_n^H(Y)\) and \(\K^n_H(Y)\) is equivalent to the pairing between \(\K_n(H \backslash Y)\) and \(\K^n(H \backslash Y)\).
  Thus we also have an isomorphism 
  \begin{equation}
    \K_n^H(Y) \otimes \Q \cong \Hom(\K^n_H(Y), \Q).
    \label{eq:pairingNondegenerate}
  \end{equation}
  Now pick a \(\Q\)-basis $(\theta_i)_{i\in I}$ for $\K^n_H(Y) \otimes \Q$.
  By exploiting the rational surjectivity of the equivariant coarse co-assembly map and possibly multiplying the basis elements with integers, we can assume that there are elements $\vartheta_i \in \K_{1-n}(\sHigCorRed Y \rtimes_\mu H)$ with \(\mu^\ast(\vartheta_i) = \theta_i\) for all $i \in I$.
  We use the equivariant slant products with all \(\vartheta_i\) simultaneously.
  By \cref{cor_equivariant_composition_is_pairing}, this yields a commutative diagram
  \[
    \begin{tikzcd}
      \HRPlaceholder_m^{G}(X) \otimes \K_n^H(Y) \otimes \Q \ar[r, "\times"] \dar[d, "\cong"', "\id \otimes \prod_{i \in I} \langle \blank{,}\theta_i\rangle"] &
      \HRPlaceholder_{m+n}^{G \times H}(X \times Y) \otimes \Q \ar[d, "\prod_{i \in I} /\vartheta_i"]\\
      \HRPlaceholder_m^G(X) \otimes \prod_{i \in I} \Q \ar[r, hook]&
      \prod_{i \in I} (\HRPlaceholder_m^G(X) \otimes \Q).
    \end{tikzcd}
  \]
  The left vertical arrow is an isomorphism by \labelcref{eq:pairingNondegenerate} and  the lower horizontal arrow is injective for abstract reasons.
  Consequently, this proves that the external product map is injective.
\end{proof}

\begin{rem}
At first glance, one might also hope for a coarse version of the previous theorem using the definitions given in \cref{sec_compatibility_Rips}. 
That is, for a proper metric space $Y$ of bounded geometry such that the coarse co-assembly map $\mu^*\colon \K_{1-*}(\sHigCorRed Y)\to \KX^{*}(Y)$ is rationally surjective, the external product maps
\begin{alignat*}{3}
\times\colon&& \KX_m(X)&\otimes \KX_n(Y)&&\to \KX_{m+n}(X\times Y)
\\
\times\colon&& \SX_m(X)&\otimes \KX_n(X)&&\to \SX_{m+n}(X\times Y)
\\
\times\colon&& \K_m(\RoeSymbol X)&\otimes \KX_n(Y)&&\to \K_{m+n}(\RoeSymbol (X\times Y))
\end{alignat*}
should be rationally injective.\footnote{The external product $\times\colon \K_m(\RoeSymbol X)\otimes \KX_n(Y)\to \K_{m+n}(\RoeSymbol(X\times Y))$ is the external product of the Roe algebras composed with the coarsified assembly map $\KX_n(Y)\to \K_n(\RoeSymbol Y)$.}
However, there seems to be little hope.
The proof of \cref{thm_coass_surj} does not work in this situation because the pairing between $\KX_n(Y)$ and $\KX^{n}(Y)$ can be degenerate in general.
\end{rem}

\subsection{Proper metric spaces that are scaleable, combable or coarsely embeddable}

In this section we show that in many situations, where one can prove the coarse Novikov conjecture, one can also prove injectivity of external products on the non-equivariant version of the Higson--Roe sequence.
In the following, the space \(Y\) will come without a group action.
In effect, we will apply \cref{thm_coass_surj} for the case that \(H\) is the trivial group.

We refrain from recalling here the notions occuring in the following corollary (like \emph{scaleable} or \emph{combing}) because they do not appear anywhere else in this paper. The interested reader can find the relevant definitionsin the references provided in the proof below.
The corollary is stated as \cref{cor_conditionsCoAssIso_intro} in the introduction.

\begin{cor}\label{cor:conditionsCoAssIso}
Let $Y$ be either
\begin{myenuma}
\item a uniformly contractible, proper metric space of continuously bounded geometry which is scaleable,
\item a uniformly contractible, proper metric space of continuously bounded geometry which admits an expanding and coherent combing, or
\item the universal cover $\Efree H$ of the classifying space $\Bfree H$ of a group $H$, if $\Bfree H$ is a finite complex and $H$ is coarsely embeddable into a Hilbert space. \label{item:CoarseEmbedd}
\end{myenuma}

Then for every proper metric space $X$ endowed with a proper action of a countable discrete group \(G\), the external product
\[
  \HRPlaceholder_m^G(X) \otimes \K_{n}(Y) \to \HRPlaceholder^G_{m+n}(X \times Y)
\]
is rationally injective for each \(m,n \in \Z\).
\end{cor}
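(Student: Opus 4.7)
The plan is to apply \cref{thm_coass_surj} with $H$ the trivial group in each of the three cases. When $H$ is trivial, the hypothesis reduces to requiring that $Y$ has continuously bounded geometry and that the non-equivariant co-assembly map $\mu^\ast \colon \K_{1-\ast}(\sHigCorRed Y) \to \K^\ast(Y)$ is rationally surjective; the conclusion is precisely the rational injectivity of $\HRPlaceholder_m^G(X) \otimes \K_n(Y) \to \HRPlaceholder_{m+n}^G(X \times Y)$ claimed in the corollary. Therefore the proof reduces in each case to verifying these two hypotheses on $Y$.

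Continuously bounded geometry is assumed outright in cases (a) and (b); in case (c), the universal cover $\Efree H$ of a finite aspherical complex is a simplicial complex of bounded geometry, hence has continuously bounded geometry by the discussion following \cref{defn:boundedGeometry}. For the surjectivity of the co-assembly map, the strategy in all three cases is to first establish an isomorphism (not merely rational surjection) at the level of coarse $\K$-theory, $\mu^\ast \colon \K_{1-\ast}(\sHigCorRed Y) \xrightarrow{\cong} \KX^\ast(Y)$, and then transfer this to $\K^\ast(Y)$ via the co-coarsification map. The latter step is supplied uniformly by \cref{prop_uniformly_contractible}: uniform contractibility together with bounded geometry forces $\coarsify^\ast \colon \KX^\ast(Y) \to \K^\ast(Y)$ to be an isomorphism. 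Uniform contractibility holds by assumption in cases (a) and (b), and in case (c) it follows from the fact that $\Efree H$ is the universal cover of a finite aspherical complex.

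It then remains to invoke the appropriate coarse co-assembly isomorphism theorem in each case. For (a), scaleable spaces of continuously bounded geometry have an invertible coarsified co-assembly map by the result of Emerson and Meyer~\cite{EmeMey}. For (b), the isomorphism is the main theorem of Engel--Wulff for uniformly contractible spaces admitting an expanding and coherent combing. For (c), the canonical coarse equivalence $H \to \Efree H$ (sending $h$ to $h \cdot p_0$ for a chosen basepoint) induces an identification of the coarse co-assembly map of $\Efree H$ with that of $H$, and Emerson--Meyer~\cite[Theorem~9.2]{EmeMey} shows that coarse embeddability of $H$ into Hilbert space implies the latter is an isomorphism. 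Composing with $\coarsify^\ast$ yields an isomorphism $\mu^\ast \colon \K_{1-\ast}(\sHigCorRed Y) \xrightarrow{\cong} \K^\ast(Y)$ in all three cases.

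No single step is genuinely hard once the machinery is in place; the only delicate point is the commutative triangle $\mu^\ast = \coarsify^\ast \circ \mu^\ast$ relating the coarsified and ordinary co-assembly maps, which is established in \cref{sec_compatibility_Rips} and underlies the reduction to coarse $\K$-theory. With the surjectivity of $\mu^\ast$ verified, \cref{thm_coass_surj} applied to the trivial $H$-action on $Y$ completes the proof in each of the three cases.
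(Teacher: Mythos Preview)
Your proposal is correct and follows essentially the same approach as the paper: reduce to \cref{thm_coass_surj} with trivial $H$, verify continuously bounded geometry, and establish (rational) surjectivity of the co-assembly map in each case via the Emerson--Meyer and Engel--Wulff results together with \cref{prop_uniformly_contractible}. One small inaccuracy: in case~(b) the Engel--Wulff theorem only gives \emph{surjectivity} of the coarsified co-assembly map, not an isomorphism, but this is all you need.
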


\begin{proof}
In either case we will reduce to \cref{thm_coass_surj}, where we take \(H\) to be the trivial group.
Then we need to show that the ordinary coarse co-assembly map \(\mu^\ast \colon  \K_{1-\ast}(\sHigCorRed Y) \to \K^\ast(Y)\) is rationally surjective.
\begin{myenuma}
\item \citeauthor{EmeMey} \cite[Corollary~8.10]{EmeMey} proved that under these assumptions on $Y$ its coarse co-assembly map is an isomorphism.
\item \citeauthor{engel_wulff} proved that under these assumptions on the space $Y$ the coarse co-assembly map $\mu^\ast\colon \K_{1-*}(\sHigCorRed Y)\to \KX^*(Y)$ is surjective \cite[Theorem~5.10]{engel_wulff}. Because uniform contractibility of $Y$ implies $\KX^*(Y) \cong \K^*(Y)$, see \cref{prop_uniformly_contractible}, the claim follows.
\item \citeauthor{EmeMey} \cite[Section~9]{EmeMey} proved that under these assumptions on $H$ its coarse co-assembly map $\mu^\ast\colon \K_{1-*}(\sHigCorRed \Efree H)\to \KX^*(\Efree H)$ is an isomorphism.
In this case, $\Efree H$ is uniformly contractible and so we get $\KX^*(H) \cong \K^*(\Efree H)$ by \cref{prop_uniformly_contractible}.
Moreover, $\Efree H$ has continuously bounded geometry.\qedhere
\end{myenuma}
\end{proof}

\subsection{Groups with a \texorpdfstring{$\gamma$}{gamma}-element}
\label{sec_rational_kuenneth}

The following result is also a consequence of \cref{thm_coass_surj} and was stated as \cref{intro_thm_struct_injective_gamma} in the introduction.

\begin{cor}\label{thm_struct_injective_gamma}
  Let $N$ be a finite aspherical complex, and assume that $H = \pi_1 N$ has a $\gamma$-element.

  Then for every proper metric space \(X\) endowed with a proper action of a countable discrete group \(G\), the external product map
  \[
  \HRPlaceholder_m^{G}(X) \otimes \K_{n}(N) \to \HRPlaceholder_{m+n}^{G \times H}(X \times \ucov{N})
  \]
   is rationally injective for each \(m,n \in \Z\).
\end{cor}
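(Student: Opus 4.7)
The plan is to reduce the statement to a direct application of \cref{thm_coass_surj}, taking $Y = \ucov{N}$ with its proper, free $H$-action. The content of the proof is then essentially to verify that all hypotheses of \cref{thm_coass_surj} are met and that the resulting external product matches the one in the statement.

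First I would check that $Y = \ucov{N}$ satisfies the geometric hypotheses. Since $N$ is a finite aspherical complex, the fundamental group $H = \pi_1 N$ must be torsion-free: a non-trivial finite subgroup of $H$ would act freely on the finite-dimensional contractible complex $\ucov{N}$, which is impossible. Consequently $\ucov{N}$ is simultaneously a model for $\Efree H$ and $\Eub H$, and it is an $H$-finite model. Equip $\ucov{N}$ with any $H$-invariant proper length metric. Because $N$ is a finite simplicial complex, $\ucov{N}$ has bounded geometry as a simplicial complex (\cref{defn:boundedGeometry}\,\labelcref{defn_bounded_geom_simplicial}); by the discussion after \cref{defn:boundedGeometry}, the underlying metric space therefore has continuously bounded geometry. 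The $H$-action is proper, free, and cocompact by construction.

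Next I would invoke \cref{cor_equiv_coarse_coass_surj}: since $H$ admits a $\gamma$-element and the $H$-finite classifying space $\Eub H = \ucov{N}$ exists, the equivariant coarse co-assembly map
\[
	\mu^*_H\colon \K_{1-\ast}(\sHigCorRed \ucov{N} \rtimes_\mu H) \to \K_H^{\ast}(\ucov{N})
\]
is surjective for any choice of exact crossed product functor $\rtimes_\mu$. Hence all hypotheses of \cref{thm_coass_surj} are verified, and applying that theorem yields that, for every proper metric space $X$ with a proper action of a countable discrete group $G$, the external product map
\[
	\HRPlaceholder_m^G(X) \otimes \K_n^H(\ucov{N}) \to \HRPlaceholder_{m+n}^{G \times H}(X \times \ucov{N})
\]
is rationally injective for all $m, n \in \Z$.

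Finally, I would identify this external product with the one in the statement of the corollary. Because the $H$-action on $\ucov{N}$ is free, the induction isomorphism $\mathsf{I}_{\ucov{N}} \colon \K_n^H(\ucov{N}) \xrightarrow{\cong} \K_n(H \backslash \ucov{N}) = \K_n(N)$ of \labelcref{eq:LocIndIso} applies. The compatibility between this induction isomorphism and our external products is built into the framework: the external product is constructed from the equivariant localization algebras in \cref{subsec_crossProducts_equiv}, and \cref{thm_equivariantComparison} ensures that under free actions the equivariant constructions coincide (via $\mathsf{I}$) with the non-equivariant ones on the quotient, including their behavior under products. Precomposing the displayed external product with $\id \otimes \mathsf{I}_{\ucov{N}}^{-1}$ therefore produces exactly the external product map $\HRPlaceholder_m^G(X) \otimes \K_n(N) \to \HRPlaceholder_{m+n}^{G \times H}(X \times \ucov{N})$ of the corollary, which is consequently rationally injective. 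The main point requiring care is this last identification with $\K_n(N)$ and the verification that the relevant external product diagram commutes, but this is directly supplied by \cref{thm_equivariantComparison}.
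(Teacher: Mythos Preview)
Your proposal is correct and follows essentially the same approach as the paper: verify that $\ucov{N}$ is an $H$-finite model for $\Eub H$ (using torsion-freeness of $H$), invoke \cref{cor_equiv_coarse_coass_surj} to get surjectivity of the equivariant coarse co-assembly map, then apply \cref{thm_coass_surj} and use the induction isomorphism $\K_\ast^H(\ucov{N}) \cong \K_\ast(N)$. One small remark: your appeal to \cref{thm_equivariantComparison} for compatibility of the \emph{external} product with the induction isomorphism is slightly off, since that theorem is stated for the slant product; the paper simply treats the compatibility of external products with induction as implicit in the localization algebra framework of \cref{subsec_crossProducts_equiv}.
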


\begin{proof}
Since $N$ is assumed to be a finite aspherical complex, the group \(H\) is torsion-free and $\ucov N$ is an $H$-finite model for $\Efree H = \Eub H$.
 Hence by \cref{cor_equiv_coarse_coass_surj} we conclude that the equivariant coarse co-assembly map
\[
\mu^\ast_H\colon \K_{1-\ast}(\sHigCorRed \ucov N \rtimes_{\max} H) \to \K^{\ast}_H(\ucov{N}) \cong \K^{\ast}(N)
\]
is surjective. 
Thus we can apply \cref{thm_coass_surj} together with the isomorphism \(\K_\ast(N) \cong \K^H_\ast(\ucov{N})\).
\end{proof}

In the case of $\K$-homology the above injectivity statement follows from the Künneth formula and is therefore valid in full generality, that is, without assuming the existence of a $\gamma$-element.
But for the $\K$-theory of the reduced group \textCstar-algebras, the Künneth formula is only known to hold if $\pi_1 M$ satisfies the Baum--Connes conjecture with coefficients in any \textCstar-algebras with trivial $\pi_1 M$-action \cite{Kuenneth_BC}, which is a considerably stronger assumption than the existence of a $\gamma$-element.
We explore this in \cref{sec_full_kuenneth} below.

\subsection{Higson-essentialness and hypereuclidean manifolds}
\label{sec_hypereuc}
We restate the definition of a Higson-essential manifold which was given in \cref{defn_Higson_essential_intro}.
\begin{defn}\label{defn_Higson_essential}
  We say that a complete Riemannian \spinC-manifold \(X\) of dimension \(m\) is \emph{Higson-essential} if there exists \(\vartheta \in \K_{1-m}(\sHigCorRed X)\) such that \(\langle [\Dirac_X], \mu^\ast(\vartheta)  \rangle = 1\), where \([\Dirac_X] \in \K_m(X)\) denotes the \(\K\)-homological fundamental class of the \spinC-structure.
  If the condition is relaxed to merely \(\langle [\Dirac_X], \mu^\ast(\vartheta)  \rangle \neq 0\), then \(X\) is called \emph{rationally Higson-essential}.
\end{defn}
Start with an immediate observation.
\begin{prop}
  Let \(X\) be a complete Riemannian \spinC-manifold \(X\) of dimension \(m\) such that the coarse co-assembly map \(\mu^\ast \colon \K_{1-m}(\sHigCorRed X) \to \K^m(X)\) is \parensup{rationally} surjective.
  Then \(X\) is \parensup{rationally} Higson-essential.
\end{prop}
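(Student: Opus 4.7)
The plan is to reduce the statement to a lifting problem through $\mu^*$. By \cref{defn_Higson_essential}, we need to produce $\vartheta \in \K_{1-m}(\sHigCorRed X)$ with $\langle [\Dirac_X], \mu^*(\vartheta)\rangle = 1$ (respectively $\neq 0$). The natural strategy is to first find a suitable class $c \in \K^m(X)$ that pairs to $1$ against the fundamental class, and then lift it through $\mu^*$.

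The first step is to exhibit $c \in \K^m(X)$ with $\langle [\Dirac_X], c\rangle = 1$. This is purely local, thanks to the \spinC-structure: fix any point $x \in X$ and an open Euclidean neighbourhood $U$ of $x$ on which the \spinC-structure is trivialised, so that $\Dirac_X|_U$ agrees up to compact perturbation with the Euclidean Dirac operator $\Dirac_{\IR^m}$ pulled back along a chart. The compactly supported Bott class $\beta \in \K^m(\IR^m)$, supported in a small ball, transports via this chart and the open embedding $U \hookrightarrow X$ to a class $c \in \K^m(X)$. The pairing $\langle [\Dirac_X], c\rangle$ depends only on the restriction of $[\Dirac_X]$ to the support of $c$, and there it agrees with $\langle [\Dirac_{\IR^m}], \beta\rangle = 1$. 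In slogan form: the local index of the Dirac operator twisted by Bott is $1$, which is the classical normalisation of Bott periodicity and Poincaré duality for \spinC-manifolds.

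Having obtained such a $c$, I would conclude as follows. In the integral case, surjectivity of $\mu^*$ supplies $\vartheta \in \K_{1-m}(\sHigCorRed X)$ with $\mu^*(\vartheta) = c$, so that $\langle [\Dirac_X], \mu^*(\vartheta)\rangle = 1$ by construction. In the rational case, rational surjectivity of $\mu^*$ gives a nonzero integer $n$ and some $\vartheta$ with $\mu^*(\vartheta) = n \cdot c$, whence $\langle [\Dirac_X], \mu^*(\vartheta)\rangle = n \neq 0$.

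The only real content lies in the first step: verifying that the locally constructed Bott class pairs to $1$ with $[\Dirac_X]$. This is standard but must be stated carefully, and is the step I expect to require the most care in a fully written-out proof. Everything else is a formal lifting argument.
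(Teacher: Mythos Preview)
Your proposal is correct and follows essentially the same approach as the paper: both produce a local Bott class in a coordinate chart that pairs to $1$ with $[\Dirac_X]$, then lift it through the (rationally) surjective co-assembly map. The paper states this in two sentences, while you have spelled out the local pairing computation more carefully, but there is no substantive difference.
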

\begin{proof}
  This follows from the definition because there always exists a class \(\beta_X \in \K^m(X)\) such that \(\langle [\Dirac_X], \beta_X \rangle = 1\).
  Indeed, \(\beta_X\) can be taken to be the Bott generator inside a coordinate patch \(\R^m \subset X\).
\end{proof}

\begin{rem}\label{rem_surj_implies_Higson-essential}
  In particular, a complete \spinC-manifold which is also a space of a type considered in \cref{cor:conditionsCoAssIso} is Higson-essential.
\end{rem}

Higson-essentialness is an analytic condition.
We will contrast it with the following coarse geometric property which is a generalization of hypereuclidan manifolds from \cref{defn_stably_hypereuclidean_noncompact}.

\begin{defn}[{compare \cite[Definition~2.11]{BrunnbauerHanke}}]
We say a complete oriented Riemannian manifold $X$ of dimension \(m\) is \emph{coarsely hypereuclidean} if there exists a coarse map \(\varphi \colon X \to \R^m\) such that \(\varphi_\ast(\coarsify[X]) \in \HZX_m(\R^m)\) is a generator, where \(\HZX_\ast\) denotes coarse homology and \(\coarsify \colon \HZ^\lf_\ast(X) \to \HZX_\ast(X)\) is the coarsification map.
If the condition is relaxed to merely \(\varphi_\ast(\coarsify[X]) \neq 0 \in \HZX_m(\R^m)\), then \(X\) is called rationally coarsely hypereuclidean.

Moreover, we say that \(X\) is (rationally) \emph{stably coarsely hypereuclidean} if there is \(k \in \N\) such that \(X \times \R^k\) is (rationally) coarsely hypereuclidean.
\end{defn}

Since any proper Lipschitz map is coarse and the coarsification map is an isomorphism for \(\R^m\), it is immediate that (rationally, stably) hypereuclidean in the sense of \cref{defn_stably_hypereuclidean_noncompact} implies (rationally, stably) coarsely hypereuclidean.

\subsubsection{Coarsely hypereuclidean implies Higson-essential}
In this subsection, we prove that stably coarsely hypereuclidan \spinC-manifolds are Higson-essential.
To deal with the \emph{stable} aspect, we need to work with the suspension isomorphism for the stable Higson corona.
We discuss this in the following remark.
\begin{rem}[Suspension for the stable Higson corona] \label{rem:sHigCorSuspension}

	Let \(\R_- \coloneqq (-\infty, 0]\) and \(\R_+ \coloneqq [0, \infty)\).
	Then \(X \times \R = X \times \R_- \cup X \times \R_+\) is a coarsely excisive cover by closed subsets.
	We then have three pull-back diagrams of \Cstar-algebras of the form
	\[
		\xymatrix{
			\mathfrak{C}(X \times \R) \ar[r] \ar[d] & \mathfrak{C}(X \times \R_+) \ar[d]\\
			\mathfrak{C}(X \times \R_-)  \ar[r]& \mathfrak{C}(X),
		}
	\]
	where \(\mathfrak{C} \in \{ \Cz(\blank, \Kom),  \sHigComRed, \sHigCorRed\}\).
	Moreover, all maps in these diagrams are surjections.
	These properties follow from \cite[Lemmas~3.3 and 3.4]{WilletHomological}.
	Associated to each of these pullback diagrams we have a long exact Mayer--Vietoris sequence in \(\K\)-theory~\cite[Section~21.2]{blackadar}.
		As \(X \times \R_\pm\) is flasque, \(\K_\ast(\mathfrak{C}(X \times \R_\pm)) = 0\).
Thus the boundary maps in the Mayer--Vietoris sequences yield isomorphisms \(\Sigma \colon \K_\ast(\mathfrak{C}(X)) \xrightarrow{\cong} \K_{\ast-1}(\mathfrak{C}(X \times \R))\).
Applying the functorial exact sequence \(0 \to \Cz(\blank, \Kom) \to \sHigComRed(\blank) \to \sHigCorRed(\blank) \to 0\) yields an exact sequence of pullback diagrams.
Thus the co-assembly map, which is the boundary map associated to \(0 \to \Cz(\blank, \Kom) \to \sHigComRed(\blank) \to \sHigCorRed(\blank) \to 0\), commutes with the Mayer--Vietoris boundary map up to a sign.
In other words, we have the diagram
\[
	\xymatrix{
	\K^\ast(X) \ar[r]^-{\Sigma}&
	 \K^{\ast+1}(X \times \R) \\
	\K_{1-\ast}(\sHigCorRed X) \ar[r]^-{\Sigma} \ar[u]^-{\mu^\ast} & \K_{1-(\ast+1)}(\sHigCorRed (X \times \R)) \ar[u]^-{\mu^\ast}
	}
\]
which commutes up to multiplication with \(-1\).
\end{rem}
We are now ready to to prove the main theorem of this subsection.
\begin{thm}\label{hypereuclideanImpliesCoassemblyLift}
	Let \(X\) be an \(m\)-dimensional \parensup{rationally} stably coarsely hypereuclidean \spinC-manifold.
	Then \(X\) is \parensup{rationally} Higson-essential.
\end{thm}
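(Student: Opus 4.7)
The plan is to proceed in two stages: first reduce to the unstable case ($k = 0$) via the suspension isomorphism for $\K_*(\sHigCorRed(\blank))$ from \cref{rem:sHigCorSuspension}, and then construct the required class in the unstable case as the pullback of the Bott class on $\R^m$ along the witnessing coarse map.

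Assume the unstable case has been established. Since $X \times \R^k$ is coarsely hypereuclidean, it is then \parensup{rationally} Higson-essential, yielding $\eta \in \K_{1-(m+k)}(\sHigCorRed(X \times \R^k))$ with $\langle [\Dirac_{X \times \R^k}], \mu^*(\eta)\rangle = 1$ \parensup{resp.\ $\neq 0$}. Iterating the suspension isomorphism $\Sigma$ of \cref{rem:sHigCorSuspension} $k$ times, set $\vartheta \coloneqq \Sigma^{-k}(\eta) \in \K_{1-m}(\sHigCorRed X)$. The compatibility of $\Sigma$ with the coassembly map and with the suspension on $\K$-theory $\K^m(X) \cong \K^{m+k}(X \times \R^k)$ (which is given by exterior multiplication with the K-theoretic Bott element), combined with the product formula $[\Dirac_{X \times \R^k}] = [\Dirac_X] \times [\Dirac_{\R^k}]$ and the normalization $\langle [\Dirac_{\R^k}], \beta\rangle = 1$ from \cref{properties_slant}~\labelcref{item_3_mainthm_slant}, yields $\langle [\Dirac_X], \mu^*(\vartheta)\rangle = \pm \langle [\Dirac_{X \times \R^k}], \mu^*(\eta)\rangle$.

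For the unstable case, let $\varphi \colon X \to \R^m$ be the coarse map witnessing coarse hypereuclideanity. As $\R^m$ is scaleable and uniformly contractible, its coarse co-assembly map is an isomorphism, providing a Bott class $\beta \in \K_{1-m}(\sHigCorRed \R^m)$ with $\mu^*(\beta) \in \K^m(\R^m) \cong \Z$ a generator. Define $\vartheta \coloneqq \varphi^*(\beta)$. Using the factorization of $\mu^*$ through its coarsified version, naturality of the coarse coassembly under the coarse map $\varphi$, and naturality of the pairing, one computes
\[ \langle [\Dirac_X], \mu^*(\vartheta)\rangle = \langle \coarsify[\Dirac_X], \varphi^* \mu^*(\beta)\rangle = \langle \varphi_* \coarsify[\Dirac_X], \mu^*(\beta)\rangle, \]
the last pairing being between $\KX_m(\R^m)$ and $\KX^m(\R^m)$. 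The right-hand factor is a generator of $\KX^m(\R^m) \cong \Z$, so it remains to show that $\varphi_*\coarsify[\Dirac_X] \in \KX_m(\R^m) \cong \Z$ is a generator \parensup{resp.\ rationally non-zero}.

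The remaining link is the Chern character: by the Riemann--Roch formula, $\ch[\Dirac_X] = [X] \cap \Td(X)$, whose top-degree component is $[X]$ and whose lower-degree components lie in $H^{\lf}_j(X; \Q)$ for $j < m$. Pushing forward along $\varphi$ and using $\HZX_j(\R^m; \Q) = 0$ for $j \neq m$ annihilates the lower components, so $\ch(\varphi_*\coarsify[\Dirac_X]) = \varphi_*\coarsify[X]$ in $\HZX_m(\R^m; \Q)$, which is a generator \parensup{resp.\ non-zero} by the coarse hypereuclidean hypothesis. Since $\Td(\R^m) = 1$, the Chern character is an integral isomorphism on $\R^m$, giving the desired conclusion. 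The main obstacle lies in setting up the coarse Chern character and verifying its naturality under pushforward along the coarse map $\varphi$ at the level of Rips complexes; once that machinery is in place, the computation above is routine.
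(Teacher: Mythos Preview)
Your proof is correct and follows essentially the same approach as the paper. The only organizational difference is that you modularize the argument by first reducing to the unstable case via the suspension isomorphism and then treating $k=0$ separately, whereas the paper works directly with $X\times\R^k$ and $\R^{m+k}$ throughout and applies $\Sigma^{-k}$ only at the end; both routes use the same ingredients (suspension from \cref{rem:sHigCorSuspension}, pullback of the Bott class along $\varphi$, the Chern character/Todd class argument to identify $\varphi_*\coarsify[\Dirac]$ with $d$ times the generator, and the factorization of $\mu^*$ through the coarsification). The point you flag about the coarse Chern character is handled just as informally in the paper, which simply uses that $\KX_m(\R^m)\cong\HZX_m(\R^m)\cong\Z$ and that the map between them sends generator to generator.
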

\begin{proof}
	By assumption, we have a coarse map $\varphi\colon X \times \R^k \to \R^{m+k}$ such that \(\varphi_\ast \coarsify [X \times \R^k] = d\coarsify [\R^{m+k}]\), where \(d = 1\) \parensup{or \(d \neq 0 \in \Z\), respectively}.
  We first observe that then the same applies to the \(\K\)-homological fundamental clases, that is, 
  \[
  \varphi_\ast \coarsify [\Dirac_{X \times \R^{k}}] = d \coarsify [\Dirac_{\R^{m+k}}] \in \KX_{m+k}(\R^{m+k}).
  \]
  This follows from the Chern character.
  Indeed, 
  \[
  \ch_*([\Dirac_{X \times \R^k}]) \in \bigoplus_{i\in \N} \HZ^\lf_{m+k-2i}(X \times \R^k;\Q)
  \] 
  is the Poincaré dual of the Todd class \(\Td(X \times \IR^k)\) of \(X \times \R^k\).
  Since \(\R^{m+k}\) has coarse and locally finite homology only in degree \(m+k\), only the top degree component of  \(\ch_*([\Dirac_{X \times \R^k}])\) contributes to \(\ch_*(\varphi_\ast \coarsify [\Dirac_{X \times \R^k}])\).
  As the degree zero part of \(\Td(X \times \R^k)\) is \(1\), this component is \([X \times \R^k]\) and we conclude that
  \[\ch_*(\varphi_\ast \coarsify [\Dirac_{X \times \R^k}]) = \varphi_\ast \coarsify [X \times \R^k] = d \coarsify [\R^{m+k}] \in \HZX_{m+k}(\R^{m+k}; \Q).\]
  This also implies the desired integral equality because the transformation \(\Z \cong \HZX_\ast(\R^{m+k}) \to \HZX_\ast(\R^{m+k}; \Q) \cong \Q\) is injective.
  
  \begin{sloppypar}
	Next, we consider the following commutative diagram (up to signs).
	\[
		\xymatrix{
      \K^m(X) \ar[r]^-{\Sigma^k}_-{\cong}
      & \K^{m+k}(X \times \R^k)
      & \K^{m+k}(\R^{m+k})  \\
			\KX^m(X) \ar[r]^-{\Sigma^k}_-{\cong}  \ar[u]^-{\coarsify^\ast}
				& \KX^{m+k}(X \times \R^k) \ar[u]^-{\coarsify^\ast}
				& \KX^{m+k}(\R^{m+k}) \ar[l]_-{\varphi^\ast} \ar[u]^-{\coarsify^\ast}_-{\cong} \\
			\K_{1-m}(\sHigCorRed X) \ar[u]^-{\mu^\ast} \ar[r]^-{\Sigma^k}_-{\cong}
				& \K_{1-m-k}(\sHigCorRed (X\times\R^k)) \ar[u]^-{\mu^\ast}
				& \ar[l]_-{\varphi^\ast} \K_{1-m-k}(\sHigCorRed \R^{m+k}) \ar[u]^-{\mu^\ast}_-{\cong}
		}
	\]
  Here \(\Sigma^k\) signifies the \(k\)-fold application of the suspension ismorphism from \cref{rem:sHigCorSuspension}.
	Let \( \beta_{\R^{m+k}} \in \K^{m+k}(\R^{m+k})\) be the Bott generator and \(\vartheta_{\R^{m+k}} \in \K_{1-m-k}(\sHigCorRed \R^{m+k})\) such that \(\coarsify^\ast \mu^\ast \vartheta_{\R^{m+k}} = \beta_{\R^{m+k}}\).
	Further, we set \(\vartheta \in \K_{1-m}(\sHigCorRed X) \) to be the unique element such that \(\Sigma^k \vartheta = \varphi^\ast \vartheta_{\R^{m+k}} \).
  To prove that \(X\) is (rationally) Higson-essential, we need to verify that \(\langle [\Dirac_X], \coarsify^\ast \mu^\ast \vartheta\rangle = \pm d\).
  Indeed, \(\Sigma^k \mu^\ast \vartheta = \pm \mu^\ast \Sigma^k \vartheta = \pm \mu^\ast \varphi^\ast \vartheta_{\R^{m+k}} = \pm \varphi^\ast \mu^\ast \vartheta_{\R^{m+k}}\) and therefore
	\begin{align*}
		\langle [\Dirac_X], \coarsify^\ast \mu^\ast \vartheta\rangle
			&= \langle [\Dirac_X] \times [\Dirac_{\R^k}], \Sigma^k \coarsify^\ast \mu^\ast \vartheta \rangle \\
      &= \langle [\Dirac_X] \times [\Dirac_{\R^k}], \coarsify^\ast \Sigma^k  \mu^\ast \vartheta \rangle \\
			&= \pm \langle [\Dirac_{X \times \R^{k}}], \coarsify^\ast \varphi^\ast \mu^\ast \vartheta_{\R^{m+k}} \rangle \\
      &= \pm \langle \varphi_\ast \coarsify [\Dirac_{X \times \R^{k}}], \mu^\ast \vartheta_{\R^{m+k}} \rangle \\
      &= \pm \langle d \coarsify [\Dirac_{\R^{m+k}}], \mu^\ast \vartheta_{\R^{m+k}} \rangle \\
      &= \pm \langle d [\Dirac_{\R^{m+k}}], \coarsify^\ast \mu^\ast \vartheta_{\R^{m+k}} \rangle \\
      &= \pm \langle d [\Dirac_{\R^{m+k}}], \beta_{\R^{m+k}} \rangle = \pm d \,. \qedhere 
  \end{align*}
\end{sloppypar}
\end{proof}

\subsubsection{Injectivity of external products}
The following result explains why the notion of Higson-essentialness is useful for our purposes.
It was stated as \Cref{intro_theorem_injectivity_higson_essential} in the introduction.

\begin{cor}\label{cor_application_of_higson_essential}
	Let \(Y\) be an \(n\)-dimensional \spinC-manifold of continuously bounded geometry.
  Suppose that \(Y\) is \parensup{rationally} Higson-essential \parensup{in particular, this is satisfied if \(Y\) is \parensup{rationally} stably coarsely hypereuclidean}.
  Assume furthermore that \(Y\) is endowed with a proper action of a countable discrete group \(H\) which preserves the \spinC structure.

	Then for every proper metric space \(X\) which is endowed with a proper action of a countable discrete group \(G\), the external product
\[\HRPlaceholder_\ast^{G}(X) \xrightarrow{\times [\Dirac_Y]} \HRPlaceholder_{\ast+n}^{G \times H}(X \times Y)\]
is \parensup{rationally} split-injective.
\end{cor}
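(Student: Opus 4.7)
The plan is to reduce the statement to \cref{CrossWithElementInj} by constructing an equivariant lift of the non-equivariant co-assembly witness supplied by the Higson-essential hypothesis. Since the \(H\)-action preserves the \spinC-structure, the Dirac operator determines an equivariant fundamental class \(y \coloneqq [\Dirac_Y]^H \in \K_n^H(Y)\), whose image under the forgetful map \(\ForgetEquiv_\ast \colon \K_n^H(Y) \to \K_n(Y)\) is the ordinary \([\Dirac_Y]\). The Higson-essentialness of \(Y\) furnishes some \(\vartheta \in \K_{1-n}(\sHigCorRed Y)\) with \(\langle [\Dirac_Y], \mu^\ast(\vartheta)\rangle = 1\) (respectively \(\neq 0\)); my candidate equivariant lift is \(\vartheta_H \coloneqq \iota_\ast(\vartheta) \in \K_{1-n}(\sHigCorRed Y \rtimes_\mu H)\), where \(\iota \colon \sHigCorRed Y \to \sHigCorRed Y \rtimes_\mu H\), \(f \mapsto f \delta_e\), is the canonical \(\ast\)-homomorphism from just before \cref{lem_compatibility_equiv_nonequiv}.

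The main step will be to verify the equivariant pairing identity \(\langle y, \mu^\ast_H(\vartheta_H)\rangle = 1\) (respectively \(\neq 0\)) required by \cref{CrossWithElementInj}. By \cref{thm_equiKHomSlantCompatible}, applicable because \(Y\) has continuously bounded geometry, the left-hand side equals the pairing \(\langle y, \vartheta_H\rangle\), i.e.\ the equivariant slant product \(y / \vartheta_H\) over the one-point space in the sense of \cref{defn_equivariant_pairing}. Specializing \cref{lem_compatibility_equiv_nonequiv} to \(X = \ast\) with trivial ambient group—so that the forgetful map on the target \(\K_\ast(\C)\) is the identity—yields \(y / \iota_\ast(\vartheta) = \ForgetEquiv_\ast(y) / \vartheta = [\Dirac_Y] / \vartheta\), and invoking the non-equivariant analogue \cref{thm_KHomSlantCompatible} identifies this final expression with \(\langle [\Dirac_Y], \mu^\ast(\vartheta)\rangle\), which is \(1\) (respectively \(\neq 0\)) by assumption.

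With the pairing identity secured, \cref{CrossWithElementInj} directly delivers the desired (rational) split-injectivity of the external product map \(\blank \times [\Dirac_Y]^H\) on every constituent of the Higson--Roe sequence, with retraction given explicitly by the equivariant slant product \(\blank / \vartheta_H\) constructed in \cref{sec_equiv_version}. The only real subtlety in executing this plan is bookkeeping the various compatibility statements between equivariant and non-equivariant constructions and between slant products and pairings; however, all of these compatibilities are already established earlier in the paper, so no new analytic input should be required. The addendum for coarsely stably hypereuclidean \(Y\) follows immediately from \cref{hypereuclideanImpliesCoassemblyLift}.
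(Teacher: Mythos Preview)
Your proposal is correct and follows essentially the same route as the paper's own proof: both push the non-equivariant witness \(\vartheta\) through \(\iota_\ast\) to obtain an element of \(\K_{1-n}(\sHigCorRed Y \rtimes_\mu H)\), then invoke \cref{thm_equiKHomSlantCompatible} and \cref{lem_compatibility_equiv_nonequiv} (specialized to \(X=\ast\)) to identify the equivariant pairing with the non-equivariant one, and finally appeal to \cref{CrossWithElementInj}. Your write-up is in fact slightly more explicit than the paper's in spelling out the chain of compatibilities, but the argument is the same.
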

\begin{proof}
  If  \([\Dirac_Y] \in \K_n^H(Y)\) denotes the equivariant fundamental class of \(Y\), then \(\ForgetEquiv_\ast [\Dirac_Y] \in \K_n(Y)\) is the fundamental class of the underlying non-equivariant \spinC-manifold, where we used the notation from \cref{subsec_compatibility_equiv_nonequiv}.
  By assumption, there exists \(\vartheta \in \K_{1-n}(\sHigCorRed Y)\) with \(\langle \ForgetEquiv_\ast [\Dirac_Y], \mu^\ast \vartheta\rangle = d\), where \(d = 1\) (or \(d \neq 0\), respectively).
  Let \(\iota \colon \sHigCorRed Y \to \sHigCorRed Y \rtimes_\mu H\) be the map which was also considered in \cref{subsec_compatibility_equiv_nonequiv}.
  Then \cref{defn_equivariant_pairing,thm_equiKHomSlantCompatible,lem_compatibility_equiv_nonequiv} imply that
  \[
    \langle [\Dirac_Y], \mu^\ast_H \iota_\ast \vartheta \rangle = \langle \ForgetEquiv_\ast [\Dirac_Y], \mu^\ast \vartheta\rangle = d.
  \]
  Thus we can apply \cref{CrossWithElementInj}.
\end{proof}

\subsubsection{Contractible manifolds}

Assuming that the non-compact manifold is contractible (for instance, consider the universal cover of an aspherical manifold), we get the following stronger results.

\begin{prop}\label{AsphericalHypereuclideanImpliesSurj}
	Let \(Y\) be a contractible \(n\)-dimensional \spinC-manifold.
  Suppose that \(Y\) is \parensup{rationally} Higson-essential \parensup{in particular, this is satisfied if \(Y\) is \parensup{rationally} stably coarsely hypereuclidean}.

	Then the coarse co-assembly map \(\mu^\ast \colon \K_{1-\ast}(\sHigCorRed Y) \to \K^\ast(Y)\) is \parensup{rationally} surjective.
\end{prop}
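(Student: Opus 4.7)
The plan is to reduce the claim to the observation that for a contractible $n$-manifold $Y$, the compactly supported $\K$-theory $\K^*(Y)$ is essentially cyclic and concentrated in a single parity, so that the Higson-essentialness hypothesis already pins down a generator in the image of $\mu^*$.

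First, I would compute $\K^*(Y)$. Since $Y$ is an oriented contractible $n$-manifold, Poincar\'e duality yields $H^k_c(Y;\Z) \cong H_{n-k}(Y;\Z)$, which is $\Z$ for $k=n$ and zero otherwise. The Atiyah--Hirzebruch spectral sequence for compactly supported $\K$-theory is thus concentrated in a single column, collapses at $E_2$ for dimensional reasons, and admits no room for extensions, giving $\K^n(Y) \cong \Z$ and $\K^{n+1}(Y) = 0$ (mod $2$-periodicity).

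Second, I would verify that the pairing $\langle [\Dirac_Y], \blank\rangle \colon \K^n(Y) \to \Z$ is an isomorphism. For this, choose a coordinate patch $U \subset Y$ diffeomorphic to $\R^n$ and let $\beta_U \in \K^n(Y)$ denote the image of the Bott generator of $\K^n(\R^n)$ under the canonical extension-by-zero map $\K^n(U) \to \K^n(Y)$. Excision and the standard Atiyah--Singer computation for $\Dirac_{\R^n}$ twisted by the Bott class give $\langle [\Dirac_Y], \beta_U\rangle = 1$. Since $\K^n(Y) \cong \Z$, the pairing is a surjection $\Z \to \Z$, and hence an isomorphism.

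With these two ingredients, the conclusion is immediate. If $Y$ is Higson-essential, there is $\vartheta \in \K_{1-n}(\sHigCorRed Y)$ with $\langle [\Dirac_Y], \mu^*(\vartheta)\rangle = 1$, so by the isomorphism above $\mu^*(\vartheta)$ is an integral generator of $\K^n(Y)$, making $\mu^*$ surjective in degree $n$; surjectivity in the other parity is automatic because $\K^{n+1}(Y) = 0$. The rational statement is proved identically by replacing $\Z$ with $\Q$ and demanding only non-vanishing of the pairing. The only point requiring real care is the identification of the topological pairing with Kasparov--Poincar\'e duality together with the Bott-class index computation in a chart; this is classical for spin$^c$ manifolds and poses no conceptual obstacle.
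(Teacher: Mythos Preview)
Your proposal is correct and follows essentially the same approach as the paper: compute that \(\K^\ast(Y)\) is \(\Z\) concentrated in parity \(n\), then observe that any class pairing to \(1\) (or nonzero rationally) with \([\Dirac_Y]\) must be a generator. The only difference is cosmetic---the paper invokes \spinC{} Poincar\'e duality in \(\K\)-theory directly (\(\K^p(Y)\cong \RK_{n-p}(Y)\cong \RK_{n-p}(\ast)\)) rather than passing through ordinary Poincar\'e duality and the Atiyah--Hirzebruch spectral sequence, and it omits your explicit Bott-class verification since a class in \(\Z\) pairing to \(1\) is automatically a generator.
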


\begin{proof}
	Because \(Y\) is contractible, Poincaré duality for \spinC-manifolds (see for instance~\cite[Exercise~11.8.11]{higson_roe}) implies
	\[\K^p(Y) \cong \RK_{n-p}(Y) \cong \RK_{n-p}(\ast) \cong
	\begin{cases}
		\Z & p \equiv n \mod 2, \\
		0 & \text{otherwise}.
	\end{cases}
	\]
	If \(Y\) is Higson-essential, by definition there exists a class in the image of the co-assembly map which pairs to \(1\) with \([\Dirac_Y]\).
	Such a class must generate \(\K^n(Y) \cong \Z\).
	If \(Y\) is only rationally Higson-essential, the co-assembly map is still rationally non-trivial and hence rationally surjective as the target is one-dimensional.
\end{proof}

\begin{cor}
  Let \(Y\) be a contractible \(n\)-dimensional \spinC-manifold of continuously bounded geometry.
  Suppose that \(Y\) is rationally Higson-essential \parensup{in particular, this is satisfied if \(Y\) is rationally stably coarsely hypereuclidean}.

	Then for every proper metric space $X$ endowed with a proper action of a countable discrete group \(G\), the external product
	\[\HRPlaceholder_m^G(X) \otimes \K_{p}(Y) \to \HRPlaceholder_{m+p}^G(X \times Y)\]
	is rationally injective for each \(m, p \in \Z\).
\end{cor}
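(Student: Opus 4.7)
The plan is to assemble this corollary as a direct specialization of two results proved earlier in the paper, namely \cref{AsphericalHypereuclideanImpliesSurj} and \cref{thm_coass_surj}.

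First, I would invoke \cref{AsphericalHypereuclideanImpliesSurj} to upgrade the rational Higson-essentialness hypothesis into rational surjectivity of the \parensup{non-equivariant} coarse co-assembly map
\[
\mu^\ast\colon \K_{1-\ast}(\sHigCorRed Y)\to \K^\ast(Y).
\]
The content there is Poincaré duality for the contractible \spinC-manifold $Y$, which reduces $\K^\ast(Y)$ to $\K_{n-\ast}(\ast)$ and in particular to a rank-one group in degree $n$; rational Higson-essentialness then guarantees that the image of $\mu^\ast$ in this single non-zero degree is non-trivial, hence rationally all of $\K^\ast(Y)$.

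Next, I would feed this into \cref{thm_coass_surj}, specialized to the trivial group $H = \{1\}$ acting trivially on $Y$. The trivial action is tautologically proper and free, and for any exact crossed product functor $\rtimes_\mu$ one has $\sHigCorRed Y \rtimes_\mu \{1\} = \sHigCorRed Y$, so the equivariant coarse co-assembly map $\mu^\ast_H$ reduces to the ordinary $\mu^\ast$ and $\K^H_p(Y) = \K_p(Y)$. Continuously bounded geometry of $Y$ is given by hypothesis, and the first step provides rational surjectivity, so all hypotheses of \cref{thm_coass_surj} are in place. Its conclusion delivers precisely the rational injectivity of
\[
\HRPlaceholder_m^G(X)\otimes \K_p(Y)\to \HRPlaceholder_{m+p}^G(X\times Y)
\]
for every $m,p\in\Z$ and every $(X,G)$.

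There is no substantive obstacle: both ingredients have already been developed in full, and the present corollary is essentially a bookkeeping exercise checking that the degenerate case $H=\{1\}$ in the equivariant framework of \cref{sec_equiv_version} recovers the non-equivariant external product appearing in the statement. The only conceptual point worth noting is that contractibility is used solely via \cref{AsphericalHypereuclideanImpliesSurj} to promote Higson-essentialness from a statement about a single cohomology class to surjectivity of the full co-assembly map, which is the correct input for the slant-product-based retraction constructed in the proof of \cref{thm_coass_surj}.
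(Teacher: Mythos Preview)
Your proposal is correct and follows exactly the same approach as the paper: the paper's proof is simply ``Combine \cref{AsphericalHypereuclideanImpliesSurj,thm_coass_surj},'' and you have unpacked precisely this combination, including the observation that one applies \cref{thm_coass_surj} with the trivial group $H=\{1\}$.
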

\begin{proof}
	Combine \cref{AsphericalHypereuclideanImpliesSurj,thm_coass_surj}.
\end{proof}

\section{K\"{u}nneth theorems for the structure group}
\label{sec_full_kuenneth}
In this final section---which does not use the methods of the rest of the paper---we deduce a full Künneth-like theorem for the analytic structure group in the case that the Baum--Connes conjecture is satisfied.
\begin{thm}\label{thm_kuenneth}
  Let $H$ be a countable discrete group.
  Assume that $H$ is torsion-free and satisfies the Baum--Connes conjecture for all coefficient \textCstar-algebras with trivial $H$-action.\footnote{For example, $H$ could be a-T-menable \cite{higson_kasparov} or it could be hyperbolic \cite{lafforgue_BC,puschnigg_BC}.}
  
  Then for any simplicial complex $M$, the external product map
  \[ \RStrg_*^{G}(\ucov{M}) \otimes \RK_*(\Bfree H) \to \RStrg_*^{G \times H}(\ucov{M} \times \Efree H),\]
   where \(G = \pi_1 M\), is rationally an isomorphism. 
   If $\RK_*(\Bfree H)$ is torsion-free, then it is integrally an isomorphism.
\end{thm}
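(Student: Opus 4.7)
I would prove \cref{thm_kuenneth} by a five-lemma argument applied to the Higson--Roe sequence \labelcref{eq:HigsonRoeRepresentable}. Specifically, consider the commutative ladder whose upper row is the Higson--Roe sequence of $\ucov{M}$ tensored over $\Z$ with the $\Z/2$-graded abelian group $\RK_*(\Bfree H)$, and whose lower row is the Higson--Roe sequence of $\ucov{M} \times \Efree H$ for the group $G \times H$, with the vertical arrows given by external products. Commutativity of this ladder is exactly the content of the diagram constructed in \cref{subsec_crossProducts_equiv}, where we implicitly use the Baum--Connes isomorphism $\K_*(\CstarRed H) \cong \RK_*(\Bfree H)$ to realize the maps $\blank \times \Ind_H(z)$ as external products with classes in $\RK_*(\Bfree H)$.

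Two of the vertical arrows need to be identified with known K\"unneth-type isomorphisms. First, the classical rational K\"unneth theorem for $\K$-homology gives the isomorphism $\RK_*(M) \otimes \RK_*(\Bfree H) \xrightarrow{\cong} \RK_*(M \times \Bfree H)$; when $\RK_*(\Bfree H)$ is torsion-free the Tor correction term vanishes, so this is an integral isomorphism. Second, the K\"unneth theorem of \cite{Kuenneth_BC} for reduced group \textCstar-algebras under the Baum--Connes conjecture with coefficients yields that $\K_*(\CstarRed G) \otimes \K_*(\CstarRed H) \to \K_*(\CstarRed G \otimes \CstarRed H) = \K_*(\CstarRed(G \times H))$ is a rational isomorphism (and an integral isomorphism when $\K_*(\CstarRed H)$ is torsion-free). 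Combining this with Baum--Connes for $H$ with trivial coefficients gives the identification of this map with $\K_*(\CstarRed G) \otimes \RK_*(\Bfree H) \xrightarrow{\cong} \K_*(\CstarRed(G \times H))$. With four of the five vertical arrows on consecutive degrees thus identified as rational (or integral, under the torsion-free hypothesis) isomorphisms, the five-lemma yields the desired statement for the middle vertical arrow, which is the external product in question.

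The main technical obstacle is ensuring that the upper row of the ladder is genuinely exact after tensoring with $\RK_*(\Bfree H)$. Rationally this is automatic, because every $\Q$-vector space is flat over $\Q$. Integrally, one needs $\RK_*(\Bfree H)$ to be flat over $\Z$, which is equivalent to $\RK_*(\Bfree H)$ being torsion-free -- precisely the hypothesis imposed in the integral part of the theorem. A further point that must be verified along the way is that the external product intertwines the boundary map $\partial$ of the Higson--Roe sequence with the correct sign conventions (cf.\ \cref{rem_signconvention}); this is built into the constructions in \cref{subsec_crossProducts_equiv} and therefore requires no additional work.
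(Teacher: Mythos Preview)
Your proposal is correct and follows essentially the same approach as the paper: both set up the commutative ladder of Higson--Roe sequences \labelcref{eq:HigsonRoeRepresentable} for $\ucov{M}$ (tensored with $\RK_*(\Bfree H)$) and for $\ucov{M}\times\Efree H$, identify the $\K$-homology and group-$\Cstar$-algebra vertical arrows as (rational) isomorphisms via the classical K\"unneth formula and the result of \cite{Kuenneth_BC} combined with Baum--Connes for the torsion-free group $H$, and then apply the five lemma. Your discussion of exactness after tensoring and the sign issue is slightly more explicit than the paper's, but there is no substantive difference.
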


\begin{proof}
We consider the following commutative diagram which is part of a map between rationally exact sequences.
\begin{align*}
\mathclap{\xymatrix{
\RStrg^G_*(\ucov{M}) \otimes \RK_*(\Bfree H)   \ar[r] \ar[d] & \RK_*(M) \otimes \RK_*(\Bfree H) \ar[r] \ar[d] &  \K_*(\CstarRed G) \otimes \RK_*(\Bfree H) \ar[d]\\
\RStrg_*^{G \times H}(\ucov{M} \times \Efree H) \ar[r] & \RK_*(M \times \Bfree H) \ar[r] & \K_\ast(\CstarRed (G \times H))
}}
\end{align*}
Indeed, the top sequence is the analytic exact sequence~\labelcref{eq:HigsonRoeRepresentable} for the space $\ucov{M}$ tensored with $\RK_*(\Bfree H)$.
The functor $\blank \otimes \RK_*(\Bfree H)$ is rationally exact and hence the top sequence is rationally an exact sequence.
The lower sequence is the analytic exact sequence~\labelcref{eq:HigsonRoeRepresentable} for the space $\ucov{M} \times \Efree H$.

The middle vertical arrow is rationally an isomorphism due to the Künneth formula for $\K$-homology. Note that in general for any ring spectrum we have a Künneth spectral sequence.
But in the case of the complex $\K$-theory spectrum one can show that the spectral sequence degenerates suitably to give rise to a short exact sequence. This follows by similar arguments as presented in the remark on top of page~62 in \cite{land_diss}.

Because we assume $H$ to be torsion-free and to satisfy the Baum--Connes conjecture, the assembly map $\RK_\ast(\Bfree H) \to \K_*(\CstarRed H)$ is an isomorphism.
Also, the reduced group \textCstar-algebra $\CstarRed H$ satisfies the Künneth formula, because we assume that $H$ satisfies (the reduced version of) the Baum--Connes conjecture for all coefficient \textCstar-algebras with trivial $H$-action.\footnote{Tu \cite{tu_UCT} proved that if $H$ is amenable, then $\CstarRed H$ lies in the bootstrap class and hence satisfies the Künneth formula. That $\CstarRed H$ satisfies the Künneth formula if $H$ satisfies the Baum--Connes conjecture was proven by \citeauthor{Kuenneth_BC}~\cite{Kuenneth_BC}.}
Hence the right vertical arrow in the diagram is rationally an isomorphism.

It follows from the five lemma that the left vertical arrow must also be rationally an isomorphism.

If $\RK_*(\Bfree H)$ is torsion-free, then the functor $\blank \otimes \RK_*(\Bfree H)$ is integrally exact, that is, the top sequence in the above diagram is exact.
Furthermore, the $\operatorname{Tor}$-terms in the Künneth formulas for the middle and right vertical arrows vanish in this case and therefore these arrows are isomorphisms. Therefore the left vertical arrow is also an isomorphism by the five-lemma.
\end{proof}

\printbibliography[heading=bibintoc]

\end{document}